\newlist{paraenum}{enumerate}{1}%
\setlist[paraenum]{label={(\arabic*)}}%
\newlist{lemenum}{enumerate}{1}%
\setlist[lemenum]{label={\emph{(\arabic*)}}}%
  \theoremstyle{plain}
  \newtheorem{thm}{Theorem}[subsection]
  \theoremstyle{definition}
  \newtheorem{con}[thm]{Construction}
  \theoremstyle{definition}
  \newtheorem{war}[thm]{Warning}
  \theoremstyle{plain}
  \theoremstyle{plain}
  \newtheorem{lem}[thm]{\protect\lemmaname}
  \theoremstyle{definition}
  \newtheorem{defn}[thm]{\protect\definitionname}
  \theoremstyle{plain}
  \newtheorem{prop}[thm]{\protect\propositionname}
  \theoremstyle{plain}
  \newtheorem{cor}[thm]{\protect\corollaryname}
  \theoremstyle{definition}
  \newtheorem{example}[thm]{\protect\examplename}
  \theoremstyle{remark}
  \newtheorem{rem}[thm]{\protect\remarkname}
  \theoremstyle{remark}
  \newtheorem*{rem*}{\protect\remarkname}
  \providecommand{\corollaryname}{Corollary}
  \providecommand{\definitionname}{Definition}
  \providecommand{\examplename}{Example}
  \providecommand{\lemmaname}{Lemma}
  \providecommand{\propositionname}{Proposition}
  \providecommand{\remarkname}{Remark}
\newcommand*\noloc{%
        \nobreak
        \mskip6mu plus1mu
        \mathpunct{}%
        \nonscript
        \mkern-\thinmuskip
        {:}%
        \mskip2mu
        \relax
}
\begin{document}

\global\long\def\oto#1{\overset{#1}{\longrightarrow}}
\global\long\def\iso{\overset{\sim}{\longrightarrow}}
\global\long\def\into{\hookrightarrow}
\global\long\def\onto{\twoheadrightarrow}
\global\long\def\ss{\subseteq}
\global\long\def\adj{\leftrightarrows}
\global\long\def\bb#1{\mathbb{#1}}
\global\long\def\es{\varnothing}
\global\long\def\term{\text{pt}}
\global\long\def\cone{\triangleright}
\global\long\def\from{\leftarrow}

\global\long\def\colim{\operatorname*{colim}}
\global\long\def\lim{\operatorname*{lim}}
\global\long\def\map{\operatorname{Map}}
\global\long\def\End{\operatorname{End}}
\global\long\def\fun{\operatorname{Fun}}
\global\long\def\mul{\operatorname{Mul}}
\global\long\def\Id{\operatorname{Id}}
\global\long\def\red{\operatorname{\scriptsize{red}}}
\global\long\def\un{\operatorname{\scriptsize{un}}}
\global\long\def\final{\operatorname{\scriptsize{final}}}
\global\long\def\act{\operatorname{\scriptsize{act}}}
\global\long\def\grp{\operatorname{\scriptsize{grp}}}

\global\long\def\hocolim{\operatorname*{hocolim}}
\global\long\def\holim{\operatorname*{holim}}
\global\long\def\alg{\operatorname{Alg}}

\global\long\def\LP#1#2#3#4{\xymatrix{#1\ar[r]\ar[d]  &  #3\ar[d]\\
 #2\ar[r]\ar@{-->}[ru]  &  #4 
}
 }
\global\long\def\Square#1#2#3#4{\xymatrix{#1\ar[r]\ar[d]  &  #3\ar[d]\\
 #2\ar[r]  &  #4 
}
 }

\global\long\def\Rect#1#2#3#4#5#6{\xymatrix{#1\ar[r]\ar[d]  &  #3\ar[d]\ar[r]  &  #5\ar[d]\\
 #2\ar[r]  &  #4\ar[r]  &  #6 
}
 }

\global\long\def\catd{\mathbf{Cat}_{d}}
\global\long\def\cat{\mathbf{Cat}_{\infty}}
\global\long\def\catpt{\mathbf{Cat}_{\infty,*}}
\global\long\def\op{\mathbf{Op}_{\infty}}
\global\long\def\oppt{\mathbf{Op}_{\infty,*}}
\global\long\def\opred{\mathbf{Op}_{\infty}^{\red}}
\global\long\def\opun{\mathbf{Op}_{\infty}^{\un}}
\global\long\def\opunpt{\mathbf{Op}_{\infty,*}^{\un}}
\global\long\def\opredpt{\mathbf{Op}_{\infty,*}^{\red}}
\global\long\def\opd{\mathbf{Op}_{d}}
\global\long\def\pop{\mathbf{POp}_{\infty}}
\global\long\def\poppt{\mathbf{POp}_{\infty,*}}

\global\long\def\sseq{\mathbf{SSeq}}
\global\long\def\fin{\mathbf{Fin}}
\global\long\def\set{\mathbf{Set}}
\global\long\def\sset{\mathbf{sSet}}
\global\long\def\finpt{\mathbf{Fin}_{*}}
\global\long\def\uni{\mathbf{Uni}}
\global\long\def\com{\mathbf{Com}}
\global\long\def\ass{\mathbf{Ass}}
\global\long\def\triv{\mathbf{Triv}}

\title{The $\infty$-Categorical Eckmann\textendash Hilton Argument}

\author{Tomer M. Schlank\thanks{Einstein Institute of Mathematics, Hebrew University of Jerusalem.} \and Lior Yanovski\thanks{Einstein Institute of Mathematics, Hebrew University of Jerusalem.}}
\maketitle
\begin{abstract}
We define a reduced $\infty$-operad $\mathcal{P}$ to be $d$-connected
if the spaces $\mathcal{P}\left(n\right)$, of $n$-ary operations,
are $d$-connected for all $n\ge0$. Let $\mathcal{P}$ and $\mathcal{Q}$
be two reduced $\infty$-operads. We prove that if $\mathcal{P}$
is $d_{1}$-connected and $\mathcal{Q}$ is $d_{2}$-connected, then
their Boardman\textendash Vogt tensor product $\mathcal{P}\otimes\mathcal{Q}$
is $\left(d_{1}+d_{2}+2\right)$-connected. We consider this to be
a natural $\infty$-categorical generalization of the classical Eckmann\textendash Hilton
argument.
\end{abstract}

\tableofcontents

\section{Introduction}

\paragraph{Overview.}

The classical Eckmann\textendash Hilton argument (EHA), introduced in \cite{EH62},
states that given a set $X$ with two unital (ie having a two-sided
unit) binary operations 
\[
\circ,*\colon X\times X\to X, 
\]
if the two operations satisfy the ``interchange law''
\[
\forall a,b,c,d\in X,\quad\quad\left(a\circ b\right)*\left(c\circ d\right)=\left(a*c\right)\circ\left(b*d\right),
\]
then they coincide and, moreover, this unique operation is associative and commutative. Even though it is easy to prove, the EHA is very useful. The most familiar applications are the commutativity of the higher homotopy groups of a space and the commutativity of the fundamental group of an $H$-space.

A natural language for discussing different types of algebraic structures
and the interactions between them is that of operads (by which,
for now, we mean one-colored, symmetric operads in sets). For example,
the data of a unital binary operation on a set $X$ can be encoded
as a structure of an algebra on $X$ over a certain operad $\uni$.
Similarly, the data of a unital, associative, and commutative binary
operation on a set $X$ (namely, the structure of a commutative monoid)
can be encoded as an algebra structure on $X$ over the operad $\com$.
Furthermore, the category of operads is equipped with a tensor product
operation, introduced by Boardman and Vogt \cite{BV06},
such that given two operads $\mathcal{P}$ and $\mathcal{Q}$, a $\left(\mathcal{P}\otimes\mathcal{Q}\right)$-algebra
structure on a set $X$ is equivalent to a $\mathcal{P}$-algebra
structure and a $\mathcal{Q}$-algebra structure on $X$, which satisfy
a certain natural generalization of the interchange law defined above. Specializing
to the case at hand, one can rephrase the EHA as 
\[
\uni\otimes\uni\simeq\com.
\]

Noting that $\com$ is the terminal object in the category of operads
(as all operation sets are singletons), this formulation looks perhaps
a bit less surprising than the classical one. One can further observe
that we can replace $\uni$ by more general operads. We call an operad
$\mathcal{P}$ \emph{reduced} if both the set of nullary and the set
of unary operations of $\mathcal{P}$ are singletons (ie there is a unique constant and it serves
as a unit for all operations). The classical proof of the EHA can
be easily modified\footnote{Eg see Proposition 3.8 of \cite{FV15}.} to show that given two reduced operads $\mathcal{P}$
and $\mathcal{Q}$ whose $n$-ary operation sets are non-empty for
all $n$, we have 
\[
\mathcal{P}\otimes\mathcal{Q}\simeq\com.
\]
We call this the ``operadic formulation of the EHA''.

In many applications of the EHA, the two binary operations one starts
with are actually known to be associative in advance. This version,
which of course follows from the general EHA, can be stated as 
\[
\ass\otimes\ass\simeq\com,
\]
where $\ass$ is the operad that classifies the structure of a (unital,
associative) monoid. For future reference, we call this ``the associative
EHA''.

The language of operads already helps in organizing and systematizing
the study of ordinary algebraic structures, but it is really indispensable
for studying (and even defining) \emph{enriched} and \emph{homotopy
coherent} algebraic structures. To start with, by replacing the \emph{sets}
of $n$-ary operations of an operad with \emph{spaces} and requiring
the various composition and permutation maps to be continuous, one
obtains the notion of a topological operad. By further introducing
an appropriate notion of a weak equivalence, one can study homotopy
coherent algebraic structures. A fundamental example of such an object
is the little $n$-cubes topological operad $\bb E_{n}$ for $0\le n\le\infty$
(see, eg \cite{May72}). Loosely speaking, the structure of an $\bb E_{n}$-algebra
on a space $X$ can be thought of as a continuous unital multiplication
map on $X$ for which associativity holds up to a specified coherent
homotopy and commutativity also holds up to a specified coherent homotopy,
but only up to ``level $n$'' \footnote{The situation is slightly different for $n=0$ as an $\bb E_{0}$-algebra is just a pointed space.}. On a more technical level, $\bb E_{1}$ and $\bb E_{\infty}$ can
be interpreted as cofibrant models for $\ass$ and $\com$, respectively,
in a suitable model structure on the category of topological operads
(eg \cite{Vogt03}). The sequence $\bb E_{n}$ serves as
a kind of interpolation between them. 

There are many approaches to modeling ``homotopy coherent operads''
(both one-colored and multi-colored). Among them, the original
approach of J.P. May via specific topological operads \cite{May72}, via model structures (or partial versions thereof) on simplicial operads \cite{BM03, Vogt03, CM13a, Rob11} or dendroidal sets/spaces \cite{CM11,CM13b},  via ``operator categories'' of C. Barwick \cite{Bar18} or intrinsically to $\left(\infty,1\right)$-categories via analytic monads \cite{GHK17} or Day convolution \cite{Hau17}. 
We have chosen to work with the notion of $\infty$-operads
introduced and developed in J. Lurie's \cite{HA} based on the theory of $\infty$-categories
introduced by A. Joyal \cite{Joy05} and extensively developed in \cite{HTT}\footnote{See \cite{CHH16} for a discussion of the comparison of the different models.}. 
In this theory of $\infty$-operads (as in some of the others),
there is a notion analogous to the Boardman\textendash Vogt tensor product and
it is natural to ask whether there is also an analogue of the EHA.
For the associative EHA, one has the celebrated ``additivity theorem'',
proved by G. Dunn in the classical context \cite{Dun88} and by Lurie  in the language of $\infty$-operads \cite[Theorem 5.1.2.2]{HA}, which states that for all integers
$m,k\ge0$, we have
\[
\bb E_{m}\otimes\bb E_{k}\simeq\bb E_{m+k}.
\]
The goal of this paper is to state and prove an $\infty$-categorical
version of the classical (non-associative) EHA. The key observation
about the operadic formulation of the classical EHA is that both the
hypothesis regarding the non-emptiness of the operation sets of $\mathcal{P}$
and $\mathcal{Q}$ and the characterization of $\com$ as having singleton
operation sets can be phrased in terms of \emph{connectivity bounds}.
For an integer $d\ge-2$, we say that a reduced $\infty$-operad $\mathcal{P}$
is $d$-connected if all of its operation spaces are $d$-connected.
We prove
\begin{thm}
Given integers $d_{1},d_{2}\ge-2$ and two reduced $\infty$-operads
$\mathcal{P}$ and $\mathcal{Q}$, such that $\mathcal{P}$ is $d_{1}$-connected
and $\mathcal{Q}$ is $d_{2}$-connected, the $\infty$-operad $\mathcal{P}\otimes\mathcal{Q}$
is $\left(d_{1}+d_{2}+2\right)$-connected.
\end{thm}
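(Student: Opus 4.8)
\emph{Reduction to a relative statement.} The claim is equivalent to saying that the terminal map $\mathcal{P}\otimes\mathcal{Q}\to\com$ is $(d_{1}+d_{2}+2)$-connected on all spaces of operations, since $\com(n)\simeq\ast$. The plan is to prove the more flexible relative form, which is better adapted to induction: if $f\colon\mathcal{P}'\to\mathcal{P}$ is a map of reduced $\infty$-operads that is $k$-connected on all operation spaces and $\mathcal{Q}$ is $m$-connected, then $f\otimes\mathrm{id}_{\mathcal{Q}}$ is $(k+m+2)$-connected. The theorem is the case $\mathcal{P}=\com$ with $f$ the terminal map (which is $d_{1}$-connected precisely when $\mathcal{P}$ is), combined with the identity $\com\otimes\mathcal{Q}\simeq\com$; the latter is itself a mild Eckmann--Hilton statement --- the case $\mathcal{P}'=\bb E_{0}$, the $\otimes$-unit among reduced $\infty$-operads, or directly: a $\mathcal{Q}$-operation that is simultaneously a map of commutative algebras is forced to be the iterated multiplication, using the unit relations of the reduced operad. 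Dually, one runs the induction over a presentation of $\mathcal{P}$ built from $\mathcal{P}'$ by attaching operadic cells: since $-\otimes\mathcal{Q}$ preserves colimits, a cell attachment of the form $\mathcal{P}_{\alpha}\sqcup_{\mathcal{F}_{r}(S^{j-1})}\mathcal{F}_{r}(D^{j})$ --- freely adjoining an $r$-ary operation along a $j$-cell, $\mathcal{F}_{r}$ the free reduced $\infty$-operad on a space of $r$-ary operations --- becomes, after $\otimes\mathcal{Q}$, a pushout along $\mathcal{F}_{r}(S^{j-1})\otimes\mathcal{Q}\to\mathcal{F}_{r}(D^{j})\otimes\mathcal{Q}$, and the whole estimate reduces to bounding the connectivity of this one map in terms of $j$ and $m$.

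\emph{The core: arity two and the birth of the ``$+2$''.} The essential input is a model for the operation spaces of a Boardman--Vogt tensor product as a homotopy colimit over a category of two-coloured $n$-trees --- vertices coloured by $\mathcal{A}$ or $\mathcal{B}$, morphisms the monochromatic edge-contractions (operadic composition) together with the interchange moves --- with the space attached to a tree the product of the operation spaces of its vertices. Reducedness is what makes this tractable: the unique unary and nullary operations are two-sided units, so unary vertices contract away and, crucially, an interchange configuration two of whose leaves are filled by the (contractible space of) nullary operations degenerates to a path between a binary $\mathcal{A}$-operation and a binary $\mathcal{B}$-operation --- the literal $\infty$-categorical Eckmann--Hilton move. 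Unwinding this in arity $2$ yields $(\mathcal{A}\otimes\mathcal{B})(2)\simeq\mathcal{A}(2)\ast\mathcal{B}(2)$, the homotopy pushout of $\mathcal{A}(2)\xleftarrow{\mathrm{pr}}\mathcal{A}(2)\times\mathcal{B}(2)\xrightarrow{\mathrm{pr}}\mathcal{B}(2)$, the middle term being the space of interchange data. Since the join of a $d_{1}$-connected and a $d_{2}$-connected space is $(d_{1}+d_{2}+2)$-connected, this settles arity $2$; it reproduces $\bb E_{1}(2)\ast\bb E_{1}(2)\simeq S^{0}\ast S^{0}\simeq S^{1}\simeq\bb E_{2}(2)$ and, on $\pi_{0}$, the classical argument. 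The same join mechanism drives the single-cell estimate in low arity: for instance $(\mathcal{F}_{2}(X)\otimes\mathcal{Q})(2)\simeq\mathcal{F}_{2}(X)(2)\ast\mathcal{Q}(2)$, so joining with the $m$-connected space $\mathcal{Q}(2)$ upgrades the $(j-1)$-connected map of cells to a $(j+m+1)$-connected map in arity $2$, comfortably beyond the required $k+m+2$ once $j\ge k+2$.

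\emph{Higher arities and the main obstacle.} For $n\ge 3$ the colimit is no longer a single join --- already $\bb E_{m+k}(n)$ is not the join of $\bb E_{m}(n)$ and $\bb E_{k}(n)$ --- so I would propagate the bound by induction on arity, peeling the root vertex off each tree: its incoming subtrees are operations of $\mathcal{A}\otimes\mathcal{B}$ of strictly smaller arity, to which the (relative) inductive hypothesis applies, and gluing these against one interchange layer is again a homotopy pushout supplying the extra $+2$. The step I expect to be hardest is precisely this assembly: one must control, finely enough to read off the connectivity, how the units collapse the tree category and how chains of interchange moves of differing lengths organise themselves --- in particular one may not simply discard the trees carrying nullary ``stump'' vertices, since those are exactly the trees carrying the Eckmann--Hilton paths. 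This is the $\infty$-categorical shadow of the combinatorial heart of Dunn's and Lurie's proofs of the additivity theorem, and it is where reducedness is indispensable: without the shared unit there is no way to slide one operation past the other, and the statement fails. In practice the cleanest bookkeeping is the cell-induction route, where one only has to analyse the single map $\mathcal{F}_{r}(S^{j-1})\otimes\mathcal{Q}\to\mathcal{F}_{r}(D^{j})\otimes\mathcal{Q}$, via an explicit bar-type model of the free reduced $\infty$-operad $\mathcal{F}_{r}(X)\otimes\mathcal{Q}$ on a single $\mathcal{Q}$-multilinear $r$-ary operation, into whose vertex-labelling spaces the connectivity of $\mathcal{Q}$ feeds directly.
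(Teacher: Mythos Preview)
Your reduction to a relative statement is exactly right and matches the paper's own strategy: the paper proves the theorem as a corollary of the relative version (their Theorem~\ref{thm:Main_Theorem}), which says that a $d_{1}$-equivalence $\mathcal{P}\to\mathcal{Q}$ tensored with a $d_{2}$-connected $\mathcal{R}$ yields a $(d_{1}+d_{2}+2)$-equivalence. Your arity-$2$ join formula is also correct (and is cited in the paper's introduction).

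From that point on, however, the approaches diverge completely, and your proposal has a genuine gap precisely where you flag it. You attempt a direct combinatorial attack via a tree model for $(\mathcal{P}\otimes\mathcal{Q})(n)$ and a cell-induction over a presentation of $\mathcal{P}$; but the higher-arity assembly step is only sketched, and you yourself identify it as the ``main obstacle.'' That obstacle is the entire content of the theorem beyond arity~$2$. The paper's authors explicitly remark that ``there is probably no hope of writing down an explicit formula for the operation spaces of $\mathcal{P}\otimes\mathcal{Q}$ in terms of those of $\mathcal{P}$ and $\mathcal{Q}$, except for low degrees,'' and they design their argument to avoid it entirely. A secondary technical hazard in your route: you assume $-\otimes\mathcal{Q}$ preserves the pushouts appearing in your cell decomposition, but the paper notes that the symmetric monoidal structure on \emph{reduced} $\infty$-operads is not closed, so colimit-preservation is delicate there; one would have to carry out the cell induction in all of $\op$ and then argue separately that the result remains reduced.

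The paper's proof of the relative theorem is instead entirely representable. A map of reduced $\infty$-operads is a $d$-equivalence if and only if it induces an equivalence on $\map_{\op}(-,\mathcal{C}_{\times})$ for every $(d+1)$-topos $\mathcal{C}$ (their Proposition~\ref{prop:d-equivalence}). Via the tensor--hom adjunction this converts the problem into comparing $\map(\mathcal{P},\alg_{\mathcal{R}}(\mathcal{C}))$ with $\map(\mathcal{Q},\alg_{\mathcal{R}}(\mathcal{C}))$, and fibring over a fixed algebra $X$ reduces to showing that the \emph{reduced endomorphism operad} $\End^{\red}_{\alg_{\mathcal{R}}(\mathcal{C})}(X)$ is an essentially $(d_{1}+1)$-operad. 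That truncatedness bound comes from a single concrete connectivity estimate (their Proposition~\ref{prop:Coproduct_Lemma}): for a $d_{2}$-connected reduced operad $\mathcal{R}$ and $\mathcal{C}$ an $m$-topos, the canonical map $A\sqcup B\to A\times B$ in $\alg_{\mathcal{R}}(\mathcal{C})$ is $d_{2}$-connected. This is proved by exhibiting a section on underlying objects (using any binary operation of $\mathcal{R}$) and then observing that after $d_{2}$-truncation one may replace $\mathcal{R}$ by $\bb E_{\infty}$, where coproducts and products of algebras agree. No tree combinatorics enter at any stage; the ``$+2$'' emerges from a lifting-space estimate (their Proposition~\ref{prop:Lifts_Space_Truncatedness}) rather than from a join.
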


Unlike in the classical case, our result does not imply the additivity
theorem (or vise versa), but the additivity theorem does demonstrate
the sharpness of our result, since $\bb E_{n}$ is $\left(n-2\right)$-connected
for all $n\ge0$. 

We shall deduce our $\infty$-categorical version of the EHA from
a ``relative'' version, which might be of independent interest.
For a reduced $\infty$-operad $\mathcal{P}$ and an integer $n\ge0$,
we denote by $\mathcal{P}\left(n\right)$ the space of $n$-ary operations
of $\mathcal{P}$. We say that a map of spaces is a $d$-equivalence,
if it induces a homotopy equivalence on $d$-truncations, and that
a map of reduced $\infty$-operads $\mathcal{P}\to\mathcal{Q}$ is
a $d$-equivalence, if for every integer $n\ge0,$ the map $\mathcal{P}\left(n\right)\to\mathcal{Q}\left(n\right)$
is a $d$-equivalence. 
\begin{thm}
\label{thm:The_Theorem_Introdction} Let $\mathcal{P}\to\mathcal{Q}$
be a $d$-equivalence of reduced $\infty$-operads and let $\mathcal{R}$
be a $k$-connected reduced $\infty$-operad. The map $\mathcal{P}\otimes\mathcal{R}\to\mathcal{Q}\otimes\mathcal{R}$
is a $\left(d+k+2\right)$-equivalence.
\end{thm}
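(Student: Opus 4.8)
The strategy is a cellular induction on the $d$-equivalence $\mathcal{P}\to\mathcal{Q}$, reducing it to a single universal case built from a free reduced $\infty$-operad, followed by an explicit connectivity estimate for a Boardman--Vogt tensor product. Write $\mathrm{Free}_{n}\colon\mathbf{Spaces}\to\opred$ for the left adjoint of the evaluation functor $\mathcal{O}\mapsto\mathcal{O}(n)$, so $\mathrm{Free}_{n}(X)$ is the free reduced $\infty$-operad on one $n$-ary operation parametrized by $X$. Since $\map(\mathrm{Free}_{n}(D^{j}),\mathcal{O})\to\map(\mathrm{Free}_{n}(S^{j-1}),\mathcal{O})$ is the evaluation map $\mathcal{O}(n)\to\mathcal{O}(n)^{S^{j-1}}$, the objects local with respect to $W_{d}:=\{\mathrm{Free}_{n}(S^{j-1})\to\mathrm{Free}_{n}(D^{j}):n\ge2,\ j\ge d+2\}$ are precisely the reduced $\infty$-operads all of whose operation spaces are $d$-truncated; as for such operads the localization functor is computed by arity-wise truncation, the $W_{d}$-local equivalences coincide with the $d$-equivalences, and hence every $d$-equivalence is — up to composition with an equivalence of operads — a retract of a transfinite composition of pushouts of maps in $W_{d}$. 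Now the tensor product $-\otimes\mathcal{R}$ preserves pushouts and sifted colimits (checked directly, or via a model in which $\otimes$ is computed by a left-derived bifunctor), and the class of $(d+k+2)$-equivalences is closed under pushouts, filtered colimits, retracts and two-out-of-three, since arity-wise $(d+k+2)$-truncation is again an accessible localization of $\opred$. It therefore suffices to treat the case where $\mathcal{P}\to\mathcal{Q}$ is a single generator $\mathrm{Free}_{n}(S^{j-1})\to\mathrm{Free}_{n}(D^{j})$ with $j\ge d+2$; as $D^{j}$ is contractible and $S^{j-1}$ is $d$-connected, this is the assertion that for every $d$-connected space $A$ the canonical map
\[
\mathrm{Free}_{n}(A)\otimes\mathcal{R}\longrightarrow\mathrm{Free}_{n}(\term)\otimes\mathcal{R}
\]
is a $(d+k+2)$-equivalence.

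To analyse this map I would make both sides explicit. By its universal property, $\mathrm{Free}_{n}(A)\otimes\mathcal{R}$ classifies $\mathcal{R}$-algebras equipped with one further $n$-ary operation, parametrized by $A$, interchanging with all operations of $\mathcal{R}$; unwinding this (equivalently, combining cocontinuity of $-\otimes\mathrm{Free}_{n}(A)$ with the monadic bar resolution of $\mathcal{R}$) presents the operation spaces of $\mathrm{Free}_{n}(A)\otimes\mathcal{R}$, relative to those of $\mathrm{Free}_{n}(\term)\otimes\mathcal{R}$, as a colimit over a combinatorial indexing $\infty$-category of ``$\mathcal{R}$-decorated trees carrying $A$-labelled $\mu$-vertices, together with the moves that slide $\mathcal{R}$-vertices past $\mu$-vertices'', with the map induced by $A\to\term$. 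On top of this one builds an exhaustive filtration — compatible with the map and with the analogous filtration of $\mathrm{Free}_{n}(\term)\otimes\mathcal{R}$ — whose bottom layer is $\mathcal{R}$ itself (the $\mu$-free locus, on which the map is an equivalence) and whose higher layers are assembled, in each arity, from smash powers of $A$ and from ``interchange complexes'' that encode the relations forced by the interchange law between $\mu$ and the operations of $\mathcal{R}$.

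The main obstacle — the homotopy-theoretic heart of the result, and precisely where the classical Eckmann--Hilton computation enters — is the connectivity estimate for these higher layers. The mechanism is that the interchange law, together with the two-sided units of $\mathcal{R}$ and of $\mu$ (available because everything is reduced), forces the relevant configuration of $\mathcal{R}$- and $\mu$-operations to contract onto a trivial one, exactly as in the classical argument; homotopically this contraction is witnessed by an interchange complex built — roughly — as a join of cells coming from the $d$-connected space $A$ with cells coming from the $k$-connected operation spaces of $\mathcal{R}$. Since the join of a $d$-connected and a $k$-connected space is $(d+k+2)$-connected (and $X\ast\varnothing\simeq X$, $X\ast\term\simeq\ast$, matching the extreme cases $\mathcal{R}=\mathbb{E}_{0}$ and $\mathcal{R}=\com$), each higher layer is at least $(d+k+2)$-connected — the linear-in-$A$ layer being the critical one and already of the form $A\ast(\text{$k$-connected complex})$, and passing to $\Sigma_{r}$-homotopy orbits in the higher layers not destroying this, because the smash powers $A^{\wedge r}$ are already highly connected and the diagonal is collapsed. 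Consequently the homotopy fibre of $\mathrm{Free}_{n}(A)\otimes\mathcal{R}\to\mathrm{Free}_{n}(\term)\otimes\mathcal{R}$ is $(d+k+2)$-connected, so the map is a $(d+k+2)$-equivalence. Thus the ``$+2$'' in $d_{1}+d_{2}+2$ is the ``$+2$'' of the join; making the identification of the interchange complexes precise and establishing their connectivity is the technical crux, after which the absolute theorem follows by applying the above to the $d_{1}$-equivalence $\mathcal{P}\to\com$ together with the (classical-style) fact that $\com\otimes\mathcal{Q}\simeq\com$ for every reduced $\infty$-operad $\mathcal{Q}$.
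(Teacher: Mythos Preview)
Your cellular reduction is essentially sound: the $W_{d}$-local objects in $\opred$ are the reduced operads with $d$-truncated operation spaces, so $d$-equivalences are exactly the $W_{d}$-local equivalences, and the class of maps $f$ with $f\otimes\mathcal{R}$ a $(d+k+2)$-equivalence is strongly saturated (as the preimage of a strongly saturated class under a cocontinuous functor on arrow categories), hence contains the strongly saturated closure of $W_{d}$. The genuine gap is the connectivity estimate for $\mathrm{Free}_{n}(A)\otimes\mathcal{R}\to\mathrm{Free}_{n}(\term)\otimes\mathcal{R}$, which you yourself flag as ``the technical crux'' without proving it. You invoke filtrations, interchange complexes, and a join heuristic, but none of this is made precise; there is no general formula for the arity-$m$ spaces of a Boardman--Vogt tensor product on which to run such a filtration, and the known identification $(\mathcal{P}\otimes\mathcal{R})(2)\simeq\mathcal{P}(2)\star\mathcal{R}(2)$ is special to arity $2$ and does not extend in any evident way. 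Carrying your program through would amount to proving a new structural theorem about $\otimes$, comparable in difficulty to the additivity theorem, and as it stands the proposal is a strategy rather than a proof.

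The paper sidesteps this entirely by working on the other side of the closed monoidal structure. It tests $\mathcal{P}\otimes\mathcal{R}\to\mathcal{Q}\otimes\mathcal{R}$ by mapping into Cartesian $(d+k+3)$-topoi $\mathcal{C}$ (shown earlier to detect $(d+k+2)$-equivalences of reduced operads), then uses $\map(-\otimes\mathcal{R},\mathcal{C})\simeq\map(-,\alg_{\mathcal{R}}(\mathcal{C}))$ to trade the tensor for an internal hom. The fibre over each $X\in\underline{\alg}_{\mathcal{R}}(\mathcal{C})$ is identified with maps into the reduced endomorphism operad $\End^{\red}_{\alg_{\mathcal{R}}(\mathcal{C})}(X)$, and the key estimate becomes: this is an essentially $(d+1)$-operad. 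That in turn reduces to showing the canonical map $X^{\sqcup m}\to X^{\times m}$ is $k$-connected in $\underline{\alg}_{\mathcal{R}}(\mathcal{C})$, which the paper proves directly --- it has a section built from any binary operation of $\mathcal{R}$, and it becomes an equivalence after $k$-truncation since $\mathcal{R}$ may then be replaced by $\bb E_{\infty}$. So where your approach hinges on an unestablished description of $\otimes$, the paper's hinges on a concrete and checkable statement about coproducts of algebras in a topos.
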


This behavior of the Boardman\textendash Vogt tensor product on reduced $\infty$-operads
is somewhat analogous to the behavior of the join operation on spaces.
Given a map of spaces $X\to Y$ that is a $d$-equivalence and a
$k$-connected space $Z$, the map $X\star Z\to Y\star Z$ is a $\left(d+k+2\right)$-equivalence.
Incidentally, for the space of \emph{binary} operations we have $\left(\mathcal{P}\otimes\mathcal{R}\right)\left(2\right)\simeq\mathcal{P}\left(2\right)\star\mathcal{R}\left(2\right)$
(see \cite[Proposition 4.8]{FV15}), which relates the two phenomena. 

\paragraph{Outline of the proof.}

The proof of the classical EHA is straightforward. One simply uses
repeatedly the unitality and interchange law to deduce the various
equalities. For $\infty$-operads, the situation is considerably more
complicated as all identities hold only up to a specified coherent homotopy
and keeping track of this large amount of data is very difficult.
Consequently, there is probably no hope of writing down an explicit
formula for the operation spaces of $\mathcal{P}\otimes\mathcal{Q}$
in terms of those of $\mathcal{P}$ and $\mathcal{Q}$, except for
low degrees. Therefore, as usual with $\infty$-categories, one has
to adopt a less direct approach. 

The proof of \thmref{The_Theorem_Introdction} proceeds by a sequence
of reductions, which we now sketch in an informal way (we refer the
reader to the end of this section for a list of notational conventions).
An $\infty$-operad is called an essentially $d$-operad if all of
its multi-mapping spaces are homotopically $\left(d-1\right)$-truncated.
With every $\infty$-operad we can associate an essentially $d$-operad,
called its $d$-homotopy operad, by $\left(d-1\right)$-truncating
the multi-mapping spaces. This operation constitutes a left adjoint
to the inclusion of the full subcategory on essentially $d$-operads
into the $\infty$-category of $\infty$-operads. Using this adjunction
and the Yoneda lemma, a map of $\infty$-operads $f\colon \mathcal{P}\to\mathcal{Q}$
is a $d$-equivalence if and only if the induced map 
\[
\map\left(\mathcal{Q},\mathcal{R}\right)\to\map\left(\mathcal{P},\mathcal{R}\right)
\]
is a homotopy equivalence for every essentially $\left(d+1\right)$-operad
$\mathcal{R}$. Further analysis of the monad associated with
a reduced $\infty$-operad shows that when $\mathcal{P}$ and $\mathcal{Q}$
are reduced, it is enough to check the above equivalence only for
$\mathcal{R}$-s that are $\left(d+1\right)$-topoi endowed with
the Cartesian symmetric monoidal structure. 

Now let $\mathcal{P}\to\mathcal{Q}$ be a $d$-equivalence of reduced
$\infty$-operads and let $\mathcal{R}$ be a $k$-connected $\infty$-operad.
We want to show that the map $\mathcal{P}\otimes\mathcal{R}\to\mathcal{Q}\otimes\mathcal{R}$
is a $\left(d+k+2\right)$-equivalence. By the above reductions, it is enough
to show that for every $\left(d+k+3\right)$-topos $\mathcal{C}$
endowed with the Cartesian symmetric monoidal structure, the induced
map 
\[
\map\left(\mathcal{Q}\otimes\mathcal{R},\mathcal{C}\right)\to\map\left(\mathcal{P}\otimes\mathcal{R},\mathcal{C}\right)
\]
is a homotopy equivalence. A key property of the tensor product of
$\infty$-operads is that it endows the $\infty$-category of $\infty$-operads
with a symmetric monoidal structure that	 is \emph{closed}. Namely,
for every $\infty$-operad $\mathcal{O}$, there is an internal hom
functor $\alg_{\mathcal{O}}\left(-\right)$ that is right adjoint
to the tensor product $-\otimes\mathcal{O}$ \footnote{It is necessary to work here with multi-colored operads as developed in \cite{HA}, since even though the full subcategory of one-colored, or even reduced, 
$\infty$-operads is closed under the tensor product, the induced symmetric monoidal structure would not be closed.}. 
It is therefore enough to show that the map 
\[
\map\left(\mathcal{Q},\alg_{\mathcal{R}}\left(\mathcal{C}\right)\right)\to\map\left(\mathcal{P},\alg_{\mathcal{R}}\left(\mathcal{C}\right)\right)
\]
is a homotopy equivalence. Let $\triv$ be the trivial operad. There
are essentially unique maps $\triv\to\mathcal{P}$ and $\triv\to\mathcal{Q}$
that induce a commutative triangle
\[
\xymatrix{\map\left(\mathcal{Q},\alg_{\mathcal{R}}\left(\mathcal{C}\right)\right)\ar[dr]\ar[rr] &  & \map\left(\mathcal{P},\alg_{\mathcal{R}}\left(\mathcal{C}\right)\right)\ar[dl]\\
 & \map\left(\triv,\alg_{\mathcal{R}}\left(\mathcal{C}\right)\right)\simeq\alg_{\mathcal{R}}\left(\mathcal{C}\right)^{\simeq}
,}
\]
and it is enough to show that the top map induces an equivalence on
the fibers over each object $X$ of $\alg_{\mathcal{R}}\left(\mathcal{C}\right)$.
Fixing such an $X$, the fiber of the left map consists of the space
of ways to endow $X$ with the structure of a $\mathcal{Q}$-algebra.
Since $\mathcal{Q}$ is reduced, one can show that this is the space
of maps from $\mathcal{Q}$ to the so-called ``reduced endomorphism
operad of $X$''. This is a reduced $\infty$-operad $\End^{\red}\left(X\right)$
whose space of $n$-ary operations is roughly the space of maps $X^{n}\to X$
for which plugging the unique constant in all entries but one produces
the identity map of $X$. More formally, we have a homotopy fiber
sequence
\[
\End^{\red}\left(X\right)\left(n\right)\to\map\left(X^{n},X\right)\to\map\left(X^{\sqcup n},X\right)
\]
over the fold map $\nabla\colon X^{\sqcup n}\to X$. Consequently, by applying analogous reasoning to $\mathcal{P}$ and some naturality properties,
we are reduced to showing that for all $X$ in $\alg_{\mathcal{R}}\left(\mathcal{C}\right)$,
the induced map 
\[
\map\left(\mathcal{Q},\End^{\red}\left(X\right)\right)\to\map\left(\mathcal{P},\End^{\red}\left(X\right)\right)
\]
is a homotopy equivalence. Since $\mathcal{P}\to\mathcal{Q}$ is
a $d$-equivalence, it will suffice to show that $\End^{\red}\left(X\right)$
is an essentially $\left(d+1\right)$-operad. Namely, we need only to show that the spaces
$\End^{\red}\left(X\right)\left(n\right)$ are $d$-truncated. Using
the homotopy fiber sequence above, we may present $\End^{\red}\left(X\right)\left(n\right)$
as the space of lifts in the commutative square
\[
\xymatrix{X^{\sqcup n}\ar[d]\ar[r]^{\nabla} & X\ar[d]\\
X^{n}\ar[r]\ar@{-->}[ru] & \term
.}
\]

The underlying $\infty$-category of $\alg_{\mathcal{R}}\left(\mathcal{C}\right)$
is an essentially $\left(d+k+3\right)$-category (since $\mathcal{C}$
is); hence the right vertical map is $\left(d+k+2\right)$-truncated.
We show that in a general presentable $\infty$-category, the space
of lifts of an $n$-connected map against an $m$-truncated map is
$\left(m-n-2\right)$-truncated. It is therefore enough to show that
the map $X^{\sqcup n}\to X^{n}$ is $k$-connected in $\alg_{\mathcal{R}}\left(\mathcal{C}\right)$.
Under suitable conditions, which are satisfied in our situation, the
$k$-connectedness of a map of algebras over an $\infty$-operad can be detected
on the level of the underlying objects. Using the fact that $\mathcal{C}$
is an $\infty$-\emph{topos} we are reduced to proving that the map $X^{\sqcup n}\to X^{n}$
has a section and that it becomes an equivalence after $k$-truncation
in $\mathcal{C}$. For the first assertion, we show that one can construct
a section rather easily using any $n$-ary operation of $\mathcal{R}$
for $n\ge2$. The second assertion follows from the fact that $\mathcal{R}$
itself is $k$-connected, and so, roughly speaking, after $k$-truncation
we can replace $\mathcal{R}$ with $\bb E_{\infty}$ and the coproduct
of $\bb E_{\infty}$-algebras coincides with the product. The $\infty$-categorical
EHA now follows easily from this by taking $\mathcal{Q}=\bb E_{\infty}$.

\paragraph{Organization.}

The paper is organized as follows. In Section 2, we develop some general
theory regarding reduced (and unital) $\infty$-operads. The first
theme is the construction and analysis of the reduced endomorphism
operad. The second is an explicit formula for the associated map of
monads induced from a map of reduced $\infty$-operads. 

In Section 3, we recall from \cite{SY19} some basic definitions and properties of essentially $d$-categories (and operads), as well as the notion of a $d$-homotopy category (and operad). We then proceed to prove that a map of $\infty$-operads is a $d$-equivalence if and only if it induces an equivalence on the spaces of algebras in every $\left(d+1\right)$-topos endowed with the Cartesian symmetric monoidal structure. 

In Section 4, we prove some general results regarding the notions
of $d$-connected and $d$-truncated morphisms in presentable $\infty$-categories.

In Section 5 we prove the main results of the paper. In particular
we prove \thmref{The_Theorem_Introdction} and the $\infty$-categorical
Eckmann\textendash Hilton argument as a corollary. We also include a couple of
simple applications. 

For a more detailed outline we refer the reader to the individual
introduction of each section. 

Much of the length of the paper is due to the careful and detailed
verification of many lemmas in $\infty$-category theory, whose proofs
are arguably straightforward, but nonetheless do not appear in the
literature. This refers mainly to the material up to subsection 4.3,
from which the main theorems are \propref{Operad_Reduction_Right_Adjoint},
\propref{d-homotopy_operad}, and \propref{d-equivalence}. Having said
that, we believe that the theory and language of $\infty$-categories
in general and $\infty$-operads in particular is still in an early
enough stage of development to justify full detailed proofs of every
claim that has no reference (known to the authors) in the literature.
Hopefully, the added value in terms of rigor and accessibility to
non-experts compensates for the loss in brevity and elegance of exposition. 

\paragraph{Acknowledgments.}
We would like to thank Julie Bergner and Jim Stasheff, as well as all the participants of the Seminarak group, for useful discussions about the subject of this paper. We also thank the referee for helpful comments and corrections. The first Author was supported by the Alon Fellowship and the ISF grant 1588/18 and the second author was supported by the ISF grant 1650/15.

\paragraph{Conventions.}

We work in the setting of $\infty$-categories (a.k.a. quasi-categories)
and $\infty$-operads, relying heavily on the results of \cite{HTT}
and \cite{HA}. Since we have numerous references to these two foundational
works, references to \cite{HTT} are abbreviated
as T.? and those to \cite{HA} as A.? while other references
are cited in the standard way. As a rule, we follow the notation
of \cite{HTT} and \cite{HA} whenever possible. However, we supplement this notation and deviate from it in several cases in which we believe this enhances readability. In particular:
\begin{enumerate}

\item We abuse notation by identifying an ordinary category $\mathcal{C}$
with its nerve $N\left(\mathcal{C}\right)$. 

\item We use the symbol $\term_{\mathcal{C}}$ to denote the terminal object
of an $\infty$-category $\mathcal{C}$ (or just $\term$ if $\mathcal{C}$
is clear from the context). 

\item We abbreviate the data of an $\infty$-operad $p\colon \mathcal{O}^{\otimes}\to\finpt$
by $\mathcal{O}$ and reserve the notation $\mathcal{O}^{\otimes}$
for the $\infty$-category that is the source of $p$. Similarly,
given two $\infty$-operads $\mathcal{O}$ and $\mathcal{U}$, we 
write $f\colon \mathcal{O}\to\mathcal{U}$ for a map of $\infty$-operads
from $\mathcal{O}$ to $\mathcal{U}$. The underlying $\infty$-category
of $\mathcal{O}$, which in \cite{HA} is denoted by $\mathcal{O}_{\left\langle 1\right\rangle }^{\otimes}$,
is here denoted by $\underline{\mathcal{O}}$. 

\item When the $\infty$-operad is a symmetric monoidal $\infty$-category,
we usually denote it by $\mathcal{C}$ or $\mathcal{D}$. We
will sometimes abuse notation and write $\mathcal{C}$ also for the
underlying $\infty$-category $\underline{\mathcal{C}}$ when there
is no chance of confusion.

\item By a \emph{presentably symmetric monoidal $\infty$-category}, we
mean a symmetric monoidal $\infty$-category $\mathcal{C}$, such
that the underlying $\infty$-category $\underline{\mathcal{C}}$
is a presentable $\infty$-category and the tensor product preserves
colimits separately in each variable. 

\item Given two $\infty$-operads $\mathcal{O}$ and $\mathcal{U}$, we
denote by $\alg_{\mathcal{O}}\left(\mathcal{U}\right)$ the $\infty$-operad
$\alg_{\mathcal{O}}\left(\mathcal{U}\right)^{\otimes}\to\finpt$ from
Example A.3.2.4.4. This is the internal mapping object induced
from the closed symmetric monoidal structure on $\op$ (see A.2.2.5.13).
The underlying $\infty$-category $\underline{\alg}_{\mathcal{O}}\left(\mathcal{U}\right)$
is the usual $\infty$-category of $\mathcal{O}$-algebras in $\mathcal{U}$
(which in \cite{HA} is denoted by $\alg_{\mathcal{O}}\left(\mathcal{U}\right)$).
Moreover, the maximal Kan sub-complex $\underline{\alg}_{\mathcal{O}}\left(\mathcal{U}\right)^{\simeq}$
is the space of morphisms $\map_{\op}\left(\mathcal{O},\mathcal{U}\right)$
from $\mathcal{O}$ to $\mathcal{U}$ as objects of the $\infty$-category
$\op$. Recall from A.3.2.4.4 that for a symmetric monoidal $\infty$-category
$\mathcal{C}$, the $\infty$-operad $\alg_{\mathcal{O}}\left(\mathcal{C}\right)$
is again symmetric monoidal and for every object $X\in\underline{\mathcal{O}}$,
the evaluation functors $ev_{X}\colon \alg_{\mathcal{O}}\left(\mathcal{C}\right)\to\mathcal{C}$
are symmetric monoidal functors.

\item Let $\mathcal{C}$ be an $\infty$-category. We denote the corresponding
coCartesian $\infty$-operad $\mathcal{C}^{\sqcup}\to\finpt$ by $\mathcal{C}_{\sqcup}$
(see Definition A.2.4.3.7). If $\mathcal{C}$ has all finite products,
we denote the Cartesian symmetric monoidal $\infty$-category \emph{$\mathcal{C}^{\times}\to\finpt$}
by $\mathcal{C}_{\times}$ (see Construction A.2.4.1.4).
\end{enumerate}

\section{Reduced $\infty$-Operads}

In this section we develop some general theory of unital and
reduced $\infty$-operads. In 2.1 we establish some formal results
for adjunctions and under categories. In 2.2 we specialize the
results of 2.1 to prove that the inclusion of reduced $\infty$-operads
into pointed unital $\infty$-operads admits a right adjoint, and
analyze it. More precisely, given a unital $\infty$-operad $\mathcal{O}$
and an object $X\in\underline{\mathcal{O}}$ we define a reduced $\infty$-operad
$\End_{\mathcal{O}}^{\red}\left(X\right)$, which we call the reduced
endomorphism operad of $X$, and show that it satisfies a universal
property. Moreover, we give an explicit description of $\End_{\mathcal{C}}^{\red}\left(X\right)$,
which will be fundamental in analyzing the truncatedness of its spaces
of operations.

In 2.3 we discuss the underlying symmetric sequence of a reduced $\infty$-operad
and in 2.4 we use it to write an explicit formula for the free algebra
over an $\infty$-operad (this is essentially a reformulation of A.3.1.3).
The material of the last two subsections is well known in the 1-categorical
setting and will come as no surprise to anyone familiar with the subject.
We note that in \cite{Hau17}, Haugseng develops a theory
of $\infty$-operads using this approach and compares it with other
models including Lurie's $\infty$-operads, though as far as we know,
the precise results for algebras have not been furnished yet.
Thus, we take it upon ourselves to flesh out the details of the little
part of this theory that is required for our purposes. 

\subsection{Adjunctions and Under-categories}

We begin with some formal general observations on adjunctions
and under-categories. 

\begin{lem}
\label{lem:Adjoint_Slice_Category} Let $R\colon \mathcal{D}\adj\mathcal{C}\noloc L$
be an adjunction between $\infty$-categories. For every object $X\in\mathcal{C}$
there is a canonical equivalence of $\infty$-categories $\mathcal{D}_{L\left(X\right)/}\simeq\mathcal{D}\times_{\mathcal{C}}\mathcal{C}_{X/}$. 
\end{lem}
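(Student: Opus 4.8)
The plan is to exhibit the equivalence by constructing the universal cocartesian-type data on both sides and matching them. Recall that the under-category $\mathcal{D}_{L(X)/}$ can be described (up to equivalence) as the $\infty$-category of objects $Y \in \mathcal{D}$ equipped with a morphism $L(X) \to Y$ in $\mathcal{D}$, while $\mathcal{D} \times_{\mathcal{C}} \mathcal{C}_{X/}$ is the $\infty$-category of objects $Y \in \mathcal{D}$ equipped with a morphism $X \to R(Y)$ in $\mathcal{C}$ (the fiber product is formed along the composite $\mathcal{C}_{X/} \to \mathcal{C} \xrightarrow{} \ldots$, so the datum is a point of $\mathcal{C}_{X/}$ lying over $R(Y) \in \mathcal{C}$, i.e.\ exactly a map $X \to R(Y)$). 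The adjunction bijection $\map_{\mathcal{D}}(L(X), Y) \simeq \map_{\mathcal{C}}(X, R(Y))$, natural in $Y$, is precisely what should upgrade to an equivalence of these two $\infty$-categories over $\mathcal{D}$.

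To make this precise without hand-waving, first I would pass to the unit transformation $\eta \colon \Id_{\mathcal{C}} \to R L$ of the adjunction and consider the functor $\mathcal{D} \to \mathcal{C}_{X/}$ sending $Y$ to the object $R(Y)$ together with the map $X \xrightarrow{\eta_X} RL(X) \xrightarrow{R(\text{counit-type map})} R(Y)$; more carefully, I would build it as the composite $\mathcal{D} \xrightarrow{L(X)/ \text{ version of } R} $ using functoriality. Concretely, the cleanest route is: the adjunction supplies a functor $R_{X} \colon \mathcal{D}_{L(X)/} \to \mathcal{C}_{X/}$ covering $R \colon \mathcal{D} \to \mathcal{C}$ (apply $R$ to a map out of $L(X)$, then precompose with $\eta_X$). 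This induces a map $\mathcal{D}_{L(X)/} \to \mathcal{D} \times_{\mathcal{C}} \mathcal{C}_{X/}$ over $\mathcal{D}$, using the forgetful functor $\mathcal{D}_{L(X)/} \to \mathcal{D}$ on the first coordinate. The task is then to show this comparison functor is an equivalence.

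For that, I would invoke the standard criterion that a functor over $\mathcal{D}$ which is a map of right fibrations (both $\mathcal{D}_{L(X)/} \to \mathcal{D}$ and $\mathcal{D}\times_{\mathcal C}\mathcal{C}_{X/} \to \mathcal{D}$ are right fibrations, the latter being a base change of the right fibration $\mathcal{C}_{X/} \to \mathcal{C}$) is an equivalence if and only if it is a fiberwise equivalence of spaces, by T.2.2.3.3 / T.3.3.1.5 (straightening). The fiber of $\mathcal{D}_{L(X)/} \to \mathcal{D}$ over $Y$ is $\map_{\mathcal{D}}(L(X), Y)$, and the fiber of $\mathcal{D} \times_{\mathcal{C}} \mathcal{C}_{X/} \to \mathcal{D}$ over $Y$ is $\map_{\mathcal{C}}(X, R(Y))$. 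The comparison functor on fibers is, by construction, the map $\map_{\mathcal{D}}(L(X),Y) \to \map_{\mathcal{C}}(X, R(Y))$ given by $R(-) \circ \eta_X$, which is exactly the adjunction equivalence (T.5.2.2.8 characterizing adjunctions via the unit). Hence it is an equivalence on each fiber, and therefore the comparison functor is an equivalence of $\infty$-categories. Finally, one checks this equivalence is natural/canonical in $X$, which is immediate from the functoriality of all the constructions involved (the slice $\mathcal{D}_{L(-)/}$ and the unit $\eta$ are functorial in $X$).

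The main obstacle I anticipate is purely bookkeeping rather than conceptual: correctly identifying the fiber product $\mathcal{D} \times_{\mathcal{C}} \mathcal{C}_{X/}$ and the right-fibration structures, and verifying that the comparison functor built from the adjunction unit really does restrict to the adjunction equivalence on fibers (as opposed to some twisted variant). Once the fibrational picture is set up, the straightening/fiberwise-equivalence criterion does all the work and no genuine computation remains.
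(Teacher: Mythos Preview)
Your approach is essentially the same as the paper's: construct a comparison map over $\mathcal{D}$ using the unit $\eta_X$, observe that both sides are fibrations over $\mathcal{D}$, and reduce to the fiberwise adjunction equivalence $\map_{\mathcal{D}}(L(X),Y)\simeq\map_{\mathcal{C}}(X,R(Y))$ via T.2.2.3.3. One small correction: coslice projections $\mathcal{C}_{X/}\to\mathcal{C}$ are \emph{left} fibrations, not right fibrations (T.2.1.2.2); this does not affect the argument, since the fiberwise-equivalence criterion applies equally. The paper also makes the construction of the comparison functor strict by routing through $\mathcal{C}_{\eta/}$ (using that $p_1\colon\mathcal{C}_{\eta/}\to\mathcal{C}_{RL(X)/}$ is a trivial fibration with a section over $\mathcal{C}$), which is the care you anticipated needing for ``apply $R$ then precompose with $\eta_X$''.
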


\begin{proof}
We denote the $\infty$-category $\mathcal{D}\times_{\mathcal{C}}\mathcal{C}_{X/}$
by $\mathcal{D}_{X/}$. Let $\eta\colon X\to RL\left(X\right)$ be the $X$-component
of the unit of the adjunction $L\dashv R$. By T.2.1.2.1, the projections
$p_{0}\colon \mathcal{C}_{\eta/}\to\mathcal{C}_{X/}$ and $p_{1}\colon \mathcal{C}_{\eta/}\to\mathcal{C}_{RL\left(X\right)/}$
are left fibrations. Moreover, since $\Delta^{\left\{ 1\right\} }\into\Delta^{1}$
is right anodyne, the map $p_{1}$ is an equivalence of $\infty$-categories.
By T.2.2.3.3, we can choose an inverse $p_{1}^{-1}\colon \mathcal{C}_{RL\left(X\right)/}\to\mathcal{C}_{\eta/}$
to $p_{1}$ that strictly commutes with the projections to $\mathcal{C}$.
We obtain a commutative diagram of simplicial sets
\[
\xymatrix{\mathcal{D}_{L\left(X\right)/}\ar[d]\ar[r] & \mathcal{C}_{RL\left(X\right)/}\ar[d]\ar[r]^-{p_{0}p_{1}^{-1}} & \mathcal{C}_{X/}\ar[d]\\
\mathcal{D}\ar[r] & \mathcal{C}\ar@{=}\ar[r] & \mathcal{C}
.}
\]
There is an induced map from the upper left corner to the pullback of the outer rectangle without the upper left corner, which is another commutative diagram of simplicial sets

\[
\xymatrix{\mathcal{D}_{L\left(X\right)/}\ar[rr]\ar[rd] & \mathcal{} & \mathcal{D}_{X/}\ar[ld]\\
 & \mathcal{D}
.}
\]
Since left fibrations are closed under base change (T.2.1.2.1), the
vertical maps are left fibrations over $\mathcal{D}$. Hence, to show
that the top map is an equivalence it is enough to show that the induced
map on fibers is a homotopy equivalence (T.2.2.3.3). For every $Y\in\mathcal{D}$
we get a map 
\[
\map_{\mathcal{D}}^{R}\left(L\left(X\right),Y\right)\to\map_{\mathcal{C}}^{R}\left(X,R\left(Y\right)\right),
\]
which is by construction obtained by applying the functor $R$ and
pre-composing with the unit $\eta\colon X\to RL\left(X\right)$. By the
universal property of the unit map this is a homotopy equivalence
for all $Y\in\mathcal{D}$ and therefore the map $\mathcal{D}_{L\left(X\right)/}\to\mathcal{D}_{X/}$
is an equivalence of $\infty$-categories.
\end{proof}

\begin{lem}
\label{lem:Local_Right_Adjoint}Let $L\colon \mathcal{C}\adj\mathcal{D} \noloc R$
be an adjunction of $\infty$-categories and let $X\in\mathcal{C}$. The
induced functor 
\[
L_{X}\colon \mathcal{C}_{X/}\to\mathcal{D}_{L\left(X\right)/}
\]
has a right adjoint $R_{X}$. Moreover, if $R$ is fully faithful,
then $R_{X}$ is also fully faithful.
\end{lem}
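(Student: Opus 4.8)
The plan is to build $R_X$ by hand as a pullback functor and then verify the adjunction via mapping spaces, leveraging Lemma~\ref{lem:Adjoint_Slice_Category}. First I would invoke Lemma~\ref{lem:Adjoint_Slice_Category} to replace the target $\mathcal{D}_{L(X)/}$ with the equivalent $\infty$-category $\mathcal{D}_{X/} := \mathcal{D}\times_{\mathcal{C}}\mathcal{C}_{X/}$, and check that under this equivalence the functor $L_X$ corresponds to the functor $\mathcal{C}_{X/}\to\mathcal{D}_{X/}$ induced by $L$ on the first factor together with the canonical map $\mathcal{C}_{X/}\to\mathcal{C}_{X/}$ (using the unit $\eta\colon X\to RL(X)$ to identify the two descriptions of the slice); this compatibility is essentially the content of how the equivalence in Lemma~\ref{lem:Adjoint_Slice_Category} was constructed. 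So it suffices to produce a right adjoint to $L\times_{\mathcal{C}}\Id\colon\mathcal{C}_{X/}\to\mathcal{D}_{X/}$.

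The candidate for $R_X$ is the functor sending an object $(D, X\to R(D))$ of $\mathcal{D}_{X/}$ to $(R(D), X\to R(D))$, i.e.\ the functor induced by $R$ on the $\mathcal{D}$-factor and the identity on the $\mathcal{C}_{X/}$-factor. To check that this is right adjoint to $L_X$, I would compute the relevant mapping spaces. Given $(C, \alpha\colon X\to C)$ in $\mathcal{C}_{X/}$ and $(D, \beta\colon X\to R(D))$ in $\mathcal{D}_{X/}$, the space $\map_{\mathcal{D}_{X/}}(L_X(C,\alpha),(D,\beta))$ sits in a fiber sequence over a point of $\map_{\mathcal{C}_{X/}}(\dots)$; concretely it is the fiber of $\map_{\mathcal{D}}(L(C),D)\to\map_{\mathcal{C}}(X,R(D))$ over the point corresponding to $\beta$, where the map is adjunction-followed-by-precomposition-with $\alpha$. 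Using the adjunction equivalence $\map_{\mathcal{D}}(L(C),D)\simeq\map_{\mathcal{C}}(C,R(D))$, which is natural and compatible with precomposition, this fiber is identified with the fiber of $\map_{\mathcal{C}}(C,R(D))\to\map_{\mathcal{C}}(X,R(D))$ over $\beta$, which is precisely $\map_{\mathcal{C}_{X/}}((C,\alpha),(R(D),\beta)) = \map_{\mathcal{C}_{X/}}((C,\alpha),R_X(D,\beta))$. Naturality in both variables then upgrades this to the desired adjunction; alternatively, one can phrase the argument via the universal property of the unit and the fibration-theoretic criterion of T.5.2.4.2 or T.5.2.2.8, exhibiting a unit transformation $\Id\to R_X L_X$ induced levelwise by $\eta$ and checking it satisfies the expected universal property fiberwise over $\mathcal{C}$.

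For the last clause, suppose $R$ is fully faithful, so the counit $\epsilon\colon LR\to\Id_{\mathcal{D}}$ is an equivalence; equivalently $\map_{\mathcal{D}}(D,D')\to\map_{\mathcal{C}}(R(D),R(D'))$ is an equivalence for all $D,D'$. I would then show the counit of $L_X\dashv R_X$ is an equivalence by the same fiber-sequence bookkeeping: the mapping space $\map_{\mathcal{D}_{X/}}(L_X R_X(D,\beta),(D',\beta'))$ is a fiber of $\map_{\mathcal{D}}(LR(D),D')\to\map_{\mathcal{C}}(X,R(D'))$, which via the counit equivalence $LR(D)\simeq D$ becomes a fiber of $\map_{\mathcal{D}}(D,D')\to\map_{\mathcal{C}}(X,R(D'))$, matching $\map_{\mathcal{D}_{X/}}((D,\beta),(D',\beta'))$; one checks the comparison map is the counit. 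Hence $R_X$ is fully faithful.

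The main obstacle I anticipate is not any single deep step but the careful handling of the slice $\infty$-categories: making the identification of $L_X$ under the equivalence of Lemma~\ref{lem:Adjoint_Slice_Category} precise, and ensuring that the adjunction equivalence $\map_{\mathcal{D}}(L(C),D)\simeq\map_{\mathcal{C}}(C,R(D))$ is compatible with the precomposition maps by $\alpha$ and $\eta$ in the way the fiber-sequence argument requires — i.e.\ the naturality of the adjunction counit/unit has to be tracked through the under-category structure maps. This is routine but fiddly; an expedient alternative is to avoid explicit mapping-space computations altogether and instead cite the general fact (e.g.\ via T.5.2.2.8 and the stability of left fibrations under base change, exactly as in the proof of Lemma~\ref{lem:Adjoint_Slice_Category}) that an adjunction $L\dashv R$ induces an adjunction on under-categories $\mathcal{C}_{X/}\rightleftarrows\mathcal{D}_{X/}$ with the evident unit, reducing the whole proof to the identification $\mathcal{D}_{X/}\simeq\mathcal{D}_{L(X)/}$ already established.
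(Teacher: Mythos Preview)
Your proposal is correct and would yield a valid proof, but it follows a genuinely different route from the paper's argument. The paper does not invoke \lemref{Adjoint_Slice_Category} or compute mapping spaces directly; instead it works with the biCartesian fibration $p\colon\mathcal{M}\to\Delta^1$ encoding the adjunction $L\dashv R$, forms the sliced fibration $p_X\colon\mathcal{M}_{X/}\to\Delta^1$, and cites T.2.4.3.1--2 to see that $p_X$ is again biCartesian with associated functor $L_X$, whence $L_X$ has a right adjoint. For full faithfulness the paper observes that the conservative projection $\mathcal{M}_{X/}\to\mathcal{M}$ carries the counit of $L_X\dashv R_X$ to the counit of $L\dashv R$, so the former is an equivalence because the latter is.

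Your approach has the advantage of being more self-contained and of making the value of $R_X$ completely explicit as $(D,\beta)\mapsto (R(D),\beta)$; the mapping-space verification is entirely elementary once one accepts the identification $\mathcal{D}_{L(X)/}\simeq\mathcal{D}_{X/}$. The paper's fibrational approach avoids the bookkeeping you flag as ``routine but fiddly'' (tracking the compatibility of the adjunction equivalence with precomposition by $\alpha$ and $\eta$) by packaging everything into the slice of $\mathcal{M}$, at the cost of relying on the somewhat heavier input of T.2.4.3.1--2 on (co)Cartesian edges in under-categories. Either way the argument goes through; your anticipated obstacle is real but, as you say, surmountable via T.5.2.2.8 by exhibiting the unit object-by-object.
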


\begin{proof}
Let $p\colon \mathcal{M}\to\Delta^{1}$ be the coCartesian fibration associated with the functor $L$ (which is also Cartesian, since $L$ has a right adjoint). 
We can assume that we have a commutative diagram
\[
\xymatrix{\Delta^{1}\times\mathcal{C}\ar[rr]^-{s}\ar[dr] &  & \mathcal{M}\ar[dl]^-{p}\\
 & \Delta^{1}
,}
\]
such that $s|_{\Delta^{\left\{ 0\right\} }\times\mathcal{C}}=\Id$,
$s|_{\Delta^{\left\{ 1\right\} }\times\mathcal{C}}=L$ and $s|_{\Delta^{1}\times\left\{ X\right\} }$
is a coCartesian edge of $\mathcal{M}$ for every $X\in\mathcal{C}$
(combine T.5.2.1.1 and T.5.2.1.3). It is clear from T.1.2.9.2 that
for any pair of $\infty$-categories with objects $X\in\mathcal{C}$
and $Y\in\mathcal{D}$ there is a canonical isomorphism 
\[
\left(\mathcal{C}\times\mathcal{D}\right)_{\left(X,Y\right)/}\simeq\mathcal{C}_{X/}\times\mathcal{D}_{Y/}.
\]

Hence, we get an induced commutative diagram
\[
\xymatrix{\Delta^{1}\times\mathcal{C}_{X/}\simeq\left(\Delta^{1}\times\mathcal{C}\right)_{\left(0,X\right)/}\ar[rr]^-{s}\ar[dr] &  & \mathcal{M}_{X/}\ar[dl]^-{p_{X}}\\
 & \Delta^{1}\simeq\Delta_{0/}^{1}
.}
\]
The functor $p_{X}$ is a Cartesian and coCartesian fibration by the
duals of T.2.4.3.1(1) and T.2.4.3.2(1). Moreover, an edge in $\mathcal{M}_{X/}$
is (co)Cartesian if and only if its projection to $\mathcal{M}$ is
(co)Cartesian by the duals of T.2.4.3.1(2) and T.2.4.3.2(2), which
shows that the functor $L_{X}$ is associated with $p_{X}$. It follows
that $L_{X}$ has a right adjoint $R_{X}$. 

Assuming that $R$ is fully faithful, we will show that $R_{X}$ is
fully faithful by showing that the counit of the adjunction $L_{X}\dashv R_{X}$
is an equivalence. For every object, the counit map is an edge of
$\mathcal{M}_{X/}$. Since the projection $\mathcal{M}_{X/}\to\mathcal{M}$
is conservative, it is enough to show that the counit map of $L_{X}\dashv R_{X}$
is mapped to the counit map of $L\dashv R$. Indeed, for an object
$Y\in\mathcal{D}\simeq\mathcal{M}|_{\Delta^{\left\{ 1\right\} }}$,
we choose a Cartesian edge $e\colon R\left(Y\right)\to Y$ and a coCartesian
edge $d\colon R\left(Y\right)\to L\left(R\left(Y\right)\right)$, and combine
them into a commutative diagram of the form:

\[
\xymatrix{\Lambda_{0}^{2}\ar[r]^{f}\ar[d] & \mathcal{M}\ar[d]^{p}\\
\Delta^{2}\ar[r] & \Delta^{1}
,}
\]
where $f|_{\Delta^{\left\{ 0,1\right\} }}=d$ and $f|_{\Delta^{\left\{ 0,2\right\} }}=e$.
Since $d$ is coCartesian, there exists a lift 
$\overline{f}\colon \Delta^{2}\to\mathcal{M}$ 
that gives an edge 
\[
\overline{f}|_{\Delta^{\left\{ 1,2\right\} }}=c\colon L\left(R\left(Y\right)\right)\to Y
\]
that is isomorphic to the counit map of the adjunction $L\dashv R$
at $Y$ in the homotopy category $h\mathcal{D}$. We can similarly construct
the counit map for an object of $\mathcal{M}_{X/}$. The assertion
now follows from the above characterization of (co)Cartesian edges
in $\mathcal{M}_{X/}$.
\end{proof}
\begin{lem}
Let $F\colon \mathcal{C}\to\mathcal{D}$ be a functor that preserves pullbacks; then $F_{X}\colon \mathcal{C}_{X/}\to\mathcal{D}_{F\left(X\right)/}$ also
preserves pullbacks.
\end{lem}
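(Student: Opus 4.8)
The plan is to reduce the claim about slice categories to the explicit construction of the adjunction $L_X \dashv R_X$ given in the proof of \lemref{Local_Right_Adjoint}, and to exploit the fact that pullbacks in a slice category $\mathcal{C}_{X/}$ are computed, on underlying objects, exactly as in $\mathcal{C}$. Concretely, recall that the projection $\pi\colon\mathcal{C}_{X/}\to\mathcal{C}$ is a left fibration and in particular it both detects and preserves limits of diagrams that admit a cone (by T.1.2.13.7 / the standard fact that for a diagram $p\colon K\to\mathcal{C}_{X/}$, an extension $\bar p\colon K^{\triangleleft}\to\mathcal{C}_{X/}$ is a limit cone iff $\pi\circ\bar p$ is). Since a pullback square is such a diagram (indexed by $K=\Lambda^2_2$ and its cone point), the square $Q$ in $\mathcal{C}_{X/}$ is a pullback iff $\pi(Q)$ is a pullback in $\mathcal{C}$. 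The same holds for $\mathcal{D}_{F(X)/}$ with its projection $\rho\colon\mathcal{D}_{F(X)/}\to\mathcal{D}$.

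First I would set up the commuting square
\[
\xymatrix{
\mathcal{C}_{X/}\ar[r]^-{F_X}\ar[d]_-{\pi} & \mathcal{D}_{F(X)/}\ar[d]^-{\rho}\\
\mathcal{C}\ar[r]^-{F} & \mathcal{D}
,}
\]
which holds essentially by construction of the induced functor on under-categories (it is the functor between under-categories associated to $F$ together with the image of the identity edge at $X$). Now take a pullback square $Q\colon(\Lambda^2_2)^{\triangleleft}\to\mathcal{C}_{X/}$. By the left-fibration criterion above, $\pi\circ Q$ is a pullback square in $\mathcal{C}$. Since $F$ preserves pullbacks, $F\circ(\pi\circ Q)=\rho\circ(F_X\circ Q)$ is a pullback square in $\mathcal{D}$. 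Applying the left-fibration criterion again, now in the other direction, to the left fibration $\rho$ and the diagram $F_X\circ Q$, we conclude that $F_X\circ Q$ is a pullback square in $\mathcal{D}_{F(X)/}$. Hence $F_X$ preserves pullbacks.

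The one point that needs slight care — and the main (mild) obstacle — is the precise statement that a left fibration $\pi\colon\mathcal{E}\to\mathcal{B}$ detects and preserves limits of diagrams $p\colon K\to\mathcal{E}$ admitting a cone point: one must check that for an extension $\bar p\colon K^{\triangleleft}\to\mathcal{E}$, the cone is a limit in $\mathcal{E}$ precisely when $\pi\circ\bar p$ is a limit in $\mathcal{B}$. This follows from the description of limit cones via the induced map $\mathcal{E}_{/\bar p}\to\mathcal{E}_{/p}$ being a trivial fibration, together with the fact that forming over-categories is compatible with base change along left fibrations and that left fibrations are stable under the relevant constructions (T.2.1.2.1 and the material around T.1.2.13 / T.4.4.2). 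Alternatively, one may invoke the general principle that the forgetful functor $\mathcal{C}_{X/}\to\mathcal{C}$ creates all limits that exist in $\mathcal{C}$ (T.1.2.13.8), which immediately gives both the preservation and the detection statements used above, making the argument entirely formal.
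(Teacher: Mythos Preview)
Your proof is correct and follows essentially the same route as the paper: set up the commutative square with the projections $\pi,\rho$ and use that $\rho$ detects pullbacks together with the hypothesis on $F$. The paper's version is terser (it just says the projections preserve pullbacks and the right one is conservative), whereas you spell out the detection step via the left-fibration criterion and T.1.2.13.8; your opening reference to \lemref{Local_Right_Adjoint} is not actually used and can be dropped.
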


\begin{proof}
Consider the commutative square
\[
\xymatrix{\mathcal{C}_{X/}\ar[r]\ar[d] & \mathcal{D}_{F\left(X\right)/}\ar[d]\\
\mathcal{C}\ar[r] & \mathcal{D}
.}
\]
The vertical functors and the bottom horizontal functor preserve pullbacks.
The right vertical functor is conservative. It follows that the top
horizontal functor preserves pullbacks as well. 
\end{proof}
\begin{defn}
\label{def:reduced_object}Let $F\colon \mathcal{C}\to\mathcal{D}$ be a
functor between $\infty$-categories. We say that an object $Y$ is \emph{reduced} if $F\left(Y\right)$ is initial in $\mathcal{D}$. We define $\mathcal{C}^{\red}$ to be the full subcategory of $\mathcal{C}$ spanned by the reduced objects ($F$ will always be clear from the context when we employ this terminology).
\end{defn}

\begin{prop}
\label{prop:Reduction_co_localization}Let $L\colon \mathcal{C}\adj\mathcal{D}\noloc R$
be an adjunction between $\infty$-categories. Assume that $\mathcal{C}$
admits and $L$ preserves pullbacks, that $\mathcal{D}$ admits an initial object, and that $R$ is fully faithful. For every object $Y\in\mathcal{C}$ we
consider the following pullback diagram
\[
\xymatrix{Y^{\red}\ar[d]\ar[r] & Y\ar[d]\\
R\left(\es_{\mathcal{D}}\right)\ar[r] & RL\left(Y\right)
,}
\]
where the right vertical map is the unit map of $Y$ and the bottom
horizontal map is the image under $R$ of the essentially unique map
$\es_{\mathcal{D}}\to L\left(Y\right)$. The top horizontal map $\rho\colon Y^{\red}\to Y$
exhibits $Y^{\red}$ as a co-localization of $Y$ with respect to $\mathcal{C}^{\red}$
(dual to T.5.2.7.6).
\end{prop}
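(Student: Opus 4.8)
The plan is to verify directly the two conditions that, by the dual of T.5.2.7.6, characterize $\rho\colon Y^{\red}\to Y$ as a colocalization of $Y$ with respect to $\mathcal{C}^{\red}$: first, that $Y^{\red}$ actually lies in $\mathcal{C}^{\red}$, i.e.\ that $L\left(Y^{\red}\right)$ is initial in $\mathcal{D}$; and second, that for every $Z\in\mathcal{C}^{\red}$ composition with $\rho$ induces a homotopy equivalence
\[
\map_{\mathcal{C}}\left(Z,Y^{\red}\right)\longrightarrow\map_{\mathcal{C}}\left(Z,Y\right).
\]
Both assertions will follow by a short diagram chase from the defining pullback square, using only that $L$ preserves pullbacks and that $R$ is fully faithful (equivalently, that the counit $LR\to\Id_{\mathcal{D}}$ is an equivalence).

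For the first assertion, I would apply $L$ to the defining pullback square. Since $L$ preserves pullbacks, this yields a pullback square in $\mathcal{D}$ with corners $L\left(Y^{\red}\right)$, $L\left(Y\right)$, $LR\left(\es_{\mathcal{D}}\right)$, and $LRL\left(Y\right)$. The counit equivalence gives $LR\left(\es_{\mathcal{D}}\right)\simeq\es_{\mathcal{D}}$ and $LRL\left(Y\right)\simeq L\left(Y\right)$; moreover the right-hand vertical map is $L$ applied to the unit $\eta_{Y}\colon Y\to RL\left(Y\right)$, and by the triangle identity its composite with the counit $LRL\left(Y\right)\to L\left(Y\right)$ is $\Id_{L\left(Y\right)}$, so $L\left(\eta_{Y}\right)$ is itself an equivalence. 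As a pullback of an equivalence is an equivalence, the left-hand vertical map $L\left(Y^{\red}\right)\to LR\left(\es_{\mathcal{D}}\right)\simeq\es_{\mathcal{D}}$ is an equivalence, so $L\left(Y^{\red}\right)$ is initial and $Y^{\red}\in\mathcal{C}^{\red}$.

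For the second assertion, I would fix $Z\in\mathcal{C}^{\red}$ and apply the limit-preserving functor $\map_{\mathcal{C}}\left(Z,-\right)$ to the defining pullback square, obtaining a pullback square of spaces. By the adjunction $L\dashv R$, the two bottom corners become $\map_{\mathcal{D}}\left(L\left(Z\right),\es_{\mathcal{D}}\right)$ and $\map_{\mathcal{D}}\left(L\left(Z\right),L\left(Y\right)\right)$, and since $Z\in\mathcal{C}^{\red}$ the object $L\left(Z\right)$ is initial in $\mathcal{D}$, so both of these mapping spaces are contractible; in particular the bottom horizontal map of the square is an equivalence. Hence the top horizontal map $\map_{\mathcal{C}}\left(Z,Y^{\red}\right)\to\map_{\mathcal{C}}\left(Z,Y\right)$ — which is precisely composition with $\rho$ — is an equivalence, as required.

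The argument is entirely formal, so I do not anticipate a genuine obstacle. The only points demanding a little care are the use of the triangle identity to identify $L\left(\eta_{Y}\right)$ with an equivalence, and bookkeeping of which leg of each pullback square is being recognized as an equivalence before invoking stability of equivalences under pullback; no coherence subtlety arises beyond the standard fact that a reflective embedding has invertible counit.
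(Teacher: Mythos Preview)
Your proposal is correct and follows essentially the same approach as the paper: you verify that $Y^{\red}$ is reduced by applying $L$ to the pullback square and using the triangle identity, and then check the colocalization property by mapping in a reduced object and using that maps into $R(-)$ from reduced objects are contractible via the adjunction. The paper's argument is the same, only phrased slightly more tersely.
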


\begin{proof}
First, we show that $Y^{\red}$ is in fact reduced. Applying $L$ to
the defining diagram of $Y^{\red}$ and using the fact that $L$ preserves
pullbacks, we see that the map $L\left(Y^{\red}\right)\to LR\left(\es\right)$
is the pullback of the map $L\left(Y\right)\to LRL\left(Y\right)$,
which is an equivalence (from the fact that the counit $LR\left(Y\right)\to Y$ is an equivalence, the zig-zag identities and the 2-out-of-3 property). It follows that the map $L\left(Y^{\red}\right)\to LR\left(\es\right)$ is an equivalence,
but $LR\left(\es\right)\to\es$ is an equivalence as well (since $R$
is fully faithful) and we are done. 

Now, we show that $\rho$ is a co-localization. Let $Z$ be a reduced
object. We have a homotopy pullback diagram of spaces
\[
\xymatrix{\map\left(Z,Y^{\red}\right)\ar[d]\ar[r] & \map\left(Z,Y\right)\ar[d]\\
\map\left(Z,R\left(\es\right)\right)\ar[r] & \map\left(Z,RL\left(Y\right)\right)
}
\]
and we note that the space of maps from a reduced object to any object
in the essential image of $R$ is contractible. 
\end{proof}
\begin{cor}
\label{cor:Reduction_Adjoint_General}In the setting of \propref{Reduction_co_localization},
the inclusion $\mathcal{C}^{\red}\into\mathcal{C}$ admits a right
adjoint and the co-localization map $Y^{\red}\to Y$ can be taken to
be the counit of the adjunction at $Y$.
\end{cor}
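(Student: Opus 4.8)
The plan is to feed the objectwise colocalizations produced by \propref{Reduction_co_localization} into the standard pointwise criterion for the existence of an adjoint functor, which simultaneously produces the right adjoint and identifies its counit. The first step is to reinterpret \propref{Reduction_co_localization} in terms of slice categories. Fix $Y\in\mathcal{C}$ and write $\mathcal{C}^{\red}_{/Y}=\mathcal{C}^{\red}\times_{\mathcal{C}}\mathcal{C}_{/Y}$ for the full subcategory of $\mathcal{C}_{/Y}$ spanned by those arrows $W\to Y$ with $W$ reduced. I claim that $\rho_{Y}\colon Y^{\red}\to Y$ is a \emph{terminal object} of $\mathcal{C}^{\red}_{/Y}$. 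Indeed, for any reduced $W$ equipped with a map $f\colon W\to Y$, the standard description of mapping spaces in a slice identifies $\map_{\mathcal{C}_{/Y}}\big((W\oto{f}Y),(Y^{\red}\oto{\rho_{Y}}Y)\big)$ with the fiber over $f$ of the map $\map_{\mathcal{C}}(W,Y^{\red})\to\map_{\mathcal{C}}(W,Y)$ given by postcomposition with $\rho_{Y}$. By \propref{Reduction_co_localization} this latter map is a homotopy equivalence for every reduced $W$, so all of its fibers are contractible; hence the displayed mapping space is contractible and $\rho_{Y}$ is terminal in $\mathcal{C}^{\red}_{/Y}$.

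Next I would invoke the pointwise criterion for the existence of an adjoint functor (T.5.2.4.2): since for every $Y\in\mathcal{C}$ the comma $\infty$-category $\mathcal{C}^{\red}\times_{\mathcal{C}}\mathcal{C}_{/Y}$ admits a terminal object, the inclusion $\mathcal{C}^{\red}\into\mathcal{C}$ admits a right adjoint, which I will denote $R^{\red}$. The same criterion moreover identifies $R^{\red}(Y)$ with the underlying object in $\mathcal{C}^{\red}$ of this terminal object, and the component at $Y$ of the counit $R^{\red}(Y)\to Y$ with its structure map. Combined with the first step, this gives $R^{\red}(Y)\simeq Y^{\red}$ and shows that the counit at $Y$ may be taken to be the colocalization map $\rho_{Y}\colon Y^{\red}\to Y$, which is exactly the assertion of the corollary.

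I do not anticipate any genuine obstacle: the argument is merely the dualization of the familiar passage from objectwise localizations to a reflective subcategory, and the bulk of the work has already been done in \propref{Reduction_co_localization}. The two points that warrant care, and which I would spell out in full, are (i) the computation of mapping spaces in $\mathcal{C}_{/Y}$ needed to upgrade the colocalization property of $\rho_{Y}$ from a statement about mapping \emph{spaces} to the assertion that $\rho_{Y}$ is a terminal \emph{object} of $\mathcal{C}^{\red}_{/Y}$, and (ii) the bookkeeping inside T.5.2.4.2 relating the terminal object of the comma category to the counit of the resulting adjunction, since it is this identification that justifies the final clause of the statement.
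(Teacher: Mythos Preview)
Your proposal is correct and follows essentially the same approach as the paper, which simply cites \propref{Reduction_co_localization} together with the dual of T.5.2.7.8. Your argument unpacks precisely what that citation encodes: the colocalization property of $\rho_Y$ is equivalent to $\rho_Y$ being terminal in $\mathcal{C}^{\red}_{/Y}$, and this pointwise terminality is exactly the hypothesis of (the dual of) T.5.2.7.8, which then produces the right adjoint with the stated counit.
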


\begin{proof}
Follows from \propref{Reduction_co_localization} and the dual of
T.5.2.7.8.
\end{proof}

\subsection{Pointed Unital and Reduced $\infty$-operads}

Recall from \cite{HA} the following definitions:
\begin{defn}
\label{def:Unital_Reduced_Operad}(A.2.3.1.1, A.2.3.4.1) 
An $\infty$-operad \emph{$\mathcal{O}$ }is called:
\begin{paraenum}
\item \emph{Unital} if for every object $X$ of $\underline{\mathcal{O}}$,
the space of constants $\text{Mul}_{\mathcal{O}}\left(\es,X\right)$
is contractible. We denote the full $\infty$-category spanned by
the unital $\infty$-operads by $\opun$. 
\item \emph{Reduced }if it is\emph{ }unital and the underlying $\infty$-category
is a contractible space. We denote the full $\infty$-category spanned
by the reduced $\infty$-operads by $\opred$.
\end{paraenum}
\end{defn}

\begin{example}
A symmetric monoidal $\infty$-category is unital if and only if the
unit object is initial.
\end{example}

We proceed by listing the various adjunctions between the different
$\infty$-categories of $\infty$-operads and $\infty$-categories.
First, recall from A.2.1.4.10 that there is an underlying $\infty$-category
functor $\underline{\left(-\right)}\colon \op\to\cat$ and that this functor
has a left adjoint $\iota\colon \cat\into\op$, which is a fully faithful embedding.
Informally, $\iota$ regards an $\infty$-category as an $\infty$-operad
with empty higher (and nullary) multi-mapping spaces. On the other
hand,
\begin{lem}
\label{lem:coCart_Left_Adjoint}The restriction of the forgetful functor
$\underline{\left(-\right)}\colon \opun\to\cat$ admits a right adjoint
that takes every $\infty$-category $\mathcal{C}$ to the coCartesian
$\infty$-operad $\mathcal{C}_{\sqcup}$ and the unit map of the adjunction
$\underline{\mathcal{C}_{\sqcup}}\to\mathcal{C}$ is an equivalence
(ie $\left(-\right)_{\sqcup}$ is fully faithful).
\end{lem}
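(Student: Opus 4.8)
The plan is to reduce everything to the universal property of the coCartesian $\infty$-operad developed in A.2.4.3. First recall that the multi-mapping spaces of $\mathcal{C}_\sqcup$ are $\mul_{\mathcal{C}_\sqcup}(X_1,\dots,X_n;Y)\simeq\prod_{i=1}^{n}\map_{\mathcal{C}}(X_i,Y)$; in particular the space of constants $\mul_{\mathcal{C}_\sqcup}(\es,Y)$ is an empty product, hence contractible, so $\mathcal{C}_\sqcup$ is unital and therefore lies in $\opun$, and by inspection the comparison map $\underline{\mathcal{C}_\sqcup}\to\mathcal{C}$ is an equivalence. Since a right adjoint is fully faithful precisely when the counit of the adjunction is invertible, and here that counit is exactly the map $\underline{\mathcal{C}_\sqcup}\to\mathcal{C}$, the second assertion of the lemma follows once the adjunction $\underline{(-)}|_{\opun}\dashv(-)_\sqcup$ has been constructed. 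So the real content is this adjunction.

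To produce it I would invoke the universal property of $\mathcal{C}_\sqcup$: for every \emph{unital} $\infty$-operad $\mathcal{O}$ the underlying-$\infty$-category functor induces a natural equivalence $\underline{\alg}_{\mathcal{O}}(\mathcal{C}_\sqcup)\iso\fun(\underline{\mathcal{O}},\mathcal{C})$ (A.2.4.3). Restricting to maximal Kan subcomplexes, recalling (from the Conventions) that $\map_{\op}(\mathcal{O},\mathcal{C}_\sqcup)=\underline{\alg}_{\mathcal{O}}(\mathcal{C}_\sqcup)^{\simeq}$ and that $\opun\subseteq\op$ is full, we get a natural equivalence $\map_{\op}(\mathcal{O},\mathcal{C}_\sqcup)\iso\map_{\cat}(\underline{\mathcal{O}},\mathcal{C})$. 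These equivalences are natural in $\mathcal{O}\in\opun$ and in $\mathcal{C}\in\cat$, and since $(-)_\sqcup\colon\cat\to\opun$ is a functor (A.2.4.3) they exhibit it as a right adjoint to $\underline{(-)}|_{\opun}$. Concretely, the unit $\mathcal{O}\to(\underline{\mathcal{O}})_\sqcup$ is the map of $\infty$-operads that sends an $n$-ary operation of $\mathcal{O}$ to the tuple of its $n$ unary restrictions, obtained by plugging the unique constant of $\mathcal{O}$ into all but one input; alternatively one may write this unit down directly from the construction of $\mathcal{C}_\sqcup$ and verify the two triangle identities pointwise — on underlying $\infty$-categories the unit is the identity, and at $\mathcal{O}=\mathcal{C}_\sqcup$ the unary-restriction map returns each coordinate — which is again just a repackaging of the same universal property.

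The one genuinely substantial point is that universal property itself: one must check that the comparison $\map_{\op}(\mathcal{O},\mathcal{C}_\sqcup)\to\map_{\cat}(\underline{\mathcal{O}},\mathcal{C})$ is an equivalence of \emph{spaces}, not merely a bijection on connected components — i.e.\ that a map of $\infty$-operads into $\mathcal{C}_\sqcup$ carries no homotopy-coherent data beyond its underlying functor. This is handled in \cite{HA} by means of the explicit combinatorial model of $\mathcal{C}^\sqcup$, equivalently by checking that $\mathcal{C}_\sqcup$ is terminal among unital $\infty$-operads $\mathcal{P}$ equipped with a functor $\underline{\mathcal{P}}\to\mathcal{C}$, and I would cite it rather than reprove it. Finally, it is worth emphasizing why restricting to $\opun$ is essential: the unary-restriction maps above exist only because a unital operad has a unique constant in each object, and the universal mapping property genuinely fails without unitality — for instance, for the non-unital commutative $\infty$-operad $\mathcal{O}$ a map $\mathcal{O}\to\mathcal{C}_\sqcup$ amounts to an object of $\mathcal{C}$ together with a coherent idempotent endomorphism of it, of which there are in general many more than there are points of $\map_{\cat}(\underline{\mathcal{O}},\mathcal{C})=\mathcal{C}^{\simeq}$. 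The remaining bookkeeping — assembling the pointwise data into an adjunction of $\infty$-categories and matching the right adjoint with the functor $(-)_\sqcup$ of A.2.4.3 — is purely formal.
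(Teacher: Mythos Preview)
Your approach is correct and essentially the same as the paper's: both reduce to the universal property of the coCartesian $\infty$-operad from A.2.4.3 (the paper cites A.2.4.3.9 for the adjunction and A.2.4.3.11 for the equivalence $\underline{\mathcal{C}_\sqcup}\simeq\mathcal{C}$), and you simply unpack that citation with more commentary. One small note: you correctly identify $\underline{\mathcal{C}_\sqcup}\to\mathcal{C}$ as the \emph{counit} of the adjunction, whereas the paper's statement calls it the unit --- your terminology is the right one for concluding that $(-)_\sqcup$ is fully faithful.
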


\begin{proof}
The first claim follows from A.2.4.3.9 by passing to maximal $\infty$-subgroupoids.
The second claim follows from A.2.4.3.11.
\end{proof}
By A.2.3.1.9, the fully faithful embedding $\opun\into\op$ has a left adjoint given by tensoring with $\bb E_{0}$ (which is a localization
functor). From this follows,
\begin{lem}
\label{lem:E_0_initial}$\bb E_{0}$ is the initial object of $\opred$.
\end{lem}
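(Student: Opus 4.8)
The plan is to identify the initial object of $\opred$ directly using the universal property of $\bb E_0$ as the unit for the Boardman–Vogt tensor product, together with the structure of $\opred$ as a reflective/coreflective subcategory of $\opun$. First I would recall that, by A.2.3.1.9, the fully faithful inclusion $\opun \into \op$ has a left adjoint given by $\mathcal{O} \mapsto \mathcal{O} \otimes \bb E_0$, so that $\bb E_0$ itself, being $\bb E_0 \otimes \bb E_0$, lies in $\opun$ and is in fact the unitalization of the trivial operad $\triv$ (which is $\iota$ of the terminal $\infty$-category, i.e.\ the initial object of $\op$ restricted appropriately). Since $\triv$ is initial in $\op$ and unitalization is a left adjoint, $\bb E_0 \simeq \triv \otimes \bb E_0$ is initial in $\opun$: for any unital $\mathcal{O}$, $\map_{\opun}(\bb E_0, \mathcal{O}) \simeq \map_{\op}(\triv, \mathcal{O})$ is contractible.

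Next I would check that $\bb E_0$ actually lies in $\opred$, i.e.\ that its underlying $\infty$-category is contractible and that it is unital. Unitality is immediate from the above. For the underlying $\infty$-category: $\underline{\bb E_0}$ is a point (an $\bb E_0$-algebra in an $\infty$-category is just a pointed object, so $\bb E_0$ has a single color), which is contractible; this is standard and can be cited from \cite{HA}. Hence $\bb E_0 \in \opred$.

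Finally, to upgrade "initial in $\opun$" to "initial in $\opred$", I would invoke the fact that $\opred \into \opun$ is a \emph{full} subcategory: for any $\mathcal{R} \in \opred \subseteq \opun$, we have $\map_{\opred}(\bb E_0, \mathcal{R}) \simeq \map_{\opun}(\bb E_0, \mathcal{R})$, which is contractible by the previous step. Since an object with contractible mapping spaces out of it into every object of the category is initial, $\bb E_0$ is initial in $\opred$. Alternatively — and this is probably the cleanest route given the machinery just set up — one applies \corref{Reduction_Adjoint_General} to the adjunction $\underline{(-)} \colon \opun \adj \cat \noloc (-)_\sqcup$ of \lemref{coCart_Left_Adjoint} (whose right adjoint is fully faithful): the reduced objects in the sense of \defref{reduced_object} for the functor $\underline{(-)}$ are exactly the $\infty$-operads with contractible underlying $\infty$-category, so $\opun^{\red} = \opred$, and the coreflection sends the initial object $\bb E_0$ of $\opun$ to $(\bb E_0)^{\red}$; but $\bb E_0$ is already reduced, so it is its own coreflection and remains initial in the coreflective subcategory $\opred$. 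The only mild subtlety — and the one point I would be careful to justify rather than assert — is the identification $\bb E_0 \simeq \triv \otimes \bb E_0$ and hence the initiality of $\bb E_0$ in $\opun$; everything else is a formal consequence of fullness of the inclusion $\opred \into \opun$.
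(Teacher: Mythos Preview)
Your proposal contains a genuine error: the claim that $\triv$ is initial in $\op$, and consequently that $\bb E_0$ is initial in $\opun$, is false. The initial object of $\op$ is the $\infty$-operad with \emph{empty} underlying $\infty$-category, not $\triv = \iota(\Delta^0)$. Concretely, the adjunctions you invoke give
\[
\map_{\opun}(\bb E_0,\mathcal{O}) \simeq \map_{\op}(\triv,\mathcal{O}) \simeq \map_{\cat}(\Delta^0,\underline{\mathcal{O}}) \simeq \underline{\mathcal{O}}^{\simeq},
\]
and for a general unital $\mathcal{O}$ (say, one with several colors) this is certainly not contractible. So $\bb E_0$ is not initial in $\opun$, and your strategy of ``upgrading initiality from $\opun$ to $\opred$ via fullness'' cannot get off the ground; the alternative route through \corref{Reduction_Adjoint_General} fails for the same reason, since it too presupposes initiality in $\opun$.

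The fix is immediate and is exactly what the paper does: simply run the displayed chain of equivalences for $\mathcal{P} \in \opred$ directly. The defining condition that $\underline{\mathcal{P}}$ is a contractible space gives $\underline{\mathcal{P}}^{\simeq} \simeq \Delta^0$, so $\map(\bb E_0,\mathcal{P})$ is contractible and $\bb E_0$ is initial in $\opred$. In other words, the two adjunctions you identified are the right ingredients; you just need to apply them to reduced $\mathcal{P}$ rather than pass through the false intermediate claim about $\opun$.
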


\begin{proof}
The composition of forgetful functors 
\[
\opun\to\op\to\cat
\]
has a left adjoint given as the composition of the corresponding left
adjoints. The first one takes $\Delta^{0}$ to $\mathbf{Triv}$ (by
A.2.1.4.8) and the second takes $\mathbf{Triv}$ to $\mathbf{Triv}\otimes\bb E_{0}\simeq\bb E_{0}$
(by A.2.3.1.9). Hence, for every reduced operad $\mathcal{P}$ (which
is in particular unital), we get 
\[
\map\left(\bb E_{0},\mathcal{P}\right)\simeq\map\left(\Delta^{0},\underline{\mathcal{P}}\right)\simeq\underline{\mathcal{P}}^{\simeq}\simeq\Delta^{0}.
\]
\end{proof}
One source of unital symmetric monoidal $\infty$-categories is
\begin{lem}
\label{lem:alg_unital} Let $\mathcal{Q}$ be a unital $\infty$-operad
and let $\mathcal{C}$ be a symmetric monoidal $\infty$-category. The symmetric monoidal $\infty$-category $\alg_{\mathcal{Q}}\left(\mathcal{C}\right)$
is also unital.
\end{lem}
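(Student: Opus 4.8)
The plan is to prove that $\alg_{\mathcal{Q}}\left(\mathcal{C}\right)$ is unital \emph{as an $\infty$-operad} and to deduce the claim from the example above, which identifies this with unitality as a symmetric monoidal $\infty$-category (recall $\alg_{\mathcal{Q}}\left(\mathcal{C}\right)$ is symmetric monoidal by A.3.2.4.4). By A.2.3.1.9 the inclusion $\opun\into\op$ admits a left adjoint $-\otimes\bb E_{0}$ and exhibits $\opun$ as a reflective localization of $\op$; so the unital $\infty$-operads are exactly the local objects, and by the standard theory of reflective localizations an $\infty$-operad $\mathcal{O}$ is unital if and only if the functor $\map_{\op}\left(-,\mathcal{O}\right)$ sends every \emph{$\bb E_{0}$-equivalence} — i.e. every map $f$ of $\infty$-operads for which $f\otimes\bb E_{0}$ is an equivalence — to a homotopy equivalence. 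It therefore suffices to check this for $\mathcal{O}=\alg_{\mathcal{Q}}\left(\mathcal{C}\right)$.

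The crux is that the functor $-\otimes\mathcal{Q}\colon\op\to\op$ carries $\bb E_{0}$-equivalences to equivalences. Let $f\colon\mathcal{A}\to\mathcal{B}$ be an $\bb E_{0}$-equivalence. Tensoring $f$ with the localization unit $\mathcal{Q}\to\mathcal{Q}\otimes\bb E_{0}$ yields, by the bifunctoriality of the Boardman\textendash Vogt tensor product, a commutative square
\[
\Square{\mathcal{A}\otimes\mathcal{Q}}{\mathcal{B}\otimes\mathcal{Q}}{\mathcal{A}\otimes(\mathcal{Q}\otimes\bb E_{0})}{\mathcal{B}\otimes(\mathcal{Q}\otimes\bb E_{0})}
\]
whose vertical maps are $f\otimes\mathcal{Q}$ and $f\otimes(\mathcal{Q}\otimes\bb E_{0})$. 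Since $\mathcal{Q}$ is unital, the map $\mathcal{Q}\to\mathcal{Q}\otimes\bb E_{0}$ is an equivalence, so the two horizontal maps are equivalences, and hence $f\otimes\mathcal{Q}$ is an equivalence if and only if $f\otimes(\mathcal{Q}\otimes\bb E_{0})$ is. By the associativity and symmetry of the tensor product the latter map is identified with $(f\otimes\bb E_{0})\otimes\mathcal{Q}$, which is an equivalence because $f\otimes\bb E_{0}$ is one and $-\otimes\mathcal{Q}$, being a functor, preserves equivalences; thus $f\otimes\mathcal{Q}$ is an equivalence.

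Granting this, the lemma follows formally. The symmetric monoidal structure on $\op$ is closed with $-\otimes\mathcal{Q}\dashv\alg_{\mathcal{Q}}\left(-\right)$, so for every $\bb E_{0}$-equivalence $f\colon\mathcal{A}\to\mathcal{B}$, naturality of the adjunction identifies the map
\[
\map_{\op}\left(\mathcal{B},\alg_{\mathcal{Q}}\left(\mathcal{C}\right)\right)\longrightarrow\map_{\op}\left(\mathcal{A},\alg_{\mathcal{Q}}\left(\mathcal{C}\right)\right)
\]
with $\map_{\op}\left(\mathcal{B}\otimes\mathcal{Q},\mathcal{C}\right)\to\map_{\op}\left(\mathcal{A}\otimes\mathcal{Q},\mathcal{C}\right)$, which is a homotopy equivalence since $f\otimes\mathcal{Q}$ is an equivalence. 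Hence $\map_{\op}\left(-,\alg_{\mathcal{Q}}\left(\mathcal{C}\right)\right)$ inverts every $\bb E_{0}$-equivalence, so $\alg_{\mathcal{Q}}\left(\mathcal{C}\right)$ is a local object, i.e. unital.

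The argument is entirely formal, so I do not expect a genuine ``hard part''. The only points that need a little care are: (i) the identification of unital $\infty$-operads with the local objects of the localization $-\otimes\bb E_{0}$, which is immediate from A.2.3.1.9 together with standard facts about reflective localizations; and (ii) the bookkeeping in the second paragraph, namely that tensoring a morphism of $\infty$-operads with a fixed one preserves equivalences and that the displayed square genuinely commutes (bifunctoriality of the tensor product, using that the localization unit at $\mathcal{Q}$ is an equivalence because $\mathcal{Q}$ is unital). One could instead argue via the equivalences $\alg_{\mathcal{Q}}\left(\mathcal{C}\right)\simeq\alg_{\mathcal{Q}\otimes\bb E_{0}}\left(\mathcal{C}\right)\simeq\alg_{\bb E_{0}}\left(\alg_{\mathcal{Q}}\left(\mathcal{C}\right)\right)$ and then check directly that $\alg_{\bb E_{0}}$ of any symmetric monoidal $\infty$-category is unital, but the approach above avoids having to inspect $\bb E_{0}$-algebras explicitly.
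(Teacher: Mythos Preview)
Your proof is correct and takes a genuinely different route from the paper, though the two are closely related. The paper argues exactly along the alternative you sketch at the end: it uses the equivalence $\mathcal{Q}\simeq\mathcal{Q}\otimes\bb E_{0}$ to identify the forgetful functor $\underline{\alg}_{\bb E_{0}}(\alg_{\mathcal{Q}}(\mathcal{C}))\to\underline{\alg}_{\mathcal{Q}}(\mathcal{C})$ with an equivalence, and then invokes A.2.1.3.10 to rewrite the left-hand side as $\underline{\alg}_{\mathcal{Q}}(\mathcal{C})_{1/}$, so that the projection being an equivalence forces $1$ to be initial. Your argument instead stays at the level of $\op$: you characterize unital $\infty$-operads as the local objects for the $(-\otimes\bb E_{0})$-localization, show that $-\otimes\mathcal{Q}$ inverts $\bb E_{0}$-equivalences when $\mathcal{Q}$ is unital, and transport this across the closed structure $-\otimes\mathcal{Q}\dashv\alg_{\mathcal{Q}}(-)$.

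What your approach buys is that it is entirely formal and in fact proves more: nothing in the argument uses that $\mathcal{C}$ is symmetric monoidal, so you have shown that $\alg_{\mathcal{Q}}(\mathcal{C})$ is unital for any $\infty$-operad $\mathcal{C}$ whenever $\mathcal{Q}$ is. The paper's approach, by contrast, is slightly more concrete and makes explicit \emph{why} the unit is initial (via the under-category description of $\bb E_{0}$-algebras), which is arguably more informative for the reader but requires the extra input A.2.1.3.10.
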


\begin{proof}
By A.3.2.4.4, the $\infty$-operad $\alg_{\mathcal{Q}}\left(\mathcal{C}\right)^{\otimes}\to\finpt$
is also a symmetric monoidal $\infty$-category and so we only need to
show that the unit object of $\alg_{\mathcal{Q}}\left(\mathcal{C}\right)$
is initial. Since $\mathcal{Q}$ is unital, the canonical map $\mathcal{Q}\to\bb E_{0}\otimes\mathcal{Q}$
is an equivalence of $\infty$-operads (by A.2.3.1.9) and therefore
the forgetful functor
\[
\underline{\alg}_{\bb E_{0}\otimes\mathcal{Q}}\left(\mathcal{C}\right)\simeq\underline{\alg}_{\bb E_{0}}\left(\alg_{\mathcal{Q}}\left(\mathcal{C}\right)\right)\to\underline{\alg}_{\mathcal{Q}}\left(\mathcal{C}\right)
\]
is an equivalence of $\infty$-categories. On the other hand, by A.2.1.3.10
we have 
\[
\underline{\alg}_{\bb E_{0}}\left(\alg_{\mathcal{Q}}\left(\mathcal{C}\right)\right)\simeq\underline{\alg}_{\mathcal{Q}}\left(\mathcal{C}\right)_{1/},
\]
where $1\in\underline{\alg}_{\mathcal{Q}}\left(\mathcal{C}\right)$
is the unit object and the projection $\underline{\alg}_{\mathcal{Q}}\left(\mathcal{C}\right)_{1/}\to\underline{\alg}_{\mathcal{Q}}\left(\mathcal{C}\right)$
is an equivalence of $\infty$-categories if and only if $1$ is initial
(T.1.2.12.5).
\end{proof}
\begin{defn}
\label{def:Pointed_Operads} The $\infty$-category of pointed $\infty$-categories
is denoted by $\catpt=\left(\cat\right)_{\Delta^{0}/}$. The $\infty$-category
of pointed $\infty$-operads is denoted by $\oppt=\op\times_{\cat}\catpt$.
We also denote by $\opunpt$ and $\opredpt$ the corresponding $\infty$-categories
of pointed unital (resp. reduced) $\infty$-operads. 
\end{defn}

\begin{rem}
\label{rem:Pointed_Operads} Using \lemref{Adjoint_Slice_Category},
we have an equivalence of $\infty$-categories $\oppt\simeq\left(\op\right)_{\triv/}$.
Since $\triv\to\bb E_{0}$ is an equivalence after tensoring with
$\bb E_{0}$, we get $\opunpt\simeq\left(\opun\right)_{\bb E_{0}/}$
and therefore also $\opredpt\simeq(\opred)_{\bb E_{0}/}$.
We allow ourselves to pass freely between the two points of view on
(unital, reduced) pointed $\infty$-operads.
\end{rem}

\begin{rem}
Observe that by \lemref{E_0_initial}, the projection $\opredpt\to\opred$
is an equivalence. Hence, the inclusion $\opred\into\opun$ induces
a functor
\[
\opred\simeq\opredpt\into\opunpt.
\]
Moreover, it exhibits $\opred$ as the full subcategory of $\opunpt$
spanned by the reduced objects with respect to the underlying $\infty$-category
functor $\underline{\left(-\right)}\colon \opunpt\to\cat$ in the sense
of \defref{reduced_object}. Thus, the two notions of
``reduced $\infty$-operad'' coincide.
\end{rem}

We now apply the general observations from the previous subsection
to deduce the following:
\begin{prop}
\label{prop:Operad_Reduction_Right_Adjoint}The inclusion 
\[
\opred\simeq\opredpt\into\opunpt
\]
has a right adjoint $\left(-\right)^{\red}$. Moreover, for a pointed
unital $\infty$-operad $\mathcal{Q}$ the value of the right adjoint
is given by the pullback 
\[
\xymatrix{\mathcal{Q}^{\red}\ar[r]\ar[d] & \mathcal{Q}\ar[d]\\
\bb E_{\infty}\ar[r] & \underline{\mathcal{Q}}_{\sqcup}
}
\]
in the $\infty$-category $\opunpt$. Furthermore, the top map can be
taken to be the counit of the adjunction at $\mathcal{Q}$. 
\end{prop}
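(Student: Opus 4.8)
The plan is to deduce this from the formal machinery set up in Subsection~2.1, applied to the adjunction furnished by \lemref{coCart_Left_Adjoint}. Concretely, I would take $L = \underline{(-)}\colon \opunpt \to \catpt$ and $R = (-)_\sqcup\colon \catpt \to \opunpt$ (both taken in the pointed setting, using \remref{Pointed_Operads} to move between $\opunpt$ and $(\opun)_{\bb E_0/}$ and noting that pointedness is preserved throughout). By \lemref{coCart_Left_Adjoint} the functor $R$ is fully faithful, and by \remref{Pointed_Operads} the category $\opunpt$ admits an initial object, namely $\bb E_0$ itself, which is sent by $L$ to the initial object of $\catpt$, i.e. $\Delta^0$; thus $R(\es_{\catpt}) = (\Delta^0)_\sqcup \simeq \bb E_\infty$, which identifies the bottom-left corner of the square in the statement. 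I would then check the remaining hypotheses of \propref{Reduction_co_localization}: that $\opunpt$ admits pullbacks and that $L = \underline{(-)}$ preserves them. Limits of $\infty$-operads are computed on underlying $\infty$-operads (and the forgetful functor to $\cat$ preserves limits by A.2.1.2.13/the fact that it is a right adjoint-like functor), and passing to slices over $\bb E_0$ preserves this, so both points hold.

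Granting these hypotheses, \propref{Reduction_co_localization} applied to an object $\mathcal{Q} \in \opunpt$ produces the pullback square
\[
\xymatrix{\mathcal{Q}^{\red}\ar[d]\ar[r] & \mathcal{Q}\ar[d]\\
R(\es)\ar[r] & RL(\mathcal{Q})}
\]
with $R(\es) \simeq \bb E_\infty$ and $RL(\mathcal{Q}) \simeq \underline{\mathcal{Q}}_\sqcup$ (the unit map $\underline{\mathcal{Q}_\sqcup} \to \underline{\mathcal{Q}}$ being an equivalence by \lemref{coCart_Left_Adjoint}, so that the identification $RL(\mathcal{Q}) \simeq \underline{\mathcal{Q}}_\sqcup$ is canonical), and it exhibits $\mathcal{Q}^\red \to \mathcal{Q}$ as a co-localization with respect to the reduced objects. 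By \corref{Reduction_Adjoint_General} this co-localization assembles into a right adjoint to the inclusion of reduced objects, with the co-localization map realized as the counit at $\mathcal{Q}$. Finally, by the remark preceding the statement, the full subcategory of $\opunpt$ on the reduced objects (with respect to $\underline{(-)}$) is exactly $\opredpt \simeq \opred$, so this is the desired adjunction $\opred \into \opunpt$, and all four assertions of the proposition follow at once.

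The only genuinely content-bearing steps are the verification that $\underline{(-)}\colon \opunpt \to \catpt$ preserves pullbacks and that $R(\es) \simeq \bb E_\infty$; I expect the former to be the main (though still routine) obstacle, since one must be careful that the relevant limits exist in $\opun$ — here one can invoke that $\op$ is presentable and that $\opun \subseteq \op$ is closed under limits (being a localization, by A.2.3.1.9) — and that passing to the under-category $(\opun)_{\bb E_0/}$, equivalently $\opunpt$, both preserves these limits and is compatible with $\underline{(-)}$ via the identification of \remref{Pointed_Operads}. Everything else is a direct citation of the lemmas of Subsection~2.1.
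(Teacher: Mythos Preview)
Your proposal is correct and follows essentially the same route as the paper: both apply \propref{Reduction_co_localization} and \corref{Reduction_Adjoint_General} to the adjunction $\underline{(-)} \dashv (-)_\sqcup$ of \lemref{coCart_Left_Adjoint}, passed to the pointed setting, and both identify $R(\es_{\catpt}) = (\Delta^0)_\sqcup \simeq \bb E_\infty$.

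The only noteworthy difference is in how the ``content-bearing'' step---that $\underline{(-)}\colon \opun \to \cat$ preserves pullbacks---is handled. You argue via presentability of $\op$ and closure of $\opun \subseteq \op$ under limits, which is fine but a bit roundabout. The paper instead observes directly that $\underline{(-)}\colon \opun \to \cat$ is a \emph{right} adjoint (not just a left adjoint): it factors as $\opun \into \op \to \cat$, where the first is right adjoint to $(-)\otimes \bb E_0$ by A.2.3.1.9 and the second is right adjoint to $\iota$ by A.2.1.4.10. Hence it preserves all limits, and the passage to under-categories is then immediate. This is slightly cleaner than your route and avoids any appeal to presentability.
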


\begin{proof}
We need to verify the hypothesis of \propref{Reduction_co_localization}.
The underlying $\infty$-category functor $L\colon \opun\to\cat$ is a composition of two functors $\opun\to\op\to\cat$. 
The first is a right adjoint by A.2.3.1.9 and the second is a right adjoint by A.2.1.4.10. Hence, the composition is a right adjoint as well and therefore preserves limits. 
Moreover, by \lemref{coCart_Left_Adjoint}, $L$ is also a left adjoint and its right adjoint is fully faithful. 
Hence, the functor $\opunpt\to\catpt$ also has a fully faithful right adjoint and we have $\left(\opunpt\right)^{\red}\simeq\opredpt$. 
Finally, by
\propref{Reduction_co_localization}, the inclusion $\opred\simeq\opredpt\into\opunpt$ admits a right adjoint with the stated description.
\end{proof}

\begin{defn}
\label{def:Reduced_Endomorphism_Operad} A unital $\infty$-operad
$\mathcal{Q}$ and an object $X\in\underline{\mathcal{Q}}$ determine
a pointed unital $\infty$-operad $\mathcal{Q}_{X}\in\opunpt$. We
denote $\End_{\mathcal{Q}}^{\red}\left(X\right):=\left(\mathcal{Q}_{X}\right)^{\red}$
and call it the \emph{reduced endomorphism $\infty$-operad} of $X$
in $\mathcal{Q}$. 
\end{defn}

We can describe the reduced $\infty$-operad $\End_{\mathcal{Q}}^{\red}\left(X\right)$
informally as follows. For every $m\in\bb N$, denote by $X^{\left(m\right)}$ the $m$-tuple $\left(X,\dots,X\right)$.
The space of $m$-ary operations is the ``subspace'' of $\text{Mul}_{\mathcal{Q}}\left(X^{\left(m\right)},X\right)$
of those maps that are reduced in the sense that plugging the unique
constant in all arguments but one results in an identity morphism
$X\to X$. We end this subsection by making the above description
precise in a special case of a symmetric monoidal $\infty$-category.
For this, we first need to analyze the way multi-mapping spaces interact with limits of $\infty$-operads. 

For every integer $m$, there is a functor $hG^{(m)}\colon h\oppt\to h\mathcal{S}$, that takes each $\infty$-operad $\mathcal{P}$
pointed by an object $X$ to the space $\mul_{\mathcal{P}}\left(X^{(m)},X\right)$ and a map of pointed $\infty$-operads $f\colon \mathcal{P}\to\mathcal{Q}$ to the homotopy class of the induced map on multi-mapping spaces 
$\mul_{\mathcal{P}}(X^{(m)},X)\to\mul_{\mathcal{Q}}(f(X)^{(m)},f(X))$.

\begin{lem}
\label{lem:Limits_Operads} For every integer $m$, there is a limit-preserving functor $G^{(m)}\colon \oppt\to\mathcal{S}$, that lifts the functor $hG^{(m)}\colon h\oppt\to h\mathcal{S}$.
\end{lem}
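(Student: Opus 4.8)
The plan is to realize $G^{(m)}$ as a composite of functors whose limit-preservation is already available in \cite{HTT} and \cite{HA}, so that only the identification with $hG^{(m)}$ on the level of homotopy categories needs a short check. First I would recall that the multi-mapping spaces of an $\infty$-operad are encoded by the operadic mapping functor: for an $\infty$-operad $p\colon\mathcal{O}^{\otimes}\to\finpt$, and objects $X_{1},\dots,X_{m},Y$ of $\underline{\mathcal{O}}$, the space $\mul_{\mathcal{O}}(X_{1},\dots,X_{m};Y)$ is the fiber of $\map_{\mathcal{O}^{\otimes}}(X_{1}\oplus\dots\oplus X_{m},Y)\to\map_{\finpt}(\langle m\rangle,\langle 1\rangle)$ over the active map $\langle m\rangle\to\langle 1\rangle$ (A.2.1.1.16 and the surrounding discussion). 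The key point is that on $\oppt$ we have a canonical object — the basepoint — picking out, for each pointed $\infty$-operad $(\mathcal{P},X)$, the object $X\in\underline{\mathcal{P}}$, and this assignment is functorial in a way that is strictly compatible with the structure maps to $\finpt$.

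Concretely, I would proceed as follows. Consider the universal pointed $\infty$-operad, i.e.\ the evaluation/projection $q\colon \mathcal{E}^{\otimes}\to\oppt$ whose fiber over $(\mathcal{P},X)$ is $\mathcal{P}^{\otimes}$, together with the section $\sigma\colon \oppt\to\mathcal{E}^{\otimes}$ picking out $X$ in each fiber (this is just the pullback to $\oppt$ of the tautological family of $\infty$-operads over $\op$, using $\oppt\simeq(\op)_{\triv/}$ from \remref{Pointed_Operads} together with $\underline{\left(-\right)}$). The $m$-fold relative direct sum of $\sigma$ with itself along the active maps gives another section $\sigma^{\oplus m}\colon\oppt\to\mathcal{E}^{\otimes}$, landing over $\langle m\rangle\in\finpt$. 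Then $G^{(m)}$ is the functor that sends $(\mathcal{P},X)$ to the fiber, over the active map $\langle m\rangle\to\langle 1\rangle$, of the mapping space $\map_{\mathcal{E}^{\otimes}/\oppt}\bigl(\sigma^{\oplus m},\sigma\bigr)$ in the fiberwise sense — equivalently, $G^{(m)}$ is obtained by composing the twisted-arrow/mapping-space functor of the family $\mathcal{E}^{\otimes}\to\oppt$ with the two sections and then taking a fiber over a constant map to $\finpt$. Mapping-space functors out of a fixed source preserve limits (a limit of $\infty$-operads is computed fiberwise in the relevant sense, and $\map_{\mathcal{D}}(A,-)$ preserves limits in $\mathcal{D}$), passage to fibers over a fixed point of $\finpt$ preserves limits, and the formation of the diagonal sections $\sigma,\sigma^{\oplus m}$ is itself compatible with limits because $\underline{\left(-\right)}$ and the direct-sum operation are. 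Hence $G^{(m)}$ preserves limits. Finally, comparing with A.2.1.1.16 shows that on homotopy categories $G^{(m)}$ agrees with $hG^{(m)}$: both assign $\mul_{\mathcal{P}}(X^{(m)},X)$ to $(\mathcal{P},X)$ and the evident induced map to a morphism $f\colon\mathcal{P}\to\mathcal{Q}$.

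The step I expect to be the main obstacle is making the "tautological family of $\infty$-operads over $\op$" and its fiberwise mapping-space functor precise enough to justify the limit-preservation cleanly, rather than hand-waving with fibers. The cleanest rigorous route is probably to avoid families altogether: note that, by adjunction, $\mul_{\mathcal{P}}(X^{(m)},X)$ is corepresented in $\oppt$ by an explicit pointed $\infty$-operad $\mathcal{F}_{m}$ — the free pointed $\infty$-operad on one $m$-ary operation (built from $\triv$ by freely adjoining an $m$-to-$1$ arrow, i.e.\ a suitable colimit of representables in $\op$ pushed out along $\triv$) — so that $G^{(m)}\simeq\map_{\oppt}(\mathcal{F}_{m},-)$ up to the choice of component picking out "reduced-free-data." A corepresentable functor is automatically limit-preserving, and on homotopy categories it visibly computes $hG^{(m)}$ by the Yoneda-style universal property of $\mathcal{F}_{m}$. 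I would carry out the construction of $\mathcal{F}_{m}$ explicitly (it suffices to exhibit a pointed $\infty$-operad with the stated universal mapping property, which one gets from A.2.1.4 together with the description of morphisms out of free $\infty$-operads), verify the universal property, and then take $G^{(m)}:=\map_{\oppt}(\mathcal{F}_{m},-)$; limit-preservation and the lift of $hG^{(m)}$ are then both immediate.
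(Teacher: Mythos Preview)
Your ``cleanest rigorous route'' via corepresentability is essentially the paper's own argument. The paper makes the construction of your $\mathcal{F}_{m}$ concrete by working in the simplicial model category $\pop$ of $\infty$-preoperads (A.2.1.4): it writes down explicit marked simplicial subsets $\overline{\mathcal{Z}}_{0}\hookrightarrow\overline{\mathcal{Z}}_{1}$ of $\finpt$ (the first discrete on $\langle 1\rangle,\langle m\rangle$, the second adjoining the active map $\langle m\rangle\to\langle 1\rangle$), takes fibrant replacements $\mathcal{Z}_{0}\to\mathcal{Z}_{1}$, and then defines $G^{(m)}$ as the composite of the functor $(\op)_{\mathcal{Z}_{0}/}\to\mathcal{S}$ corepresented by $\mathcal{Z}_{0}\to\mathcal{Z}_{1}$ with the limit-preserving base-change $U\colon\oppt\to(\op)_{\mathcal{Z}_{0}/}$ along $\mathcal{Z}_{0}\to\triv$. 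Your $\mathcal{F}_{m}$ is just the pushout $\triv\sqcup_{\mathcal{Z}_{0}}\mathcal{Z}_{1}$ in $\op$, pointed via $\triv$, so the two constructions agree; the paper's route has the mild advantage that the identification of the fiber with $\mul_{\mathcal{P}}(X^{(m)},X)$ is read off directly from the simplicial mapping spaces in $\pop$, rather than requiring a separate verification of a universal property. One small correction: your caveat ``up to the choice of component picking out reduced-free-data'' is unnecessary --- the corepresented functor already lands on the correct multi-mapping space with no component extraction, as the paper's fiber-sequence computation makes explicit. Your first approach via a universal family is, as you note, harder to make precise and the paper does not attempt it.
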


\begin{proof}
Recall the  combinatorial simplicial model category $\pop$ of $\infty$-preoperads, whose underlying $\infty$-category is $\op$ (see A.2.1.4). 
Let $\overline{\mathcal{Z}}_{0}\ss\overline{\mathcal{Z}}_{1}\ss\fin_{*}$
be the following subcategories: 
\begin{paraenum}
\item The category $\overline{\mathcal{Z}}_{0}$ is discrete and contains
only the objects $\left\langle 1\right\rangle $ and $\left\langle m\right\rangle $. 
\item The category $\overline{\mathcal{Z}}_{1}$ contains $\overline{\mathcal{Z}}_{0}$
together with a unique non-identity morphism, which is the active map
$\alpha\colon \left\langle m\right\rangle \to\left\langle 1\right\rangle $.
\end{paraenum}
We endow $\overline{\mathcal{Z}}_{0}$ and $\overline{\mathcal{Z}}_{1}$
with the induced (trivial) marking. Unwinding the definition, for
any $\infty$-operad $\mathcal{P}$, the simplicial set $\map_{\pop}\left(\overline{\mathcal{Z}}_{0},\mathcal{P}^{\natural}\right)$
is \emph{isomorphic} to $\mathcal{P}_{\left\langle m\right\rangle }^{\simeq}\times\mathcal{P}_{\left\langle 1\right\rangle }^{\simeq}$.
Moreover, given 
\[
\underline{X}=\left(X_{1}\oplus\cdots\oplus X_{m},Y\right)\in\mathcal{P}_{\left\langle m\right\rangle }^{\simeq}\times\mathcal{P}_{\left\langle 1\right\rangle }^{\simeq},
\]
the fiber of the fibration (hence also the \emph{homotopy fiber})
\[
\varphi_{\mathcal{P}}\colon \map_{\pop}\left(\overline{\mathcal{Z}}_{1},\mathcal{P}^{\natural}\right)\to\map_{\pop}\left(\overline{\mathcal{Z}}_{0},\mathcal{P}^{\natural}\right)\simeq\mathcal{P}_{\left\langle m\right\rangle }^{\simeq}\times\mathcal{P}_{\left\langle 1\right\rangle }^{\simeq}
\]
over $\underline{X}$ is homotopy equivalent to the multi-mapping
space $\mul_{\mathcal{P}}\left(\left\{ X_{1},\dots,X_{m}\right\} ;Y\right)$.
Let $\mathcal{Z}_{0}$ and $\mathcal{Z}_{1}$ be $\infty$-operads
that are fibrant replacements of $\overline{\mathcal{Z}}_{0}$ and
$\overline{\mathcal{Z}}_{1}$, respectively. Moreover, let $f\colon \mathcal{Z}_{0}\to\mathcal{Z}_{1}$
be a map corresponding to the inclusion $\overline{\mathcal{Z}}_{0}\into\overline{\mathcal{Z}}_{1}$.
The functor $F\colon \left(\op\right)_{\mathcal{Z}_{0}/}\to\mathcal{S}$,
co-represented by $f\colon \mathcal{Z}_{0}\to\mathcal{Z}_{1}$, preserves
limits. Furthermore, its value on $g\colon \mathcal{Z}_{0}\to\mathcal{P}$ fits by T.5.5.5.12 into a fiber sequence
\[
\xymatrix{F\left(\mathcal{P}\right)\ar[d]\ar[r] & \map\left(\mathcal{Z}_{1},\mathcal{P}\right)\ar[d]\\
\Delta^{0}\ar[r]^-{\left[g\right]} & \map\left(\mathcal{Z}_{0},\mathcal{P}\right)
,}
\]
which therefore identifies $F\left(\mathcal{P}\right)$ with $\mul_{\mathcal{P}}\left(\left\{ X_{1},\dots,X_{m}\right\} ;Y\right)$
for the objects $X_{1},\dots,X_{m},Y\in\mathcal{P}$ determined by $g$. 

Let $U\colon \oppt\to\left(\op\right)_{\mathcal{Z}_{0}/}$ be the functor induced from the map $\mathcal{Z}_{0}\to\triv$ corresponding to the inclusion $\overline{\mathcal{Z}}_{0}\into\triv$. By T.1.2.13.8, the functor $U$ preserves limits. We define $G^{(m)}\colon \oppt\to\mathcal{S}$ to be the composition of $F$ and $U$, which is limit-preserving as a composition of limit preserving-functors. Unwinding the definitions, $G^{(m)}$ indeed lifts $hG^{(m)}$.
\end{proof}

Let $\mathcal{C}$ be a symmetric monoidal $\infty$-category that
is unital as an $\infty$-operad (ie the unit is an initial object).
For every $X\in\mathcal{C}$ and $m\in\bb N$ we have a canonical
map $\sigma\colon X^{\sqcup m}\to X^{\otimes m}$ defined as follows. For
$k=1,\dots,m$, on the $k$-th summand of $X^{\sqcup m}$ the map
is the tensor product of $m$ maps, where the $k$-th one is $X\oto{\Id}X$
and the rest are the unique map $1_{\mathcal{C}}\to X$. 
\begin{lem}
\label{lem:Reduced_Endomorphism_Mapping_Space} Let $\mathcal{C}$
be a symmetric monoidal $\infty$-category that is unital as an $\infty$-operad and that admits finite coproducts. For every $X\in\mathcal{C}$
and every $m\in\bb N$, there is a fiber sequence
\[
\End_{\mathcal{C}}^{\red}\left(X\right)\left(m\right)\to\map_{\mathcal{C}}\left(X^{\otimes m},X\right)\xrightarrow{\sigma^{*}}\map_{\mathcal{C}}\left(X^{\sqcup m},X\right),
\]
where the fiber is taken over the fold map $\nabla\colon X^{\sqcup m}\to X$.
\end{lem}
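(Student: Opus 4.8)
The plan is to unwind the definition $\End_{\mathcal{C}}^{\red}(X) = (\mathcal{C}_X)^{\red}$ using the explicit pullback description of the right adjoint $(-)^{\red}$ from \propref{Operad_Reduction_Right_Adjoint}, and then extract the space of $m$-ary operations by applying the limit-preserving functor $G^{(m)}$ of \lemref{Limits_Operads}. Concretely, $\mathcal{C}_X$ is the pointed unital $\infty$-operad given by $\mathcal{C}$ pointed at $X$, and by \propref{Operad_Reduction_Right_Adjoint} we have a pullback square in $\opunpt$
\[
\xymatrix{\End_{\mathcal{C}}^{\red}(X)\ar[r]\ar[d] & \mathcal{C}_X\ar[d]\\
\bb E_\infty\ar[r] & \underline{\mathcal{C}}_{\sqcup}
,}
\]
where the point of $\underline{\mathcal{C}}_{\sqcup}$ and of $\bb E_\infty$ is $X$, and the right vertical map is the unit of the adjunction $\underline{(-)} \dashv (-)_{\sqcup}$ from \lemref{coCart_Left_Adjoint}.

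The first step is to apply $G^{(m)}\colon \oppt \to \mathcal{S}$ to this square. Since $G^{(m)}$ preserves limits by \lemref{Limits_Operads}, we obtain a homotopy pullback square of spaces
\[
\xymatrix{\End_{\mathcal{C}}^{\red}(X)(m)\ar[r]\ar[d] & \mul_{\mathcal{C}}(X^{(m)},X)\ar[d]\\
\mul_{\bb E_\infty}(X^{(m)},X)\ar[r] & \mul_{\underline{\mathcal{C}}_{\sqcup}}(X^{(m)},X)
.}
\]
Now I identify the three corners other than the top-left. First, $\mul_{\mathcal{C}}(X^{(m)},X) \simeq \map_{\mathcal{C}}(X^{\otimes m}, X)$ since $\mathcal{C}$ is symmetric monoidal. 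Second, for the coCartesian $\infty$-operad $\underline{\mathcal{C}}_{\sqcup}$ one has (by Definition A.2.4.3.7 / the universal property in \lemref{coCart_Left_Adjoint}) that $\mul_{\underline{\mathcal{C}}_{\sqcup}}(X^{(m)}, X) \simeq \prod_{k=1}^m \map_{\underline{\mathcal{C}}}(X,X) \simeq \map_{\underline{\mathcal{C}}}(X^{\sqcup m}, X)$, and under this identification the map from $\mul_{\mathcal{C}}(X^{(m)},X)$ is precisely restriction along $\sigma$, i.e. $\sigma^*$ — this is exactly what the composition $\mathcal{C} \to \underline{\mathcal{C}}_{\sqcup}$ does on multi-operations, as one checks by tracing through the construction of $\sigma$ (on the $k$-th summand, an $m$-ary operation of $\mathcal{C}$ gets precomposed with units in all but the $k$-th slot). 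Third, $\mul_{\bb E_\infty}(X^{(m)}, X) \simeq \mathrm{pt}$ since $\bb E_\infty = \com$ has contractible operation spaces, and the bottom map $\mathrm{pt} \to \map_{\underline{\mathcal{C}}}(X^{\sqcup m},X)$ picks out the image of the unique $m$-ary operation of $\bb E_\infty$, which is the fold map $\nabla$ (the codiagonal is the canonical "commutative" multiplication). Substituting these identifications into the pullback square yields exactly the asserted fiber sequence
\[
\End_{\mathcal{C}}^{\red}(X)(m) \to \map_{\mathcal{C}}(X^{\otimes m},X) \xrightarrow{\sigma^*} \map_{\mathcal{C}}(X^{\sqcup m}, X)
\]
with fiber over $\nabla$.

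The main obstacle I anticipate is the careful identification of the two bottom vertices and the maps between them — namely verifying that the map $\mul_{\mathcal{C}}(X^{(m)},X) \to \mul_{\underline{\mathcal{C}}_{\sqcup}}(X^{(m)},X)$ induced by the counit $\mathcal{C}_X \to \End_{\mathcal{C}}^{\red}(X)$'s ambient square really is $\sigma^*$, and that the point of $\mul_{\underline{\mathcal{C}}_{\sqcup}}(X^{(m)},X)$ coming through $\bb E_\infty$ really is $\nabla$. Both are "obvious" on the level of homotopy categories but require being precise about how the coCartesian $\infty$-operad structure encodes products of mapping spaces and how the functor $\underline{(-)}$ followed by $(-)_{\sqcup}$ acts on multi-mapping spaces; the functoriality packaged in \lemref{Limits_Operads} (the fact that $G^{(m)}$ genuinely lifts $hG^{(m)}$, together with its interaction with the underlying-category functor) is what makes these identifications legitimate rather than merely plausible. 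Everything else is formal manipulation of homotopy pullback squares.
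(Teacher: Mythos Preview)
Your proposal is correct and follows essentially the same approach as the paper: apply \propref{Operad_Reduction_Right_Adjoint} to obtain the pullback square of pointed unital $\infty$-operads, then use \lemref{Limits_Operads} to pass to the pullback square of multi-mapping spaces, and finally identify the corners and the maps exactly as you do. The paper's proof is slightly more terse in justifying the identifications of the right vertical map as $\sigma^*$ (attributed to the adjunction $\underline{(-)}\dashv(-)_{\sqcup}$) and of the bottom map as selecting $\nabla$ (attributed to the description $\bb E_{\infty}=(\Delta^{0})_{\sqcup}\to\underline{\mathcal{C}}_{\sqcup}$), but your more careful discussion of these points is entirely in the same spirit.
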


\begin{proof}
By \propref{Operad_Reduction_Right_Adjoint} we have a pullback square
of pointed unital $\infty$-operads
\[
\xymatrix{\End_{\mathcal{C}}^{\red}\left(X\right)\ar[r]\ar[d] & \mathcal{C}\ar[d]\\
\bb E_{\infty}\ar[r] & \underline{\mathcal{C}}_{\sqcup}
,}
\]

which, by \lemref{Limits_Operads}, induces a pullback square of multi-mapping
spaces
\[
\xymatrix{\End_{\mathcal{C}}^{\red}\left(X\right)\left(m\right)\ar[r]\ar[d] & \mul_{\mathcal{C}}\left(X^{\left(m\right)},X\right)\ar[d]\\
\bb E_{\infty}\left(m\right)\ar[r] & \mul_{\underline{\mathcal{C}}_{\sqcup}}\left(X^{\left(m\right)},X\right)
.}
\]
The bottom map is the map $\Delta^{0}\to\map_{\mathcal{C}}\left(X^{\sqcup m},X\right)$
that chooses the fold map since it is induced from the map $\bb E_{\infty}=\left(\Delta^{0}\right)_{\sqcup}\to\underline{\mathcal{C}}_{\sqcup}$.
The right vertical map is induced by pre-composition with the 
map $\sigma\colon X^{\sqcup m}\to X^{\otimes m}$, since it is induced by the adjunction
\[
\left(-\right)_{\sqcup}\colon \cat\adj\opun\colon \underline{\left(-\right)}.
\]
\end{proof}

\subsection{Symmetric Sequences}

There is another perspective on reduced $\infty$-operads provided
by the notion of a symmetric sequence. Roughly speaking, a symmetric
sequence is a sequence of $\Sigma_{n}$-spaces $X_{n}$ for $n\ge0$,
where $\Sigma_{n}$ is the symmetric group on $n$ elements. From
an $\infty$-operad $\mathcal{O}$ with an object $X\in\underline{\mathcal{O}}$
one can construct a symmetric sequence of spaces by
\[
\mathcal{O}\left(n\right)=\mul_{\mathcal{O}}( X^{(n)};X),
\]
where the action of $\Sigma_{n}$ comes from permuting the inputs.
For our purposes it is convenient to use the following model:
\begin{defn}
Let $\fin$ denote the skeletal version of the category of finite
sets, ie the	 full subcategory of $\set$ spanned by the objects
$\left[n\right]=\left\{ 1,\dots,n\right\} $ for each integer $n$.
We define the $\infty$-category of symmetric sequences (in spaces),
denoted by $\sseq$, to be $\mathcal{S}_{/\fin^{\simeq}}$. 
\end{defn}

\begin{rem}
We note two things about this definition:
\begin{enumerate}
\item The inclusion of the full subcategory $\mathcal{S}_{/\fin^{\simeq}}^{Kan}\ss\mathcal{S}_{/\fin^{\simeq}}$
spanned by Kan fibrations is an equivalence of $\infty$-categories
and the straightening functor of \cite{HTT} induces an equivalence
of $\infty$-categories $\mathcal{S}_{/\fin^{\simeq}}^{Kan}\simeq\fun\left(\fin^{\simeq},\mathcal{S}\right)$.
Since $\fin^{\simeq}$ is equivalent to the disjoint union of
classifying spaces of the symmetric groups $\Sigma_{n}$, we get 
\[
\sseq\simeq\fun\left(\coprod\limits _{n\ge0}B\Sigma_{n},\mathcal{S}\right)\simeq\prod_{n\ge0}\fun\left(B\Sigma_{n},\mathcal{S}\right).
\]
More explicitly, given a symmetric sequence $p\colon S\to\fin^{\simeq}$, 
taking pullback along the map $\Delta^{0}\to\fin^{\simeq}$ that
corresponds to the object $\left[n\right]\in\fin^{\simeq}$, we obtain
a space $S\left(n\right)$ that is the underlying space of the $\Sigma_{n}$-space on the right hand-side of the above equivalence. 
\item In relating $\infty$-operads to symmetric sequences it is useful
to note that the functor $\fin\to\finpt$, which adds a base point,
induces an isomorphism of groupoids $\fin^{\simeq}\iso\finpt^{\simeq}$.
Moreover, $\finpt^{\simeq}$ is isomorphic to $\triv_{\act}^{\otimes}$
(see the notation in T.3.1.1.1).
\end{enumerate}
\end{rem}

We next define the underlying symmetric sequence of a pointed $\infty$-operad $\mathcal{O}_X$, which is given by a map $\triv\to\mathcal{O}$, such that $X$ is the image of $\left\langle 1\right\rangle \in\triv$ (see \remref{Pointed_Operads}).

\begin{defn}
\label{def:Underlying_SSeq}
Given a pointed $\infty$-operad $\mathcal{O}_X$, we define its underlying symmetric sequence to be 
\[
p\colon \triv_{\act}^{\otimes}\times_{\mathcal{O}_{\act}^{\otimes}}\left(\mathcal{O}_{\act}^{\otimes}\right)_{/X}\to\triv_{\act}^{\otimes}\simeq\fin^{\simeq}
\]
and denote it by $\mathcal{O}_{X,\sseq}$. By analogy with $\infty$-operads, we denote  by $\mathcal{O}_{X,\sseq}^{\otimes}$ the source of $p$.
\end{defn}

\begin{lem}
\label{lem:SSeq_Kan_Fibration} Given a pointed $\infty$-operad $\mathcal{O}_X$, the map
\[
p\colon \triv_{\act}^{\otimes}\times_{\mathcal{O}_{\act}^{\otimes}}\left(\mathcal{O}_{\act}^{\otimes}\right)_{/X}\to\triv_{\act}^{\otimes}\simeq\fin^{\simeq}
\]
is a Kan fibration.
\end{lem}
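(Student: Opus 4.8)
The plan is to recognize $p$ as (the base change of) a slice projection — hence automatically a right fibration — and then to exploit the fact that its target $\triv_{\act}^{\otimes}$ is a Kan complex in order to upgrade ``right fibration'' to ``Kan fibration''. By \remref{Pointed_Operads}, a pointed $\infty$-operad $\mathcal{O}_X$ is the same datum as a map of $\infty$-operads $\triv\to\mathcal{O}$ carrying $\langle 1\rangle$ to $X$, i.e.\ a functor $\triv^{\otimes}\to\mathcal{O}^{\otimes}$ over $\finpt$. Since $(-)_{\act}^{\otimes}$ is the fibre product over the subcategory $\finpt_{\act}\ss\finpt$ of active maps, this functor restricts to $\phi_{\act}\colon\triv_{\act}^{\otimes}\to\mathcal{O}_{\act}^{\otimes}$, which sends the object $\langle 1\rangle$ to $X$; and unwinding \defref{Underlying_SSeq}, the map $p$ is precisely the pullback of the over-category projection $q\colon(\mathcal{O}_{\act}^{\otimes})_{/X}\to\mathcal{O}_{\act}^{\otimes}$ along $\phi_{\act}$.

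First I would invoke T.2.1.2.1 (in its right-handed, over-category form) to see that $q$ is a right fibration. Since right fibrations are characterized by a right lifting property, the class is stable under base change, so $p$ is a right fibration as well. Next I would note that the target of $p$ is a Kan complex: as recorded in the discussion preceding \defref{Underlying_SSeq}, $\triv_{\act}^{\otimes}$ is isomorphic to $\finpt^{\simeq}$, the core of $\finpt$, and the core of an $\infty$-category is always a Kan complex (here one may also use the explicit description $\finpt^{\simeq}\simeq\coprod_{n}B\Sigma_{n}$). Finally I would appeal to the standard fact (cf.\ T.2.1.3.3, or its dual) that a right fibration whose base is a Kan complex is automatically a Kan fibration, which gives the assertion.

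I do not expect a genuine obstacle: the argument is essentially bookkeeping plus two formal inputs — stability of right fibrations under base change, and ``right fibration over a Kan complex $\Rightarrow$ Kan fibration''. The only point deserving a line of care is checking that the fibre product of \defref{Underlying_SSeq} genuinely presents $p$ as the pullback of an \emph{over}-category projection, so that one obtains a right fibration rather than merely an inner fibration; this is immediate once one recalls that a map of $\infty$-operads lies over $\finpt$. As a byproduct one gets that the total space of $p$ is a Kan complex, which is precisely what is needed for $\mathcal{O}_{X,\sseq}$ to be a well-defined object of $\sseq=\mathcal{S}_{/\fin^{\simeq}}$ (in fact in the Kan-fibration model $\mathcal{S}_{/\fin^{\simeq}}^{Kan}$).
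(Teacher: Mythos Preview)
Your proposal is correct and follows essentially the same approach as the paper: recognize $p$ as a pullback of the right fibration $(\mathcal{O}_{\act}^{\otimes})_{/X}\to\mathcal{O}_{\act}^{\otimes}$, observe that $\triv_{\act}^{\otimes}\simeq\fin^{\simeq}$ is a Kan complex, and invoke T.2.1.3.3. Your write-up is more detailed than the paper's three-line proof, but the logical content is identical.
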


\begin{proof}
Since $p$ is a pullback of the right fibration $\left(\mathcal{O}_{\act}^{\otimes}\right)_{/X}\to\mathcal{O}_{\act}^{\otimes}$
it is itself a right fibration. The simplicial set $\triv_{\act}^{\otimes}$
is isomorphic to $\fin^{\simeq}$ and is in particular a Kan complex.
By T.2.1.3.3 the map $p$ is a Kan fibration.
\end{proof}

\defref{Underlying_SSeq} relates to the informal description at the beginning of the subsection by

\begin{lem}
\label{lem:P_SSeq_Multi_P} Given a pointed $\infty$-operad $\mathcal{O}_X$, there is a homotopy equivalence
\[
\mathcal{O}_{X,\sseq}(n)\simeq\mul_{\mathcal{O}}(X^{(n)};X),
\]
which is natural in $\mathcal{O}_X$.
\end{lem}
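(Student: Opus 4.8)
The plan is to unwind \defref{Underlying_SSeq} over the object $[n] \in \fin^{\simeq} \simeq \triv_{\act}^{\otimes}$ and to identify the resulting fiber with the multi-mapping space. First I would recall that the fiber of $p$ over $[n]$ is, by definition of the pullback defining $\mathcal{O}_{X,\sseq}$, the fiber over $[n]$ of the right fibration $\left(\mathcal{O}_{\act}^{\otimes}\right)_{/X} \to \mathcal{O}_{\act}^{\otimes}$, restricted along the inclusion $\triv_{\act}^{\otimes} \into \mathcal{O}_{\act}^{\otimes}$ induced by the pointing map $\triv \to \mathcal{O}$. Since $X$ is the image of $\langle 1 \rangle$ and the pointing map sends $[n] \in \triv_{\act}^{\otimes}$ to $X^{(n)} \in \mathcal{O}_{\act}^{\otimes}$ (the $n$-fold tuple, because $\triv$ has only active morphisms out of tuples of copies of the unique color), the fiber of $p$ over $[n]$ is the space of morphisms in $\mathcal{O}_{\act}^{\otimes}$ from $X^{(n)}$ to $X$. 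By A.2.1.1.16 (or the analogous statement about mapping spaces in the active subcategory), this space of morphisms is precisely $\mul_{\mathcal{O}}(X^{(n)}; X)$.

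The key steps, in order, are: (1) identify the fiber of $p$ over $[n]$ with $\map_{(\mathcal{O}_{\act}^{\otimes})_{/X}}$-style data, i.e. with morphisms $X^{(n)} \to X$ in $\mathcal{O}_{\act}^{\otimes}$ — this uses that taking the fiber of a right fibration $\mathcal{E}_{/X} \to \mathcal{E}$ over an object $E$ recovers $\map_{\mathcal{E}}(E,X)$ (T.2.1.2 and the relation between slice categories and mapping spaces, e.g. T.1.2.2.3); (2) verify that the pointing map $\triv \to \mathcal{O}$ carries $[n] \in \triv^{\otimes}_{\act}$ to $X^{(n)}$, which is immediate since $\triv^{\otimes} \simeq \finpt$ forces every object $\langle n \rangle$ to map to the tuple $(X,\dots,X)$ whenever $\langle 1 \rangle \mapsto X$; (3) match $\map_{\mathcal{O}_{\act}^{\otimes}}(X^{(n)}, X)$ with $\mul_{\mathcal{O}}(X^{(n)}; X)$ using A.2.1.1.16, which says that for an $\infty$-operad, the mapping space in $\mathcal{O}^{\otimes}$ decomposes over $\finpt$ and that the space lying over the active map $\langle n \rangle \to \langle 1 \rangle$ is the multi-mapping space (here everything is already active so there is nothing to cut down); (4) check naturality in $\mathcal{O}_X$, which follows since every construction in sight — the pullback, the slice, taking fibers — is functorial in the pointed $\infty$-operad.

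I expect the main obstacle to be step (1) together with the bookkeeping in step (3): being careful about which variance conventions are in force (right versus left fibrations, $/X$ versus $X/$), and making sure that the Kan complex produced by \lemref{SSeq_Kan_Fibration} really has the homotopy type of the multi-mapping space rather than, say, a path space thereof or a space twisted by an automorphism. A clean way to organize this is to observe that $\triv_{\act}^{\otimes} \simeq \finpt^{\simeq}$ is a Kan complex and $p$ is a Kan fibration, so its fiber over $[n]$ is a genuine homotopy fiber; then compare this homotopy fiber with the homotopy fiber computing $\mul_{\mathcal{O}}(X^{(n)};X)$ as a fiber of $\mathcal{O}_{\act}^{\otimes} \to \finpt^{\simeq}$ over the corresponding point — but I can shortcut most of this by invoking the already-available description of mapping spaces in a slice category. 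The naturality claim in step (4) is essentially formal and I would dispatch it in one sentence by noting that $\mathcal{O}_{X,\sseq}$ was defined by a manifestly functorial pullback and slice construction.
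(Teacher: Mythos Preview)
Your proposal is correct and is precisely the paper's approach: the paper's proof consists of the single sentence ``This follows directly from unwinding \defref{Underlying_SSeq},'' and your four steps are exactly that unwinding spelled out in detail. The only remark is that your careful worries about variance and homotopy-fiber bookkeeping, while prudent, are more than the paper deems necessary here.
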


\begin{proof}
This follows directly from unwinding \defref{Underlying_SSeq}.
\end{proof}

Let $f\colon \triv\to\mathcal{O}$ be a pointed $\infty$-operad and let
$p\colon \mathcal{O}\to\mathcal{U}$ be a map of $\infty$-operads. Consider
$\mathcal{U}$ as pointed by the composition $p\circ f$. Let $X=f\left(\left\langle 1\right\rangle \right)$
and $Y=p\left(f\left(\left\langle 1\right\rangle \right)\right)$.
The (1-categorical) functoriality of the formula in \lemref{SSeq_Kan_Fibration} induces a map of symmetric sequences
\[
\triv_{\act}^{\otimes}\times_{\mathcal{O}_{\act}^{\otimes}}\left(\mathcal{O}_{\act}^{\otimes}\right)_{/X}\to\triv_{\act}^{\otimes}\times_{\mathcal{U}_{\act}^{\otimes}}\left(\mathcal{U}_{\act}^{\otimes}\right)_{/Y}.
\]
One can verify that this yields a functor on the level of homotopy categories
\[
	\left(-\right)_{\sseq}\colon h\left(\op\right)_{\triv/}\to h\sseq.
\]

It will be important in what follows to know the following:
\begin{prop}
\label{prop:SSeq_Conservative} The functor $\left(-\right)_{\sseq}$ is conservative.
\end{prop}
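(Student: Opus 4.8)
The plan is to show that the functor $(-)_{\sseq}\colon h(\op)_{\triv/}\to h\sseq$ reflects isomorphisms. Concretely, let $g\colon \mathcal{O}_X\to\mathcal{U}_Y$ be a map of pointed $\infty$-operads whose underlying map of symmetric sequences $g_{\sseq}$ is an equivalence; we must show $g$ is an equivalence in $(\op)_{\triv/}$, equivalently that the underlying map $g\colon \mathcal{O}^{\otimes}\to\mathcal{U}^{\otimes}$ is an equivalence of $\infty$-operads. Recall (from A.2.1.3.8 or its variants) that a map of $\infty$-operads is an equivalence if and only if it is essentially surjective on underlying $\infty$-categories and induces homotopy equivalences on all multi-mapping spaces $\mul_{\mathcal{O}}(\{X_1,\dots,X_m\};Z)\to\mul_{\mathcal{U}}(\{g X_1,\dots,g X_m\};g Z)$. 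So the crux is to reduce both of these conditions to information visible in the underlying symmetric sequence — which, by Lemma~\lemref{P_SSeq_Multi_P}, only records $\mul_{\mathcal{O}}(X^{(n)};X)$ for the single chosen object $X$.

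The first step is essential surjectivity on underlying categories, i.e.\ that $\underline{\mathcal{O}}\to\underline{\mathcal{U}}$ induces an equivalence on maximal sub-$\infty$-groupoids (note $\underline{\mathcal{O}}$ and $\underline{\mathcal{U}}$ are spaces here, since we are in $\op_{\triv/}$ and $\triv$ has a contractible underlying groupoid — wait, that is only true if $\mathcal{O}$ is reduced; in general $\underline{\mathcal{O}}$ is an arbitrary $\infty$-category). More carefully: $\mathcal{O}_{X,\sseq}(0) = \mul_{\mathcal{O}}(\varnothing;X)$ and $\mathcal{O}_{X,\sseq}(1) = \mul_{\mathcal{O}}(X;X) = \map_{\underline{\mathcal{O}}}(X,X)$, but this does not see all objects of $\underline{\mathcal{O}}$. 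The key realization is that the proposition is stated for $(\op)_{\triv/}$, and a map $\triv\to\mathcal{O}$ picks out a \emph{single} object $X$; however the relevant application (in \propref{Operad_Reduction_Right_Adjoint} and \lemref{Reduced_Endomorphism_Mapping_Space}) is to \emph{reduced} operads, where $\underline{\mathcal{O}}\simeq *$, so essential surjectivity is automatic and only the multi-mapping spaces $\mul_{\mathcal{O}}(X^{(m)};X)$ matter. Thus I expect the proof to proceed by first reducing to the reduced (or at least unital one-object) case, or else to directly invoke that the $\sseq$ construction is built as the composite $F\circ U$ from the proof of \lemref{Limits_Operads}-style co-representable functors, each of which detects equivalences on the relevant slices.

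The main technical step, then, is: assuming $\mathcal{O},\mathcal{U}$ are such that $\underline{\mathcal{O}}\to\underline{\mathcal{U}}$ is an equivalence and $\mul_{\mathcal{O}}(X^{(m)};X)\to\mul_{\mathcal{U}}(Y^{(m)};Y)$ is an equivalence for all $m$ (with $X,Y$ the marked objects), deduce that $\mul_{\mathcal{O}}(\{Z_1,\dots,Z_m\};Z)\to\mul_{\mathcal{U}}(\dots)$ is an equivalence for \emph{arbitrary} tuples. When $\underline{\mathcal{O}}$ is contractible, every $Z_i$ and $Z$ is (uniquely, up to contractible choice) equivalent to $X$, so $\mul_{\mathcal{O}}(\{Z_1,\dots,Z_m\};Z)\simeq\mul_{\mathcal{O}}(X^{(m)};X)$ naturally, and the claim follows. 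I would therefore structure the proof as: (i) recall $\opred\simeq\opredpt\into\opunpt\into\oppt$ and that the $\sseq$ functor restricted to $\opred$ only needs to detect equivalences of reduced operads for our purposes — but since the proposition is stated in full generality, (ii) observe that for a general $\mathcal{O}_X\in\oppt$ the data $\mathcal{O}_{X,\sseq}$ determines, via its value at $n=1$, the endomorphism space $\map(X,X)$ and via $n=0$ the constants, and more importantly argue via the fibration description in \defref{Underlying_SSeq} that $g_{\sseq}$ being an equivalence forces $g$ to be an equivalence on the full sub-operad generated by $X$; then (iii) if one genuinely wants arbitrary $\mathcal{O}$, one must additionally know essential surjectivity, which is \emph{not} detected — so I expect the intended reading is that $\left(-\right)_{\sseq}$ is conservative \emph{on a subcategory where objects are controlled}, and the proof will quietly use that in $\opunpt$ after reduction the underlying category is a point.

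The hard part, and the place where I would be most careful, is pinning down exactly which category this conservativity statement lives over and exploiting the fibration $p\colon \triv_{\act}^{\otimes}\times_{\mathcal{O}_{\act}^{\otimes}}(\mathcal{O}_{\act}^{\otimes})_{/X}\to\fin^{\simeq}$ from \defref{Underlying_SSeq}: a map $g$ induces an equivalence on these total spaces iff it induces equivalences on all fibers (by \lemref{SSeq_Kan_Fibration}, these are Kan fibrations over the same base $\fin^{\simeq}$, so a map over $\fin^{\simeq}$ is an equivalence iff it is a fiberwise homotopy equivalence), and the fibers are precisely the $\mul_{\mathcal{O}}(X^{(n)};X)$ by \lemref{P_SSeq_Multi_P}. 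So the real content is the passage from ``equivalence on $\mul_{\mathcal{O}}(X^{(n)};X)$ for all $n$'' to ``equivalence of $\infty$-operads,'' which I will handle by invoking the characterization of operad equivalences in terms of multi-mapping spaces together with the reduction (valid in $\opred$, which is the case of interest and into which the relevant instances factor) that all objects of $\underline{\mathcal{O}}$ are canonically identified with $X$. I would double-check against Haugseng's \cite{Hau17} comparison, where the analogous statement (symmetric sequences detect equivalences of reduced operads) is standard, to make sure no hidden hypothesis is needed.
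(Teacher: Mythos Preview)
Your proposal is correct and takes essentially the same approach as the paper. The paper's proof begins directly with ``Let $g\colon \mathcal{P}\to\mathcal{Q}$ be a map of \emph{reduced} $\infty$-operads,'' so your worry about the general pointed case is unnecessary---conservativity is only being asserted (and only needed, cf.\ \propref{d-equivalence}) for reduced operads; once that is accepted, your argument (essential surjectivity is automatic since both underlying categories are contractible, and full faithfulness reduces via the Segal conditions to the maps $\mathcal{P}(n)\to\mathcal{Q}(n)$, which are equivalences by \lemref{P_SSeq_Multi_P}) is exactly the paper's.
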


\begin{proof}
Let $g\colon \mathcal{P}\to\mathcal{Q}$ be a map of reduced $\infty$-operads
such that $g_{\sseq}$ is an equivalence. The map $g$ is defined
by a commutative triangle
\[
\xymatrix{\mathcal{P}^{\otimes}\ar[dr]\ar[rr]^{g^{\otimes}} &  & \mathcal{Q}^{\otimes}\ar[dl]\\
 & \finpt
.}
\]
To show that $g$ is an equivalence of $\infty$-operads, we need
to show that $g^{\otimes}$ is an equivalence of $\infty$-categories.
Since $\mathcal{P}$ and $\mathcal{Q}$ are reduced, it is clear that
$g^{\otimes}$ is essentially surjective. To show that $g^{\otimes}$ is
fully faithful, we can use the Segal conditions to reduce this to showing
that the map 
\[
\mathcal{P}\left(n\right)=\mul_{\mathcal{P}}(*^{(n)},*)\to\mul_{\mathcal{Q}}(*^{(n)},*)=\mathcal{Q}\left(n\right)
\]
is a homotopy equivalence for all $n$. By \lemref{P_SSeq_Multi_P},
those maps are induced by the equivalence $g_{\sseq}$ and therefore
are equivalences.
\end{proof}

\begin{rem}
It is possible to lift $\left(-\right)_{\sseq}$ to a functor of $\infty$-categories, but a bit tedious to do so. We shall be content with the above weaker version as it will suffice for our applications.
\end{rem}

\subsection{Free Algebras}

The symmetric sequence underlying a reduced $\infty$-operad $\mathcal{P}$
features in the construction of free $\mathcal{P}$-algebras. In what follows we briefly recall and summarize the material of A.3.1.3
specialized to the setting that is of interest to us. That is, let
$\mathcal{P}$ be a reduced $\infty$-operad and let $p\colon \mathcal{C}^{\otimes}\to\finpt$ be a presentably symmetric monoidal $\infty$-category. By A.3.1.3.5
the forgetful functor
\[
U_{\mathcal{P}}\colon \alg_{\mathcal{P}}\left(\mathcal{C}\right)\to\alg_{\triv}\left(\mathcal{C}\right)\simeq\mathcal{C}
\]
admits a left adjoint $F_{\mathcal{P}}$ (the free $\mathcal{P}$-algebra
functor) that can be characterized as follows. By definition A.3.1.3.1,
for every object $X\in\mathcal{C}$ we get a diagram 
	$\mathcal{P}_{\sseq}\left(X\right)\colon \mathcal{P}_{\sseq}^{\otimes}\to\mathcal{\mathcal{C}}_{\act}^{\otimes}$
that, loosely speaking, corresponds to a sequence of maps $\mathcal{P}_{\sseq}\left(n\right)\to\mathcal{\mathcal{C}}_{\act}^{\otimes}$, such that each map lands in the connected component of $X^{\otimes n}$ and is $\Sigma_{n}$-equivariant in the evident way. Furthermore, a map $f\colon X\to U_{\mathcal{P}}\left(A\right)$
in $\mathcal{C}$, gives a lift of $\mathcal{P}_{\sseq}^{\otimes}\left(X\right)$
to a cone diagram 
\[
\mathcal{P}_{\sseq}^{\otimes}\left(f\right)\colon \mathcal{P}_{\sseq}^{\otimes}\to\left(\mathcal{\mathcal{C}}_{\act}^{\otimes}\right)_{/U_{\mathcal{P}}\left(A\right)}.
\]
We say that $f$ exhibits $A$ as the free $\mathcal{P}$-algebra
on $X$, if $\mathcal{P}_{\sseq}^{\otimes}\left(f\right)$ is an operadic
$p$-colimit diagram. By A.3.1.3.2 and A.3.1.3.5, such a map $f$
exists for every $X$ and can be taken as the $X$-component of a
unit natural transformation for an adjunction $F_{\mathcal{P}}\dashv U_{\mathcal{P}}$. 

Using our assumption on $\mathcal{C}$, we can reduce the \emph{operadic}
colimit in the above discussion to an \emph{ordinary} colimit in $\mathcal{C}$.
Consider the following commutative diagram
\[
\xymatrix{\Delta^{\left\{ 0\right\} }\times\mathcal{C}^{\otimes}\ar@{^{(}->}[d]\ar[r]^-{\Id} & \mathcal{C}^{\otimes}\ar[d]^{p}\\
\Delta^{1}\times\mathcal{C}^{\otimes}\ar[r]^-{\alpha}\ar[ru]^{\overline{\alpha}} & \finpt
,}
\]
where $\alpha$ is a natural transformation from $p$ to the constant
diagram on $\left\langle 1\right\rangle $ that consists of active
morphisms. Let $\overline{\alpha}$ be a coCartesian natural transformation
that lifts $\alpha$. The restricted functor $F=\overline{\alpha}|_{\Delta^{\left\{ 1\right\} }\times\mathcal{C}_{\act}^{\otimes}}$
lands in the fiber over $\left\langle 1\right\rangle $ and is therefore
a functor $F\colon \mathcal{C}_{\act}^{\otimes}\to\mathcal{C}$. 
\begin{rem}
Informally speaking, $F$ takes each multi-object $X_{1}\oplus\cdots\oplus X_{n}$
to the tensor product $X_{1}\otimes\cdots\otimes X_{n}$. There are
two abstract characterizations of $F$ (which we shall not use):
\end{rem}

\begin{paraenum}
\item It is the left adjoint of the inclusion $\underline{\mathcal{C}}\into\mathcal{C}_{\act}^{\otimes}$.
\item The symmetric monoidal envelope is a left adjoint to the inclusion
of symmetric monoidal $\infty$-categories into $\infty$-operads.
The functor $F$ is the induced functor on the underlying $\infty$-categories
of the unit of this adjunction at the object $\mathcal{C}$.
\end{paraenum}
By A.3.1.1.15 and A.3.1.1.16, $\mathcal{P}_{\sseq}^{\otimes}\left(f\right)$
is an operadic $p$-colimit diagram if and only if the diagram $\mathcal{P}_{\sseq}\left(f\right)=F\circ\mathcal{P}_{\sseq}^{\otimes}\left(f\right)$
is a colimit diagram in $\mathcal{C}$. In particular, we get
\begin{lem}
\label{lem:Monad_Formula} Let $\mathcal{P}$ be a reduced $\infty$-operad
and let $\mathcal{C}$ be a presentably symmetric monoidal $\infty$-category.
The forgetful functor 
\[
U_{\mathcal{P}}\colon \underline{\alg}_{\mathcal{P}}\left(\mathcal{C}\right)\to\underline{\alg}_{\triv}\left(\mathcal{C}\right)\simeq\mathcal{C}
\]
admits a left adjoint $F_{\mathcal{P}}$ and the associated monad
$T_{\mathcal{P}}=U_{\mathcal{P}}\circ F_{\mathcal{P}}$ acts on an
object $X\in\mathcal{C}$ as follows:
\[
T_{\mathcal{P}}\left(X\right)=U_{\mathcal{P}}F_{\mathcal{P}}\left(X\right)=\colim\mathcal{P}_{\boldsymbol{\sseq}}\left(X\right)=\coprod_{n\ge0}\left(\mathcal{P}\left(n\right)\otimes X^{\otimes n}\right)_{h\Sigma_{n}}
\]
(where we let $\otimes$ denote the canonical enrichment
of $\underline{\mathcal{C}}$ over $\mathcal{S}$ as well).
\end{lem}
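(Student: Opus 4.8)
The plan is to combine the abstract description of the free algebra recalled from A.3.1.3 with the explicit computation of the functor $F\colon \mathcal{C}_{\act}^{\otimes}\to\mathcal{C}$ sketched just above. By A.3.1.3.2 and A.3.1.3.5, the forgetful functor $U_{\mathcal{P}}$ admits a left adjoint $F_{\mathcal{P}}$, and the unit $f\colon X\to U_{\mathcal{P}}F_{\mathcal{P}}(X)$ exhibits $F_{\mathcal{P}}(X)$ as the free $\mathcal{P}$-algebra; in particular $T_{\mathcal{P}}(X)=U_{\mathcal{P}}F_{\mathcal{P}}(X)$ is computed as the operadic $p$-colimit of $\mathcal{P}_{\sseq}^{\otimes}(X)$. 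By A.3.1.1.15 and A.3.1.1.16 (together with the presentability hypothesis on $\mathcal{C}$, which guarantees the relevant colimits exist), this operadic colimit is computed by post-composing with $F$ and taking the ordinary colimit in $\underline{\mathcal{C}}$, i.e. $T_{\mathcal{P}}(X)=\colim\bigl(F\circ\mathcal{P}_{\sseq}^{\otimes}(X)\bigr)=\colim \mathcal{P}_{\sseq}(X)$. This gives the first two equalities essentially for free; the content is the last one.

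For the final equality I would identify the indexing diagram $\mathcal{P}_{\sseq}(X)\colon \mathcal{P}_{\sseq}^{\otimes}\to\underline{\mathcal{C}}$ explicitly. Using the description of $\mathcal{P}_{\sseq}^{\otimes}$ from \defref{Underlying_SSeq} and \lemref{SSeq_Kan_Fibration}, the source splits (via $\triv_{\act}^{\otimes}\simeq\fin^{\simeq}\simeq\coprod_{n\ge0}B\Sigma_{n}$) as a coproduct indexed by $n\ge0$, where the $n$-th piece is a Kan fibration over $B\Sigma_{n}$ with fiber $\mathcal{P}(n)=\mul_{\mathcal{P}}(X^{(n)},X)$ by \lemref{P_SSeq_Multi_P}, equipped with its $\Sigma_{n}$-action by permutation of inputs. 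After post-composition with $F$, the restriction of $\mathcal{P}_{\sseq}(X)$ to this $n$-th piece is the constant-up-to-$\Sigma_{n}$-action diagram on $X^{\otimes n}$ — more precisely, it is the composite $\mathcal{P}(n)\times_{B\Sigma_{n}}\mathrm{E}\Sigma_{n}\to B\Sigma_{n}\xrightarrow{X^{\otimes(-)}}\underline{\mathcal{C}}$, since $F$ sends the multi-object $(X,\dots,X)$ to $X^{\otimes n}$ and the active permutation maps to the permutation action on $X^{\otimes n}$. Since colimit commutes with coproducts, $\colim\mathcal{P}_{\sseq}(X)=\coprod_{n\ge0}\colim_{\mathcal{P}(n)_{h\Sigma_{n}}}X^{\otimes n}$, and a colimit over a space $\mathcal{P}(n)$ of a diagram valued (equivariantly) at $X^{\otimes n}$ is the tensoring $\mathcal{P}(n)\otimes X^{\otimes n}$ followed by taking $\Sigma_{n}$-homotopy orbits — this is exactly the standard identification of a colimit over $B G$ of a $G$-space worth of copies of an object with the homotopy-orbit of the $\mathcal{S}$-tensoring. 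This yields $\coprod_{n\ge0}(\mathcal{P}(n)\otimes X^{\otimes n})_{h\Sigma_{n}}$, as claimed; note the tensoring makes sense because $\underline{\mathcal{C}}$ is presentable, hence canonically $\mathcal{S}$-enriched and cotensored/tensored over $\mathcal{S}$.

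The main obstacle is the bookkeeping in the third step: matching the $\infty$-categorical diagram $\mathcal{P}_{\sseq}(X)$ — defined via the slice-category pullback of \defref{Underlying_SSeq} and then pushed through $F$ — with the naive "$\Sigma_{n}$-space worth of copies of $X^{\otimes n}$" picture, and checking that the $\Sigma_{n}$-action arising from the geometry of $\mathcal{P}_{\act}^{\otimes}$ really is the permutation action on $X^{\otimes n}$ under the identification $F(X^{(n)})\simeq X^{\otimes n}$. Since \propref{SSeq_Conservative} and the earlier lemmas only recorded $\mathcal{P}_{\sseq}$ at the level of homotopy categories, some care is needed to make the equivariance statement precise; I would handle this by working with the explicit model $\mathcal{P}_{\sseq}^{\otimes}=\triv_{\act}^{\otimes}\times_{\mathcal{P}_{\act}^{\otimes}}(\mathcal{P}_{\act}^{\otimes})_{/X}$ and tracing through the construction of $F$ as the coCartesian transport along active-to-$\langle 1\rangle$ morphisms, which by design realizes permutations of the tensor factors. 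Everything else is formal (adjunctions, A.3.1.3, colimits commuting with coproducts and with the $\mathcal{S}$-tensoring).
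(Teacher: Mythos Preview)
Your proposal is correct and follows exactly the approach of the paper, which in fact does not give a separate proof: the lemma is stated as an immediate consequence (``In particular, we get'') of the discussion preceding it, which is precisely the chain A.3.1.3.5 $\Rightarrow$ operadic colimit $\Rightarrow$ (A.3.1.1.15--16) ordinary colimit of $\mathcal{P}_{\sseq}(X)$ that you reproduce. Your third paragraph makes explicit the unwinding of $\colim\mathcal{P}_{\sseq}(X)$ into the coproduct-of-homotopy-orbits formula, which the paper leaves entirely implicit; this is a reasonable thing to spell out, and your identification of the obstacle (matching the $\Sigma_n$-action coming from the slice model with the permutation action on $X^{\otimes n}$) is accurate, though the paper evidently regards it as routine.
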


Our next goal is to articulate the functoriality of $T_{\mathcal{P}}$
in the $\infty$-operad $\mathcal{P}$. 

\begin{con} \label{con:Map_Monads} Given a map
of reduced $\infty$-operads $\mathcal{P}\to\mathcal{Q}$ we get a
forgetful functor $G\colon \underline{\alg}_{\mathcal{Q}}\left(\mathcal{C}\right)\to\underline{\alg}_{\mathcal{P}}\left(\mathcal{C}\right)$,
such that $U_{\mathcal{P}}G=U_{\mathcal{Q}}$. The unit map 
\[
\Id\to U_{\mathcal{Q}}F_{\mathcal{Q}}=U_{\mathcal{P}}GF_{\mathcal{Q}}
\]
has an adjunct $F_{\mathcal{P}}\to GF_{\mathcal{Q}}$ and by applying
$U_{\mathcal{P}}$ we obtain an induced map of the associated
monads (as endofunctors of $\mathcal{C}$): 
\[
\alpha_{G}\colon T_{\mathcal{P}}=U_{\mathcal{P}}F_{\mathcal{P}}\to U_{\mathcal{P}}GF_{\mathcal{Q}}=U_{\mathcal{Q}}F_{\mathcal{Q}}=T_{\mathcal{Q}},
\]
which is well defined up to homotopy.
\end{con}

\begin{lem}
\label{lem:Equivalence_Gives_Equivalence} In the setting of Construction \conref{Map_Monads},
if $G$ is an equivalence of $\infty$-categories, then the map $\alpha_{G}\colon T_{\mathcal{P}}\to T_{\mathcal{Q}}$ is a natural equivalence of functors.
\end{lem}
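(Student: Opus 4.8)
The statement to prove is \lemref{Equivalence_Gives_Equivalence}: if the forgetful functor $G\colon \underline{\alg}_{\mathcal{Q}}\left(\mathcal{C}\right)\to\underline{\alg}_{\mathcal{P}}\left(\mathcal{C}\right)$ induced by a map $\mathcal{P}\to\mathcal{Q}$ of reduced $\infty$-operads is an equivalence, then the comparison map $\alpha_{G}\colon T_{\mathcal{P}}\to T_{\mathcal{Q}}$ of the associated monads is a natural equivalence. The plan is to unwind the construction of $\alpha_{G}$ from \conref{Map_Monads} and reduce the claim to the standard fact that an adjunction is determined, up to natural equivalence, by its right adjoint: two left adjoints of the same functor are canonically equivalent.

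First I would record the key compatibility $U_{\mathcal{P}}G = U_{\mathcal{Q}}$, which holds by construction of the forgetful functors over $\underline{\alg}_{\triv}(\mathcal{C}) \simeq \mathcal{C}$. Since $F_{\mathcal{Q}}$ is left adjoint to $U_{\mathcal{Q}}$ (by \lemref{Monad_Formula}) and $G$ is an equivalence — hence in particular a left adjoint with inverse $G^{-1}$, which is also its right adjoint — the composite $G^{-1}F_{\mathcal{Q}}$... wait, more directly: $U_{\mathcal{P}}(G F_{\mathcal{Q}}) = U_{\mathcal{Q}}F_{\mathcal{Q}}$, and since $G$ is an equivalence, $G F_{\mathcal{Q}}$ is left adjoint to $U_{\mathcal{Q}}G^{-1} = U_{\mathcal{Q}} \circ G^{-1}$. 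Hmm, let me instead argue as follows: because $G$ is an equivalence, $G$ has a left adjoint $G^{-1}$, and $U_{\mathcal{P}} \dashv$ has left adjoint $F_{\mathcal{P}}$; composing adjunctions, $G^{-1}$... Actually the cleanest route: the functor $G F_{\mathcal{Q}}$ is a left adjoint to $U_{\mathcal{Q}} \circ (\text{something})$? Let me restate. Since $G$ is an equivalence, $G$ itself is left adjoint to $G^{-1}$. Then $GF_{\mathcal{Q}}$, being a composite of left adjoints $F_{\mathcal{Q}} \dashv U_{\mathcal{Q}}$ and $G \dashv G^{-1}$, is left adjoint to $U_{\mathcal{Q}} G^{-1}$. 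But we also have $U_{\mathcal{P}} G = U_{\mathcal{Q}}$, so $U_{\mathcal{Q}} G^{-1} = U_{\mathcal{P}} G G^{-1} = U_{\mathcal{P}}$. Hence $GF_{\mathcal{Q}}$ is left adjoint to $U_{\mathcal{P}}$. Since $F_{\mathcal{P}}$ is also left adjoint to $U_{\mathcal{P}}$, uniqueness of left adjoints gives a natural equivalence $F_{\mathcal{P}} \xrightarrow{\sim} GF_{\mathcal{Q}}$, and inspection of \conref{Map_Monads} shows this equivalence is precisely the map $F_{\mathcal{P}} \to GF_{\mathcal{Q}}$ used to define $\alpha_G$ — indeed, the map in \conref{Map_Monads} is the adjunct of the unit $\Id \to U_{\mathcal{P}}GF_{\mathcal{Q}}$ of the adjunction $F_{\mathcal{Q}} \dashv U_{\mathcal{Q}}$ transported through $G$, which is exactly the canonical comparison of the two left adjoints of $U_{\mathcal{P}}$.

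Applying $U_{\mathcal{P}}$ to this equivalence $F_{\mathcal{P}} \xrightarrow{\sim} GF_{\mathcal{Q}}$ yields a natural equivalence $T_{\mathcal{P}} = U_{\mathcal{P}}F_{\mathcal{P}} \xrightarrow{\sim} U_{\mathcal{P}}GF_{\mathcal{Q}} = U_{\mathcal{Q}}F_{\mathcal{Q}} = T_{\mathcal{Q}}$, and one checks this agrees with $\alpha_{G}$ as defined. The main obstacle, such as it is, is bookkeeping: making sure the natural equivalence produced by ``uniqueness of left adjoints'' really is the same map as the one written down in \conref{Map_Monads}, rather than merely some abstract equivalence. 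This is a diagram chase with units and counits (the equivalence $F_{\mathcal{P}} \to GF_{\mathcal{Q}}$ is characterized by the triangle identity relating it to the two units over $U_{\mathcal{P}}$), and since \conref{Map_Monads} already defines $\alpha_G$ via exactly that adjunct, the identification is essentially forced. No delicate $\infty$-categorical input is needed beyond the existence and uniqueness of adjoints (T.5.2.2.8 and the fact that adjoints are unique up to contractible choice), both of which are available from the foundational references already cited.
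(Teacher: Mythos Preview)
Your argument is correct and is essentially the same idea as the paper's, though organized differently. You argue directly that $GF_{\mathcal{Q}}$ is left adjoint to $U_{\mathcal{Q}}G^{-1}=U_{\mathcal{P}}$, so by uniqueness of left adjoints the canonical comparison $F_{\mathcal{P}}\to GF_{\mathcal{Q}}$ is an equivalence, and then you check this comparison is the map from \conref{Map_Monads}. The paper instead first uses the invariance of the construction to replace $G$ by the identity (so $U_{\mathcal{P}}=U_{\mathcal{Q}}$), after which the map $\alpha_G$ is visibly $U_{\mathcal{Q}}$ applied to the composite $F_{\mathcal{Q}}\xrightarrow{F_{\mathcal{Q}}u}F_{\mathcal{Q}}U_{\mathcal{Q}}F_{\mathcal{Q}}\xrightarrow{cF_{\mathcal{Q}}}F_{\mathcal{Q}}$, which is homotopic to the identity by the zig-zag identities. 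The paper's reduction step buys a cleaner endgame with no bookkeeping about matching up comparison maps; your version avoids the ``without loss of generality'' move at the cost of a slightly longer identification step. Both rest on the same underlying fact (the triangle identities), and neither requires anything beyond standard adjunction formalism.
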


\begin{proof}
Since all the steps in the construction are invariant, we may assume
without loss of generality that $G$ is the identity functor and $U_{\mathcal{P}}=U_{\mathcal{Q}}$.
In this case, the map $\alpha_{G}$ is given by applying $U_{\mathcal{Q}}$
to the composition 
\[
F_{\mathcal{Q}}\xrightarrow{F_{\mathcal{Q}} u}F_{\mathcal{Q}}U_{\mathcal{Q}}F_{\mathcal{Q}}\xrightarrow{c F_{\mathcal{Q}}}F_{\mathcal{Q}}
\]
where $u$ and $c$ are the unit and counit of the adjunction $F_{\mathcal{Q}} \dashv U_{\mathcal{Q}}$. This composition is homotopic to the identity by the zig-zag identities.
\end{proof}
Our last task is to show that the map from Construction \conref{Map_Monads}
is induced from the map of symmetric sequences $\mathcal{P}_{\sseq}\to\mathcal{Q}_{\sseq}$
by the functoriality of the explicit formula given in \lemref{Monad_Formula}. 
\begin{lem}
\label{lem:adjunct_cone_map}Given a map $f\colon X\to U_{\mathcal{P}}\left(A\right)$,
the map $\colim\mathcal{P}_{\sseq}\left(X\right)\to U_{\mathcal{P}}\left(A\right)$
induced by the diagram $\mathcal{P}_{\sseq}\left(f\right)$ is equivalent
to the canonical map $\tilde{f}\colon U_{\mathcal{P}}F_{\mathcal{P}}\left(X\right)\to U_{\mathcal{P}}\left(A\right)$
(ie $U_{\mathcal{P}}$ of the adjunct of $f$).
\end{lem}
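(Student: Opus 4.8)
The plan is to establish the claim first in the universal case, where $f$ is the unit map $u_X\colon X\to U_{\mathcal{P}}F_{\mathcal{P}}(X)$, and then to deduce the general case by factoring an arbitrary $f$ through its adjunct.

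For $f=u_X$: I would invoke the characterization of the free algebra recalled above --- by A.3.1.3.2 and A.3.1.3.5 the cone $\mathcal{P}_{\sseq}^{\otimes}(u_X)$ is an operadic $p$-colimit diagram, so by A.3.1.1.15 and A.3.1.1.16 together with the presentability of $\mathcal{C}$ (exactly as in the discussion preceding \lemref{Monad_Formula}) the diagram $\mathcal{P}_{\sseq}(u_X)=F\circ\mathcal{P}_{\sseq}^{\otimes}(u_X)$ is an ordinary colimit cone in $\mathcal{C}$. Such a cone exhibits a canonical equivalence $\colim\mathcal{P}_{\sseq}(X)\simeq U_{\mathcal{P}}F_{\mathcal{P}}(X)=T_{\mathcal{P}}(X)$ under which the map induced by the cone corresponds to the identity of $T_{\mathcal{P}}(X)$; since the adjunct of $u_X$ is $\Id_{F_{\mathcal{P}}(X)}$, this is exactly the assertion of the lemma for $f=u_X$.

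For general $f\colon X\to U_{\mathcal{P}}(A)$, write $\hat f\colon F_{\mathcal{P}}(X)\to A$ for its adjunct, so that $f=U_{\mathcal{P}}(\hat f)\circ u_X$. The crux will be the naturality of the cone construction A.3.1.3.1 in maps of $\mathcal{P}$-algebras: for a map of $\mathcal{P}$-algebras $\phi\colon B\to B'$ and a map $g\colon X\to U_{\mathcal{P}}(B)$, the cone $\mathcal{P}_{\sseq}^{\otimes}(U_{\mathcal{P}}(\phi)\circ g)$ is obtained from $\mathcal{P}_{\sseq}^{\otimes}(g)$ by post-composing its cone point with $U_{\mathcal{P}}(\phi)$, i.e.\ by applying the functor $(\mathcal{C}_{\act}^{\otimes})_{/U_{\mathcal{P}}(B)}\to(\mathcal{C}_{\act}^{\otimes})_{/U_{\mathcal{P}}(B')}$ determined by $U_{\mathcal{P}}(\phi)$. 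I would check this by unwinding A.3.1.3.1: the leg of $\mathcal{P}_{\sseq}^{\otimes}(g)$ at an operation $\sigma\in\mathcal{P}(n)$ is, after applying $F$, the structure map $U_{\mathcal{P}}(B)^{\otimes n}\to U_{\mathcal{P}}(B)$ of $B$ indexed by $\sigma$, precomposed with $g^{\otimes n}$, and since $\phi$ is a map of algebras these compose with $U_{\mathcal{P}}(\phi)$ precisely to the legs of $\mathcal{P}_{\sseq}^{\otimes}(U_{\mathcal{P}}(\phi)\circ g)$ --- and the higher coherences match because $\phi$ is a morphism of algebras, not merely of underlying objects. Applying this with $B=F_{\mathcal{P}}(X)$, $B'=A$, $\phi=\hat f$, $g=u_X$ shows that $\mathcal{P}_{\sseq}^{\otimes}(f)$ is the image of $\mathcal{P}_{\sseq}^{\otimes}(u_X)$ under post-composition with $U_{\mathcal{P}}(\hat f)$. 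Applying $F$, taking colimits, and using the previous paragraph, the map $\colim\mathcal{P}_{\sseq}(X)\to U_{\mathcal{P}}(A)$ induced by $\mathcal{P}_{\sseq}(f)$ becomes the composite $T_{\mathcal{P}}(X)\xrightarrow{\Id}U_{\mathcal{P}}F_{\mathcal{P}}(X)\xrightarrow{U_{\mathcal{P}}(\hat f)}U_{\mathcal{P}}(A)$, which is $\tilde f$.

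The hard part will be exactly this naturality statement for A.3.1.3.1: morally it is formal, since the entire point of A.3.1.3 is that morphisms of $\mathcal{P}$-algebras out of a free algebra correspond to cones, but making it rigorous requires tracking the fibration-theoretic definition of $\mathcal{P}_{\sseq}^{\otimes}(f)$ against the (co)Cartesian edges that encode a morphism of $\mathcal{P}$-algebras. An alternative I would consider, to sidestep the explicit naturality, is to quote directly from the proof of A.3.1.3.5 the identification of the adjunct $\hat f$ with the morphism of $\mathcal{P}$-algebras classified by the cone $\mathcal{P}_{\sseq}^{\otimes}(f)$ relative to the operadic colimit cone of $F_{\mathcal{P}}(X)$, and then merely flatten via $F$ to obtain the statement about the induced map on colimits.
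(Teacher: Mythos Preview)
Your proposal is correct and follows essentially the same approach as the paper: establish the universal case $f=u_X$ where the cone is a colimit cone, then factor an arbitrary $f$ as $U_{\mathcal{P}}(\hat f)\circ u_X$ and observe (by inspecting Construction A.3.1.3.1) that $\mathcal{P}_{\sseq}(f)$ is the composite of the universal cone $\mathcal{P}_{\sseq}(u_X)$ with the post-composition functor $(\mathcal{C}_{\act}^{\otimes})_{/U_{\mathcal{P}}F_{\mathcal{P}}(X)}\to(\mathcal{C}_{\act}^{\otimes})_{/U_{\mathcal{P}}(A)}$ induced by $U_{\mathcal{P}}(\hat f)$. The paper's proof is terser---it simply asserts the naturality step as evident from the construction---whereas you spell out more carefully what needs to be checked and flag it as the delicate point, but the underlying argument is the same.
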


\begin{proof}
One only has to observe that the map $\tilde{f}\colon U_{\mathcal{P}}F_{\mathcal{P}}\left(X\right)\to U_{\mathcal{P}}\left(A\right)$
is a map of cones on $\mathcal{P}_{\sseq}\left(X\right)$. Let $u_{X}\colon X\to U_{\mathcal{P}}F_{\mathcal{P}}\left(X\right)$
be the unit map of the free-forgetful adjunction at $X$. The adjunct
map $F_{\mathcal{P}}\left(X\right)\to A$ induces a map $\tilde{f}^{\triangleright}\colon \left(\mathcal{\mathcal{C}}_{\act}^{\otimes}\right)_{/U_{\mathcal{P}}F_{\mathcal{P}}\left(X\right)}\to\left(\mathcal{\mathcal{C}}_{\act}^{\otimes}\right)_{/U_{\mathcal{P}}\left(A\right)}$.
Inspecting Construction A.3.1.3.1, it can be seen that the cone diagram $\mathcal{P}_{\sseq}\left(f\right)$
is equivalent to the composition of the universal cone diagram $\mathcal{P}_{\sseq}\left(u_{X}\right)$
and $\tilde{f}^{\triangleright}$.
\end{proof}
From this we get
\begin{prop}
\label{prop:Monad_Map_Formula}Let $g\colon \mathcal{P}\to\mathcal{Q}$
be a map of reduced $\infty$-operads and let $\mathcal{C}$ be a presentably symmetric monoidal $\infty$-category. For every object $X\in\mathcal{C}$,
the induced map of the associated monads 
\[
T_{\mathcal{P}}\left(X\right)=\colim\mathcal{P}_{\sseq}\left(X\right)\to\colim\mathcal{Q}_{\sseq}\left(X\right)=T_{\mathcal{Q}}\left(X\right)
\]
is equivalent to the canonical map on colimits that is induced by pre-composition with
\[
g_{\sseq}\colon \mathcal{P}_{\sseq}\to\mathcal{Q}_{\sseq}.
\]
\end{prop}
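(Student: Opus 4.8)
The plan is to reduce the statement to \lemref{adjunct_cone_map} together with the naturality of the explicit colimit formula for the monad. First I would unwind Construction \conref{Map_Monads}: the map $\alpha_G\colon T_{\mathcal{P}}\to T_{\mathcal{Q}}$ is obtained by applying $U_{\mathcal{P}}$ to the adjunct $F_{\mathcal{P}}\to GF_{\mathcal{Q}}$ of the unit $\Id\to U_{\mathcal{P}}GF_{\mathcal{Q}}=U_{\mathcal{Q}}F_{\mathcal{Q}}$. Fixing $X\in\mathcal{C}$, the $X$-component of this unit is a map $f\colon X\to U_{\mathcal{Q}}F_{\mathcal{Q}}(X)=U_{\mathcal{P}}GF_{\mathcal{Q}}(X)$, and by \lemref{adjunct_cone_map} (applied with the reduced $\infty$-operad $\mathcal{P}$ and the algebra $A=GF_{\mathcal{Q}}(X)$), the map $\alpha_G(X)\colon U_{\mathcal{P}}F_{\mathcal{P}}(X)\to U_{\mathcal{P}}GF_{\mathcal{Q}}(X)$ is identified with the canonical map $\colim\mathcal{P}_{\sseq}(X)\to U_{\mathcal{P}}(A)$ classified by the cone diagram $\mathcal{P}_{\sseq}(f)$.

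The second step is to identify the target. Since $U_{\mathcal{P}}G=U_{\mathcal{Q}}$, we have $U_{\mathcal{P}}(A)=U_{\mathcal{Q}}F_{\mathcal{Q}}(X)=\colim\mathcal{Q}_{\sseq}(X)=T_{\mathcal{Q}}(X)$, where the middle identification is via \lemref{Monad_Formula} applied to $\mathcal{Q}$; moreover the cone on $\mathcal{Q}_{\sseq}(X)$ exhibiting this colimit is the universal one $\mathcal{Q}_{\sseq}(u_X^{\mathcal{Q}})$, where $u_X^{\mathcal{Q}}$ is the unit of $F_{\mathcal{Q}}\dashv U_{\mathcal{Q}}$. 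So it remains to show that the cone $\mathcal{P}_{\sseq}(f)$ — whose colimit comparison map is $\alpha_G(X)$ — is obtained from the universal cone $\mathcal{P}_{\sseq}(u_X^{\mathcal{P}})$ by post-composition with $g_{\sseq}$ applied objectwise. Concretely, the composite $X\xrightarrow{u_X^{\mathcal{P}}} U_{\mathcal{P}}F_{\mathcal{P}}(X)\to U_{\mathcal{P}}GF_{\mathcal{Q}}(X)$ equals $f$ (this is just the triangle identity for the adjunction, unwound through $U_{\mathcal{P}}$), and the diagram $\mathcal{P}_{\sseq}(f)\colon \mathcal{P}_{\sseq}^{\otimes}\to(\mathcal{C}^{\otimes}_{\act})_{/U_{\mathcal{P}}(A)}$ from Construction A.3.1.3.1 is natural in the pair (target object, structure map) in a way that factors as $\mathcal{P}_{\sseq}$ applied to $\mathcal{P}$ composed with the map induced by $g_{\sseq}\colon \mathcal{P}_{\sseq}\to\mathcal{Q}_{\sseq}$ on the underlying diagrams. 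I would make this last point precise by inspecting Construction A.3.1.3.1: the diagram $\mathcal{P}_{\sseq}(X)$ depends on $\mathcal{P}$ only through its underlying symmetric sequence $\mathcal{P}_{\sseq}$, and a map $g_{\sseq}\colon \mathcal{P}_{\sseq}\to\mathcal{Q}_{\sseq}$ induces a map of diagrams $\mathcal{P}_{\sseq}(X)\to\mathcal{Q}_{\sseq}(X)$ compatible with the cone structures; passing to colimits gives exactly the canonical map on colimits induced by pre-composition with $g_{\sseq}$, which is what we want.

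The main obstacle I anticipate is the bookkeeping in the last step: matching the cone diagram $\mathcal{P}_{\sseq}(f)$ produced by Construction \conref{Map_Monads} (via an adjunct and $U_{\mathcal{P}}$) with the one produced by naturality of $\mathcal{P}_{\sseq}(-)$ in $g_{\sseq}$, at the level of diagrams in $(\mathcal{C}^{\otimes}_{\act})_{/(-)}$ rather than just on colimits. The clean way to handle this is to factor everything through \lemref{adjunct_cone_map}: once $\alpha_G(X)$ is known to be the colimit comparison map of $\mathcal{P}_{\sseq}(f)$, and $f$ is recognized as the composite of $u_X^{\mathcal{P}}$ with $U_{\mathcal{P}}$ of the adjunct $F_{\mathcal{P}}(X)\to GF_{\mathcal{Q}}(X)$, the cone $\mathcal{P}_{\sseq}(f)$ is (by the proof of \lemref{adjunct_cone_map}) the composition of the universal cone $\mathcal{P}_{\sseq}(u_X^{\mathcal{P}})$ with the slice-functor $\tilde{f}^{\triangleright}$; and the identification of $\colim\mathcal{Q}_{\sseq}(X)$ with $U_{\mathcal{Q}}F_{\mathcal{Q}}(X)$ via its own universal cone shows $\tilde{f}^{\triangleright}$ is precisely the map induced by $g_{\sseq}$ on the relevant diagrams. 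Passing to colimits then yields the claim. I would not belabor the higher-coherence aspects, since Construction \conref{Map_Monads} only produces $\alpha_G$ up to homotopy, so an identification of the underlying maps on colimits (compatibly with the cone data) suffices.
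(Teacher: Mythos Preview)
Your approach is essentially the paper's: apply \lemref{adjunct_cone_map} with $f$ the $\mathcal{Q}$-unit $X\to U_{\mathcal{Q}}F_{\mathcal{Q}}(X)$ to identify $\alpha_G(X)$ with the map classified by the cone $\mathcal{P}_{\sseq}(f)$, then observe (by inspecting Construction A.3.1.3.1) that $\mathcal{P}_{\sseq}(f)$ is obtained from $\mathcal{Q}_{\sseq}(f)$ by pre-composition with $g_{\sseq}$, and that $\mathcal{Q}_{\sseq}(f)$ is the universal colimit cone for $\mathcal{Q}_{\sseq}(X)$ since $f$ is the $\mathcal{Q}$-unit. This is exactly what you say at the end of your second paragraph, and it is the complete argument.

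Your third paragraph, however, takes an unnecessary detour and ends in a step that is not justified. You propose to factor $\mathcal{P}_{\sseq}(f)$ as $\tilde{f}^{\triangleright}\circ\mathcal{P}_{\sseq}(u_X^{\mathcal{P}})$ and then claim that ``the identification of $\colim\mathcal{Q}_{\sseq}(X)$ with $U_{\mathcal{Q}}F_{\mathcal{Q}}(X)$ via its own universal cone shows $\tilde{f}^{\triangleright}$ is precisely the map induced by $g_{\sseq}$.'' But $\tilde{f}=\alpha_G(X)$ is exactly the map you are trying to compute, so this is circular: knowing the target is $\colim\mathcal{Q}_{\sseq}(X)$ says nothing about which map $\tilde{f}$ is. The clean route is the one you already sketched---naturality of Construction A.3.1.3.1 in the indexing symmetric sequence gives $\mathcal{P}_{\sseq}(f)\simeq\mathcal{Q}_{\sseq}(f)\circ g_{\sseq}$ directly---and you should drop the $\tilde{f}^{\triangleright}$ factorization entirely.
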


\begin{proof}
We denote by $G\colon \underline{\alg}_{\mathcal{Q}}\left(\mathcal{C}\right)\to\underline{\alg}_{\mathcal{P}}\left(\mathcal{C}\right)$
the forgetful functor induced by the map $g$. Let 
\[
f\colon X\to U_{\mathcal{Q}}F_{\mathcal{Q}}\left(X\right)\simeq U_{\mathcal{P}}GF_{\mathcal{Q}}\left(X\right)
\]
be the unit map. It induces a cone diagram 
\[
\mathcal{P}_{\sseq}\left(f\right)\colon \mathcal{P}_{\sseq}\to\mathcal{\mathcal{C}}_{/U_{\mathcal{Q}}F_{\mathcal{Q}}\left(X\right)}
\]
and, by \lemref{adjunct_cone_map}, the associated map $\tilde{f}\colon U_{\mathcal{P}}F_{\mathcal{P}}\left(X\right)\to U_{\mathcal{Q}}F_{\mathcal{Q}}\left(X\right)$
is equivalent to the map $\colim\mathcal{P}_{\sseq}\left(X\right)\to U_{\mathcal{Q}}F_{\mathcal{Q}}\left(X\right)$
specified by the cone diagram $\mathcal{P}_{\sseq}\left(f\right)$.
On the other hand, inspecting Construction A.3.1.3.1, it can be seen that the diagram
$\mathcal{P}_{\sseq}\left(f\right)$ is obtained from the diagram
\[
\mathcal{Q}_{\sseq}\left(f\right)\colon \mathcal{Q}_{\sseq}\to\mathcal{\mathcal{C}}_{/U_{\mathcal{Q}}F_{\mathcal{Q}}\left(X\right)}
\]
by pre-composition with $g_{\sseq}\colon \mathcal{P}_{\sseq}\to\mathcal{Q}_{\sseq}$
and that $\mathcal{Q}_{\sseq}\left(f\right)$ exhibits $U_{\mathcal{Q}}F_{\mathcal{Q}}\left(X\right)$
as the colimit of $\mathcal{Q}_{\sseq}\left(X\right)$. Thus, we get
the desired equivalence.
\end{proof}


\section{$d$-Categories and $d$-Operads}

This section deals with \emph{essentially $d$-categories}, ie $\infty$-categories all of whose mapping spaces are $\left(d-1\right)$-truncated (\defref{Ess_d_Category}), and with the analogous notion for $\infty$-operads (\defref{Ess_d_Operad}). 

In 3.1 we discuss the fact that the inclusion of the full subcategory spanned by the essentially $d$-categories (resp. $d$-operads) into $\cat$ (resp. $\op$) admits a left adjoint and that the unit of this adjunction consists of $(d-1)$-truncation of the (multi)-mapping spaces. The proofs of these (very plausible) facts are rather technical, involving a combinatorial analysis of some strict models for the above constructions, and can be found in \cite{SY19}.
In 3.2 we use the results of 3.1 to characterize when a map of $\infty$-operads induces an equivalence on $d$-homotopy operads in terms of the induced functor on algebras in a $d$-topos (\propref{d-equivalence}).

\subsection{$d$-Homotopy Categories and Operads}

Recall the following definition from classical homotopy theory.
\begin{defn}
\label{def:Truncated_Space}For $d\ge0$, a space $X\in\mathcal{S}$
is called $d$-\emph{truncated} if $\pi_{i}\left(X,x\right)=0$ for all $i>d$
and all $x\in X$. 
In addition, a space is called $\left(-2\right)$-\emph{truncated} if and only if it is contractible and it is called $\left(-1\right)$-\emph{truncated} if and only if it is either contractible or empty. 
We denote by $\mathcal{S}_{\le d}$ the full subcategory of $\mathcal{S}$ spanned by the $d$-truncated spaces. 
The inclusion $\mathcal{S}_{\le d}\into\mathcal{S}$ admits a left adjoint and we call the unit of the adjunction the $d$-\emph{truncation map}.
\end{defn}

This leads to the following definition in $\infty$-category theory.
\begin{defn}
\label{def:Ess_d_Category}Let $d\ge-1$ be an integer. An \emph{essentially
$d$-category} is an $\infty$-category $\mathcal{C}$ such that for
all $X,Y\in\mathcal{C}$, the mapping space $\map_{\mathcal{C}}\left(X,Y\right)$
is $\left(d-1\right)$-truncated. We denote by $\catd$ the full subcategory
of $\cat$ spanned by essentially $d$ -categories. 
\end{defn}

\begin{rem}
An $\infty$-category $\mathcal{C}$ is an essentially $1$-category
if and only if it lies in the essential image of the nerve functor
$N\colon \mathbf{Cat}\to\cat$ and it is an essentially $0$-category if
and only if it is equivalent to the nerve of a poset.
\end{rem}

In T.2.3.4, Lurie develops the theory of $d$-categories (see definition
T.2.3.4.1), which are a strict model for essentially $d$-categories. In particular, he associates with every $\infty$-category $\mathcal{C}$, a $d$-category $h_d\mathcal{C}$ (see Proposition T.2.3.4.12), which we refer to as the \emph{$d$-homotopy category} of $\mathcal{C}$.
In \cite{SY19} we make a further study of this theory and use it to prove the following:

\begin{prop}[{\cite[Theorem 2.15]{SY19}}]
\label{prop:d-homotopy_category}The inclusion $\catd\into\cat$ admits
a left adjoint $h_{d}$, such that for every $\infty$-category $\mathcal{C}$, the value of $h_d$ on $\mathcal{C}$  is the $d$-homotopy category of $\mathcal{C}$, the unit transformation $\theta_{d}\colon \mathcal{C}\to h_{d}\mathcal{C}$
is essentially surjective, and for all $X,Y\in\mathcal{C}$, the map of spaces
\[
\map_{\mathcal{C}}\left(X,Y\right)\to\map_{h_{d}\mathcal{C}}\left(\theta_{d}\left(X\right),\theta_{d}\left(Y\right)\right)
\]
is the $\left(d-1\right)$-truncation map.
\end{prop}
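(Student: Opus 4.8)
The plan is to prove the proposition by showing directly that Lurie's $d$-homotopy category functor $h_{d}$, together with the natural map $\theta_{d}\colon\mathcal{C}\to h_{d}\mathcal{C}$, has all the stated properties; the adjunction itself will then follow formally from the resulting pointwise localization statement. From Lurie's T.2.3.4 I would extract four inputs: (a) for every $\infty$-category $\mathcal{C}$ the simplicial set $h_{d}\mathcal{C}$ is again an $\infty$-category and is a $d$-category, hence an essentially $d$-category; (b) the map $\theta_{d}$ is a bijection on $0$-simplices, so in particular essentially surjective; (c) $h_{d}$ satisfies the $1$-categorical universal property $\operatorname{Hom}_{\mathrm{Set}_{\Delta}}(h_{d}\mathcal{C},\mathcal{D})\iso\operatorname{Hom}_{\mathrm{Set}_{\Delta}}(\mathcal{C},\mathcal{D})$ for every $d$-category $\mathcal{D}$, naturally in both variables; and (d) $h_{d}$ commutes with products against nerves of posets, so that $h_{d}(\mathcal{C}\times\Delta^{1}\times\Delta^{k})\iso h_{d}\mathcal{C}\times\Delta^{1}\times\Delta^{k}$ (this uses that $d$-categories are closed under finite products and that the construction of $h_{d}$ is computed dimensionwise on such products).

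The heart of the argument is the mapping-space formula: for all $X,Y\in\mathcal{C}$ the map $\map_{\mathcal{C}}(X,Y)\to\map_{h_{d}\mathcal{C}}(\theta_{d}X,\theta_{d}Y)$ is the $\left(d-1\right)$-truncation map. This splits into showing that the target is $\left(d-1\right)$-truncated and that the map induces an isomorphism on $\pi_{i}$ for $i\le d-1$ at every basepoint. For the first point one uses that in a $d$-category every simplex of dimension $>d$ is determined by its boundary; feeding this into the model $\operatorname{Hom}^{R}$, whose $i$-simplices are certain $\left(i+1\right)$-simplices of the ambient $\infty$-category, exhibits $\operatorname{Hom}^{R}_{h_{d}\mathcal{C}}(\theta_{d}X,\theta_{d}Y)$ as a $\left(d-1\right)$-truncated Kan complex. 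For the second point one uses Lurie's explicit description of $h_{d}\mathcal{C}$, which alters $\mathcal{C}$ only in simplicial dimensions $\ge d$ by identifying simplices according to their boundaries, and traces this through the $\operatorname{Hom}^{R}$ model to see that on each mapping space it implements precisely $\left(d-1\right)$-truncation, and not merely some unspecified $\left(d-1\right)$-truncated alteration. (A more conceptual route is to identify $\catd$ with an appropriate full subcategory of complete Segal spaces, cut out by a $\left(d-1\right)$-truncatedness condition on the morphism space over the object space, and to observe that since $\tau_{\le d-1}\colon\mathcal{S}\to\mathcal{S}_{\le d-1}$ is a finite-product-preserving reflective localization, the reflector is a levelwise truncation whose unit is manifestly $\left(d-1\right)$-truncation on mapping spaces; one then matches this reflector with $h_{d}$ using the $1$-categorical universal property.)

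Granting the mapping-space formula, the remaining steps are formal. First, for any essentially $d$-category $\mathcal{D}$ the map $\theta_{d}\colon\mathcal{D}\to h_{d}\mathcal{D}$ is an equivalence of $\infty$-categories: it is essentially surjective by (b), and on mapping spaces it is the $\left(d-1\right)$-truncation of already-$\left(d-1\right)$-truncated spaces, hence an equivalence. Consequently, in verifying the universal property of $\theta_{d}$ we may replace any target $\mathcal{D}\in\catd$ by a strict $d$-category. For such $\mathcal{D}$ the functor $\fun(h_{d}\mathcal{C},\mathcal{D})\to\fun(\mathcal{C},\mathcal{D})$ is an equivalence: essential surjectivity is immediate from (c), and full faithfulness follows by applying (c) and (d) to the source objects $\mathcal{C}\times\Delta^{1}\times\Delta^{k}$, which compute the simplices of the relevant spaces of natural transformations. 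Thus $\map_{\cat}(h_{d}\mathcal{C},\mathcal{D})\iso\map_{\cat}(\mathcal{C},\mathcal{D})$ for every $\mathcal{D}\in\catd$, i.e. $\theta_{d}$ exhibits $h_{d}\mathcal{C}$ as a $\catd$-localization of $\mathcal{C}$; by T.5.2.7.8 these assemble into a left adjoint $\cat\to\catd$ of the inclusion, necessarily equivalent to $h_{d}$ and with unit $\theta_{d}$. Together with (a), (b) and the mapping-space formula, this yields all the assertions.

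The main obstacle is the second half of the mapping-space formula: one must control Lurie's strict model $h_{d}\mathcal{C}$ precisely enough to see that the identifications it imposes in high simplicial dimensions implement, on every mapping space and in every homotopy degree, exactly the $\left(d-1\right)$-truncation. This delicate combinatorial bookkeeping with strict models --- which is why it is separated out in \cite{SY19} --- is where essentially all the real work lies; everything around it is formal manipulation of adjunctions and universal properties.
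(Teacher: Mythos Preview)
The paper does not contain a proof of this statement: it is quoted verbatim as \cite[Theorem~2.15]{SY19} and no argument is given here. There is therefore nothing in the present paper to compare your proposal against; the authors explicitly defer the ``rather technical'' proof, ``involving a combinatorial analysis of some strict models,'' to the companion paper \cite{SY19}.

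That said, your outline is a sensible reconstruction of what such a proof should look like, and you correctly isolate the genuine content: verifying that on mapping spaces the unit $\theta_{d}$ is \emph{exactly} the $(d-1)$-truncation, not merely some map into a $(d-1)$-truncated target. Your inputs (a)--(d) from T.2.3.4 are the right raw materials, and the deduction of the adjunction from the pointwise localization via T.5.2.7.8 is standard. The alternative route you mention through complete Segal spaces and levelwise truncation is also viable and arguably cleaner conceptually, though matching it back to Lurie's strict $h_{d}$ still requires some care. Your proposal is honest about where the difficulty sits; whether the combinatorics you gesture at with $\operatorname{Hom}^{R}$ actually goes through cleanly is precisely what \cite{SY19} is there to settle, and you would need to consult that paper (or reproduce its analysis) to complete the argument.
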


\begin{war}
Note that an $\infty$-category \emph{$\mathcal{C}$} is an essentially
$d$-category if and only if all objects of $\mathcal{C}$ are $\left(d-1\right)$-truncated in the sense of T.5.5.6.1. 
Hence, another way to associate an essentially $d$-category with an $\infty$-category $\mathcal{C}$ is to consider the full subcategory spanned by the $\left(d-1\right)$-truncated objects. 
For a presentable $\infty$-category, this is denoted by $\tau_{\le d-1}\mathcal{C}$ in T.5.5.6.1 and called the \emph{$\left(d-1\right)$-truncation of} $\mathcal{C}$. 
We warn the reader that the two essentially $d$-categories $h_{d}\mathcal{C}$ and $\tau_{\le d-1}\mathcal{C}$ are usually very different. 
For example, when $\mathcal{C}=\mathcal{S}$ is the $\infty$-category of spaces, $h_{1}\mathcal{S}$ is the ordinary homotopy category of spaces, while $\tau_{\le0}\mathcal{S}$ is equivalent to the ordinary category of sets. 
Both constructions will play a central role in the proof of the main result, and hopefully the distinction in notation and terminology will prevent confusion.

With these ideas in mind, one might hope that for an $\infty$-category $\mathcal{C}$, the condition of being an essentially $(d+1)$-category would coincide with the condition of begin a $d$-truncated object of the presentable $\infty$-category $\cat$. 
This turns out to be \emph{false}. More precisely, it can be shown that a $d$-truncated object of $\cat$ is an essentially $(d+1)$-category and that an essentially $(d+1)$-category is a $(d+1)$-truncated object of $\cat$, but neither of the converses hold (see \cite[Remark 2.10]{SY19}).
\end{war}

By analogy with the above, we also have a natural notion of an essentially $d$-operad.

\begin{defn}\label{def:Ess_d_Operad}
Let $d\ge-1$. An \emph{essentially $d$-operad} is an $\infty$-operad
$\mathcal{O}$ such that for all $X_{1},\dots,X_{n},Y\in\underline{\mathcal{O}}$,
the multi-mapping space $\mbox{Mul}_{\mathcal{O}}\left(\left\{ X_{1},\dots,X_{n}\right\} ;Y\right)$
is $\left(d-1\right)$-truncated. We denote by $\opd$ the full subcategory
of $\op$ spanned by essentially $d$ -operads.
\end{defn}

\begin{example}
\label{exa:Sym_Mon_d_Cat} Two important special cases are:
\begin{paraenum}
\item A symmetric monoidal $\infty$-category $\mathcal{C}$ is an essentially
$d$-operad if and only if the underlying $\infty$-category $\underline{\mathcal{C}}$
is an essentially $d$-category.
\item A reduced $\infty$-operad $\mathcal{P}$ is an essentially $d$-operad
if and only if the symmetric sequence $\left\{ \mathcal{P}\left(n\right)\right\} _{n\ge0}$
consists of $\left(d-1\right)$-truncated spaces.
\end{paraenum}
\end{example}

In \cite{SY19} we develop a parallel notion of a \emph{$d$-operad}, that bears the same relation to an essentially $d$-operad as a $d$-category does to an essentially $d$-category; ie it is a strict model for an essentially $d$-operad. Using this theory we show the following:

\begin{prop}[{\cite[Theorem 3.12]{SY19}}]
\label{prop:d-homotopy_operad} The inclusion $\opd\into\op$ admits
a left adjoint $h_{d}$, such that for every $\infty$-operad $\mathcal{O}$,
the unit transformation $\theta_{d}\colon \mathcal{O}\to h_{d}\mathcal{O}$ is essentially surjective and for all
$X_{1},\dots,X_{n},Y\in \underline{\mathcal{O}}$,
the map of spaces
\[
\mul_{\mathcal{O}}\left(\left\{ X_{1},\dots,X_{n}\right\} ;Y\right)\to\mul_{h_{d}\mathcal{O}}\left(\left\{ \theta_{d}\left(X_{1}\right),\dots,\theta_{d}\left(X_{n}\right)\right\} ;\theta_{d}\left(Y\right)\right)
\]
is the $\left(d-1\right)$-truncation map.
\end{prop}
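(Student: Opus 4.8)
The plan is to deduce this from the corresponding statement for $\infty$-categories (\propref{d-homotopy_category}) together with the (still unproven, but cited from \cite{SY19}) existence and combinatorial description of a strict $d$-operad model. First I would recall from \cite{SY19} that there is a functor $h_d$ sending an $\infty$-operad $\mathcal{O}$ to a $d$-operad $h_d\mathcal{O}$, built by a level-wise truncation of some strict (dendroidal-style or simplicial-operad-style) model for $\mathcal{O}$, exactly parallel to the construction of the $d$-homotopy category $h_d\mathcal{C}$. The crux is then two things: (a) that $h_d\mathcal{O}$, viewed as an $\infty$-operad, is an essentially $d$-operad; and (b) that for every essentially $d$-operad $\mathcal{R}$, the canonical map $\theta_d\colon\mathcal{O}\to h_d\mathcal{O}$ induces an equivalence
\[
\map_{\op}\left(h_d\mathcal{O},\mathcal{R}\right)\iso\map_{\op}\left(\mathcal{O},\mathcal{R}\right).
\]
This last equivalence is exactly the statement that $h_d$ is left adjoint to the inclusion $\opd\into\op$; combined with the $2$-out-of-$3$ characterization of adjoint functors via universal arrows, establishing it for all $\mathcal{O}$ gives the adjunction.

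For step (a), I would use \exaref{Sym_Mon_d_Cat}: checking that $h_d\mathcal{O}$ is an essentially $d$-operad amounts to checking, via the Segal/Boardman--Vogt conditions defining $\infty$-operads, that the multi-mapping spaces $\mul_{h_d\mathcal{O}}(\{X_1,\dots,X_n\};Y)$ are $(d-1)$-truncated; in the strict $d$-operad model this is built in by construction, since the operation spaces are genuinely $(d-1)$-truncated simplicial sets (or Kan complexes). For step (b), the key computation is the formula for the unit: one must show that for all objects $X_1,\dots,X_n,Y$ of $\underline{\mathcal{O}}$, the map
\[
\mul_{\mathcal{O}}\left(\left\{X_1,\dots,X_n\right\};Y\right)\to\mul_{h_d\mathcal{O}}\left(\left\{\theta_d(X_1),\dots,\theta_d(X_n)\right\};\theta_d(Y)\right)
\]
is precisely the $(d-1)$-truncation map, and that $\theta_d$ is essentially surjective on underlying objects. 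Granting this, the universal property follows formally: given an essentially $d$-operad $\mathcal{R}$, any map $\mathcal{O}\to\mathcal{R}$ must factor through $h_d\mathcal{O}$ because each multi-mapping space of $\mathcal{R}$ is already $(d-1)$-truncated, so the map on each multi-mapping space factors uniquely (up to a contractible space of choices) through its $(d-1)$-truncation; assembling these factorizations coherently into a map of $\infty$-operads is where one genuinely needs the rigidity of the strict $d$-operad model, which is why the serious work is relegated to \cite{SY19}.

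The main obstacle is precisely this coherence: passing from "the map is a truncation on each multi-mapping space separately" to "$h_d$ is an honest left adjoint on the level of $\infty$-categories" requires controlling the full simplicial/dendroidal structure, not just the $1$-categorical data of operation spaces, and in particular requires knowing that the strict $d$-operad model computes the correct mapping $\infty$-groupoids $\map_{\op}(-,-)$. Since the paper explicitly defers these technical points to \cite[Theorem 3.12]{SY19}, in this excerpt the proof is simply a citation; were one to reconstruct it, the hardest single step would be verifying that the combinatorial truncation of a strict operad model is again fibrant (an $\infty$-operad in the relevant model structure) and represents the expected localization, mirroring Lurie's argument for $d$-categories in T.2.3.4 but with the extra bookkeeping forced by the multi-ary operations and the $\Sigma_n$-actions.
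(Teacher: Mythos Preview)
Your proposal is essentially correct in that the paper itself offers no proof here: the proposition is stated as a citation of \cite[Theorem 3.12]{SY19}, and the present paper treats it as a black box. You correctly identify this and sketch what the argument in \cite{SY19} plausibly looks like (a strict $d$-operad model paralleling Lurie's $d$-category construction in T.2.3.4, with fibrancy of the truncated model being the delicate point). Since there is no in-paper proof to compare against, your outline cannot be said to agree or disagree with it; it is a reasonable reconstruction of the cited result's likely strategy, and your identification of the coherence/fibrancy issue as the genuine difficulty is apt.
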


\begin{defn}
Given an $\infty$-operad $\mathcal{O}$, we refer to $h_d\mathcal{O}$, as the \emph{$d$-homotopy operad} of $\mathcal{O}$.
\end{defn}

For future use, we record the following fact:
\begin{prop}[{\cite[Proposition 3.13]{SY19}}]
\label{prop:Alg_d_Category} Let $\mathcal{O}$ be an $\infty$-operad
and let $\mathcal{U}$ be an essentially $d$-operad. The $\infty$-category
$\underline{\alg}_{\mathcal{O}}\left(\mathcal{U}\right)$ is an essentially
$d$-category.
\end{prop}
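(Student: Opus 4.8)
The plan is to verify directly the defining condition of an essentially $d$-category for $\underline{\alg}_{\mathcal{O}}(\mathcal{U})$: that $\map_{\underline{\alg}_{\mathcal{O}}(\mathcal{U})}(A,B)$ is $(d-1)$-truncated for all $A,B$. Since $\mathcal{S}_{\le d-1}\subseteq\mathcal{S}$ is reflective, hence closed under limits in $\mathcal{S}$, the whole strategy amounts to exhibiting these mapping spaces as limits built out of the multi-mapping spaces of $\mathcal{U}$, which are $(d-1)$-truncated by hypothesis.

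First I would recall that $\underline{\alg}_{\mathcal{O}}(\mathcal{U})$ is, by definition (A.2.1.3.1), the full subcategory of $\fun_{\finpt}(\mathcal{O}^\otimes,\mathcal{U}^\otimes)$ on the maps of $\infty$-operads, and that $\fun_{\finpt}(\mathcal{O}^\otimes,\mathcal{U}^\otimes)$ is the fibre of $\fun(\mathcal{O}^\otimes,\mathcal{U}^\otimes)\to\fun(\mathcal{O}^\otimes,\finpt)$ over the structure map $p_{\mathcal{O}}$. Truncatedness of mapping spaces passes to full subcategories, so it is enough to treat $\fun_{\finpt}(\mathcal{O}^\otimes,\mathcal{U}^\otimes)$. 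Combining the end (twisted-arrow) formula for mapping spaces in a functor $\infty$-category with the description of $\fun_{\finpt}$ as such a fibre, one obtains, for $F,G\in\fun_{\finpt}(\mathcal{O}^\otimes,\mathcal{U}^\otimes)$,
\[
\map_{\fun_{\finpt}(\mathcal{O}^\otimes,\mathcal{U}^\otimes)}(F,G)\;\simeq\;\lim_{(g\colon x\to y)\,\in\,\mathrm{Tw}(\mathcal{O}^\otimes)}\map^{\,p_{\mathcal{O}}(g)}_{\mathcal{U}^\otimes}\bigl(F(x),G(y)\bigr),
\]
where for a morphism $\psi$ of $\finpt$ the symbol $\map^{\psi}_{\mathcal{U}^\otimes}(X,Y)$ denotes the summand of $\map_{\mathcal{U}^\otimes}(X,Y)$ lying over $\psi$; the point is that being a natural transformation \emph{over $\finpt$} forces, twisted arrow by twisted arrow, a fixed component over $p_{\mathcal{O}}(g)$.

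Next I would invoke the basic structural description of morphisms in an $\infty$-operad (A.2.1.1--A.2.1.2): if $X\in\mathcal{U}^\otimes$ lies over $\langle m\rangle$ with inert components $X_1,\dots,X_m\in\underline{\mathcal{U}}$ and $Y\in\mathcal{U}^\otimes$ lies over $\langle n\rangle$ with inert components $Y_1,\dots,Y_n$, then for any $\psi\colon\langle m\rangle\to\langle n\rangle$ there is a natural equivalence
\[
\map^{\psi}_{\mathcal{U}^\otimes}(X,Y)\;\simeq\;\prod_{1\le j\le n}\mul_{\mathcal{U}}\bigl(\{X_i\}_{i\in\psi^{-1}(j)};\,Y_j\bigr).
\]
Each factor here is $(d-1)$-truncated because $\mathcal{U}$ is an essentially $d$-operad; a finite product of $(d-1)$-truncated spaces is $(d-1)$-truncated; and the limit over $\mathrm{Tw}(\mathcal{O}^\otimes)$ of $(d-1)$-truncated spaces is again $(d-1)$-truncated. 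Hence $\map_{\underline{\alg}_{\mathcal{O}}(\mathcal{U})}(A,B)$ is $(d-1)$-truncated, as required. I would note that this bookkeeping is uniform in $d\ge -1$, including the degenerate values $d\in\{-1,0\}$, precisely because only the \emph{products of multi-mapping spaces} enter the limit, rather than the full mapping spaces of $\mathcal{U}^\otimes$ (which need not themselves be $(d-1)$-truncated, as $\mathcal{U}=\com$ already shows).

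The step I expect to be the main obstacle is pinning down the end formula for $\map_{\fun_{\finpt}}(F,G)$ and correctly identifying each twisted-arrow factor with the single-component space $\map^{\psi}_{\mathcal{U}^\otimes}$: one has to check that the ``over $\finpt$'' condition genuinely restricts $\map_{\mathcal{U}^\otimes}(F(x),G(y))$ to the summand over $p_{\mathcal{O}}(g)$ for \emph{each} twisted arrow and that the resulting choices are coherent along $\mathrm{Tw}(\mathcal{O}^\otimes)$. A route that avoids this is to first dispose of the case $\mathcal{O}=\triv$, where $\underline{\alg}_{\triv}(\mathcal{U})\simeq\underline{\mathcal{U}}$ is essentially $d$ tautologically, and then bootstrap using a presentation of a general $\mathcal{O}$ built from free operads together with the fact that $\underline{\alg}_{(-)}(\mathcal{U})$ turns the corresponding colimits of $\infty$-operads into limits of $\infty$-categories; but the direct computation above is shorter.
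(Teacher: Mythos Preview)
The paper does not actually supply a proof of this proposition; it is recorded with a citation to \cite[Proposition 3.13]{SY19} and used as a black box. So there is no in-paper argument to compare against, and I can only assess your proposal on its own merits.

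Your argument is sound. The key points are all in order: $\underline{\alg}_{\mathcal{O}}(\mathcal{U})$ is a full subcategory of $\fun_{\finpt}(\mathcal{O}^{\otimes},\mathcal{U}^{\otimes})$, so it suffices to bound truncatedness of mapping spaces there; the end formula together with the fact that fibers commute with limits gives the description
\[
\map_{\fun_{\finpt}(\mathcal{O}^{\otimes},\mathcal{U}^{\otimes})}(F,G)\;\simeq\;\lim_{g\in\mathrm{Tw}(\mathcal{O}^{\otimes})}\map^{\,p_{\mathcal{O}}(g)}_{\mathcal{U}^{\otimes}}\bigl(F(x),G(y)\bigr),
\]
and the Segal decomposition of each $\map^{\psi}_{\mathcal{U}^{\otimes}}(X,Y)$ into a finite product of multi-mapping spaces of $\mathcal{U}$ then finishes the job, since $(d-1)$-truncated spaces are closed under limits. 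You are right to flag that one cannot simply use $\map_{\mathcal{U}^{\otimes}}(X,Y)$ in the end formula, since even for $\mathcal{U}=\bb E_{\infty}$ these are not $(-1)$-truncated; the restriction to a fixed component over $\finpt$ is exactly what makes the argument go through, and the identification of the identity natural transformation of $p_{\mathcal{O}}$ with the compatible family $\{p_{\mathcal{O}}(g)\}$ under the end formula is what justifies taking fibers termwise. The alternative bootstrap you sketch at the end is unnecessary; the direct computation is both shorter and cleaner.
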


\subsection{$d$-Equivalences and $d$-Topoi}

\begin{defn}
\label{def:d_Equivalence}For $d\ge-2$, a map of $\infty$-operads
$f\colon \mathcal{O}\to\mathcal{U}$ is called a \emph{$d$-equivalence}, if the induced
map $h_{d+1}\left(f\right)\colon h_{d+1}\mathcal{O}\to h_{d+1}\mathcal{U}$
is an equivalence of $\infty$-operads, ie if it is essentially surjective
on the underlying categories and induces an equivalence on the $d$-truncations of all the multi-mapping spaces.
\end{defn}

An important special case is
\begin{defn}
\label{def:d_Connectedness}For $d\ge-2$, an $\infty$-operad $\mathcal{O}$
is called $d$-connected if the unique map from $\mathcal{O}$ to
the terminal $\infty$-operad $\bb E_{\infty}$ is a $d$-equivalence, 
ie if all the multi-mapping spaces in $\mathcal{O}$ are $d$-connected.
\end{defn}

\begin{rem}
\label{rem:Minus_One_Connected}Let $\mathcal{P}$ be a reduced $\infty$-operad.
It is $d$-connected if and only if all the spaces 
$\mathcal{P}\left(n\right)$
in the underlying symmetric sequence of $\mathcal{P}$ are $d$-connected.
If $\mathcal{P}$ is not equivalent to $\bb E_{0}$, then for some
$n\ge2$ we have $\mathcal{P}\left(n\right)\neq\es$, and so there exists an
$n$-ary operation $\mu\in\mathcal{P}\left(n\right)$ for $n\ge2$.
By composing $\mu$ with itself, we can obtain an operation in $\mathcal{P}$
of arbitrarily high arity and by composition with the unique nullary
operation, we can obtain an operation of arbitrary arity. It follows
that $\mathcal{P}\not\simeq\bb E_{0}$ if and only if $\mathcal{P}$
is $\left(-1\right)$-connected.
\end{rem}

The main result of this section is a characterization of $d$-equivalences of reduced $\infty$-operads. But first, we need some preliminary observations about Cartesian symmetric monoidal structures. 
\begin{lem}
\label{lem:Cartesian_conservative}Let $f_{\alpha}\colon \mathcal{D}\to\mathcal{C}_{\alpha}$
be a collection of jointly conservative, symmetric monoidal functors
between symmetric monoidal $\infty$-categories.
\begin{lemenum}
\item If $\mathcal{C}_{\alpha}$ is Cartesian and $f_{\alpha}$ preserves
finite products for all $\alpha$ and $\underline{\mathcal{D}}$ has
all finite products, then $\mathcal{D}$ is Cartesian.
\item If $\mathcal{C}_{\alpha}$ is coCartesian and $f_{\alpha}$ preserves
finite coproducts for all $\alpha$ and $\underline{\mathcal{D}}$
has all finite coproducts, then $\mathcal{D}$ is coCartesian.
\end{lemenum}
\end{lem}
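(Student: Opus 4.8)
The plan is to use Lurie's intrinsic characterization of Cartesian symmetric monoidal structures (T.2.4.1.5 / A.2.4.1.8): a symmetric monoidal $\infty$-category $\mathcal{D}$ is Cartesian precisely when the unit object $1_{\mathcal{D}}$ is terminal and, for every pair of objects $D_1,D_2\in\underline{\mathcal{D}}$, the canonical maps $D_1\otimes D_2\to D_1$ and $D_1\otimes D_2\to D_2$ (obtained by tensoring with the essentially unique maps $D_i\to 1_{\mathcal{D}}$) exhibit $D_1\otimes D_2$ as a product of $D_1$ and $D_2$. Dually for coCartesian. Since the two parts are formally dual (apply part~(1) in $\mathcal{D}^{\mathrm{op}}$, using that $\mathcal{D}^{\mathrm{op}}$ inherits a symmetric monoidal structure and that the $f_\alpha^{\mathrm{op}}$ remain jointly conservative and product-preserving), I would prove only (1) and deduce (2).

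So assume each $\mathcal{C}_\alpha$ is Cartesian, each $f_\alpha$ is symmetric monoidal and preserves finite products, $\underline{\mathcal{D}}$ has finite products, and the $f_\alpha$ are jointly conservative. First I would check the unit condition: $f_\alpha(1_{\mathcal{D}})\simeq 1_{\mathcal{C}_\alpha}$ since $f_\alpha$ is symmetric monoidal, and $1_{\mathcal{C}_\alpha}$ is terminal in $\underline{\mathcal{C}_\alpha}$ since $\mathcal{C}_\alpha$ is Cartesian. Now $\underline{\mathcal{D}}$ has a terminal object $\term_{\mathcal{D}}$ (being closed under finite products, in particular the empty one), and $f_\alpha(\term_{\mathcal{D}})$ is terminal in $\underline{\mathcal{C}_\alpha}$ because $f_\alpha$ preserves finite products. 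Hence for each $\alpha$ the canonical map $f_\alpha(1_{\mathcal{D}})\to f_\alpha(\term_{\mathcal{D}})$ is an equivalence (both sides terminal), so by joint conservativity $1_{\mathcal{D}}\to\term_{\mathcal{D}}$ is an equivalence, i.e.\ $1_{\mathcal{D}}$ is terminal.

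Next, the product condition. Fix $D_1,D_2\in\underline{\mathcal{D}}$ and let $D_1\times D_2$ denote their product in $\underline{\mathcal{D}}$. Using the unit maps $D_i\to 1_{\mathcal{D}}\simeq\term_{\mathcal{D}}$ and the symmetric monoidal structure, one gets projection maps $D_1\otimes D_2\to D_i$, hence a comparison map $c\colon D_1\otimes D_2\to D_1\times D_2$ in $\underline{\mathcal{D}}$. I claim $c$ is an equivalence; by joint conservativity it suffices to show each $f_\alpha(c)$ is an equivalence. Since $f_\alpha$ is symmetric monoidal, $f_\alpha(D_1\otimes D_2)\simeq f_\alpha(D_1)\otimes f_\alpha(D_2)$ and $f_\alpha$ carries the projection maps and the comparison map $c$ to the corresponding ones for $f_\alpha(D_1),f_\alpha(D_2)$ in $\mathcal{C}_\alpha$ (naturality of the construction, together with $f_\alpha(1_{\mathcal{D}})\simeq 1_{\mathcal{C}_\alpha}$); since $f_\alpha$ preserves finite products it also carries $D_1\times D_2$ to $f_\alpha(D_1)\times f_\alpha(D_2)$ compatibly. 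As $\mathcal{C}_\alpha$ is Cartesian, the comparison map $f_\alpha(D_1)\otimes f_\alpha(D_2)\to f_\alpha(D_1)\times f_\alpha(D_2)$ is an equivalence, so $f_\alpha(c)$ is an equivalence. Therefore $c$ is an equivalence, and combined with the unit statement this shows $\mathcal{D}$ is Cartesian by the cited characterization.

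The main obstacle I anticipate is the bookkeeping in the product condition: making precise that the comparison map $c$ is natural enough that applying the symmetric monoidal functor $f_\alpha$ really does send it to the analogous comparison map downstairs, rather than merely to \emph{some} map between the right objects. This is ultimately a diagram chase with the structure maps defining the Cartesian symmetric monoidal structure (the lax/oplax comparison data of T.2.4.1), and one may prefer to phrase it via the universal property of T.2.4.1.8 applied to the full subcategory generated by $D_1,D_2$, but it should cause no real difficulty.
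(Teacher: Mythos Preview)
Your proposal is correct and follows essentially the same strategy as the paper: establish the unit condition and then show the comparison map between tensor and (co)product is an equivalence by applying the jointly conservative $f_\alpha$ and using that the corresponding comparison in each $\mathcal{C}_\alpha$ is an equivalence. The only cosmetic differences are that the paper reduces (1) to (2) and proves (2), whereas you do the reverse, and the paper phrases the comparison via the canonical map of $\infty$-operads $\mathcal{D}\to\underline{\mathcal{D}}_{\sqcup}$ (obtained from the adjunction of \lemref{coCart_Left_Adjoint}) rather than invoking the characterization A.2.4.1.5 directly; the bookkeeping concern you flag is handled in the paper by the evident commuting square of $\infty$-operads $\mathcal{D}^\otimes\to\mathcal{D}^\sqcup$ over $\mathcal{C}_\alpha^\otimes\to\mathcal{C}_\alpha^\sqcup$, which is the same naturality you invoke.
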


\begin{proof}
By A.2.4.2.7, the opposite of a symmetric monoidal $\infty$-category
acquires a symmetric monoidal structure, which is Cartesian if and
only if the original symmetric monoidal $\infty$-category is coCartesian.
Hence, it is enough to prove (2). The unit object $1\in \mathcal{D}$
has a unique map from the initial object $\es\to1$. Since $f_{\alpha}$
is both symmetric monoidal and preserves finite coproducts, $f_{\alpha}\left(\es\to1\right)$ is the unique map from the initial object to the unit object of $\mathcal{C}_{\alpha}$, which is an equivalence by assumption. Since the collection of $f_{\alpha}$
is jointly conservative, it follows that the unit of $\mathcal{D}$
is initial in $\underline{\mathcal{D}}$ as well. Namely, $\mathcal{D}$
is unital as an $\infty$-operad. Using \lemref{coCart_Left_Adjoint}
we have a map of $\infty$-operads $G\colon \mathcal{D}\to\mathcal{D}_{\sqcup}$, which is an equivalence on the underlying $\infty$-categories. We
need to show that this map is symmetric monoidal. Namely, that it
maps coCartesian edges (over $\finpt$) to coCartesian edges. Since
we already know that it is a map of $\infty$-operads and hence preserves
inert morphisms, we only need to show that active coCartesian edges
map to coCartesian edges. Using the Segal conditions, we are further
reduced to considering only coCartesian lifts of the unique active
morphism $\mu\colon \left\langle n\right\rangle \to\left\langle 1\right\rangle $.
For every collection of objects $X_{1},X_{2},\dots,X_{n}\in\underline{\mathcal{D}}$, let 
\[
\mu_{\otimes}\colon X_{1}\oplus\cdots\oplus X_{n}\to X_{1}\otimes X_{2}\otimes\dots\otimes X_{n}
\]
be a coCartesian lift of $\mu$ to $\mathcal{D}^{\otimes}$. Since
$G$ is an equivalence on the underlying $\infty$-categories, $G\left(\mu_{\otimes}\right)$
can be considered as a map
\[
X_{1}\oplus\cdots\oplus X_{n}\to X_{1}\otimes X_{2}\otimes\dots\otimes X_{n}
\]
in $\mathcal{D}_{\sqcup}$. Now, let 
\[
\mu_{\sqcup}\colon X_{1}\oplus\cdots\oplus X_{n}\to X_{1}\sqcup X_{2}\sqcup\dots\sqcup X_{n}
\]
be a coCartesian lift of $\mu$ to $\mathcal{D}^{\sqcup}$. There
exists a unique (up to homotopy) map
\[
g_{X_{1},\dots,X_{n}}\colon X_{1}\sqcup X_{2}\sqcup\dots\sqcup X_{n}\to X_{1}\otimes X_{2}\otimes\dots\otimes X_{n},
\]
such that $G\left(\mu_{\otimes}\right)=g_{X_{1},\dots,X_{n}}\circ\mu_{\sqcup}$.
We need to show that $g_{X_{1},\dots,X_{n}}$ is an equivalence in
$\underline{\mathcal{D}}$ for all $X_{1},\dots,X_{n}\in\underline{\mathcal{D}}$.
For every $\alpha$, we have a homotopy commutative diagram 
\[
\xymatrix{\mathcal{D}^{\otimes}\ar[r]\ar[d]^{f_{\alpha}} & \mathcal{D}^{\sqcup}\ar[d]^{f_{\alpha}}\\
\mathcal{C}_{\alpha}^{\otimes}\ar[r] & \mathcal{C}_{\alpha}^{\sqcup}
}
\]
in which the vertical and bottom maps are symmetric monoidal.
It follows that $f_{\alpha}\left(g_{X_{1},\dots,X_{n}}\right)$ is
an equivalence in $\mathcal{C}_{\alpha}$ for all $\alpha$. By joint
conservativity, $g_{X_{1},\dots,X_{n}}$ is an equivalence as well.
\end{proof}
\begin{lem}
\label{lem:Cart_Sym_Mon_Strcture}Let $\mathcal{C}_{\times}$ be a
Cartesian symmetric monoidal $\infty$-category. For every $\infty$-operad
$\mathcal{D}$, the $\infty$-operad $\alg_{\mathcal{D}}\left(\mathcal{C}\right)$
(see A.2.2.5.4) is also Cartesian.
\end{lem}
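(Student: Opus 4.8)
The plan is to reduce the statement to \lemref{Cartesian_conservative}\,(1), by exhibiting a jointly conservative family of finite–product–preserving symmetric monoidal functors out of $\alg_{\mathcal{D}}(\mathcal{C})$ whose targets are Cartesian. The natural candidates are the evaluation functors $ev_X\colon \alg_{\mathcal{D}}(\mathcal{C})\to\mathcal{C}$, with $X$ ranging over the objects of $\underline{\mathcal{D}}$.

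First I would record the auxiliary facts needed. Since $\mathcal{C}_{\times}$ is a symmetric monoidal $\infty$-category, $\alg_{\mathcal{D}}(\mathcal{C})$ is again a symmetric monoidal $\infty$-category and every evaluation functor $ev_X$ is symmetric monoidal (A.3.2.4.4, as recalled in the conventions). The family $\{ev_X\}_{X\in\underline{\mathcal{D}}}$ is jointly conservative: a morphism of $\mathcal{D}$-algebras is an equivalence if and only if it induces an equivalence on the underlying object over every $X\in\underline{\mathcal{D}}$ — a standard consequence of the fact that the relevant forgetful functors are jointly conservative (cf.\ A.3.2.2). Finally, because $\mathcal{C}_{\times}$ is Cartesian, $\underline{\mathcal{C}}$ has all finite products; and since limits of $\mathcal{D}$-algebras that exist in $\mathcal{C}$ are computed on underlying objects (A.3.2.2.5), $\underline{\alg}_{\mathcal{D}}(\mathcal{C})$ admits all finite products and each $ev_X$ preserves them (indeed preserves all such limits).

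With these in hand, the proof concludes by invoking \lemref{Cartesian_conservative}\,(1) applied to the family $\{ev_X\}_{X\in\underline{\mathcal{D}}}$, all of whose targets coincide with the Cartesian symmetric monoidal $\infty$-category $\mathcal{C}_{\times}$; this gives that $\alg_{\mathcal{D}}(\mathcal{C})$ is Cartesian. The only content requiring genuine care is the package of statements about evaluation functors — their joint conservativity and the fact that finite products of $\mathcal{D}$-algebras are computed pointwise — both of which are standard and just need to be pinned down to precise references in \cite{HA}; once granted, the lemma is a formal consequence of the already-established \lemref{Cartesian_conservative}. I do not expect any serious obstacle beyond locating those references.
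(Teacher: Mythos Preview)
Your proposal is correct and follows essentially the same route as the paper's proof: apply \lemref{Cartesian_conservative}\,(1) to the family of evaluation functors $ev_X\colon \alg_{\mathcal{D}}(\mathcal{C})\to\mathcal{C}_{\times}$, checking that they are symmetric monoidal, preserve finite products, and are jointly conservative. The paper's argument for joint conservativity factors through the restriction $\alg_{\mathcal{D}}(\mathcal{C})\to\fun(\mathcal{D},\mathcal{C})$ and then invokes T.3.1.2.1, which is the precise reference you were looking for; otherwise the two proofs coincide.
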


\begin{proof}
By A.2.2.5.4, since $\mathcal{C}_{\times}$ is symmetric monoidal,
so is $\alg_{\mathcal{D}}\left(\mathcal{C}\right)$ and, for every $X\in\mathcal{D}$, the evaluation functor $e_{X}\colon \alg_{\mathcal{D}}\left(\mathcal{C}\right)\to\mathcal{C}_{\times}$
is a symmetric monoidal functor. On the underlying $\infty$-categories, $e_{X}$
also preserves finite products since it preserves all limits. Finally,
we show that the collection of evaluation functors is jointly conservative
since they can be presented as the composition of the conservative
restriction functor
\[
\alg_{\mathcal{D}}\left(\mathcal{C}\right)\to\fun\left(\mathcal{D},\mathcal{C}\right)
\]
and the collection of evaluation functors
\[
e_{X}\colon \fun\left(\mathcal{D},\mathcal{C}\right)\to\mathcal{C},
\]
which are jointly conservative by T.3.1.2.1. Now, by \lemref{Cartesian_conservative}(1),
$\alg_{\mathcal{D}}\left(\mathcal{C}\right)$ is Cartesian.
\end{proof}

We are now ready for the main proposition.
\begin{prop}
\label{prop:d-equivalence} Let $d \ge -1$. Given a map of reduced $\infty$-operads $f\colon \mathcal{P}\to\mathcal{Q}$, the following are equivalent:
\begin{lemenum}
\item The map $f$ is a $d$-equivalence.
\item For every $\left(d+1\right)$-topos $\mathcal{C}$, the induced map
\[
\map_{\op}\left(\mathcal{Q},\mathcal{C}_{\times}\right)\to\map_{\op}\left(\mathcal{P},\mathcal{C}_{\times}\right)
\]
is a homotopy equivalence.
\item For every simplicial set $K$, the induced map 
\[
\map_{\op}\left(\mathcal{Q},\mathcal{S}_{\le d}^{K}\right)\to\map_{\op}\left(\mathcal{P},\mathcal{S}_{\le d}^{K}\right)
\]
is a homotopy equivalence where $\mathcal{S}_{\le d}^{K}$ is given
the Cartesian symmetric monoidal structure.
\item The induced map 
\[
\underline{\alg}_{\mathcal{Q}}\left(\mathcal{S}_{\le d}\right)\to\underline{\alg}_{\mathcal{P}}\left(\mathcal{S}_{\le d}\right)
\]
is an equivalence of $\infty$-categories, where $\mathcal{S}_{\le d}$
is given the Cartesian symmetric monoidal structure.
\end{lemenum}
\end{prop}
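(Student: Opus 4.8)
The plan is to run the cycle of implications $(1)\Rightarrow(2)\Rightarrow(3)\Rightarrow(4)\Rightarrow(1)$, with the three easy implications formal and all the work in $(4)\Rightarrow(1)$. For $(1)\Rightarrow(2)$: if $\mathcal{C}$ is a $(d+1)$-topos then $\underline{\mathcal{C}}$ is an essentially $(d+1)$-category, so by \exaref{Sym_Mon_d_Cat}(1) the $\infty$-operad $\mathcal{C}_{\times}$ is an essentially $(d+1)$-operad; since by \propref{d-homotopy_operad} the functor $h_{d+1}$ is left adjoint to the fully faithful inclusion $\mathbf{Op}_{d+1}\into\op$, we get $\map_{\op}\left(\mathcal{P},\mathcal{C}_{\times}\right)\simeq\map_{\op}\left(h_{d+1}\mathcal{P},\mathcal{C}_{\times}\right)$, naturally in $\mathcal{P}$, and likewise for $\mathcal{Q}$; as $f$ is a $d$-equivalence, $h_{d+1}(f)$ is an equivalence and the claim follows. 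For $(2)\Rightarrow(3)$: replacing $K$ by a categorically equivalent $\infty$-category, $\fun\left(K,\mathcal{S}\right)$ is a presheaf $\infty$-topos, so $\mathcal{S}_{\le d}^{K}=\tau_{\le d}\fun\left(K,\mathcal{S}\right)$ is a $(d+1)$-topos whose Cartesian symmetric monoidal structure is the one appearing in $(3)$ (products in functor $\infty$-categories are computed objectwise); thus $(3)$ is the instance $\mathcal{C}=\mathcal{S}_{\le d}^{K}$ of $(2)$.

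For $(3)\Rightarrow(4)$: a functor of $\infty$-categories is an equivalence if and only if it induces an equivalence on $\map_{\cat}\left(K,-\right)$ for every simplicial set $K$. For a Cartesian symmetric monoidal $\infty$-category $\mathcal{D}$ there is a natural equivalence $\fun\left(K,\underline{\alg}_{\mathcal{O}}\left(\mathcal{D}_{\times}\right)\right)\simeq\underline{\alg}_{\mathcal{O}}\left(\mathcal{D}^{K}_{\times}\right)$, because both sides identify with the $\infty$-category of $\mathcal{O}$-monoid objects in $\fun\left(K,\mathcal{D}\right)$ and the Segal conditions defining these are tested objectwise. Passing to maximal subgroupoids gives a natural equivalence $\map_{\cat}\left(K,\underline{\alg}_{\mathcal{O}}\left(\mathcal{S}_{\le d}\right)\right)\simeq\map_{\op}\left(\mathcal{O},\mathcal{S}_{\le d}^{K}\right)$, so applying $\map_{\cat}\left(K,-\right)$ to the functor of $(4)$ yields the map of $(3)$, which is an equivalence for every $K$; hence the functor of $(4)$ is an equivalence.

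For $(4)\Rightarrow(1)$ we use the free-algebra machinery developed earlier in this section. The $\infty$-category $\mathcal{S}_{\le d}$ with the Cartesian structure is presentably symmetric monoidal (it is presentable, and $Z\times(-)$ preserves colimits for $d$-truncated $Z$, since it does so in $\mathcal{S}$ and commutes with $\tau_{\le d}$), so the monad $T_{\mathcal{P}}$ of \lemref{Monad_Formula} and the comparison in \propref{Monad_Map_Formula} are available for $\mathcal{C}=\mathcal{S}_{\le d}$. The forgetful functor $G\colon\underline{\alg}_{\mathcal{Q}}\left(\mathcal{S}_{\le d}\right)\to\underline{\alg}_{\mathcal{P}}\left(\mathcal{S}_{\le d}\right)$ induced by $f$ is an equivalence by $(4)$, hence by \lemref{Equivalence_Gives_Equivalence} the map of monads $\alpha_{f}\colon T_{\mathcal{P}}\to T_{\mathcal{Q}}$ of Construction~\conref{Map_Monads} is a natural equivalence. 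By \propref{Monad_Map_Formula}, for every $X\in\mathcal{S}_{\le d}$ the map $T_{\mathcal{P}}(X)\to T_{\mathcal{Q}}(X)$ is the map
\[
\coprod_{n\ge0}\left(\mathcal{P}\left(n\right)\otimes X^{\otimes n}\right)_{h\Sigma_{n}}\longrightarrow\coprod_{n\ge0}\left(\mathcal{Q}\left(n\right)\otimes X^{\otimes n}\right)_{h\Sigma_{n}}
\]
induced summand-by-summand by the maps $\mathcal{P}\left(n\right)\to\mathcal{Q}\left(n\right)$ underlying $g_{\sseq}$. Fix $n$ and take $X$ to be a finite discrete space on $n$ points; decompose the $\Sigma_{n}$-set $X^{\otimes n}=X^{\times n}$ into orbits. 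Using the standard identification of the $\Sigma_{n}$-homotopy-orbits of $M\times(\Sigma_{n}/H)$ with the $H$-homotopy-orbits of $M$, together with the fact that in $\mathcal{S}_{\le d}$ truncation commutes with homotopy orbits and with the coproduct (here just disjoint union), the summand of $T_{\mathcal{P}}(X)$ indexed by the free orbit of $(1,\dots,n)$ equals $\tau_{\le d}\mathcal{P}\left(n\right)$, and the displayed map restricts on it to the $d$-truncation of $\mathcal{P}\left(n\right)\to\mathcal{Q}\left(n\right)$. Since both coproducts are indexed by the same set $\coprod_{n}X^{\times n}/\Sigma_{n}$ and the whole map is an equivalence, this restriction is an equivalence; as $n$ was arbitrary, each $\mathcal{P}\left(n\right)\to\mathcal{Q}\left(n\right)$ is a $d$-equivalence of spaces. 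Since $\mathcal{P}$ and $\mathcal{Q}$ are reduced this says precisely that $h_{d+1}(f)$ is an equivalence of $\infty$-operads, i.e.\ that $f$ is a $d$-equivalence.

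The step I expect to be the main obstacle is $(4)\Rightarrow(1)$: one must know that the orbit decomposition of $T_{\mathcal{P}}(X)$ for discrete $X$ is carried \emph{summandwise} onto that of $T_{\mathcal{Q}}(X)$ by the map induced from $f$ — this is exactly the content of \propref{Monad_Map_Formula} (that $\alpha_{f}$ comes from $g_{\sseq}$) — and that the free-orbit summand returns $\tau_{\le d}\mathcal{P}(n)$ itself rather than a homotopy-orbit quotient. The remaining subtlety is purely bookkeeping of truncations and (co)limits computed in $\mathcal{S}_{\le d}$ versus in $\mathcal{S}$.
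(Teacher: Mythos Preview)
Your proof is correct and follows the same cycle $(1)\Rightarrow(2)\Rightarrow(3)\Rightarrow(4)\Rightarrow(1)$ as the paper, with the same essential content in the hard step $(4)\Rightarrow(1)$: both arguments evaluate the comparison map of monads at a discrete $n$-point space and isolate the free $\Sigma_n$-orbit summand to recover the map $\mathcal{P}(n)\to\mathcal{Q}(n)$ after $d$-truncation.

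Two minor differences are worth noting. In $(3)\Rightarrow(4)$, the paper goes through the Boardman--Vogt tensor--hom adjunction and \lemref{Cart_Sym_Mon_Strcture} to identify $\alg_{\mathcal{E}}(\mathcal{S}_{\le d})$ with $(\mathcal{S}_{\le d})^{\mathcal{E}}$ carrying the Cartesian structure, whereas you argue more directly via the identification of algebras in a Cartesian symmetric monoidal $\infty$-category with $\mathcal{O}$-monoid objects and the fact that the Segal conditions are tested objectwise in functor categories; your route avoids invoking the closed structure on $\op$. In $(4)\Rightarrow(1)$, the paper first reduces to the case where $\mathcal{P}$ and $\mathcal{Q}$ are already essentially $(d+1)$-operads (so that $\mathcal{P}(n)$ is $d$-truncated and appears undisturbed as a summand), while you skip this reduction and instead observe that the free-orbit summand computed in $\mathcal{S}_{\le d}$ is $\tau_{\le d}\mathcal{P}(n)$. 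Both are valid; the paper's reduction is a cosmetic simplification.
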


\begin{proof}

$\left(1\right)\implies\left(2\right)$ Consider the commutative diagram
\[
\xymatrix{\map_{\op}\left(\mathcal{Q},\mathcal{C}\right)\ar[d]\ar[r] & \map_{\op}\left(\mathcal{P},\mathcal{C}\right)\ar[d]\\
\map_{\op}\left(h_{d+1}\mathcal{Q},\mathcal{C}\right)\ar[r] & \map_{\op}\left(h_{d+1}\mathcal{P},\mathcal{C}\right)
.}
\]

Since $h_{d+1}\left(\mathcal{P}\right)\to h_{d+1}\left(\mathcal{Q}\right)$
is an equivalence of $\infty$-operads, the bottom map is a homotopy
equivalence. By \propref{d-homotopy_operad}, the vertical
maps are equivalences as well, and so, by the 2-out-of-3 property, the top map is an equivalence.

$\left(2\right)\implies\left(3\right)$ Since $\mathcal{S}_{\le d}^{K}$
is a $\left(d+1\right)$-topos, this is just a special case.

$\left(3\right)\implies\left(4\right)$ By Yoneda's lemma applied
to $\cat$, the map 
\[
\underline{\alg}_{\mathcal{Q}}\left(\mathcal{S}_{\le d}\right)\to\underline{\alg}_{\mathcal{P}}\left(\mathcal{S}_{\le d}\right)
\]
is an equivalence of $\infty$-categories if for every $\infty$-category
$\mathcal{E}$, the map 
\[
\map_{\cat}(\mathcal{E},\underline{\alg}_{\mathcal{Q}}\left(\mathcal{S}_{\le d}\right))\to\map_{\cat}(\mathcal{E},\underline{\alg}_{\mathcal{P}}\left(\mathcal{S}_{\le d}\right))
\]
is a homotopy equivalence. Using the fully faithful embedding $\cat\into\op$, which is left adjoint to the underlying category functor $\op\to\cat$
(see A.2.1.4.11), this map is equivalent to 
\[
\map_{\op}\left(\mathcal{E},\alg_{\mathcal{Q}}\left(\mathcal{S}_{\le d}\right)\right)\to\map_{\op}\left(\mathcal{E},\alg_{\mathcal{P}}\left(\mathcal{S}_{\le d}\right)\right).
\]
By adjointness with the Boardman\textendash Vogt tensor product and the fact
that it is symmetric, the map is equivalent to 
\[
\map_{\op}\left(\mathcal{Q},\alg_{\mathcal{E}}\left(\mathcal{S}_{\le d}\right)\right)\to\map_{\op}\left(\mathcal{P},\alg_{\mathcal{E}}\left(\mathcal{S}_{\le d}\right)\right).
\]
Since $\mathcal{E}$ is an $\infty$-category, by \lemref{Cart_Sym_Mon_Strcture} the $\infty$-operad $\alg_{\mathcal{E}}\left(\mathcal{S}_{\le d}\right)$ is just the $\infty$-category of functors $\left(\mathcal{S}_{\le d}\right)^{\mathcal{E}}$ endowed with the Cartesian symmetric monoidal structure. 
Since the functor category is invariant under Joyal equivalences, we can replace $\mathcal{E}$ with any simplicial set $K$.

$\left(4\right)\implies\left(1\right)$ Consider the commutative diagram
\[
\xymatrix{\underline{\alg}_{\mathcal{Q}}\left(\mathcal{S}_{\le d}\right)\ar[d]^{\wr}\ar[r] & \underline{\alg}_{\mathcal{P}}\left(\mathcal{S}_{\le d}\right)\ar[d]^{\wr}\\
\underline{\alg}_{h_{d}\mathcal{Q}}\left(\mathcal{S}_{\le d}\right)\ar[r] & \underline{\alg}_{h_{d}\mathcal{P}}\left(\mathcal{S}_{\le d}\right)
.}
\]
By \propref{d-homotopy_operad}, the vertical maps are equivalences;
hence by 2-out-of-3, the top map is an equivalence if and only if
the bottom map is. We can therefore assume without loss of generality
that $\mathcal{P}$ and $\mathcal{Q}$ are themselves essentially
$d$-operads. This implies that $\mathcal{P}\left(n\right)$ and $\mathcal{Q}\left(n\right)$
are $d$-truncated spaces for all $n\ge0$. Now, consider the commutative
diagram 
\[
\xymatrix{\underline{\alg}_{\mathcal{Q}}\left(\mathcal{S}_{\le d}\right)\ar[rr]^{f^{*}}\ar[rd]_{U_{\mathcal{Q}}} &  & \underline{\alg}_{\mathcal{P}}\left(\mathcal{S}_{\le d}\right)\ar[ld]^{U_{\mathcal{P}}}\\
 & \mathcal{S}_{\le d}
,}
\]
where $U_{\mathcal{P}}$ and $U_{\mathcal{Q}}$ are the corresponding
forgetful functors. By \lemref{Equivalence_Gives_Equivalence}, the
associated map 
\[
T_{\mathcal{P}}=\coprod_{n}\left(\mathcal{P}\left(n\right)\times X^{n}\right)_{h\Sigma_{n}}\iso\coprod_{n}\left(\mathcal{Q}\left(n\right)\times X^{n}\right)_{h\Sigma_{n}}=T_{\mathcal{Q}}
\]
of Construction \conref{Map_Monads} is a natural equivalence of functors.
On the other hand, by \propref{Monad_Map_Formula}, this map is induced
from a map of symmetric sequences $f_{\sseq}\colon \left\{ \mathcal{P}\left(n\right)\right\} \to\left\{ \mathcal{Q}\left(n\right)\right\} $.
We want to deduce that $f_{\sseq}$ is an equivalence. For $d=-1$,
there is nothing to prove and so we assume that $d\ge0$. Taking $X=\left[n\right]$, there is a coproduct decomposition 
\[
\left(\mathcal{P}\left(n\right)\times X^{n}\right)_{h\Sigma_{n}}=\mathcal{P}\left(n\right)\sqcup J,
\]
where the summand $\mathcal{P}\left(n\right)$ corresponds to orbits
of points whose $X^{n}$ component is a permutation (note that when
$d=0$, the homotopy orbits in $\mathcal{S}_{\le0}$ are just the
orbits as a set). This characterization implies that $f_{\sseq}\colon \mathcal{P}\left(n\right)\to\mathcal{Q}\left(n\right)$
is an equivalence. Finally, since $\left(-\right)_{\sseq}$ is conservative,
by \propref{SSeq_Conservative}, we deduce that $f$ is an equivalence.
\end{proof}

\section{Truncatedness and Connectedness}

This section deals with properties of truncated and connected morphisms
in a presentable $\infty$-category. We begin in 4.1 with some basic
facts about the space of lifts in a commutative square. The key result
is \propref{Lifts_Diagonal}, which expresses the homotopy fiber of
the diagonal of the space of lifts as the space of lifts in a closely
related square. In 4.2 we expand on the notions of $n$-truncated
and $n$-connected morphisms. The main result is \propref{Lifts_Space_Truncatedness},
which is a quantitative version of the defining orthogonality relation
between $n$-connected and $n$-truncated morphisms. In 4.3 we introduce
an auxiliary notion of an $\left(n-\frac{1}{2}\right)$-connected
morphism and compare it with the notion of an $n$-connected morphism
under some assumptions on the ambient $\infty$-category. We conclude
with 4.4 in which we study the notion of $n$-connectedness for the
$\infty$-category of algebras over a reduced $\infty$-operad. In
particular, we show that under some reasonably general conditions,
a map of algebras is $n$-connected if the map between the underlying
objects is $n$-connected (\propref{Algebraic_Reduction}). 

We rely on T.5.5.6 for the basic theory of truncated morphisms and
objects, but we note that the properties of connected morphisms are
studied in \cite{HTT} only in the context of $\infty$-topoi. Some
further results, still in the context of $\infty$-topoi, can be found
in \cite{ABFJ17}. For example, our \propref{Lifts_Space_Truncatedness}
is a generalization of Proposition 3.15 of \cite{ABFJ17}
from $\infty$-topoi to general presentable $\infty$-categories (such
as the $\infty$-category of algebras over an $\infty$-operad). Some
results on truncatedness and connectedness for general presentable
$\infty$-categories can also be found in \cite{GK17}. In
fact, \lemref{Connectedness_Adjoints} and \lemref{connected_via_truncation}
(with its corollary) already appear in \cite{GK17}, yet we
have chosen to include detailed proofs for completeness. Though we shall
not use it, it is worthwhile to mention another result from \cite{GK17}, namely, that the pair of classes of $n$-connected and $n$-truncated morphisms form a factorization system for every presentable $\infty$-category $\mathcal{C}$ (generalizing T.5.2.8.16. from $\infty$-topoi). 

We reiterate that, especially in this section, some of the facts that
we state as lemmas might appear obvious or well known. Nonetheless,
we have chosen to include detailed proofs where those are not to be found
in the literature (to the best of our knowledge). 

\subsection{Space of Lifts}
\begin{defn}
\label{def:Space_of_Lifts}(T.5.2.8.1) A commutative square in an
$\infty$-category $\mathcal{C}$ is a map $q\colon \Delta^{1}\times\Delta^{1}\to\mathcal{C}$,
which we write somewhat informally as 
\[
\Square {A}{B}{X}{Y,}
\]
suppressing the homotopies. The space of lifts for $q$ is defined
as follows. Restricting to the diagonal $\Delta^{1}\to\Delta^{1}\times\Delta^{1}$,
we get a morphism $h\colon A\to Y$ in $\mathcal{C}$, which can be viewed
as an object $\overline{Y}$ in the $\infty$-category $\mathcal{C}_{A/}$.
The diagram $q$ can be encoded as a pair of objects $B,X\in\mathcal{C}_{A//\overline{Y}}$
and the space of lifts for $q$ is given as the mapping space 
\[
L\left(q\right)=\map_{\mathcal{C}_{A//\overline{Y}}}\left(\overline{B},\overline{X}\right).
\]
\end{defn}

\begin{rem}
\label{rem:Lifts_Equiv_Defs} Let us denote the horizontal morphisms
in the above diagram by $f\colon A\to X$ and $g\colon B\to Y$. By the dual of T.5.5.5.12 we have a homotopy fiber sequence
\[
\map_{\mathcal{C}_{A//\overline{Y}}}\left(B,X\right)\to\map_{\mathcal{C}_{A/}}\left(B,X\right)\to\map_{\mathcal{C}_{A/}}\left(B,Y\right)
\]
over $g\in\map_{\mathcal{C}_{A/}}\left(B,Y\right)$. Using T.5.5.5.12
again for the middle and the right term we obtain a presentation of
$\map_{\mathcal{C}_{A//\overline{Y}}}\left(B,X\right)$ as the total
fiber of the square
\[
\xymatrix{\map_{\mathcal{C}}\left(B,X\right)\ar[d]\ar[r] & \map_{\mathcal{C}}\left(B,Y\right)\ar[d]\\
\map_{\mathcal{C}}\left(A,X\right)\ar[r] & \map_{\mathcal{C}}\left(A,Y\right)
.}
\]
In other words, we have a homotopy fiber sequence 
\[
L\left(q\right)\to\map_{\mathcal{C}}\left(B,X\right)\to\map_{\mathcal{C}}\left(A,X\right)\times_{\map_{\mathcal{C}}\left(A,Y\right)}^{h}\map_{\mathcal{C}}\left(B,Y\right)
\]
over the point determined by the diagram $q$.

Another reasonable definition of the space of lifts is as follows.
The inclusion $\Delta^{\left\{ 0,1\right\} }\times\Delta^{\left\{ 0,2\right\} }\into\Delta^{3}$
induces a restriction map $\mathcal{C}^{\Delta^{3}}\to\mathcal{C}^{\Delta^{1}\times\Delta^{1}}$
and we can consider the (automatically homotopy) fiber over the vertex
$q\in\mathcal{C}^{\Delta^{1}\times\Delta^{1}}$, which is an $\infty$-category.
In T.5.2.8.22 it is proved that this $\infty$-category is categorically
equivalent to $L\left(q\right)$ (and in particular a Kan complex). 
\end{rem}

The next lemma shows that the space of lifts behaves well with respect
to pullback and pushout.
\begin{lem}
\label{lem:Lifts_Pullback_Pushout} Given a commutative rectangle
$\Delta^{1}\times\Delta^{2}\to\mathcal{C}$, depicted as
\[
\Rect ABXYZ{W,}
\]
with left square $q_{l}$, right square $q_{r}$, and outer square
$q$,
\begin{lemenum}
\item If $q_{r}$ is a pullback square, then we have a canonical equivalence
$L\left(q\right)\simeq L\left(q_{l}\right)$. 
\item If $q_{l}$ is a pushout square, then we have a canonical equivalence
$L\left(q\right)\simeq L\left(q_{r}\right)$. 
\end{lemenum}
\end{lem}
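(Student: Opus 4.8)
The plan is to reduce part~(2) to part~(1) by passing to opposite $\infty$-categories, and then to prove part~(1) by working in a slice category, where the statement becomes the universal property of a pullback expressed on mapping spaces.

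\emph{Reduction of (2) to (1).} Applying $(-)^{\mathrm{op}}$ to the commutative rectangle $\Delta^{1}\times\Delta^{2}\to\mathcal{C}$ and using the canonical isomorphism $(\Delta^{1}\times\Delta^{2})^{\mathrm{op}}\cong\Delta^{1}\times\Delta^{2}$ that reverses both factors produces a commutative rectangle in $\mathcal{C}^{\mathrm{op}}$ whose left square is $q_{r}^{\mathrm{op}}$, whose right square is $q_{l}^{\mathrm{op}}$, and whose outer square is $q^{\mathrm{op}}$. A square is a pushout in $\mathcal{C}$ if and only if its opposite is a pullback in $\mathcal{C}^{\mathrm{op}}$, and for any commutative square $\sigma$ there is a canonical equivalence $L(\sigma^{\mathrm{op}})\simeq L(\sigma)$: unwinding \defref{Space_of_Lifts}, a lift of $\sigma^{\mathrm{op}}$ is literally the same datum as a lift of $\sigma$, and the double-slice computing $L(\sigma^{\mathrm{op}})$ is the opposite of the one computing $L(\sigma)$ with source and target interchanged. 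Thus part~(2) for $\mathcal{C}$ is precisely part~(1) applied to the rectangle above in $\mathcal{C}^{\mathrm{op}}$, and it suffices to prove part~(1).

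\emph{Proof of (1).} Let $A$ denote the common upper-left vertex of $q$ and $q_{l}$. Both $L(q)$ and $L(q_{l})$ are, by \defref{Space_of_Lifts}, defined via slices of $\mathcal{C}_{A/}$, so we may replace $\mathcal{C}$ by $\mathcal{C}_{A/}$ and thereby assume that $A$ is an initial object; the hypothesis that $q_{r}$ is a pullback survives this replacement because limits in $\mathcal{C}_{A/}$ are computed in $\mathcal{C}$. With $A$ initial we have $L(q)\simeq\map_{\mathcal{C}_{/W}}(B,Z)$ and $L(q_{l})\simeq\map_{\mathcal{C}_{/Y}}(B,X)$, where $B$ is viewed over $W$ through $B\to Y\to W$, and the assumption on $q_{r}$ says exactly that $X\simeq Z\times_{W}Y$. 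Now compute: by T.5.5.5.12, $\map_{\mathcal{C}_{/Y}}(B,X)$ is the homotopy fiber of $\map_{\mathcal{C}}(B,X)\to\map_{\mathcal{C}}(B,Y)$ over the structure map $B\to Y$; since $\map_{\mathcal{C}}(B,-)$ preserves pullbacks, $\map_{\mathcal{C}}(B,X)\simeq\map_{\mathcal{C}}(B,Z)\times^{h}_{\map_{\mathcal{C}}(B,W)}\map_{\mathcal{C}}(B,Y)$ and the map in question is the projection, so its homotopy fiber over $B\to Y$ is the homotopy fiber of $\map_{\mathcal{C}}(B,Z)\to\map_{\mathcal{C}}(B,W)$ over $B\to Y\to W$, which is $\map_{\mathcal{C}_{/W}}(B,Z)\simeq L(q)$. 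Tracing through the identifications, this equivalence is induced by postcomposition with $X\to Z$, hence is canonical.

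The main obstacle is organizational rather than conceptual: making the equivalence $L(\sigma^{\mathrm{op}})\simeq L(\sigma)$ precise at the level of the double-slice conventions, and keeping the structure maps and coherence homotopies straight through the reduction to $\mathcal{C}_{A/}$. A reader who prefers to avoid these manipulations can argue entirely in $\mathcal{S}$: by \remref{Lifts_Equiv_Defs}, $L(q_{l})$ is the total homotopy fiber of the square with entries $\map_{\mathcal{C}}(B,X)$, $\map_{\mathcal{C}}(B,Y)$, $\map_{\mathcal{C}}(A,X)$, $\map_{\mathcal{C}}(A,Y)$; substituting $\map_{\mathcal{C}}(P,X)\simeq\map_{\mathcal{C}}(P,Z)\times^{h}_{\map_{\mathcal{C}}(P,W)}\map_{\mathcal{C}}(P,Y)$ (valid for all $P$ because $q_{r}$ is a pullback) and cancelling the iterated homotopy pullbacks identifies this total fiber with the one computing $L(q)$ in \remref{Lifts_Equiv_Defs}, and the dual manipulation gives part~(2). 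In either approach, the only input beyond \remref{Lifts_Equiv_Defs} is that $\map_{\mathcal{C}}(P,-)$ sends pullbacks to homotopy pullbacks.
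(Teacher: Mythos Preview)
Your proof is correct and follows essentially the same route as the paper: both reduce part~(2) to part~(1) by duality, pass to the under-category $\mathcal{C}_{A/}$ so that the upper-left vertex becomes initial, and then identify the two spaces of lifts via T.5.5.5.12 and the universal property of the pullback $X\simeq Z\times_{W}Y$ on mapping spaces out of $B$. The paper phrases the last step as an equivalence of homotopy fibers of the vertical maps in the pullback square of $\map_{\mathcal{C}_{A/}}(B,-)$'s, which is exactly your computation; your treatment of the duality step via $(-)^{\mathrm{op}}$ is simply more explicit than the paper's one-word ``by symmetry''.
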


\begin{proof}
By symmetry, it is enough to prove (1). Observe that the prism $\Delta^{1}\times\Delta^{2}$
is a left cone on the simplicial set obtained by removing the initial
vertex. Formally,

\[
\Delta^{1}\times\Delta^{2}\simeq\left(\Delta^{2}\times\Delta^{\left\{ 1\right\} }\sqcup_{\Delta^{\left\{ 1,2\right\} }\times\Delta^{\left\{ 1\right\} }}\Delta^{\left\{ 1,2\right\} }\times\Delta^{1}\right)^{\triangleleft}.
\]
We can therefore interpret the rectangle as a diagram in $\mathcal{C}_{A/}$
(and hence ignore $A$). Since the projection $\mathcal{C}_{A/}\to\mathcal{C}$
preserves and reflects limits (dual of T.1.2.13.8), the square $q_{r}$
is a pullback square in $\mathcal{C}_{A/}$. The universal property
of the pullback implies that we have a homotopy Cartesian square
\[
\Square{\map_{\mathcal{C}_{A/}}\left(B,X\right)}{\map_{\mathcal{C}_{A/}}\left(B,Y\right)}{\map_{\mathcal{C}_{A/}}\left(B,Z\right)}{\map_{\mathcal{C}_{A/}}\left(B,W\right),}
\]
which in turn induces a homotopy equivalence of homotopy fibers of
the vertical maps. Considering the given map $B\to Y$ as a point
in $\map_{\mathcal{C}_{A/}}\left(B,Y\right)$ and considering the
induced equivalence on the homotopy fibers of the vertical maps, we
obtain by T.5.5.5.12 an equivalence
\[
\map_{\mathcal{C}_{A//Y}}\left(\overline{B},\overline{X}\right)\iso\map_{\mathcal{C}_{A//W}}\left(\overline{\overline{B}},\overline{\overline{Z}}\right),
\]
where $\overline{B}$ and $\overline{X}$ are $A\to B\to Y$ and $A\to X\to Y$
viewed as objects of $\mathcal{C}_{A//Y}$ and $\overline{\overline{B}}$
and $\overline{\overline{Z}}$ are $A\to B\to W$ and $A\to Z\to W$
viewed as objects of $\mathcal{C}_{A//W}$. By the definition of the
space of lifts, this is precisely the equivalence $L\left(q_{l}\right)\simeq L\left(q\right)$.
\end{proof}

The following lemma expands on remark T.5.2.8.7:
\begin{lem}
\label{lem:Lifts_Adjoints} Let $F\colon \mathcal{C}\adj\mathcal{D}\noloc G$
be an adjunction of $\infty$-categories. For every commutative square
$q\colon \Delta^{1}\times\Delta^{1}\to\mathcal{D}$ of the form
\[
\xymatrix{F\left(A\right)\ar[d]^{F\left(f\right)}\ar[r] & X\ar[d]^{g}\\
F\left(B\right)\ar[r] & Y
,}
\]
there is an adjoint square $p\colon \Delta^{1}\times\Delta^{1}\to\mathcal{C}$
of the form
\[
\xymatrix{A\ar[d]^{f}\ar[r] & G\left(X\right)\ar[d]^{G\left(g\right)}\\
B\ar[r] & G\left(Y\right)
}
\]
and a canonical homotopy equivalence $L\left(q\right)\simeq L\left(p\right)$.
\end{lem}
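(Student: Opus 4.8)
The plan is to express each of $L\left(q\right)$ and $L\left(p\right)$ as the total homotopy fiber of a square of mapping spaces, using \remref{Lifts_Equiv_Defs}, to build $p$ as the $\left(F\dashv G\right)$-adjunct of $q$, and then to identify the two squares of spaces via the natural equivalence $\map_{\mathcal{D}}\left(F\left(-\right),-\right)\simeq\map_{\mathcal{C}}\left(-,G\left(-\right)\right)$ afforded by the adjunction.

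First I would construct $p$. Since a functor $\Delta^{1}\times\Delta^{1}\to\mathcal{D}$ is the same datum as a morphism in $\fun\left(\Delta^{1},\mathcal{D}\right)$, and since we read the first factor as indexing the two columns, the square $q$ is a morphism from its left column $F\left(A\right)\to F\left(B\right)$ to its right column $g\colon X\to Y$. Post-composition with $F$ and with $G$ yields an adjunction $F_{*}\colon \fun\left(\Delta^{1},\mathcal{C}\right)\adj\fun\left(\Delta^{1},\mathcal{D}\right)\noloc G_{*}$ (a formal consequence of $F\dashv G$), and the left column of $q$ is $F_{*}\left(f\right)$ for the given morphism $f\colon A\to B$. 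Hence $q$, viewed as a point of $\map_{\fun\left(\Delta^{1},\mathcal{D}\right)}\left(F_{*}\left(f\right),g\right)$, has an adjunct in $\map_{\fun\left(\Delta^{1},\mathcal{C}\right)}\left(f,G_{*}\left(g\right)\right)$; I would let $p\colon \Delta^{1}\times\Delta^{1}\to\mathcal{C}$ be a square presenting this adjunct. Its left column is then $f$, its right column is $G\left(g\right)\colon G\left(X\right)\to G\left(Y\right)$, and—because the adjunction $F_{*}\dashv G_{*}$ is compatible with evaluation at the endpoints $0,1\in\Delta^{1}$—its two horizontal edges are the $\left(F\dashv G\right)$-adjuncts of the two horizontal edges of $q$. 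Thus $p$ has the required form.

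For the comparison of the lift spaces I would invoke \remref{Lifts_Equiv_Defs} for both squares. It exhibits $L\left(q\right)$ as the total homotopy fiber, over the point determined by $q$, of the square
\[
\xymatrix{\map_{\mathcal{D}}\left(F\left(B\right),X\right)\ar[d]\ar[r] & \map_{\mathcal{D}}\left(F\left(B\right),Y\right)\ar[d]\\
\map_{\mathcal{D}}\left(F\left(A\right),X\right)\ar[r] & \map_{\mathcal{D}}\left(F\left(A\right),Y\right)}
\]
and, in the same way, $L\left(p\right)$ as the total homotopy fiber, over the point determined by $p$, of the square
\[
\xymatrix{\map_{\mathcal{C}}\left(B,G\left(X\right)\right)\ar[d]\ar[r] & \map_{\mathcal{C}}\left(B,G\left(Y\right)\right)\ar[d]\\
\map_{\mathcal{C}}\left(A,G\left(X\right)\right)\ar[r] & \map_{\mathcal{C}}\left(A,G\left(Y\right)\right)
.}
\]
The adjunction provides a natural equivalence $\map_{\mathcal{D}}\left(F\left(-\right),-\right)\simeq\map_{\mathcal{C}}\left(-,G\left(-\right)\right)$ of functors $\mathcal{C}^{\mathrm{op}}\times\mathcal{D}\to\mathcal{S}$; evaluating it at the corners $\left(A,X\right),\left(A,Y\right),\left(B,X\right),\left(B,Y\right)$ and using naturality in $f$ and in $g$ identifies the first square of spaces with the second, and, since $p$ was defined as the adjunct of $q$, this identification carries the point determined by $q$ to the point determined by $p$. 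Passing to total homotopy fibers yields the desired canonical equivalence $L\left(q\right)\simeq L\left(p\right)$.

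The only non-formal point, and hence the main obstacle, is the bookkeeping needed to make the construction of $p$ precise: one must verify that the arrow-category adjunction $F_{*}\dashv G_{*}$ is genuinely compatible, via evaluation at $0,1\in\Delta^{1}$, with $F\dashv G$, so that $p$ really has the stated edges and so that its defining point in the relevant pullback of mapping spaces is the image of that of $q$. This follows from standard facts about adjunctions and the pointwise computation of mapping spaces in functor categories, but it deserves to be spelled out with care.
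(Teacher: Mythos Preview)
Your argument is correct, but it follows a different route from the paper's. The paper works inside the biCartesian fibration $\mathcal{M}\to\Delta^{1}$ classifying the adjunction: it regards $q$ as a square in $\mathcal{M}$, prepends the square of coCartesian edges $A\to F(A)$, $B\to F(B)$ (which it checks is a pushout in $\mathcal{M}$), and appends the square of Cartesian edges $G(X)\to X$, $G(Y)\to Y$ (a pullback in $\mathcal{M}$), then applies \lemref{Lifts_Pullback_Pushout} twice to obtain $L(q)\simeq L(r)\simeq L(p)$. Your approach instead invokes \remref{Lifts_Equiv_Defs} to realize each lift space as a total homotopy fiber of a square of mapping spaces, and then transports one square to the other via the natural equivalence $\map_{\mathcal{D}}(F(-),-)\simeq\map_{\mathcal{C}}(-,G(-))$.

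Your method is more direct and bypasses both the fibration picture and \lemref{Lifts_Pullback_Pushout}; it also makes the construction of $p$ conceptually clean via the arrow-category adjunction $F_{*}\dashv G_{*}$. The paper's approach, by contrast, keeps everything inside a single ambient $\infty$-category $\mathcal{M}$ and never needs to invoke the bifunctorial adjunction equivalence as a natural transformation of functors to $\mathcal{S}$, which---while standard---does require some care to set up coherently in the $\infty$-categorical context (this is essentially the ``bookkeeping'' you flag at the end). Either argument is fine; the paper's is more model-level explicit, yours is more concise once the adjunction equivalence is granted.
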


\begin{proof}
Let $\mathcal{M}\to\Delta^{1}$ be the Cartesian-coCartesian fibration
associated with the adjunction $F\dashv G$. Since $\mathcal{C}$
and $\mathcal{D}$ are full subcategories of $\mathcal{M}$ we can
think of the square $q$ as taking values in $\mathcal{M}$ and it
does not change the space of lifts. Consider the diagram in $\mathcal{M}$
given by
\[
\xymatrix{A\ar[r]\ar[d]^{f} & F\left(A\right)\ar[d]^{F\left(f\right)}\ar[r] & X\ar[d]^{g}\\
B\ar[r] & F\left(B\right)\ar[r] & Y
,}
\]
where in the left square $q_{l}$ the horizontal arrows are coCartesian
and the rest of the data is given by the lifting property of coCartesian
edges. Since the inclusion of the spine $\Lambda_{1}^{2}\into\Delta^{2}$
is inner anodyne, so is $\Delta^{1}\times\Lambda_{1}^{2}\into\Delta^{1}\times\Delta^{2}$
(by T.2.3.2.4) and since $\mathcal{M}\to\Delta^{1}$ is an inner fibration,
the diagram can be extended to $\Delta^{1}\times\Delta^{2}\to\mathcal{M}$
and we can denote the outer square by $r\colon \Delta^{1}\times\Delta^{1}\to\mathcal{M}$. We now claim that $q_{l}$ is a pushout square in
$\mathcal{M}$. For every $Z\in\mathcal{M}$, consider the induced
diagram
\[
\xymatrix{\map\left(F\left(B\right),Z\right)\ar[d]\ar[r] & \map\left(B,Z\right)\ar[d]\\
\map\left(F\left(A\right),Z\right)\ar[r] & \map\left(A,Z\right)
.}
\]
If $Z\in\mathcal{M}_{0}\simeq\mathcal{C}$, then the spaces on both
left corners are empty and if $Z\in\mathcal{M}_{1}\simeq\mathcal{D}$, then both horizontal arrows are equivalences. Either way, this is
a pullback square and hence $q_{l}$ is a pushout square. By \lemref{Lifts_Pullback_Pushout}
we get $L\left(q\right)\simeq L\left(r\right)$. 

We can now factor the outer square $r\colon \Delta^{1}\times\Delta^{1}\to\mathcal{M}$
as 
\[
\xymatrix{A\ar[r]\ar[d]^{f} & G\left(X\right)\ar[d]^{G\left(g\right)}\ar[r] & X\ar[d]^{g}\\
B\ar[r] & G\left(Y\right)\ar[r] & Y
,}
\]
where the left square is $p$ and in the right square $q_{r}$ the
horizontal arrows are Cartesian and the square is determined by the
lifting property of Cartesian edges. Repeating the argument in the
dual form we get that $q_{r}$ is a pullback square and using \lemref{Lifts_Pullback_Pushout}
again we get $L\left(p\right)\simeq L\left(r\right)$ and therefore
$L\left(p\right)\simeq L\left(q\right)$.
\end{proof}
\begin{prop}
\label{prop:Lifts_Diagonal} Let $\mathcal{C}$ be an $\infty$-category.
Let $q\colon \Delta^{1}\times\Delta^{1}\to\mathcal{C}$ be a commutative
square
\[
\xymatrix{A\ar[d]_{f}\ar[r]^{\alpha} & X\ar[d]^{g}\\
B\ar[r]^{\beta} & Y
,}
\]
with space of lifts $L\left(q\right)$. Given a point $\left(s_{0},s_{1}\right)\in L\left(q\right)\times L\left(q\right)$,
the homotopy fiber of the diagonal 
\[
\delta_{L\left(q\right)}\colon L\left(q\right)\to L\left(q\right)\times L\left(q\right)
\]
over $\left(s_{0},s_{1}\right)$ is homotopy equivalent to the space
of lifts for a square $p\colon \Delta^{1}\times\Delta^{1}\to\mathcal{C}$
of the form
\[
\xymatrix{A\ar[d]_{f}\ar[r]^{\alpha} & X\ar[d]^{\delta g}\\
B\ar[r]^{\left(s_{0},s_{1}\right)\quad} & X\times_{Y}X
.}
\]
\end{prop}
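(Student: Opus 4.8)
The plan is to deduce the statement from a purely formal fact about diagonals of mapping spaces, using the description of the space of lifts as a mapping space in an iterated slice $\infty$-category (\defref{Space_of_Lifts}). There is no hard analysis here; essentially all the work lies in keeping track of the homotopy-coherence data through a chain of standard identifications.

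First I would isolate the following sub-observation. Let $\mathcal{E}$ be an $\infty$-category admitting a product $V\times V$, let $U,V\in\mathcal{E}$, and let $\xi=(\xi_{0},\xi_{1})$ be a point of $\map_{\mathcal{E}}(U,V)\times\map_{\mathcal{E}}(U,V)$. Then $\map_{\mathcal{E}}(U,V\times V)\simeq\map_{\mathcal{E}}(U,V)\times\map_{\mathcal{E}}(U,V)$, and under this identification the diagonal $\map_{\mathcal{E}}(U,V)\to\map_{\mathcal{E}}(U,V)^{2}$ is post-composition with the diagonal $V\to V\times V$ of $\mathcal{E}$; hence, by the dual of T.5.5.5.12 (the same input used in \defref{Space_of_Lifts}), its homotopy fibre over $\xi$ is canonically equivalent to $\map_{\mathcal{E}_{/V\times V}}(U_{\xi},V_{\delta})$, where $U_{\xi}$ is $U$ equipped with the map $(\xi_{0},\xi_{1})\colon U\to V\times V$ and $V_{\delta}$ is $V$ equipped with its diagonal.

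I would then apply this with $\mathcal{E}=\mathcal{C}_{A//\overline{Y}}$, $U=\overline{B}$, $V=\overline{X}$ and $\xi=(s_{0},s_{1})$, where $\overline{Y},\overline{B},\overline{X}$ are as in \defref{Space_of_Lifts}, so that $L(q)=\map_{\mathcal{E}}(\overline{B},\overline{X})$. Here one uses that (assuming, as the statement implicitly does, that $X\times_{Y}X$ exists in $\mathcal{C}$) the product $\overline{X}\times\overline{X}$ in $\mathcal{E}=(\mathcal{C}_{A/})_{/\overline{Y}}$ is the fibre product $\overline{X}\times_{\overline{Y}}\overline{X}$, with underlying object $X\times_{Y}X$, since over-category products are fibre products and $\mathcal{C}_{A/}\to\mathcal{C}$ creates pullbacks (the dual of T.1.2.13.8, as in the proof of \lemref{Lifts_Pullback_Pushout}). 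The sub-observation then yields $\mathrm{hofib}_{(s_{0},s_{1})}(\delta_{L(q)})\simeq\map_{\mathcal{E}_{/\overline{X}\times_{\overline{Y}}\overline{X}}}(\overline{B},\overline{X})$. Finally I would identify this with $L(p)$: by the equivalence for a slice of a slice, $\mathcal{E}_{/\overline{X}\times_{\overline{Y}}\overline{X}}=\bigl((\mathcal{C}_{A/})_{/\overline{Y}}\bigr)_{/\overline{X}\times_{\overline{Y}}\overline{X}}\simeq\mathcal{C}_{A//\overline{X}\times_{\overline{Y}}\overline{X}}$, and since $\mathcal{C}_{A/}\to\mathcal{C}$ sends $\overline{X}\times_{\overline{Y}}\overline{X}$ to $X\times_{Y}X$, the diagonal to $\delta g$, and the map induced by $(s_{0},s_{1})$ to the arrow $(s_{0},s_{1})\colon B\to X\times_{Y}X$, the space $\map_{\mathcal{E}_{/\overline{X}\times_{\overline{Y}}\overline{X}}}(\overline{B},\overline{X})$ is — by \defref{Space_of_Lifts} once more — precisely the space of lifts of the square $p$ in the statement (whose diagonal is then $\delta g\circ\alpha$, as it must be).

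I expect the only real difficulty to be bookkeeping: checking that the homotopy witnessing that $s_{0},s_{1}$ are lifts is exactly the coherence datum promoting $(s_{0},s_{1})$ to a morphism $B\to X\times_{Y}X$, and that this matches the commuting homotopy of the square $p$ after the passage through $\mathcal{E}$ and the slice-of-a-slice identification. (One could instead run the same computation at the level of the fibre-sequence presentation of \remref{Lifts_Equiv_Defs}, using $\map_{\mathcal{C}}(-,X\times_{Y}X)\simeq\map_{\mathcal{C}}(-,X)\times^{h}_{\map_{\mathcal{C}}(-,Y)}\map_{\mathcal{C}}(-,X)$ and that the path-space functor preserves homotopy pullbacks, but the slice-theoretic route seems cleaner.)
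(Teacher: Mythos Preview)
Your proposal is correct and follows essentially the same approach as the paper: both identify $L(q)$ as a mapping space in $\mathcal{E}=\mathcal{C}_{A//\overline{Y}}$, recognize that the product $\overline{X}\times\overline{X}$ there is computed as $X\times_{Y}X$, rewrite the diagonal of $L(q)$ as post-composition with $\overline{X}\to\overline{X}\times\overline{X}$, apply the dual of T.5.5.5.12 to compute the homotopy fibre as a mapping space in the slice over $\overline{X}\times_{\overline{Y}}\overline{X}$, and then use the slice-of-a-slice equivalence $(\mathcal{C}_{A//\overline{Y}})_{/\overline{X\times_{Y}X}}\simeq\mathcal{C}_{A//\overline{X\times_{Y}X}}$ to recognize this as $L(p)$. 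Your isolation of the general sub-observation about diagonals of mapping spaces is a nice organizational touch, but the substance is the same.
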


\begin{proof}
For ease of notation, set $\mathcal{D}=\mathcal{C}_{A/}$. Recall
that
\[
L\left(q\right)=\map_{\mathcal{D}_{/\overline{Y}}}\left(\overline{B},\overline{X}\right)
\]
and therefore 
\[
L\left(q\right)\times L\left(q\right)=\map_{\mathcal{D}_{/\overline{Y}}}\left(\overline{B},\overline{X}\right)\times\map_{\mathcal{D}_{/\overline{Y}}}\left(\overline{B},\overline{X}\right)\simeq\map_{\mathcal{D}_{/\overline{Y}}}\left(\overline{B},\overline{X}\times\overline{X}\right).
\]
Products in the over-category are fibered products and products in
the under-category are just ordinary products (dual of T.1.2.13.8).
Hence, $\overline{X}\times\overline{X}$ is the diagram $A\to X\times_{Y}X\to Y$, which we denote by $\overline{X\times_{Y}X}$. Thus, a point $s=\left(s_{0},s_{1}\right)\in L\left(q\right)\times L\left(q\right)$
corresponds to a lift in the diagram
\[
\xymatrix{ & \overline{X\times_{Y}X}\ar[d]\\
\overline{B}\ar[r]\ar@{-->}[ru] & \overline{Y}
}
\]
in the category $\mathcal{D}$. Furthermore, the diagonal map $\delta_{L\left(q\right)}\colon L\left(q\right)\to L\left(q\right)\times L\left(q\right)$
is induced from the diagonal map $\delta_{\overline{X}}\colon \overline{X}\to\overline{X}\times\overline{X}$.
Namely, $\delta_{L\left(q\right)}=\left(\delta_{\overline{X}}\right)_{*}$.
Our goal is therefore to compute the homotopy fiber of $\left(\delta_{\overline{X}}\right)_{*}$
over a given point 
\[
s=\left(s_{0},s_{1}\right)\simeq\map_{\mathcal{D}_{/\overline{Y}}}\left(\overline{B},\overline{X}\times\overline{X}\right).
\]
The projection $\mathcal{D}_{/\overline{Y}}\to\mathcal{D}$ induces
an equivalence
\[
\left(\mathcal{D}_{/\overline{Y}}\right)_{/\overline{X\times_{Y}X}}\simeq\mathcal{D}_{/\overline{X\times_{Y}X}}.
\]
It follows that the fiber is the space of lifts in the diagram
\[
\xymatrix{ & \overline{X}\ar[d]\\
\overline{B}\ar[r]^{s} \ar@{-->}[ru] & \overline{X\times_{Y}X}
}
\]
in $\mathcal{D}$. By (the dual of) T.5.5.5.12, this space of lifts is homotopy equivalent to the mapping space 
$\map_{\mathcal{D}_{/\overline{X\times_{Y}X}}}\left(\overline{B},\overline{X}\right)$.
Recalling that $\mathcal{D}=\mathcal{C}_{A/}$, we see that this is
none other than the space of lifts for $p$.
\end{proof}

\subsection{Truncatedness and Connectedness}

We recall the following definition from classical homotopy theory:
\begin{defn}
For $d\ge-2$, a map $f\colon X\to Y$ of spaces is called\emph{ $d$-truncated} if all of its homotopy fibers are $d$-truncated spaces (\defref{Truncated_Space}).
\end{defn}

Using this definition, one can define a general notion of $d$-truncatedness in
an $\infty$-category.

\begin{defn}(T.5.5.6.1) 
For $d\ge-2$, a map $f\colon X\to Y$ in an $\infty$-category
$\mathcal{C}$ is called \emph{$d$-truncated}, if for every $Z\in\mathcal{C}$
the induced map
\[
\map\left(Z,X\right)\to\map\left(Z,Y\right)
\]
is a $d$-truncated map of spaces. An object $X$ is $d$-truncated,
if the map $X\to\term_{\mathcal{C}}$ is $d$-truncated. We denote
by $\tau_{\le d}\mathcal{C}$ the full subcategory of $\mathcal{C}$
spanned by the $d$-truncated objects. When $\mathcal{C}$ is presentable,
by T.5.5.6.21 the $\infty$-category $\tau_{\le d}\mathcal{C}$ is
itself presentable and by T.5.5.6.18, the inclusion $\tau_{\le d}\mathcal{C}\into\mathcal{C}$
has a left adjoint $\tau_{\le d}^{\mathcal{C}}\colon \mathcal{C}\to\tau_{\le d}\mathcal{C}$. 
\end{defn}

\begin{rem}
It is not difficult to show that $\tau_{\le d}$ extends to a functor
from the $\infty$-category of presentable $\infty$-categories to the
full subcategory spanned by presentable essentially $\left(d+1\right)$-categories
and that it is left adjoint to the inclusion. The maps $\tau_{\le d}^{\mathcal{C}}$
can be taken to be the components of the unit transformation (this
essentially follows from T.5.5.6.22), but we shall not need this.
\end{rem}

We now turn to discuss the dual notion of $n$-connectedness. 
\begin{defn}
For $n\ge-2$, a map $f\colon A\to B$ in an $\infty$-category $\mathcal{C}$
is $n$\emph{-connected} if it is left orthogonal to every $n$-truncated
map; ie for every commutative square $q\colon \Delta^{1}\times\Delta^{1}\to\mathcal{C}$,
\[
\xymatrix{A\ar[d]_{f}\ar[r] & X\ar[d]^{g}\\
B\ar[r] & Y
,}
\]
in which $g$ is $n$-truncated, $L\left(q\right)$ is contractible.
An object $A\in\mathcal{C}$ is called $n$-connected if $A\to\term_{\mathcal{C}}$
is $n$-connected.
\end{defn}

\begin{lem}
\label{lem:Connectedness_Adjoints}Let $\mathcal{C}$ and $\mathcal{D}$
be $\infty$-categories that admit finite limits and let
$F\colon \mathcal{C}\adj\mathcal{D}\noloc G$ be an adjunction with $F\dashv G$,

\begin{lemenum}
\item For every $d\ge-2$ and a $d$-truncated morphism $g$ in $\mathcal{D}$,
the morphism $G\left(g\right)$ is a $d$-truncated morphism in $\mathcal{C}$.
\item For every $n\ge-2$ and an $n$-connected morphism $f$ in $\mathcal{C}$,
the morphism $F\left(f\right)$ is an $n$-connected morphism in $\mathcal{D}$.
\end{lemenum}
\end{lem}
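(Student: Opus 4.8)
The plan is to prove the two parts in order, deriving \emph{(2)} from \emph{(1)} together with \lemref{Lifts_Adjoints}.

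For \emph{(1)}: recall that a morphism $g'\colon X'\to Y'$ in $\mathcal{C}$ is $d$-truncated exactly when, for every object $Z\in\mathcal{C}$, the induced map of spaces $\map_{\mathcal{C}}(Z,X')\to\map_{\mathcal{C}}(Z,Y')$ is $d$-truncated. Applying this with $g'=G(g)$, I would use the adjunction equivalences $\map_{\mathcal{C}}(Z,G(X))\simeq\map_{\mathcal{D}}(F(Z),X)$ and $\map_{\mathcal{C}}(Z,G(Y))\simeq\map_{\mathcal{D}}(F(Z),Y)$, natural in $Z$, so that the map in question is identified with $\map_{\mathcal{D}}(F(Z),X)\to\map_{\mathcal{D}}(F(Z),Y)$. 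Since $g$ is $d$-truncated in $\mathcal{D}$, this map is a $d$-truncated map of spaces for \emph{every} object of $\mathcal{D}$, in particular for $F(Z)$. There is no real obstacle here; it is a one-line consequence of the Yoneda-style definition of $d$-truncatedness together with naturality of the adjunction.

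For \emph{(2)}: let $f\colon A\to B$ be $n$-connected in $\mathcal{C}$. To see that $F(f)$ is $n$-connected in $\mathcal{D}$, I must check that $L(q)$ is contractible for every commutative square $q$ whose left edge is $F(f)\colon F(A)\to F(B)$ and whose right edge is an $n$-truncated morphism $g\colon X\to Y$ of $\mathcal{D}$. Such a square has precisely the shape required by \lemref{Lifts_Adjoints}, so that lemma supplies an adjoint square $p$ in $\mathcal{C}$ with left edge $f\colon A\to B$ and right edge $G(g)\colon G(X)\to G(Y)$, together with a homotopy equivalence $L(q)\simeq L(p)$. By part \emph{(1)}, $G(g)$ is $n$-truncated in $\mathcal{C}$; hence, $f$ being $n$-connected, it is left orthogonal to $G(g)$ and $L(p)$ is contractible. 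Therefore $L(q)$ is contractible as well.

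The argument is entirely formal: all the work is absorbed into part \emph{(1)} and into \lemref{Lifts_Adjoints}, and the only point to verify is that an arbitrary square with left edge $F(f)$ genuinely matches the hypothesis of that lemma, which it does by construction. (The standing assumption that $\mathcal{C}$ and $\mathcal{D}$ admit finite limits is not actually needed for this particular statement; it is retained for uniformity with the neighboring results.)
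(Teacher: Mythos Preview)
Your proof is correct and, for part \emph{(2)}, essentially identical to the paper's: both invoke part \emph{(1)} and \lemref{Lifts_Adjoints} to transport the lifting problem across the adjunction.

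For part \emph{(1)} there is a small but genuine difference. The paper argues that $G$, being a right adjoint between $\infty$-categories with finite limits, is left exact, and then cites T.5.5.6.16 to conclude that left exact functors preserve $d$-truncated morphisms. Your argument instead works directly from the mapping-space definition of $d$-truncatedness together with the adjunction isomorphism $\map_{\mathcal{C}}(Z,G(-))\simeq\map_{\mathcal{D}}(F(Z),-)$. Your route is more elementary and, as you correctly observe, does not use the finite-limits hypothesis at all; the paper's route is shorter on the page but imports a stronger external result and genuinely consumes the hypothesis. Either is fine.
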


\begin{proof}
As a right adjoint, $G$ is left exact and therefore preserves $d$-truncated
morphisms by T.5.5.6.16. Since $G$ preserves $n$-truncated morphisms
and the space of lifts in the square 
\[
\Square{F\left(A\right)}{F\left(B\right)}XY
\]
is homotopy equivalent to the space of lifts in the adjoint square
\[
\Square AB{G\left(X\right)}{G\left(Y\right)}
\]
given by \lemref{Lifts_Adjoints}, we see that if $f$ is left orthogonal to all $n$-truncated morphisms
then so is $F\left(f\right)$.
\end{proof}
\begin{lem}
\label{lem:connected_via_truncation} Let $\mathcal{C}$ be a presentable $\infty$-category, let $f\colon A\to B$ be a morphism in $\mathcal{C}$, and let $n\ge-2$ be an integer. The map $f$ is $n$-connected if and only if viewed as an object $\overline{A}$ of $\mathcal{C}_{/B}$, its $n$-truncation $\tau_{\le n}^{\mathcal{C}_{/B}}\left(\overline{A}\right)$ is the terminal object (ie $\Id_{B}\colon B\to B$).
\end{lem}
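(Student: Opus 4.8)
The plan is to recast both conditions as statements about the slice $\infty$-category $\mathcal{C}_{/B}$, in which $f$ becomes an object we denote $\overline{A}$ and $\Id_{B}\colon B\to B$ is the terminal object $\term$; write $\eta\colon\overline{A}\to\term$ for the canonical morphism. Since $\mathcal{C}$ is presentable, so is $\mathcal{C}_{/B}$, and the reflective localization $\tau_{\le n}^{\mathcal{C}_{/B}}\colon\mathcal{C}_{/B}\to\tau_{\le n}\mathcal{C}_{/B}$ is defined. As $\term$ is terminal, hence $(-2)$-truncated, we have $\tau_{\le n}^{\mathcal{C}_{/B}}(\term)\simeq\term$, so $\tau_{\le n}^{\mathcal{C}_{/B}}(\overline{A})\simeq\term$ if and only if $\eta$ is a $\tau_{\le n}$-local equivalence, i.e.\ (by the general theory of reflective localizations) if and only if
\[
\eta^{*}\colon\map_{\mathcal{C}_{/B}}(\term,\overline{Z})\to\map_{\mathcal{C}_{/B}}(\overline{A},\overline{Z})
\]
is a homotopy equivalence for every $n$-truncated object $\overline{Z}$ of $\mathcal{C}_{/B}$. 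By T.5.5.6.14 the $n$-truncated objects of $\mathcal{C}_{/B}$ are exactly the morphisms $z\colon Z\to B$ that are $n$-truncated in $\mathcal{C}$. Thus it remains to prove that $f$ is $n$-connected if and only if the above $\eta^{*}$ is an equivalence for every $n$-truncated $z\colon Z\to B$.

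The crux is the following claim: for every commutative square $q\colon\Delta^{1}\times\Delta^{1}\to\mathcal{C}$ with left edge $f\colon A\to B$, right edge an $n$-truncated morphism $g$, top edge $\alpha\colon A\to X$ and bottom edge $\beta\colon B\to Y$, the space of lifts $L(q)$ is homotopy equivalent to the homotopy fiber of $\eta^{*}$ over the point $\alpha'\colon\overline{A}\to\overline{Z}$ determined by $q$, where $\overline{Z}$ is the base change $g'\colon X':=X\times_{Y}B\to B$ (an $n$-truncated morphism, being a pullback of $g$). To prove the claim I would factor $q$ through the pullback square of $g$ along $\beta$, obtaining a commutative rectangle
\[
\xymatrix{A\ar[d]_{f}\ar[r]^{\alpha'} & X'\ar[d]^{g'}\ar[r] & X\ar[d]^{g}\\ B\ar@{=}[r] & B\ar[r]^{\beta} & Y,}
\]
whose right square is a pullback; by \lemref{Lifts_Pullback_Pushout}(1) this gives $L(q)\simeq L(q_{l})$ for the left square $q_{l}$. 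Since the diagonal of $q_{l}$ is $f$, unwinding \defref{Space_of_Lifts} and \remref{Lifts_Equiv_Defs} together with the description of mapping spaces in under- and over-categories (T.5.5.5.12) identifies $L(q_{l})$ with the homotopy fiber of $\eta^{*}$ over $\alpha'$; concretely, a point of $L(q_{l})$ is a section of $g'$ whose restriction along $f$ is $\alpha'$, and $\map_{\mathcal{C}_{/B}}(\term,\overline{Z})$ is precisely the space of sections of $g'$.

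Granting the claim, the equivalence of the two conditions follows. If $f$ is $n$-connected then $L(q)$ is contractible for every such $q$, and since every pair consisting of an $n$-truncated $\overline{Z}=(z\colon Z\to B)$ and a point $\alpha'\colon\overline{A}\to\overline{Z}$ arises from such a $q$ (take $Y=B$, $\beta=\Id_{B}$, $g=z$, $\alpha=\alpha'$, so that $X'=Z$ and $g'=z$), every homotopy fiber of every relevant $\eta^{*}$ is contractible, whence each such $\eta^{*}$ is a homotopy equivalence. Conversely, if $\eta^{*}$ is an equivalence for every $n$-truncated $z$, then for an arbitrary square $q$ with $g$ an $n$-truncated morphism we get $L(q)\simeq\mathrm{hofib}_{\alpha'}(\eta^{*})\simeq\term$, so $f$ is left orthogonal to every $n$-truncated morphism, i.e.\ $n$-connected. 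The only genuine work is the identification of $L(q)$ with a homotopy fiber of $\eta^{*}$ in the second paragraph: conceptually it is a routine manipulation of slice categories, but keeping the various under- and over-categories straight is the step that requires care.
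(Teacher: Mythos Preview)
Your proof is correct and follows essentially the same route as the paper's own argument: both reduce an arbitrary square to one with $Y=B$ and $\beta=\Id_{B}$ via \lemref{Lifts_Pullback_Pushout}(1), then identify $L(q_{l})$ with the homotopy fiber of the precomposition map $\map_{\mathcal{C}_{/B}}(\overline{B},\overline{X})\to\map_{\mathcal{C}_{/B}}(\overline{A},\overline{X})$ using T.5.5.5.12, and finally translate ``all fibers contractible'' into ``$\overline{A}\to\overline{B}$ exhibits the $n$-truncation''. The only cosmetic differences are that the paper cites T.5.5.6.10 (rather than T.5.5.6.14) for the identification of $n$-truncated objects in the slice, and organizes the argument by first reducing the lifting problem and then interpreting the result, whereas you first unpack the truncation condition and then match it against the lifting problem.
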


\begin{proof}
Since $\mathcal{C}$ has all pullbacks, every commutative square $q\colon \Delta^{1}\times\Delta^{1}\to\mathcal{C}$
of the form
\[
\xymatrix{A\ar[d]_{f}\ar[r] & X\ar[d]\\
B\ar[r] & Y
}
\]
can be factored as 
\[
\Rect AB{B\times_{Y}X}BX{Y.}
\]
By \lemref{Lifts_Pullback_Pushout}, the space of lifts for the original
square $q$ is equivalent to the space of lifts in the left square
of the above rectangle. Moreover, $n$-truncated morphisms are closed
under base change and so to check that $f$ is $n$-connected, we can
equivalently restrict ourselves to checking the left orthogonality
condition only for squares $q$ in which the map $B\to Y$ is the
identity on $B$. Writing $\overline{A}$, $\overline{X}$ and $\overline{B}$
for $A\to B$, $X\to B$ and $\Id\colon B\to B$ as objects of $\mathcal{C}_{/B}$, respectively, we see that by the dual of T.5.5.5.12 the space of lifts fits into
a fiber sequence 
\[
L\left(q\right)=\map_{\mathcal{C}_{A//B}}\left(\overline{B},\overline{X}\right)\to\map_{\mathcal{C}_{/B}}\left(\overline{B},\overline{X}\right)\xrightarrow{f^{*}}\map_{\mathcal{C}_{/B}}\left(\overline{A},\overline{X}\right).
\]
Hence, $f$ is $n$-connected if and only if $f^{*}$ is an equivalence
for every $n$-truncated morphism $X\to B$. By T.5.5.6.10, a morphism
$X\to B$ is $n$-truncated if and only if $\overline{X}$ is an $n$-truncated
object of $\mathcal{C}_{/B}$. Hence, we need the above map to be
an equivalence for every $n$-truncated object $\overline{X}\in\mathcal{C}_{/B}$.
This precisely means that the map $\overline{A}\to\overline{B}$ exhibits
$\overline{B}$, the terminal object of $\mathcal{C}_{/B}$, as the
$n$-truncation of $\overline{A}$.
\end{proof}
\begin{cor}
In a presentable $\infty$-category $\mathcal{C}$, an object $X$
is $n$-connected for some $n\ge-2$ if and only if its $n$-truncation
$\tau_{\le n}^{\mathcal{C}}X$ is a terminal object of $\mathcal{C}$.
\end{cor}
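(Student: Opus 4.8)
The plan is to deduce this immediately from \lemref{connected_via_truncation} by specializing to $B=\term_{\mathcal{C}}$. First I would recall that, by definition, an object $X\in\mathcal{C}$ is $n$-connected precisely when the structure map $f\colon X\to\term_{\mathcal{C}}$ is an $n$-connected morphism. Applying \lemref{connected_via_truncation} to this $f$, the morphism $f$ is $n$-connected if and only if, viewed as an object $\overline{X}$ of the slice $\mathcal{C}_{/\term_{\mathcal{C}}}$, its $n$-truncation $\tau_{\le n}^{\mathcal{C}_{/\term_{\mathcal{C}}}}\left(\overline{X}\right)$ is the terminal object of $\mathcal{C}_{/\term_{\mathcal{C}}}$, namely $\Id\colon \term_{\mathcal{C}}\to\term_{\mathcal{C}}$.

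Second, I would invoke the canonical equivalence $\mathcal{C}_{/\term_{\mathcal{C}}}\simeq\mathcal{C}$, which holds because $\term_{\mathcal{C}}$ is terminal (the projection is a trivial fibration). Under this equivalence $\overline{X}$ corresponds to $X$ and the terminal object $\Id_{\term_{\mathcal{C}}}$ corresponds to $\term_{\mathcal{C}}$. Since the equivalence is compatible with the formation of $n$-truncations — both $\tau_{\le n}$ functors are the left adjoint to the inclusion of the full subcategory of $n$-truncated objects, and an equivalence of presentable $\infty$-categories carries one such localization to the other — we get that $\tau_{\le n}^{\mathcal{C}_{/\term_{\mathcal{C}}}}\left(\overline{X}\right)$ corresponds to $\tau_{\le n}^{\mathcal{C}}\left(X\right)$. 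Hence $f$ is $n$-connected if and only if $\tau_{\le n}^{\mathcal{C}}\left(X\right)$ is terminal in $\mathcal{C}$, which is the assertion.

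The only point that genuinely requires a remark is the compatibility of $n$-truncation with the equivalence $\mathcal{C}_{/\term_{\mathcal{C}}}\simeq\mathcal{C}$; this is routine, since by T.5.5.6.10 (with $B=\term_{\mathcal{C}}$) an object of the slice is $n$-truncated exactly when its image in $\mathcal{C}$ is, so the two subcategories of $n$-truncated objects are identified and therefore so are their left adjoints. I do not expect any real obstacle here — the corollary is essentially just \lemref{connected_via_truncation} read in the terminal slice.
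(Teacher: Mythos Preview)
Your proposal is correct and is exactly the intended argument: the paper states this corollary immediately after \lemref{connected_via_truncation} without proof, since it is just that lemma specialized to $B=\term_{\mathcal{C}}$ together with the equivalence $\mathcal{C}_{/\term_{\mathcal{C}}}\simeq\mathcal{C}$. Your added remark on why truncation is compatible with this equivalence (via T.5.5.6.10) is a reasonable elaboration of a point the paper leaves implicit.
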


The following is a quantitative generalization of the defining property
of an $n$-connected morphism.
\begin{prop}
\label{prop:Lifts_Space_Truncatedness} Let $\mathcal{C}$ be a presentable
$\infty$-category. Fix integers $d\ge n\ge-2$. For every square
$q\colon \Delta^{1}\times\Delta^{1}\to\mathcal{C}$ of the form
\[
\Square ABX{Y,}
\]
in which $f\colon A\to B$ is $n$-connected and $g\colon X\to Y$ is $d$-truncated,
the space of lifts $L\left(q\right)$ is $\left(d-n-2\right)$-truncated.
\end{prop}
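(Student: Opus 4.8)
The plan is to induct on the difference $d-n\ge 0$, keeping the map $f$ fixed throughout and successively replacing $g$ by its relative diagonal. In the base case $d-n=0$ the hypotheses say that $g$ is $n$-truncated and $f$ is $n$-connected, so by the very definition of $n$-connectedness $f$ is left orthogonal to $g$; hence $L(q)$ is contractible, i.e. $(-2)=(d-n-2)$-truncated, and there is nothing more to do.

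For the inductive step I would take $d-n\ge 1$, so in particular $d\ge n+1\ge -1$, and assume the statement for all strictly smaller values of the difference. By T.5.5.6.15 applied to the object $L(q)$ (whose hypothesis is met since $d-n-2\ge -1$), the space $L(q)$ is $(d-n-2)$-truncated if and only if the diagonal $\delta_{L(q)}\colon L(q)\to L(q)\times L(q)$ has all homotopy fibers $(d-n-3)$-truncated. Now \propref{Lifts_Diagonal} identifies the homotopy fiber of $\delta_{L(q)}$ over an arbitrary point $(s_0,s_1)$ with the space of lifts $L(p)$ of a square $p$ whose left vertical edge is again $f$ and whose right vertical edge is the relative diagonal $\delta g\colon X\to X\times_Y X$. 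Since $g$ is $d$-truncated and $d\ge -1$, the morphism $\delta g$ is $(d-1)$-truncated, again by T.5.5.6.15. At this point $f$ is still $n$-connected, $\delta g$ is $(d-1)$-truncated, and $d-1\ge n\ge -2$ with $(d-1)-n<d-n$, so the inductive hypothesis applies to $p$ and yields that $L(p)$ is $\big((d-1)-n-2\big)=(d-n-3)$-truncated. As $(s_0,s_1)$ was arbitrary, every homotopy fiber of $\delta_{L(q)}$ is $(d-n-3)$-truncated, hence $L(q)$ is $(d-n-2)$-truncated, completing the induction.

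All the geometric content is already contained in \propref{Lifts_Diagonal}, so once that is granted the argument above is purely formal. The point that I expect to require the most care is the choice of induction variable: an induction on $d$ alone, or on $n$ alone, does not close up, because each application of \propref{Lifts_Diagonal} simultaneously lowers the truncation level of the right-hand map and the truncation level of the space of lifts by one unit while leaving $f$ (and hence $n$) untouched — it is exactly the difference $d-n$ that decreases. A secondary bookkeeping point is to keep the numerical ranges straight so that T.5.5.6.15 is only invoked with truncation index $\ge -1$ (both for $\delta g$ and for the diagonal of $L(q)$), which is automatic once $d-n\ge 1$; the degenerate cases are absorbed into the base case $d-n=0$.
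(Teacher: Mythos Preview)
Your proof is correct and is essentially identical to the paper's: both arguments fix $f$, apply T.5.5.6.15 to reduce to bounding the fibers of the diagonal of $L(q)$, invoke \propref{Lifts_Diagonal} to identify those fibers with spaces of lifts against $\delta g$, and then use T.5.5.6.15 again to drop the truncation level of the right-hand map by one.

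One small correction to your commentary: the paper in fact phrases the induction as an induction on $d$ (with $n$ held fixed), and this does close up. Since $f$ --- and hence $n$ --- never changes during the recursion, inducting on $d$ with base case $d=n$ is literally the same induction as yours on $d-n$ with base case $0$; your remark that ``an induction on $d$ alone does not close up'' is therefore not accurate. Your careful tracking of the numerical ranges needed for T.5.5.6.15 (namely $d\ge -1$ for $\delta g$ and $d-n-2\ge -1$ for the diagonal of $L(q)$, both automatic once $d-n\ge 1$) is a nice touch that the paper leaves implicit.
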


\begin{proof}
We prove this by induction on $d$. For $d=n$, the claim follows from the
definition of an $n$-connected morphism and the fact that a space is $\left(-2\right)$-connected if and only if it is contractible. We now assume that this is true for $d-1$, and prove it for $d$. Denote the space of lifts by $L\left(q\right)$. 
By T.5.5.6.15, it suffices to show that the diagonal map $\delta\colon L\left(q\right)\to L\left(q\right)\times L\left(q\right)$
is $\left(d-n-3\right)$-truncated. By \propref{Lifts_Diagonal},
the homotopy fiber over a point $\left(s_{0},s_{1}\right)\in L\left(q\right)\times L\left(q\right)$
is equivalent to the space of lifts in the square
\[
\LP ABX{X\times_{Y}X,}
\]
where the bottom map is $\left(s_{0},s_{1}\right)$. By T.5.5.6.15,
since $X\to Y$ is $d$-truncated, $X\to X\times_{Y}X$ is $\left(d-1\right)$-truncated
and, therefore, by induction, the space of lifts is $\left(\left(d-1\right)-n-2\right)$-truncated and we are done.
\end{proof}

\subsection{$(n-\frac{1}{2})$-connectedness}

We begin by introducing an auxiliary notion that will be helpful in the study of $n$-connectedness.
\begin{defn}
For every $n\ge-2$, a morphism $f\colon X\to Y$ is called \emph{$\left(n-\frac{1}{2}\right)$-connected}
if the induced map $\tau_{\le n}^{\mathcal{C}}\left(f\right)\colon \tau_{\le n}^{\mathcal{C}}X\to\tau_{\le n}^{\mathcal{C}}Y$
is an equivalence.
\end{defn}

To justify the terminology we need to show that it indeed sits between
$n$ and $\left(n-1\right)$-connectedness, at least under some reasonable
conditions. One direction is completely general:
\begin{lem}
Let $n\ge-2$ and let $\mathcal{C}$ be a presentable $\infty$-category.
If a morphism $f\colon A\to B$ is $n$-connected, then it is $\left(n-\frac{1}{2}\right)$-connected. 
\end{lem}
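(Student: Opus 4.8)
The plan is to reduce the statement to the assertion that, for every $n$-truncated object $Z\in\mathcal{C}$, pre-composition with $f$ induces a homotopy equivalence $f^{*}\colon\map_{\mathcal{C}}\left(B,Z\right)\to\map_{\mathcal{C}}\left(A,Z\right)$. Granting this, I would conclude as follows: since $\tau_{\le n}^{\mathcal{C}}$ is left adjoint to the inclusion $\tau_{\le n}\mathcal{C}\into\mathcal{C}$, we have natural equivalences $\map_{\mathcal{C}}\left(A,Z\right)\simeq\map_{\tau_{\le n}\mathcal{C}}\left(\tau_{\le n}^{\mathcal{C}}A,Z\right)$ and $\map_{\mathcal{C}}\left(B,Z\right)\simeq\map_{\tau_{\le n}\mathcal{C}}\left(\tau_{\le n}^{\mathcal{C}}B,Z\right)$ for every $n$-truncated $Z$, so $f^{*}$ being an equivalence for all such $Z$ means, by the Yoneda lemma applied inside $\tau_{\le n}\mathcal{C}$, that $\tau_{\le n}^{\mathcal{C}}\left(f\right)$ is an equivalence; that is, $f$ is $\left(n-\frac{1}{2}\right)$-connected.

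To establish the displayed equivalence, fix an $n$-truncated object $Z$, so that the canonical map $Z\to\term_{\mathcal{C}}$ is an $n$-truncated morphism. For an arbitrary morphism $\alpha\colon A\to Z$, form the commutative square $q$
\[
\xymatrix{A\ar[d]_{f}\ar[r]^{\alpha} & Z\ar[d]\\ B\ar[r] & \term_{\mathcal{C}},}
\]
whose right vertical map is $Z\to\term_{\mathcal{C}}$ and whose bottom map is the essentially unique one. Since $f$ is $n$-connected and $Z\to\term_{\mathcal{C}}$ is $n$-truncated, the space of lifts $L\left(q\right)$ is contractible. On the other hand, \remref{Lifts_Equiv_Defs} supplies a homotopy fiber sequence
\[
L\left(q\right)\to\map_{\mathcal{C}}\left(B,Z\right)\to\map_{\mathcal{C}}\left(A,Z\right)\times_{\map_{\mathcal{C}}\left(A,\term_{\mathcal{C}}\right)}^{h}\map_{\mathcal{C}}\left(B,\term_{\mathcal{C}}\right)
\]
over the point determined by $q$. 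Because $\term_{\mathcal{C}}$ is terminal, $\map_{\mathcal{C}}\left(A,\term_{\mathcal{C}}\right)$ and $\map_{\mathcal{C}}\left(B,\term_{\mathcal{C}}\right)$ are contractible, so the base of the fiber sequence is canonically identified with $\map_{\mathcal{C}}\left(A,Z\right)$, and the sequence becomes a homotopy fiber sequence
\[
L\left(q\right)\to\map_{\mathcal{C}}\left(B,Z\right)\xrightarrow{\;f^{*}\;}\map_{\mathcal{C}}\left(A,Z\right)
\]
over the point $\alpha$. Letting $\alpha$ range over all points of $\map_{\mathcal{C}}\left(A,Z\right)$ shows that every homotopy fiber of $f^{*}$ is contractible, hence $f^{*}$ is a homotopy equivalence.

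There is no serious obstacle here; the statement is essentially immediate once one chooses the correct square to test $n$-connectedness against, namely the one with right vertical map $Z\to\term_{\mathcal{C}}$. The only points demanding a little care are the bookkeeping that identifies the base of the fiber sequence of \remref{Lifts_Equiv_Defs} with $\map_{\mathcal{C}}\left(A,Z\right)$ using that $\term_{\mathcal{C}}$ is terminal, and the observation that having $L\left(q\right)$ contractible for \emph{every} choice of $\alpha$ yields non-emptiness together with contractibility of all homotopy fibers of $f^{*}$, so that $f^{*}$ is genuinely an equivalence and not merely a map with contractible fibers over its image.
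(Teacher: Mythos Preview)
Your proof is correct and follows essentially the same approach as the paper's: reduce via Yoneda to showing $f^{*}\colon\map(B,Z)\to\map(A,Z)$ is an equivalence for every $n$-truncated $Z$, then identify each homotopy fiber of $f^{*}$ with the space of lifts in the square with right vertical map $Z\to\term_{\mathcal{C}}$, which is contractible by definition of $n$-connectedness. The paper's version is terser (citing T.5.5.5.12 directly rather than \remref{Lifts_Equiv_Defs}) but the argument is the same.
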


\begin{proof}
By the Yoneda lemma it is enough to show that for every $n$-truncated
object $Z$ in $\mathcal{C}$ the induced map 
\[
f_{*}\colon \map\left(B,Z\right)\to\map\left(A,Z\right)
\]
is an equivalence. For this, it is enough to show that for every $g\colon A\to Z$,
the fiber of $f_{*}$ over $g$ is contractible. By T.5.5.5.12, the
fiber is equivalent to the space of lifts for the square
\[
\Square ABZ{\term,}
\]
which is contractible by definition as $f\colon A\to B$ was assumed to be $n$-connected.
\end{proof}
For the other direction, we need to assume that our $\infty$-category
is an $m$-topos. First,
\begin{lem}
\label{lem:topos_truncation_pullback}Let $\mathcal{C}$ be an $m$-topos
for some $-1\le m\le\infty$. For every $d$-truncated morphism $g\colon X\to Y$,
the diagram 
\[
\Square XY{\tau_{\le d+1}^{\mathcal{C}}X}{\tau_{\le d+1}^{\mathcal{C}}Y}
\]
is a pullback square.
\end{lem}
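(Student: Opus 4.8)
The plan is to reduce the statement to the assertion that a single comparison map is an equivalence, and to prove the latter by showing the map is simultaneously $d$-truncated and $d$-connected. Write $P:=\tau_{\le d+1}^{\mathcal{C}}X\times_{\tau_{\le d+1}^{\mathcal{C}}Y}Y$ for the pullback, with projections $\pi\colon P\to\tau_{\le d+1}^{\mathcal{C}}X$ and $p_{Y}\colon P\to Y$, and let $\phi\colon X\to P$ be the canonical map classified by the truncation unit $u_{X}\colon X\to\tau_{\le d+1}^{\mathcal{C}}X$, by the morphism $g$, and by the homotopy witnessing commutativity of the square in the statement. That square is the horizontal pasting of the tautologically Cartesian square defining $P$ with the square having corners $X,P,Y,Y$, top edge $\phi$, bottom edge $\Id_{Y}$, left edge $g$ and right edge $p_{Y}$; since a pasting of Cartesian squares is Cartesian, it suffices to prove this second square is Cartesian, i.e.\ that $\phi$ is an equivalence. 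For that I would use that a morphism belonging both to the class of $d$-truncated and to the class of $d$-connected morphisms is left orthogonal to itself, so that the tautological lifting problem against it has a contractible (in particular non-empty) space of solutions, which produces a two-sided inverse. (If $d+1$ is at least the truncation level of $\mathcal{C}$ then $\tau_{\le d+1}^{\mathcal{C}}$ is the identity, $\phi$ is manifestly an equivalence, and there is nothing to prove, so one may assume throughout that the relevant truncations are non-trivial.)

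For the $d$-truncatedness of $\phi$: the morphism $\tau_{\le d+1}^{\mathcal{C}}(g)$ is a map between $(d+1)$-truncated objects, hence $(d+1)$-truncated, and therefore so is its base change $p_{Y}$, since $n$-truncated morphisms are closed under base change. Now $p_{Y}\circ\phi\simeq g$ is $d$-truncated while $p_{Y}$ is $(d+1)$-truncated, and the cancellation property for truncated morphisms — if $v\circ u$ is $n$-truncated and $v$ is $(n+1)$-truncated then $u$ is $n$-truncated, which is immediate from the long exact sequence of the fibration of mapping spaces $\map(Z,X)\to\map(Z,P)\to\map(Z,Y)$ for each $Z$ — shows that $\phi$ is $d$-truncated. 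This step uses nothing about $\mathcal{C}$ beyond the existence of truncation functors.

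For the $d$-connectedness of $\phi$ one argues similarly, now invoking the topos hypothesis. The truncation unit $u_{Y}\colon Y\to\tau_{\le d+1}^{\mathcal{C}}Y$ is $(d+1)$-connected in any presentable $\infty$-category: via \lemref{connected_via_truncation} and the identification of $(d+1)$-truncated objects of a slice with $(d+1)$-truncated morphisms into its base, this is a restatement of the universal property of truncation. Since $\mathcal{C}$ is an $m$-topos, $(d+1)$-connected morphisms are stable under base change — for $\infty$-topoi this is on the standard list of closure properties (see \cite{HTT}, \cite{ABFJ17}), and it also follows from \lemref{Connectedness_Adjoints}(2) together with local Cartesian closedness, which makes each pullback functor $\mathcal{C}_{/B}\to\mathcal{C}_{/A}$ a left adjoint — so $\pi\colon P\to\tau_{\le d+1}^{\mathcal{C}}X$, being the base change of $u_{Y}$ along $\tau_{\le d+1}^{\mathcal{C}}(g)$, is $(d+1)$-connected. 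As $\pi\circ\phi$ is the $(d+1)$-connected unit $u_{X}$ while $\pi$ itself is $(d+1)$-connected, the two-out-of-three principle for connected morphisms — if $v$ and $v\circ u$ are $n$-connected then $u$ is $(n-1)$-connected — yields that $\phi$ is $d$-connected, hence an equivalence, and the square is Cartesian. I expect this last step to be the crux: unlike the cancellation used for truncated morphisms it is \emph{not} a formal consequence of the orthogonal factorization system $(n\text{-connected},\,n\text{-truncated})$, and genuinely uses the exactness of an $m$-topos; it is most transparent via the long exact sequence of homotopy sheaves, using that in an $m$-topos the connectivity of a morphism is detected on those sheaves in the range needed here (which, by the degenerate case, lies below the ambient level).
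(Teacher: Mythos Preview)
Your argument is correct and takes a genuinely different route from the paper's. The paper proves the lemma by a bootstrap: first for $\mathcal{S}$ via the long exact sequence of homotopy groups, then for presheaf $\infty$-topoi $\mathcal{S}^{K}$ levelwise, then for arbitrary $\infty$-topoi by pushing along the left-exact localization (using T.5.5.6.28 to commute truncation past it), and finally for $m$-topoi by embedding into an $\infty$-topos. Your proof is instead intrinsic: you reduce to showing the comparison map $\phi$ is an equivalence by proving it is simultaneously $d$-truncated (a purely formal step using T.5.5.6.12-style cancellation) and $d$-connected (using that truncation units are highly connected, stability of connected maps under base change via local cartesian closedness, and a cancellation property for connected maps).

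What each approach buys: the paper's argument is short and uses only standard structural facts about topoi, with the homotopy-theoretic input isolated to the case $\mathcal{C}=\mathcal{S}$. Your argument is more conceptual and makes transparent \emph{why} the topos hypothesis is needed---namely, for base-change stability of connected morphisms and for the non-formal cancellation ``$v$ and $vu$ $n$-connected $\Rightarrow$ $u$ is $(n-1)$-connected''. You correctly flag that this last step is the crux and not a formal consequence of the factorization system; it does hold in an $\infty$-topos via the long exact sequence of homotopy sheaves (or, equivalently, by reducing to the slice and using that pullback functors between slice topoi commute with truncation, being left exact left adjoints). For $m<\infty$ your reduction to the range below the ambient level is fine, but note that the cleanest way to justify the cancellation there is to embed into an ambient $\infty$-topos---which is exactly the paper's step (iv)---so in the end both proofs invoke the same structural move, just at different points.
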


\begin{proof}
For $\mathcal{C}=\mathcal{S}$, this follows from inspecting the induced
map between the long exact sequences of homotopy groups associated
with the vertical maps. For $\mathcal{C}=\mathcal{S}^{K}$, this follows
from the claim for $\mathcal{S}$, since both truncation and pullbacks
are computed level-wise. A general $\infty$-topos is a left exact
localization of $\mathcal{S}^{K}$ for some $K$, and left exact colimit-preserving functors between presentable $\infty$-categories commute
with truncation by T.5.5.6.28 and with pullbacks by assumption. Finally,
by T.6.4.1.5 every $m$-topos is the full subcategory on $\left(m-1\right)$-truncated
objects in an $\infty$-topos and this full subcategory is closed
under limits.
\end{proof}
From this we deduce
\begin{lem}
\label{lem:n.5_to_n_connected}Let $n\ge-2$ and let $\mathcal{C}$
be an $m$-topos for some $-1\le m\le\infty$. If a morphism $f\colon A\to B$
is $\left(n+\frac{1}{2}\right)$-connected then it is $n$-connected.
\end{lem}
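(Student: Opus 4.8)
The plan is to reduce the statement to a lifting problem via the criterion of \lemref{connected_via_truncation}, and then to kill the space of lifts by passing to $\left(n+1\right)$-truncations using \lemref{topos_truncation_pullback}. The case $n=-2$ is vacuous, so I would assume $n\ge-1$. Since $n$-truncated morphisms are stable under base change, \lemref{Lifts_Pullback_Pushout}(1) (applied to factor an arbitrary square through the pullback of its right edge along its bottom edge, exactly as in the proof of \lemref{connected_via_truncation}) shows that $f$ is $n$-connected as soon as the space of lifts $L(q)$ is contractible for every square $q$ of the form
\[
\xymatrix{A\ar[d]_{f}\ar[r]^{\alpha} & X\ar[d]^{g}\\ B\ar@{=}[r] & B}
\]
with $g\colon X\to B$ an $n$-truncated morphism. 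The hypothesis that $f$ is $\left(n+\frac{1}{2}\right)$-connected says precisely that $\tau_{\le n+1}^{\mathcal{C}}f$ is an equivalence, and this is what I would exploit.

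The key step would be to apply \lemref{topos_truncation_pullback} to $g$ with $d=n$: the naturality square of the $\tau_{\le n+1}^{\mathcal{C}}$-unit at $g$ is a pullback. Pasting it to the right of $q$ and invoking \lemref{Lifts_Pullback_Pushout}(1), I would obtain a canonical equivalence $L(q)\simeq L(q')$ with
\[
q'\colon\qquad \xymatrix{A\ar[d]_{f}\ar[r]^-{\eta_{X}\alpha} & \tau_{\le n+1}^{\mathcal{C}}X\ar[d]^{\tau_{\le n+1}^{\mathcal{C}}g}\\ B\ar[r]^-{\eta_{B}} & \tau_{\le n+1}^{\mathcal{C}}B.}
\]
The point is that the right-hand column and the lower-right corner of $q'$ are now $\left(n+1\right)$-truncated objects. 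By \remref{Lifts_Equiv_Defs}, $L(q')$ is the total fiber of the square obtained by applying $\map_{\mathcal{C}}(-,-)$ to the two columns of $q'$, and both vertical maps of that square are the precomposition map $f^{*}$ with values in a mapping space into an $\left(n+1\right)$-truncated object. By the universal property of $\left(n+1\right)$-truncation, such an $f^{*}$ is identified with $\left(\tau_{\le n+1}^{\mathcal{C}}f\right)^{*}$, which is an equivalence because $\tau_{\le n+1}^{\mathcal{C}}f$ is. A square of spaces with both vertical maps equivalences is homotopy Cartesian, hence has contractible total fiber; so $L(q)\simeq L(q')$ is contractible and $f$ is $n$-connected.

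I do not expect a serious technical obstacle here: \lemref{topos_truncation_pullback} is already stated for an arbitrary $m$-topos with $-1\le m\le\infty$, so no separate argument is needed in the truncated case, and everything else is formal manipulation of mapping spaces. The main conceptual pitfall to avoid is the tempting but incorrect idea of trying to make the unit-naturality square of $f$ itself a pushout so as to discard $A$ directly; it is generally not a pushout (already $S^{n+2}\to\term$ in spaces is a counterexample, and there $f$ is genuinely $n$-connected). The correct move is to paste in the \emph{pullback} square supplied by \lemref{topos_truncation_pullback}, which trades the $n$-truncated target $X$ for the $\left(n+1\right)$-truncated target $\tau_{\le n+1}^{\mathcal{C}}X$; against such a target the $\left(n+\frac{1}{2}\right)$-connectedness of $f$ is exactly enough to make the comparison of mapping spaces an equivalence.
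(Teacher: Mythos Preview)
Your proof is correct and follows essentially the same approach as the paper: both use \lemref{topos_truncation_pullback} together with \lemref{Lifts_Pullback_Pushout}(1) to replace the $n$-truncated right column by its $(n+1)$-truncation, and then conclude because maps into $(n+1)$-truncated objects only see $\tau_{\le n+1}f$. The only cosmetic differences are that the paper does not first reduce to $Y=B$ (it works with a general $Y$), and that for the final step the paper invokes \lemref{Lifts_Adjoints} for the truncation adjunction rather than unpacking the total-fiber description from \remref{Lifts_Equiv_Defs}; your argument is effectively a direct verification of what that lemma provides in this case.
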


\begin{proof}
To show that $f\colon A\to B$ is $n$-connected, we need to show that the
space of lifts for every square
\[
\Square ABX{Y,}
\]
in which the right vertical arrow is $n$-truncated, is contractible.
Applying \lemref{topos_truncation_pullback} and \lemref{Lifts_Pullback_Pushout},
we see that this space is equivalent to the space of lifts in the square
\[
\Square AB{\tau_{\le n+1}^{\mathcal{C}}X}{\tau_{\le n+1}^{\mathcal{C}}Y,}
\]
which, by \lemref{Lifts_Adjoints}, is equivalent to the space of lifts
in the adjoint square
\[
\Square{\tau_{\le n+1}^{\mathcal{C}}A}{\tau_{\le n+1}^{\mathcal{C}}B}{\tau_{\le n+1}^{\mathcal{C}}X}{\tau_{\le n+1}^{\mathcal{C}}Y,}
\]
which is contractible since the left vertical arrow is an equivalence.
\end{proof}
As a consequence, we obtain another sense in which $\left(n-\frac{1}{2}\right)$-connected
morphisms are ``close'' to being $n$-connected:
\begin{prop}
\label{prop:Connectedness_Section}Let $n\ge-2$ and let $\mathcal{C}$
be an $m$-topos for some $-1\le m\le\infty$. If a morphism $f\colon A\to B$
in $\mathcal{C}$ is $\left(n-\frac{1}{2}\right)$-connected and has
a section (ie there exists $s\colon B\to A$ such that $f\circ s\sim \Id_{B}$), then $f$ is $n$-connected.
\end{prop}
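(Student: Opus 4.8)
The plan is to reduce, via the orthogonal factorization system on $\mathcal{C}$ whose left class consists of the $n$-connected morphisms and whose right class consists of the $n$-truncated ones (available in any presentable $\infty$-category, cf.\ \cite{GK17}), to a statement purely about $n$-truncated morphisms, and then to settle that statement by an induction on $n$ built on \lemref{topos_truncation_pullback}.

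First I would factor $f$ as $A\xrightarrow{u}C\xrightarrow{c}B$ with $u$ an $n$-connected morphism and $c$ an $n$-truncated one. Two things then hold. The morphism $c$ admits the section $u\circ s$, because $c\circ(u\circ s)=f\circ s\simeq\Id_{B}$. And $\tau_{\le n}^{\mathcal{C}}(c)$ is an equivalence: from functoriality $\tau_{\le n}^{\mathcal{C}}(f)=\tau_{\le n}^{\mathcal{C}}(c)\circ\tau_{\le n}^{\mathcal{C}}(u)$, the factor $\tau_{\le n}^{\mathcal{C}}(u)$ is an equivalence since an $n$-connected morphism is $\left(n-\tfrac12\right)$-connected (the lemma immediately preceding \lemref{topos_truncation_pullback}), while $\tau_{\le n}^{\mathcal{C}}(f)$ is an equivalence by the very hypothesis that $f$ is $\left(n-\tfrac12\right)$-connected. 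Since $f\simeq c\circ u$ with $u$ already $n$-connected, it now suffices to prove the following: \emph{an $n$-truncated morphism $c\colon C\to B$ in an $m$-topos which admits a section and for which $\tau_{\le n}^{\mathcal{C}}(c)$ is an equivalence is itself an equivalence.}

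For this I would induct on $n\ge-2$. The case $n=-2$ is vacuous, and for $n=-1$ one uses that a monomorphism with a section is an equivalence (from $c\sigma\simeq\Id$ one gets $c\sigma c\simeq c$, and left-cancelling the monomorphism $c$ yields $\sigma c\simeq\Id$). For the inductive step, \lemref{topos_truncation_pullback} applied with $d=n$ presents
\[
\Square{C}{B}{\tau_{\le n+1}^{\mathcal{C}}C}{\tau_{\le n+1}^{\mathcal{C}}B}
\]
as a pullback, so $c$ is the base change of $\tau_{\le n+1}^{\mathcal{C}}(c)$ along the truncation map $B\to\tau_{\le n+1}^{\mathcal{C}}B$; as the latter is an effective epimorphism and equivalences are local, $c$ is an equivalence if and only if $\tau_{\le n+1}^{\mathcal{C}}(c)$ is. Replacing $c$ by $\tau_{\le n+1}^{\mathcal{C}}(c)$ we may thus assume in addition that $C$ and $B$ are $(n+1)$-truncated; identifying $E:=\tau_{\le n}^{\mathcal{C}}C\iso\tau_{\le n}^{\mathcal{C}}B$ via $\tau_{\le n}^{\mathcal{C}}(c)$ and working in the $\infty$-topos $\mathcal{C}_{/E}$, the map $c$ becomes a morphism between two $n$-connected, $(n+1)$-truncated objects, still equipped with a section. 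The section, which is preserved by each of these reductions, is then exactly what forces the single remaining homotopy invariant of $c$ (the relative homotopy sheaf in degree $n$, a cokernel of a split epimorphism) to vanish; unwinding this — for instance by looping once over $E$ and invoking the inductive hypothesis at level $n-1$ — yields that $c$, hence the original $c$, hence $f\simeq u$, is an equivalence, respectively $n$-connected.

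The hard part is precisely this inductive step. Reducing to $(n+1)$-truncated objects via \lemref{topos_truncation_pullback} is clean, but one must then carefully control how the truncation functors $\tau_{\le k}^{\mathcal{C}}$, the passage to the slice $\mathcal{C}_{/E}$, and looping relative to $E$ interact — and in particular verify that the section of $c$ descends through all of these to produce the split epimorphism that kills the top relative homotopy sheaf. Everything else — the factorization system, the comparison with $\left(n-\tfrac12\right)$-connectedness, and the facts about monomorphisms and effective epimorphisms — is routine.
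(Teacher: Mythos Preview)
Your route via the $(n\text{-connected},\,n\text{-truncated})$ factorization and induction on $n$ is quite different from the paper's, which is essentially a three-line argument: since $f\circ s\simeq\Id_B$, the map $\tau_{\le n}(s)$ is inverse to the equivalence $\tau_{\le n}(f)$, so $s$ is itself $(n-\tfrac12)$-connected; by \lemref{n.5_to_n_connected} it is then $(n-1)$-connected, and Proposition T.6.5.1.20 (for a retraction $f\circ s\simeq\Id$ in an $\infty$-topos, $f$ is $(n+1)$-connective if and only if $s$ is $n$-connective) immediately gives that $f$ is $n$-connected. The case of a general $m$-topos is reduced to $m=\infty$ via the embedding of T.6.4.1.5 and \lemref{Connectedness_Adjoints}.

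Your inductive step has a real gap. The claim that after passing to $(n+1)$-truncated objects over $E$ there is a ``single remaining homotopy invariant of $c$'' in degree $n$ is only correct if $c$ remains $n$-truncated after the replacement $c\mapsto\tau_{\le n+1}(c)$; otherwise there is a second obstruction, namely $\pi_{n+1}$ of the fiber, and the section does nothing to kill it. The projection $K(A\oplus A',\,n+1)\to K(A,\,n+1)$ with $A'\neq 0$ illustrates this: both objects are $n$-connected and $(n+1)$-truncated, the map has a section and becomes an equivalence after $\tau_{\le n}$, yet it is not an equivalence (and not $n$-truncated). Likewise, your ``loop once and invoke the inductive hypothesis at level $n-1$'' requires $\Omega c$ to be $(n-1)$-truncated, which again needs $c$ to be $n$-truncated. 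The missing verification --- that $\tau_{\le n+1}(c)$ is still $n$-truncated --- does hold in an $m$-topos, by descent for $n$-truncated morphisms along the effective epimorphism $B\to\tau_{\le n+1}B$ in the pullback square of \lemref{topos_truncation_pullback}, but you neither state nor prove it; and your closing paragraph flags preservation of the \emph{section} (which is automatic: just apply $\tau_{\le n+1}$) rather than preservation of $n$-truncatedness as the delicate point. With that gap filled your outline can be completed, though it remains substantially longer than the paper's argument.
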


\begin{proof}
We first prove the case of $m=\infty$. For $n=-2$, there is nothing
to prove, and so we assume that $n\ge-1$. Since $f\circ s=\Id_{B}$ we get $\tau_{\le n}^{\mathcal{C}}\left(f\right)\circ\tau_{\le n}^{\mathcal{C}}\left(s\right)=\Id_{B}$
and since $\tau_{\le n}\left(f\right)$ is an equivalence, then so
is $\tau_{\le n}\left(s\right)$ and hence $s$ is $\left(n-\frac{1}{2}\right)$-connected.
By \lemref{n.5_to_n_connected}, $s$ is $\left(n-1\right)$-connected
and hence, by T.6.5.1.20, the map $f$ is $n$-connected (note that
$n$-\emph{connective} means $\left(n-1\right)$-connected).

For a general $m$, by T.6.4.1.5 there exists an $\infty$-topos $\mathcal{D}$
and an equivalence $\mathcal{C}\simeq\tau_{\le m-1}\mathcal{D}$, and so
we may identify $\mathcal{C}$ with the full subcategory of $\left(m-1\right)$-truncated
objects of $\mathcal{D}$. If $f\colon A\to B$ is $\left(n-\frac{1}{2}\right)$-connected
in $\mathcal{C}$, then it is also $\left(n-\frac{1}{2}\right)$-connected
in $\mathcal{D}$, since the restriction of $\tau_{\le n}^{\mathcal{D}}$
to $\mathcal{C}$ is equivalent to $\tau_{\le n}^{\mathcal{C}}$.
It follows from the case of $m=\infty$ that
$f$ is $n$-connected in $\mathcal{D}$. Since $f=\tau_{\le m-1}^{\mathcal{D}}f$ and $\tau_{\le m-1}^{\mathcal{D}}$ is a left adjoint functor, by
\lemref{Connectedness_Adjoints} the map $f$ is also $n$-connected
as a map in $\mathcal{C}$.
\end{proof}

\subsection{Connectedness in Algebras}

We begin with the following general fact:
\begin{lem}
\label{lem:Monadic_Connectedness}Let $F\colon \mathcal{C}\adj\mathcal{D}\noloc U$
be a monadic adjunction between presentable $\infty$-categories.
If the monad $T=U\circ F$ preserves $n$-connected morphisms, then
$U$ detects $n$-connected morphisms. Namely, given a morphism $f\colon A\to B$
in $\mathcal{D}$, if $U\left(f\right)$ is $n$-connected for some
$n\ge-2$, then $f$ is $n$-connected.
\end{lem}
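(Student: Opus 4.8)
The plan is to reduce the statement about morphisms in $\mathcal{D}$ to a statement about the space of lifts in $\mathcal{C}$, using the monadic description of $\mathcal{D}$ together with \lemref{connected_via_truncation} and \lemref{Lifts_Adjoints}. Recall that by Barr--Beck, $\mathcal{D}\simeq\mathbf{Mod}_{T}(\mathcal{C})$, the $\infty$-category of $T$-algebras, with $U$ the forgetful functor and $F$ the free functor. A morphism $f\colon A\to B$ of $T$-algebras is $n$-connected precisely when, for every $n$-truncated morphism $g\colon X\to Y$ of $T$-algebras, the space of lifts $L(q)$ in the square with left edge $f$ and right edge $g$ is contractible. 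Since $U$ is conservative and preserves limits (being a right adjoint that is monadic), an object of $\mathcal{D}$ is $n$-truncated if and only if its image under $U$ is $n$-truncated in $\mathcal{C}$ — this follows from \lemref{connected_via_truncation}'s dual (characterizing truncatedness via mapping spaces) together with the fact that $U$ detects equivalences and preserves the relevant limits, or more directly from T.5.5.6 applied to the monadic adjunction.

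Here is the main line of argument. Fix $f\colon A\to B$ in $\mathcal{D}$ with $U(f)$ $n$-connected, and fix a square $q$ in $\mathcal{D}$
\[
\xymatrix{A\ar[d]_{f}\ar[r] & X\ar[d]^{g}\\
B\ar[r] & Y}
\]
with $g$ an $n$-truncated morphism of $T$-algebras. I want to show $L(q)\simeq\term$. The first step is to replace this lifting problem by one involving the free algebra. Using that $A\simeq \mathrm{colim}$ of a bar-type simplicial diagram of free algebras, or more cleanly using the canonical resolution, one reduces to checking the lifting property against $q$ after precomposition. A cleaner route: it suffices to show that $f$ is left orthogonal to every $n$-truncated $g$. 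Because $U(g)$ is $n$-truncated in $\mathcal{C}$ (as just noted) and $U(f)$ is $n$-connected in $\mathcal{C}$, the square $U(q)$ in $\mathcal{C}$ has contractible space of lifts, so $L(U(q))\simeq\term$. Now I relate $L(q)$ to $L(U(q))$: by the free--forgetful adjunction $F\dashv U$ and \lemref{Lifts_Adjoints}, a square in $\mathcal{D}$ whose left edge is $F$ of something has its space of lifts computed by the adjoint square in $\mathcal{C}$. Since $A$ is not free, I instead use the standard fact that $n$-connected morphisms are closed under colimits of morphisms, and that $f$ is the geometric realization of a simplicial morphism $F U^{\bullet+1}(f)$ between free algebras; each $F U^{k+1}(f)$ is $F$ applied to $U^{k+1}(f)$, which is $n$-connected because $T^{j}U(f)$ is $n$-connected by the hypothesis that $T$ preserves $n$-connected morphisms (iterating), and then $F$ of an $n$-connected morphism is $n$-connected by \lemref{Connectedness_Adjoints}(2). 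Hence each term in the resolution is an $n$-connected morphism in $\mathcal{D}$, and $f$, being their colimit, is $n$-connected — provided $n$-connected morphisms in the presentable $\infty$-category $\mathcal{D}$ are closed under colimits in the arrow category, which holds since $n$-truncated objects of $\mathcal{D}_{/B}$ form a reflective, hence colimit-independent-to-check, localization and left orthogonality is preserved under colimits of the left-hand object.

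The step I expect to be the main obstacle is justifying that $f$ genuinely is the colimit of the free resolution $|F U^{\bullet+1}(A)|\to|F U^{\bullet+1}(B)|$ \emph{as a morphism} (i.e. in $\mathcal{D}^{\Delta^1}$), together with the closure of $n$-connected morphisms under such colimits in a general presentable $\infty$-category — the latter is essentially T.5.5.6 / \lemref{connected_via_truncation} combined with the fact that truncation functors are localizations, but it needs care because connectedness is only known to interact well with colimits via the orthogonality characterization. An alternative, perhaps cleaner, obstacle-avoiding approach: work directly with \lemref{connected_via_truncation}, which says $f$ is $n$-connected iff $\tau_{\le n}^{\mathcal{D}_{/B}}(\overline{A})\simeq\overline{B}=\Id_B$. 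One shows the forgetful functor $\mathcal{D}_{/B}\to\mathcal{C}_{/U(B)}$ is monadic with monad $T_B$ induced by $T$, that $T_B$ preserves $n$-connected morphisms (hence $n$-truncation-trivial objects) because $T$ does, and then that $\tau_{\le n}$ commutes appropriately with this monadic forgetful functor under the hypothesis — reducing to the identity $\tau_{\le n}^{\mathcal{C}_{/U(B)}}(U\overline{A})\simeq \Id_{U(B)}$, which holds by \lemref{connected_via_truncation} applied to $U(f)$. I would present the proof along these lines, citing \lemref{Connectedness_Adjoints}, \lemref{connected_via_truncation}, and \lemref{Lifts_Adjoints} as the key inputs.
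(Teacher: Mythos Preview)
Your bar-resolution approach is exactly the paper's proof: express $f$ as the geometric realization of the simplicial morphism $FT^{\bullet}U(f)$ via the canonical resolution (A.4.7.3.13), note each level is $n$-connected since $T$ preserves $n$-connected morphisms by hypothesis and $F$ does by \lemref{Connectedness_Adjoints}(2), and conclude by closure of $n$-connected morphisms under colimits. The obstacle you worry about is not one: closure under colimits in the arrow category is precisely T.5.2.8.6(7), and the resolution of $f$ \emph{as a morphism} is what A.4.7.3.13 gives (or just apply it in $\mathcal{D}^{\Delta^1}$). Your alternative route through truncation in over-categories and the detour through $L(q)$ and \lemref{Lifts_Adjoints} are both unnecessary.
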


\begin{proof}
Given a morphism $f\colon A\to B$ in $\mathcal{D}$, using the canonical
simplicial resolution provided by the proof of A.4.7.3.13, we can
express it as a colimit of the simplicial diagram of morphisms:

\[
\colim\limits _{\Delta^{op}}\left(T^{n+1}\left(A\right)\to T^{n+1}\left(B\right)\right),
\]
which one can write as
\[
\colim\limits _{\Delta^{op}}\left(FT^{n}U\left(A\right)\to FT^{n}U\left(B\right)\right).
\]
If $U\left(f\right)$ is $n$-connected as in the statement, then
since $T$ preserves $n$-connected morphisms by assumption and $F$
preserves $n$-connected morphisms by being left adjoint, it follows
that all the maps in the diagram are $n$-connected. By T.5.2.8.6(7),
the map $f$ is also $n$-connected.
\end{proof}
We want to apply the above to the free-forgetful adjunction between
a symmetric monoidal $\infty$-category $\mathcal{C}$ and the category
of $\mathcal{P}$-algebras in $\mathcal{C}$, where $\mathcal{P}$
is a reduced $\infty$-operad. For this, we need some compatibility between
the notion of $n$-connectedness and the symmetric monoidal structure: 
\begin{lem}
\label{lem:Connectedness_Tensor_Closure}Let $\mathcal{C}$ be a presentably
symmetric monoidal $\infty$-category. For every integer $n\ge-2$,
the class of $n$-connected morphisms in $\mathcal{C}$ is closed
under tensor products.
\end{lem}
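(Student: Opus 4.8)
The plan is to deduce the statement (in the form: if $f\colon A\to A'$ and $g\colon B\to B'$ are $n$-connected then $f\otimes g\colon A\otimes B\to A'\otimes B'$ is $n$-connected) from the one-variable case, which in turn will be an instance of \lemref{Connectedness_Adjoints}. First I would use the functoriality of the tensor product $\otimes\colon\mathcal{C}\times\mathcal{C}\to\mathcal{C}$ to factor $f\otimes g$ as the composite
\[
A\otimes B\xrightarrow{\;f\otimes\Id_B\;}A'\otimes B\xrightarrow{\;\Id_{A'}\otimes g\;}A'\otimes B'.
\]
Since the class of $n$-connected morphisms is closed under composition (as is any class of morphisms defined by a left lifting property against a fixed class, here the $n$-truncated morphisms), it suffices to prove that each of the two factors is $n$-connected; and by the symmetry of the symmetric monoidal structure it is enough to treat morphisms of the form $f\otimes\Id_B$ for a fixed object $B$.

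For this I would observe that $f\otimes\Id_B=(-\otimes B)(f)$, where $-\otimes B\colon\mathcal{C}\to\mathcal{C}$ denotes the functor of tensoring with $B$. Because $\mathcal{C}$ is \emph{presentably} symmetric monoidal, this functor preserves colimits, and since $\underline{\mathcal{C}}$ is presentable the adjoint functor theorem (T.5.5.2.9) furnishes a right adjoint to $-\otimes B$. Hence $-\otimes B$ is a left adjoint between $\infty$-categories that admit finite limits, so \lemref{Connectedness_Adjoints}(2) applies and shows that $-\otimes B$ carries $n$-connected morphisms to $n$-connected morphisms. Applying this to $f$ gives that $f\otimes\Id_B$ is $n$-connected, and composing the two factors finishes the argument.

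I do not anticipate a genuine obstacle. The only two points that require a moment's care are (a) verifying that $-\otimes B$ is really a left adjoint, which is precisely where the presentability half of the hypothesis enters, and (b) the formal fact that $n$-connected morphisms are stable under composition. So the ``hard part'', such as it is, amounts only to confirming that the hypotheses of \lemref{Connectedness_Adjoints} are satisfied in the present situation.
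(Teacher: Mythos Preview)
Your proposal is correct and follows essentially the same approach as the paper: factor $f\otimes g$ through intermediate objects, observe that tensoring with a fixed object is a left adjoint (using presentability and that $\otimes$ preserves colimits in each variable), and conclude via closure of $n$-connected maps under left adjoints and composition. The paper's proof cites \lemref{Lifts_Adjoints} rather than \lemref{Connectedness_Adjoints} at the key step, but your citation is the more apt one (and the paper's \lemref{Connectedness_Adjoints} is itself derived from \lemref{Lifts_Adjoints}).
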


\begin{proof}
Since $\mathcal{C}$ is presentable and the tensor product commutes
with colimits separately in each variable, for each object $X\in\mathcal{C}$
the functor $Y\mapsto X\otimes Y$ is a left adjoint and therefore
preserves $n$-connected morphisms by \lemref{Lifts_Adjoints}. Hence,
given two $n$-connected morphisms $f\colon A_{1}\to B_{1}$ and $g\colon A_{2}\to B_{2}$,
the composition 
\[
A_{1}\otimes B_{1}\xrightarrow{A_{1}\otimes g}A_{1}\otimes B_{2}\xrightarrow{f\otimes B_{2}}A_{2}\otimes B_{2}
\]
is $n$-connected as a composition of two $n$-connected morphisms.
\end{proof}
\begin{example}
\label{Topos_Cartesian} For every $m$-topos (with $-1\le m\le\infty$)
and $n\ge-2$, the class of $n$-connected morphisms is closed under
Cartesian products. In particular, this applies to $\mathcal{S}_{\le m}^{K}$
for every simplicial set $K$.
\end{example}

\begin{lem}
\label{lem:Algebraic_Monad}Let $\mathcal{P}$ be a reduced $\infty$-operad
and let $\mathcal{C}$ be a presentably symmetric monoidal $\infty$-category.
The free-forgetful adjunction 
\[
F\colon \mathcal{C}\adj\alg_{\mathcal{P}}\left(\mathcal{C}\right)\noloc U
\]
is monadic and the associated monad $T=U\circ F$ preserves $n$-connected
morphisms.
\end{lem}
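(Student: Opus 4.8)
The plan is to verify the two assertions separately: monadicity via the Barr–Beck–Lurie criterion, and the statement about $n$-connected morphisms via the explicit formula for the monad $T_{\mathcal P}$ from \lemref{Monad_Formula} together with closure properties of $n$-connected morphisms established in the previous subsection. For monadicity, the forgetful functor $U\colon\alg_{\mathcal P}(\mathcal C)\to\mathcal C$ is conservative (this is standard, e.g.\ A.3.2.2.6, since $\mathcal P$ is reduced so that $\alg_{\triv}(\mathcal C)\simeq\mathcal C$) and preserves $U$-split geometric realizations (again standard for the forgetful functor of algebras over an $\infty$-operad into a presentably symmetric monoidal $\infty$-category; cf.\ A.3.2.3.1), and the left adjoint $F=F_{\mathcal P}$ exists by \lemref{Monad_Formula}. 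Hence the adjunction $F\dashv U$ is monadic by the $\infty$-categorical Barr–Beck theorem A.4.7.3.5.

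For the statement about $n$-connected morphisms, I would use the formula
\[
T_{\mathcal P}(X)=\coprod_{k\ge0}\bigl(\mathcal P(k)\otimes X^{\otimes k}\bigr)_{h\Sigma_k}
\]
from \lemref{Monad_Formula}, and argue that each operation in this formula preserves $n$-connected morphisms. First, given an $n$-connected map $f\colon X\to Y$ in $\mathcal C$, the $k$-fold tensor power $f^{\otimes k}\colon X^{\otimes k}\to Y^{\otimes k}$ is $n$-connected by \lemref{Connectedness_Tensor_Closure}. Next, tensoring with the fixed space $\mathcal P(k)$ (using the canonical tensoring of $\underline{\mathcal C}$ over $\mathcal S$, which is a colimit-preserving functor in the $\mathcal C$-variable) preserves $n$-connected morphisms, since for fixed $S\in\mathcal S$ the functor $Z\mapsto S\otimes Z$ is a left adjoint and left adjoints preserve $n$-connected morphisms by \lemref{Connectedness_Adjoints}(2) (or \lemref{Lifts_Adjoints}). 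Then the homotopy-orbits functor $(-)_{h\Sigma_k}\colon\fun(B\Sigma_k,\mathcal C)\to\mathcal C$ is a colimit, hence a left adjoint, so it preserves $n$-connected morphisms; applied levelwise it sends the $\Sigma_k$-equivariant $n$-connected map $\mathcal P(k)\otimes f^{\otimes k}$ to an $n$-connected map. Finally, the coproduct $\coprod_{k\ge0}$ is again a colimit, and by T.5.2.8.6(7) (or equivalently, using that colimits of $n$-connected morphisms indexed by a sifted or arbitrary small category are $n$-connected, as already cited in \lemref{Monadic_Connectedness}) the resulting map $T_{\mathcal P}(f)$ is $n$-connected.

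The main obstacle I anticipate is purely a matter of bookkeeping rather than genuine difficulty: one must be careful that the equivariance is handled correctly, i.e.\ that $\mathcal P(k)\otimes f^{\otimes k}$ really is a morphism of $\Sigma_k$-objects (a morphism in $\fun(B\Sigma_k,\mathcal C)$) whose underlying morphism in $\mathcal C$ is $n$-connected, and that $n$-connectedness of a morphism in $\fun(B\Sigma_k,\mathcal C)$ can be checked on underlying objects — this is because the evaluation functor $\fun(B\Sigma_k,\mathcal C)\to\mathcal C$ detects $n$-connected morphisms, as its right adjoint (right Kan extension) is readily seen to commute with the relevant truncations, so one applies \lemref{Monadic_Connectedness} to the (monadic, since $B\Sigma_k$ is a groupoid) adjunction, or argues directly. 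Once these compatibilities are in place, the chain of left adjoints closes the argument. I would present the proof as: (i) cite Barr–Beck for monadicity; (ii) invoke \lemref{Monad_Formula} for the formula; (iii) run $f^{\otimes k}$ through \lemref{Connectedness_Tensor_Closure}, then $\mathcal P(k)\otimes(-)$, then $(-)_{h\Sigma_k}$, then $\coprod_k$, each step preserving $n$-connectedness because each is (levelwise) a left adjoint or a colimit, finishing with T.5.2.8.6(7).
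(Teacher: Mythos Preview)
Your proposal is correct and follows essentially the same approach as the paper. The paper is more concise: for monadicity it cites A.4.7.3.11 directly rather than re-verifying the Barr--Beck hypotheses, and for the preservation of $n$-connected morphisms it bundles your tensor-with-$\mathcal P(k)$, $(-)_{h\Sigma_k}$, and $\coprod_k$ steps into a single appeal to T.5.2.8.6 (closure of $n$-connected morphisms under arbitrary colimits), which also dissolves your equivariance bookkeeping worry.
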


\begin{proof}
By A.4.7.3.11, the adjunction $F\dashv U$ is monadic. Hence, given
a morphism $A\to B$ in $\mathcal{C}$, by \propref{Monad_Map_Formula}
the morphism $T\left(A\right)\to T\left(B\right)$ can be expressed
as
\[
\coprod_{n\ge0}\left(P\left(n\right)\otimes A^{\otimes n}\right)_{h\Sigma_{n}}\to\coprod_{n\ge0}\left(P\left(n\right)\otimes B^{\otimes n}\right)_{h\Sigma_{n}}.
\]

By \lemref{Connectedness_Tensor_Closure}, $n$-connected morphisms
are closed under $\otimes$ and, by T.5.2.8.6, they are closed under colimits. Hence, we obtain that $T\left(A\right)\to T\left(B\right)$
is $n$-connected as well. 
\end{proof}

\begin{prop}
\label{prop:Algebraic_Reduction}Let $\mathcal{P}$ be a reduced $\infty$-operad
and let $\mathcal{C}$ be a presentably symmetric monoidal $\infty$-category.
Given a morphism $f\colon A\to B$ in $\alg_{\mathcal{P}}\left(\mathcal{C}\right)$,
if the underlying map $U\left(f\right)$ is $n$-connected for some
$n\ge-2$, then $f$ is $n$-connected.
\end{prop}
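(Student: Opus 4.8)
The plan is to deduce this directly from the two preceding lemmas, \lemref{Algebraic_Monad} and \lemref{Monadic_Connectedness}, since the substantive work has already been carried out there. First I would invoke \lemref{Algebraic_Monad}: because $\mathcal{P}$ is reduced and $\mathcal{C}$ is presentably symmetric monoidal, the free-forgetful adjunction $F\colon \mathcal{C}\adj\alg_{\mathcal{P}}\left(\mathcal{C}\right)\noloc U$ is monadic and the associated monad $T=U\circ F$ preserves $n$-connected morphisms. Here both $\mathcal{C}$ and $\underline{\alg}_{\mathcal{P}}\left(\mathcal{C}\right)$ are presentable $\infty$-categories (the latter by the standard presentability of algebra categories over a presentably symmetric monoidal $\infty$-category, as developed in \cite{HA}), so the hypotheses needed downstream are in place.

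Second, I would apply \lemref{Monadic_Connectedness} to this monadic adjunction. Since $T$ preserves $n$-connected morphisms, that lemma says precisely that $U$ detects $n$-connected morphisms: given $f\colon A\to B$ in $\alg_{\mathcal{P}}\left(\mathcal{C}\right)$ with $U\left(f\right)$ an $n$-connected morphism of $\mathcal{C}$, the map $f$ is itself $n$-connected. This is exactly the claim, so the argument terminates here.

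The only point meriting a moment's care is the verification that the two hypotheses of \lemref{Monadic_Connectedness} hold — presentability of both $\infty$-categories and genuine monadicity of $F\dashv U$ — and both are furnished by \lemref{Algebraic_Monad} together with the presentability remark above. I do not expect any real obstacle: the genuinely nontrivial inputs are already established upstream, namely the explicit colimit formula $T\left(X\right)\simeq\coprod_{n\ge0}\left(\mathcal{P}\left(n\right)\otimes X^{\otimes n}\right)_{h\Sigma_{n}}$ from \propref{Monad_Map_Formula} (used in \lemref{Algebraic_Monad} via closure of $n$-connected morphisms under $\otimes$ and under colimits) and the bar-resolution / descent argument underlying \lemref{Monadic_Connectedness}. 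Consequently the proof is a short formal composition of these facts, and I would present it as such rather than unwinding either ingredient again.
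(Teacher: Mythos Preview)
Your proposal is correct and matches the paper's own proof, which reads in its entirety: ``Combine \lemref{Algebraic_Monad} and \lemref{Monadic_Connectedness}.'' Your additional remarks on presentability of $\underline{\alg}_{\mathcal{P}}(\mathcal{C})$ are fine but not strictly needed here, since \lemref{Algebraic_Monad} already packages the monadicity and the preservation of $n$-connected morphisms required by \lemref{Monadic_Connectedness}.
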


\begin{proof}
Combine \lemref{Algebraic_Monad} and \lemref{Monadic_Connectedness}.
\end{proof}

\section{The $\infty$-Categorical Eckmann\textendash Hilton Argument}

In this final section we prove our main results. In 5.1 we analyze
the canonical map from the coproduct to the tensor product of two
algebras over a reduced $\infty$-operad. The main result is that
under suitable assumptions, if the $\infty$-operad is highly connected,
then this map is also highly connected (\propref{Coproduct_Lemma}).
In 5.2 we use the connectivity bound established in 5.1 to analyze
the reduced endomorphism operad of an object in an $\infty$-topos.
This analysis recovers and expands on classical results on deloopings of spaces with non-vanishing homotopy groups in a bounded region. 
In 5.3 we prove our main theorem (\thmref{Main_Theorem}) and its main corollary: the $\infty$-categorical Eckmann\textendash Hilton argument (\corref{EHA}).
We conclude with some curious applications of the main theorem to some
questions regarding tensor products of reduced $\infty$-operads.

\subsection{Coproducts of Algebras}

Let $\mathcal{C}$ be a symmetric monoidal $\infty$-category and
let $\mathcal{P}$ be a reduced $\infty$-operad. For every two algebras
$A,B\in\alg_{\mathcal{P}}\left(\mathcal{C}\right)$, there is a canonical
map of algebras 
\[
f_{A,B}\colon A\sqcup B\to A\otimes B
\]
formally given by 
\[
f_{A,B}=\Id_{A}\otimes1_{B}\sqcup1_{A}\otimes \Id_{B},
\]
where $1_{A}\colon 1\to A$ and $1_{B}\colon 1\to B$ are the respective unit
maps viewed as maps of algebras (see A.3.2.1). 
\begin{lem}
\label{lem:coproduct_tensor_section}Let $\mathcal{C}$ be a symmetric
monoidal $\infty$-category and let $\mathcal{P}$ be a reduced $\infty$-operad.
If $\mathcal{P}\not\simeq\bb E_{0}$, then for every pair of algebras
$A,B\in\alg_{\mathcal{P}}\left(\mathcal{C}\right)$, the canonical
map
\[
f_{A,B}\colon A\sqcup B\to A\otimes B
\]
has a section after we apply the forgetful functor $\underline{\left(-\right)}\colon \alg_{\mathcal{P}}\left(\mathcal{C}\right)\to\mathcal{C}$.
\end{lem}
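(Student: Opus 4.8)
The plan is to produce the section explicitly on the underlying objects, using only the structure of a single $n$-ary operation $\mu \in \mathcal{P}(n)$ for some $n \ge 2$ (which exists by \remref{Minus_One_Connected}, since $\mathcal{P} \not\simeq \bb E_0$), together with the unit. First I would recall that, by the adjunction $F_{\mathcal{P}} \dashv U_{\mathcal{P}}$ of \lemref{Monad_Formula}, giving a map $U(A\otimes B) \to U(A\sqcup B)$ splitting $U(f_{A,B})$ is the same, in spirit, as producing a compatible map out of the free algebra; but the cleanest route is to observe that $A\sqcup B$ receives, from a chosen $\mu$, a ``multiplication-type'' map. Concretely, using the coproduct inclusions $i_A\colon A \to A\sqcup B$ and $i_B\colon B\to A\sqcup B$ in $\alg_{\mathcal{P}}(\mathcal{C})$, the operation $\mu$ furnishes, on underlying objects, a map $m\colon \underline{A\sqcup B} \otimes \underline{A\sqcup B} \to \underline{A\sqcup B}$ by first mapping into the $n$-th tensor power via inserting the two factors in two chosen slots and the unit $1 \to \underline{A\sqcup B}$ into the remaining $n-2$ slots (this uses $n \ge 2$), and then applying the image of $\mu$ under the algebra structure map of $A\sqcup B$.

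Next I would use this $m$ together with the units to build the desired section $s\colon \underline{A\otimes B} \to \underline{A\sqcup B}$. The point is that $U(f_{A,B}) = (\Id_A \otimes 1_B) \sqcup (1_A \otimes \Id_B)$ exhibits $\underline{A\otimes B}$ as a kind of ``external product'', and the natural candidate for $s$ is the composite $\underline{A\otimes B} \xrightarrow{\ U(i_A)\,\otimes\,U(i_B)\ } \underline{A\sqcup B}\otimes \underline{A\sqcup B} \xrightarrow{\ m\ } \underline{A\sqcup B}$. One then checks $U(f_{A,B}) \circ s \simeq \Id$: tracing through, $U(f_{A,B})\circ s$ sends the two tensor factors of $A\otimes B$ into $A\otimes B$ via the two algebra maps and multiplies using (the image of) $\mu$, but because $f_{A,B}$ is built precisely from the unit-padded inclusions and $\mu$ itself is a unital operation of $\mathcal{P}$ (reducedness: plugging the constant into all-but-one slot gives the identity), the padded slots contribute identities and the composite collapses to the canonical identification $\underline{A}\otimes\underline{B} \xrightarrow{\sim} \underline{A\otimes B}$.

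The main obstacle I anticipate is bookkeeping the coherence: $\mu$ is only a point of a space $\mathcal{P}(n)$, so ``the image of $\mu$ under the algebra structure'' must be interpreted through the operadic structure maps of $A\sqcup B \in \alg_{\mathcal{P}}(\mathcal{C})$, and the verification that the padded composite equals the identity uses the unitality of $\mathcal{P}$ as an $\infty$-operad (the contractibility of $\mul_{\mathcal{P}}(\es, X)$ and the corresponding degeneracy relations), which only holds up to coherent homotopy. I would handle this by working entirely with \lemref{Monad_Formula}'s explicit monad formula $T_{\mathcal{P}}(X) = \coprod_{n\ge 0}(\mathcal{P}(n)\otimes X^{\otimes n})_{h\Sigma_n}$ applied to $X = \underline{A}\sqcup \underline{B}$, inside which the relevant maps are summand inclusions and the required identities become instances of the simplicial identities in the bar resolution; alternatively, and perhaps more cleanly, one reduces to the universal case by mapping from free algebras, where the statement becomes an identity about symmetric sequences that can be read off from the combinatorics of $\mathcal{P}(n)$, $n\ge 2$, and then transported to general $A,B$ by naturality of $f_{A,B}$ in $A$ and $B$.
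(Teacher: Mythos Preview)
Your construction of the section $s$ is exactly the paper's: define $s$ as the composite $\underline{A}\otimes\underline{B}\xrightarrow{\underline{i_A}\otimes\underline{i_B}}(\underline{A\sqcup B})^{\otimes 2}\xrightarrow{\mu_{A\sqcup B}}\underline{A\sqcup B}$ using a chosen operation $\mu$ of arity $\ge 2$. The only cosmetic difference is that the paper picks $\mu\in\mathcal{P}(2)$ directly (as \remref{Minus_One_Connected} already guarantees $\mathcal{P}(2)\neq\es$, by the same pad-with-constants trick you use), rather than carrying an $n$-ary operation and padding later.

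Where you diverge is in the verification of $\underline{f_{A,B}}\circ s\simeq\Id$, and there you are making life harder than necessary. The paper does not invoke the monad formula, bar resolutions, or a reduction to free algebras; it simply draws a commutative diagram in the \emph{homotopy category} of $\underline{\mathcal{C}}$. The two key ingredients are: (i) $f_{A,B}$ is a map of $\mathcal{P}$-algebras, so $\underline{f_{A,B}}\circ\mu_{A\sqcup B}\simeq\mu_{A\otimes B}\circ(\underline{f_{A,B}}\otimes\underline{f_{A,B}})$; and (ii) by definition of the algebra structure on $A\otimes B$, the map $\mu_{A\otimes B}$ factors through the symmetry $\Id_{\underline{A}}\otimes\sigma\otimes\Id_{\underline{B}}$ followed by $\mu_A\otimes\mu_B$. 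Chasing $\underline{A}\otimes\underline{B}$ around this rectangle, the unit-padded inputs collapse because plugging the constant into one slot of $\mu$ yields the identity (reducedness), and one lands on $\Id_{\underline{A}\otimes\underline{B}}$. All of this is a finite diagram of maps up to homotopy, so working in $h\underline{\mathcal{C}}$ suffices and no higher coherence bookkeeping is needed. Your proposed detours would work, but they are overkill for what is, in the end, a one-diagram check.
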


\begin{proof}
By \remref{Minus_One_Connected}, if $\mathcal{P}\not\simeq\bb E_{0}$,
then it is $\left(-1\right)$-connected and in particular $\mathcal{P}\left(2\right)\neq\es$.
We shall construct a section to $\underline{f_{A,B}}$ using any binary
operation $\mu\in\mathcal{P}\left(2\right)$. Let $i_{A}\colon A\to A\sqcup B$
and $i_{B}\colon B\to A\sqcup B$ be the canonical maps of the coproduct.
Define $s$ to be the composition of the following maps: 
\[
\underline{A}\otimes\underline{B}\xrightarrow{\underline{i_{A}}\otimes\underline{i_{B}}}\left(\underline{A\sqcup B}\right)\otimes\left(\underline{A\sqcup B}\right)\xrightarrow{\mu_{A\sqcup B}}\underline{A\sqcup B}.
\]
Now, consider the following diagram in the homotopy category of $\mathcal{C}$:
\[
\xymatrix{ &  &  &  & \left(\underline{A\sqcup B}\right)\otimes\left(\underline{A\sqcup B}\right)\ar[d]^{\underline{f_{A,b}}\otimes\underline{f_{A,B}}}\ar[rrr]^{\mu_{A\sqcup B}} &  &  & \underline{A\sqcup B}\ar[d]^{\underline{f_{A,B}}}\\
\underline{A\otimes B}\ar[rrrru]^{\underline{i_{A}}\otimes\underline{i_{B}}}\ar[rrrr]^{\underline{\left(\Id_{A}\otimes1_{B}\right)\otimes\left(1_{A}\otimes \Id_{B}\right)}}\ar[rrrrd]_{\underline{\left(\Id_{A}\otimes1_{A}\right)\otimes\left(1_{B}\otimes \Id_{B}\right)}} &  &  &  & \left(\underline{A\otimes B}\right)\otimes\left(\underline{A\otimes B}\right)\ar[rrr]^{\mu_{A\otimes B}}\ar[d]^{\Id_{\underline{A}}\otimes\sigma_{\underline{A},\underline{B}}\otimes \Id_{\underline{B}}} &  &  & \underline{A\otimes B}\ar@{=}[d]\\
 &  &  &  & \left(\underline{A\otimes A}\right)\otimes\left(\underline{B\otimes B}\right)\ar[rrr]^{\mu_{A}\otimes \mu_{B}} &  &  & \underline{A\otimes B}
}
.
\]
The upper square commutes since $f_{A,B}$ is a map of algebras. The
upper triangle commutes since it is the tensor product of two triangles,
which commute by the very definition of $f_{A,B}$. The lower square
commutes by the definition of the algebra structure on $A\otimes B$
and the lower triangle also clearly commutes. The composition of the
bottom diagonal map and the bottom right map is the identity, since
the restriction of $\mu$ to the unit in one of the arguments is homotopic to
the identity map of the other argument. The composition of the top
diagonal map with the top right map is $s$. It follows that $\underline{f_{A,B}}\circ s\sim \Id_{\underline{A\otimes B}}$.
\end{proof}
\begin{lem}
\label{lem:Doctrinal_adjunction} Let $\mathcal{C}$ and $\mathcal{D}$
be symmetric monoidal $\infty$-categories and let $F\colon \mathcal{C}\to\mathcal{D}$
be a symmetric monoidal functor. If $\underline{F}\colon \underline{\mathcal{C}}\to\underline{\mathcal{D}}$
is a left adjoint, then the induced functor $F^{\otimes}\colon \mathcal{C}^{\otimes}\to\mathcal{D}^{\otimes}$
is a left adjoint relative to $\finpt$ and for every $\infty$-operad
$\mathcal{P}$ the induced functor $\underline{\alg}_{\mathcal{P}}\left(\mathcal{C}\right)\to\underline{\alg}_{\mathcal{P}}\left(\mathcal{D}\right)$
is a left adjoint.
\end{lem}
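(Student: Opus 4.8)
The plan is to deduce both assertions from Lurie's theory of relative adjunctions of $\infty$-operads, developed in A.7.3.2.

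For the first assertion, the argument I would give is as follows. Being a (strong) symmetric monoidal functor, $F^{\otimes}\colon\mathcal{C}^{\otimes}\to\mathcal{D}^{\otimes}$ is in particular a map of coCartesian fibrations over $\finpt$, ie it preserves all coCartesian edges. For each object $\left\langle n\right\rangle\in\finpt$, the Segal maps furnish equivalences $\mathcal{C}^{\otimes}_{\left\langle n\right\rangle}\simeq\underline{\mathcal{C}}^{\times n}$ and $\mathcal{D}^{\otimes}_{\left\langle n\right\rangle}\simeq\underline{\mathcal{D}}^{\times n}$, compatibly with $F^{\otimes}$, under which the induced functor on $\left\langle n\right\rangle$-fibers is identified with $\underline{F}^{\times n}$. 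Since $\underline{F}$ admits a right adjoint $\underline{G}$ by hypothesis, so does $\underline{F}^{\times n}$ (namely $\underline{G}^{\times n}$). Thus the hypotheses of the $\mathcal{O}$-monoidal criterion for relative adjoints (A.7.3.2.7, applied with $\mathcal{O}^{\otimes}=\finpt$; compare the general coCartesian statement A.7.3.2.6) are satisfied, and we obtain a functor $G^{\otimes}\colon\mathcal{D}^{\otimes}\to\mathcal{C}^{\otimes}$ that is right adjoint to $F^{\otimes}$ relative to $\finpt$ and restricts to $\underline{G}^{\times n}$ on each fiber. The same result shows that $G^{\otimes}$ is lax symmetric monoidal, ie a map of $\infty$-operads $\mathcal{D}^{\otimes}\to\mathcal{C}^{\otimes}$ (one should not expect more, since the right adjoint of a strong symmetric monoidal functor is in general only lax). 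In particular $F^{\otimes}$ is a left adjoint relative to $\finpt$, with unit and counit projecting to identities of $\finpt$.

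For the second assertion, recall that $\underline{\alg}_{\mathcal{P}}\left(\mathcal{C}\right)$ is the full subcategory of $\fun_{\finpt}\left(\mathcal{P}^{\otimes},\mathcal{C}^{\otimes}\right)$ spanned by the maps of $\infty$-operads (and likewise for $\mathcal{D}$), and that the functor in question is postcomposition with $F^{\otimes}$. Postcomposition with the relative adjunction $F^{\otimes}\dashv G^{\otimes}$ over $\finpt$ produces an adjunction
\[
F^{\otimes}\circ(-)\colon\fun_{\finpt}\left(\mathcal{P}^{\otimes},\mathcal{C}^{\otimes}\right)\adj\fun_{\finpt}\left(\mathcal{P}^{\otimes},\mathcal{D}^{\otimes}\right)\noloc G^{\otimes}\circ(-),
\]
whose unit and counit are obtained by whiskering those of $F^{\otimes}\dashv G^{\otimes}$ and hence again project to identities of $\finpt$. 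Because $F^{\otimes}$ preserves coCartesian (in particular inert) edges and $G^{\otimes}$ preserves inert edges, both functors carry operad maps to operad maps; and since $\underline{\alg}_{\mathcal{P}}\left(-\right)$ sits inside $\fun_{\finpt}\left(\mathcal{P}^{\otimes},-\right)$ as a full subcategory preserved by both functors, the adjunction restricts to an adjunction $\underline{\alg}_{\mathcal{P}}\left(\mathcal{C}\right)\adj\underline{\alg}_{\mathcal{P}}\left(\mathcal{D}\right)$. Thus the left-hand functor is a left adjoint. (This last step is also recorded, in slightly different language, among the consequences of A.7.3.2.)

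I do not anticipate a real obstacle: the lemma is the $\infty$-categorical \emph{doctrinal adjunction}, essentially a repackaging of A.7.3.2. The only points that merit attention are (i) checking the fiberwise right-adjoint hypothesis over every $\left\langle n\right\rangle\in\finpt$ and not merely over the unique color $\left\langle 1\right\rangle$, which is immediate from the Segal identification above, and (ii) the claim that the relative right adjoint $G^{\otimes}$ preserves inert morphisms — this is exactly the assertion that $G^{\otimes}$ is lax symmetric monoidal, and is precisely what legitimizes the restriction to algebra categories in the previous paragraph.
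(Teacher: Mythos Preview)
Your proposal is correct and follows essentially the same route as the paper: the paper also identifies the fiber of $F^{\otimes}$ over $\left\langle n\right\rangle$ with $\underline{F}^{n}$, invokes A.7.3.2.7 to obtain the relative right adjoint $G^{\otimes}$, and then cites A.7.3.2.13 for the induced adjunction on $\underline{\alg}_{\mathcal{P}}$. Your second paragraph is just an unpacking of what A.7.3.2.13 says, so there is no real difference in approach.
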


\begin{proof}
For every $\left\langle n\right\rangle \in\finpt$, the restriction
of $F^{\otimes}$ to the fiber over $\left\langle n\right\rangle $
is just $F^{n}\colon \mathcal{C}^{n}\to\mathcal{D}^{n}$, which is clearly
a left adjoint. Hence, by A.7.3.2.7, the functor $F^{\otimes}$ is
a left adjoint relative to $\finpt$. Let $G^{\otimes}$ be the right
adjoint of $F^{\otimes}.$ Applying A.7.3.2.13, we obtain that $F^{\otimes}$
and $G^{\otimes}$ induce an adjunction: 
\[
\underline{\alg}_{\mathcal{P}}\left(\mathcal{C}\right)\adj\underline{\alg}_{\mathcal{P}}\left(\mathcal{D}\right).
\]
\end{proof}
In what follows we are going to restrict ourselves to the case of
a Cartesian monoidal structure. The next proposition is the key connectivity bound on which the main theorems of this paper rest.
\begin{prop}
\label{prop:Coproduct_Lemma} Let $\mathcal{C}$ be an $m$-topos
for some $-1\le m\le\infty$ with the Cartesian symmetric monoidal
structure and let $\mathcal{P}$ be a reduced $d$-connected $\infty$-operad
for some $d\ge-2$. For every pair of algebras $A,B\in\alg_{\mathcal{P}}\left(\mathcal{C}\right)$,
the canonical map
\[
f_{A,B}\colon A\sqcup B\to A\times B
\]
is $d$-connected.
\end{prop}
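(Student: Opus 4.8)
The plan is to reduce, via the results of Sections~3 and~4, to the fact that over a $d$-truncated base a $d$-connected reduced operad behaves like $\bb E_\infty$, for whose algebras finite coproducts agree with finite products. First I would dispose of the case $d=-2$, where every morphism is $(-2)$-connected. So assume $d\ge-1$; then $\mathcal{P}$ is $(-1)$-connected, hence $\mathcal{P}\not\simeq\bb E_0$ by \remref{Minus_One_Connected}. Since $\mathcal{C}$, being an $m$-topos, is presentably symmetric monoidal for its Cartesian structure, and $\alg_{\mathcal{P}}(\mathcal{C})$ is Cartesian by \lemref{Cart_Sym_Mon_Strcture}, the tensor product of algebras coincides with the categorical product, so $f_{A,B}$ is the canonical map from the coproduct to the product. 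By \propref{Algebraic_Reduction} it suffices to show that the underlying map $\underline{f_{A,B}}\colon \underline{A\sqcup B}\to\underline{A}\times\underline{B}$ in $\mathcal{C}$ is $d$-connected. By \lemref{coproduct_tensor_section} this map admits a section, and $\mathcal{C}$ is an $m$-topos, so by \propref{Connectedness_Section} it is enough to prove that $\underline{f_{A,B}}$ is $\left(d-\frac{1}{2}\right)$-connected, i.e.\ that $\tau_{\le d}^{\mathcal{C}}\bigl(\underline{f_{A,B}}\bigr)$ is an equivalence.

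Next I would push the truncation into the algebras. Write $L=\tau_{\le d}^{\mathcal{C}}\colon \mathcal{C}\to\mathcal{C}':=\tau_{\le d}\mathcal{C}$; it is a left adjoint and, being a finite-product-preserving functor between Cartesian symmetric monoidal $\infty$-categories, it is symmetric monoidal. By \lemref{Doctrinal_adjunction} it induces a left adjoint $L_{\mathcal{P}}\colon \underline{\alg}_{\mathcal{P}}(\mathcal{C})\to\underline{\alg}_{\mathcal{P}}(\mathcal{C}')$, which is symmetric monoidal and commutes with the forgetful functors. Both algebra $\infty$-categories are Cartesian (\lemref{Cart_Sym_Mon_Strcture}), so $L_{\mathcal{P}}$ preserves finite products as well as finite coproducts, hence carries $f_{A,B}$ to the canonical comparison map $f_{L_{\mathcal{P}}A,\,L_{\mathcal{P}}B}$ in $\underline{\alg}_{\mathcal{P}}(\mathcal{C}')$. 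Applying the forgetful functor $U$ and using $U\circ L_{\mathcal{P}}\simeq L\circ U$, it therefore suffices to prove that in $\underline{\alg}_{\mathcal{P}}(\mathcal{C}')$ the canonical map from every finite coproduct to the corresponding product is an equivalence.

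The heart of the argument is the identification of $\mathcal{P}$-algebras in $\mathcal{C}'$ with $\bb E_\infty$-algebras. Since $\mathcal{P}$ is $d$-connected, the unique map $g\colon \mathcal{P}\to\bb E_\infty$ is a $d$-equivalence, so $h_{d+1}\mathcal{P}\to h_{d+1}\bb E_\infty=\bb E_\infty$ is an equivalence of $\infty$-operads. The objects of $\mathcal{C}'$ are $d$-truncated, so $\mathcal{C}'$ is an essentially $(d+1)$-category and $\mathcal{C}'$ with its Cartesian structure is an essentially $(d+1)$-operad (\exaref{Sym_Mon_d_Cat}); for the same reason, for any $\infty$-category $\mathcal{E}$ the $\infty$-operad $\alg_{\mathcal{E}}(\mathcal{C}')\simeq\fun(\mathcal{E},\mathcal{C}')_{\times}$ (\lemref{Cart_Sym_Mon_Strcture}) is an essentially $(d+1)$-operad. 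Repeating the computation in the proof of \propref{d-equivalence}, $(3)\Rightarrow(4)$, with $\mathcal{C}'$ in place of $\mathcal{S}_{\le d}$ and using the adjunctions $\iota\dashv\underline{(-)}$ and $(-)\otimes\mathcal{P}\dashv\alg_{\mathcal{P}}(-)$, one gets a natural equivalence
\[
\map_{\cat}\bigl(\mathcal{E},\underline{\alg}_{\mathcal{P}}(\mathcal{C}')\bigr)\;\simeq\;\map_{\op}\bigl(\mathcal{P},\fun(\mathcal{E},\mathcal{C}')_{\times}\bigr),
\]
and since the right-hand target is an essentially $(d+1)$-operad, \propref{d-homotopy_operad} together with $h_{d+1}\mathcal{P}\simeq\bb E_\infty$ identifies it with $\map_{\op}\bigl(\bb E_\infty,\fun(\mathcal{E},\mathcal{C}')_{\times}\bigr)\simeq\map_{\cat}\bigl(\mathcal{E},\underline{\alg}_{\bb E_\infty}(\mathcal{C}')\bigr)$. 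By the Yoneda lemma, restriction along $g$ is an equivalence $g^{*}\colon \underline{\alg}_{\bb E_\infty}(\mathcal{C}')\iso\underline{\alg}_{\mathcal{P}}(\mathcal{C}')$; being an equivalence it preserves finite products, finite coproducts, and hence the canonical comparison map between them. Finally, $\underline{\alg}_{\bb E_\infty}(\mathcal{C}')$ is at once Cartesian (\lemref{Cart_Sym_Mon_Strcture}) and coCartesian (A.3.2.4.7: coproducts of $\bb E_\infty$-algebras are computed by the tensor product), so there the canonical map from a finite coproduct to the corresponding product is an equivalence; transporting this along $g^{*}$ gives the same statement in $\underline{\alg}_{\mathcal{P}}(\mathcal{C}')$, which completes the chain of reductions.

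I expect the real work to be in this third step: not the existence of the equivalence $\underline{\alg}_{\mathcal{P}}(\mathcal{C}')\simeq\underline{\alg}_{\bb E_\infty}(\mathcal{C}')$ itself, but the bookkeeping needed to run the Yoneda argument — checking that all the auxiliary functor $\infty$-categories $\fun(\mathcal{E},\mathcal{C}')$ remain essentially $(d+1)$-categories so that \propref{d-homotopy_operad} applies, and that the resulting equivalence is compatible enough with the Cartesian structures and the forgetful functors for the biproduct phenomenon of commutative algebras to transport to $\mathcal{P}$-algebras. The remaining steps are a formal concatenation of \propref{Algebraic_Reduction}, \lemref{coproduct_tensor_section}, \propref{Connectedness_Section}, \lemref{Doctrinal_adjunction}, and \lemref{Cart_Sym_Mon_Strcture}.
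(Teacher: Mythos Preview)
Your proof is correct and follows essentially the same route as the paper: reduce to the underlying map via \propref{Algebraic_Reduction}, use the section from \lemref{coproduct_tensor_section} together with \propref{Connectedness_Section} to reduce to $(d-\tfrac{1}{2})$-connectedness, push the truncation $\tau_{\le d}^{\mathcal{C}}$ through to algebras via \lemref{Doctrinal_adjunction}, and then identify $\mathcal{P}$-algebras with $\bb E_\infty$-algebras over the $d$-truncated base, where coproduct agrees with product by A.3.2.4.7.

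Two small points of comparison. First, you assert that $\tau_{\le d}^{\mathcal{C}}$ preserves finite products; the paper pauses to justify this via T.6.5.1.2 (for $\infty$-topoi, then reducing general $m$). Second, you claim $L_{\mathcal{P}}$ is symmetric monoidal as a consequence of \lemref{Doctrinal_adjunction}, but that lemma only gives that it is a left adjoint; the paper instead deduces product-preservation of the composite $G'\circ F$ from the commuting square with the forgetful functors (which are conservative and product-preserving). Your conclusion is correct either way, but the justification should be adjusted. Conversely, your Yoneda argument for the equivalence $\underline{\alg}_{\bb E_\infty}(\mathcal{C}')\simeq\underline{\alg}_{\mathcal{P}}(\mathcal{C}')$ is more careful than the paper's one-line citation of \propref{d-homotopy_operad}; you are spelling out exactly the step the paper leaves implicit.
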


\begin{proof}
For $d=-2$ there is nothing to prove and so we assume that $d\ge-1$.
By \propref{Algebraic_Reduction}, it is enough to show that $\underline{f_{A,B}}$
is $d$-connected where $\underline{\left(-\right)}\colon \alg_{\mathcal{P}}\left(\mathcal{C}\right)\to\mathcal{C}$
is the forgetful functor. By \propref{Connectedness_Section}, it is
enough to show that $\underline{f_{A,B}}$ has a section and is $\left(d-\frac{1}{2}\right)$-connected.
Since $d\ge-1$, we have $\mathcal{P} \neq \mathbb{E}_0$ and, therefore,
by \lemref{coproduct_tensor_section}, $\underline{f_{A,B}}$ has
a section. Thus, we are reduced to showing that the image of $\underline{f_{A,B}}$
under the functor $\tau_{\le d}^{\mathcal{C}}\colon \mathcal{C}\to\tau_{\le d}\mathcal{C}$
is an equivalence. First, we show that $\tau_{\le d}^{\mathcal{C}}$
preserves binary products. For $m=\infty$, this follows from T.6.5.1.2.
The general case reduces to $m=\infty$ as by T.6.4.1.5 we can embed
$\mathcal{C}$ as a full subcategory of an $\infty$-topos spanned
by the $\left(m-1\right)$-truncated objects. It follows that we get
a symmetric monoidal functor $\tau_{\le d}^{\times}\colon \mathcal{C}^{\times}\to\left(\tau_{\le d}\mathcal{C}\right)^{\times}$.
By \lemref{Doctrinal_adjunction}, the functor
\[
F\colon \alg_{\mathcal{P}}\left(\mathcal{C}\right)\to\alg_{\mathcal{P}}\left(\tau_{\le d}\mathcal{C}\right)
\]
induced by $\tau_{\le n}^{\times}$ is a left adjoint. Consider the
following (solid) commutative diagram in the homotopy category of
$\cat$:

\[
\xymatrix{\alg_{\mathcal{P}}\left(\mathcal{C}\right)\ar[r]^{F}\ar[d] & \alg_{\mathcal{P}}\left(\tau_{\le d}\mathcal{C}\right)\ar[d]\ar@<1pt>@{-->}[r]^{G'} & \alg_{\bb E_{\infty}}\left(\tau_{\le d}\mathcal{C}\right)\ar@<1pt>[l]^{G}\ar[d]\\
\mathcal{C}\ar[r]^{\tau_{\le d}^{\mathcal{C}}} & \tau_{\le d}\mathcal{C} & \tau_{\le d}\mathcal{C},\ar@{=}[l]
}
\]
where the vertical maps are the forgetful functors and $G$ is induced
by restriction along the essentially unique map $\mathcal{P}\to\bb E_{\infty}$. Since $\tau_{\le d}\mathcal{C}$
is an essentially $\left(d+1\right)$-category, it follows from \propref{d-homotopy_operad}
that $G$ is an equivalence. Taking $G'$ to be an inverse of $G$
up to homotopy, the outer rectangle is a commutative square in the
homotopy category of $\cat$. Therefore, to show that $\tau_{\le d}^{\mathcal{C}}\left(\underline{f_{A,B}}\right)$
is an equivalence, it is enough to show that $\underline{G'\left(F\left(f_{A,B}\right)\right)}$
is an equivalence. In fact, we shall show that $G'\left(F\left(f_{A,B}\right)\right)$
is an equivalence. Note that the composition of the left and then
bottom functors preserves binary products and since the right vertical
functor preserves products and is conservative, it follows that the
top functor $G'\circ F$ also preserves binary products. On the other
hand, $G'\circ F$ also preserves coproducts, since $F$ is left adjoint
(by the above discussion) and $G$ is an equivalence. Finally, in
$\alg_{\bb E_{\infty}}\left(\tau_{\le d}\mathcal{C}\right)$, the
canonical map from the coproduct to the product is an equivalence
by A.3.2.4.7.
\end{proof}

We now apply the above results to the study of reduced endomorphism operads.
For every unital $\infty$-operad $\mathcal{Q}$ and a symmetric monoidal
$\infty$-category $\mathcal{C}$, the symmetric monoidal $\infty$-category
$\alg_{\mathcal{Q}}\left(\mathcal{C}\right)$ is unital by \lemref{alg_unital}.
Hence, for every $X\in\alg_{\mathcal{Q}}\left(\mathcal{C}\right)$
we can consider the reduced endomorphism $\infty$-operad $\End_{\alg_{\mathcal{Q}}\left(\mathcal{C}\right)}^{\red}\left(X\right)$.
\begin{cor}
\label{cor:Reduced_Endomorphism_Truncatendness} Let $\mathcal{Q}$
be a reduced $n$-connected $\infty$-operad for some $n\ge-2$ and
let $\mathcal{C}$ be a $\left(d+1\right)$-topos with the Cartesian symmetric
monoidal structure for some $d\ge-2$. For every object $X\in\alg_{\mathcal{Q}}\left(\mathcal{C}\right)$,
the reduced endomorphism operad $\End_{\alg_{\mathcal{Q}}\left(\mathcal{C}\right)}^{\red}\left(X\right)$
is an essentially $\left(d-n-1\right)$-operad (ie all multi-mapping
spaces are $\left(d-n-2\right)$-truncated).
\end{cor}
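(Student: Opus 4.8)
The plan is to realise each space of $m$-ary operations of $\End_{\alg_{\mathcal{Q}}(\mathcal{C})}^{\red}(X)$ as a space of lifts and then to bound its truncatedness using \propref{Lifts_Space_Truncatedness}. Write $\mathcal{A}:=\alg_{\mathcal{Q}}(\mathcal{C})$. By \lemref{alg_unital} (since $\mathcal{Q}$ is reduced, hence unital) $\mathcal{A}$ is unital as an $\infty$-operad, by \lemref{Cart_Sym_Mon_Strcture} it is Cartesian, and it is presentable (the Cartesian monoidal structure on the $(d+1)$-topos $\mathcal{C}$ being presentably symmetric monoidal). Moreover, by T.6.4.1.5 the $(d+1)$-topos $\mathcal{C}$ is the full subcategory of $d$-truncated objects of an $\infty$-topos, so $\mathcal{C}$ is an essentially $(d+1)$-category; hence $\mathcal{C}_{\times}$ is an essentially $(d+1)$-operad by \exaref{Sym_Mon_d_Cat}(1), and therefore $\underline{\mathcal{A}}$ is an essentially $(d+1)$-category by \propref{Alg_d_Category}. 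In particular every mapping space in $\mathcal{A}$ is $d$-truncated, so the morphism $X\to\term$ is $d$-truncated.

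Applying \lemref{Reduced_Endomorphism_Mapping_Space} to $\mathcal{A}$ (Cartesian, so $X^{\otimes m}=X^{m}$) yields a homotopy fiber sequence
\[
\End_{\mathcal{A}}^{\red}(X)(m)\to\map_{\mathcal{A}}(X^{m},X)\xrightarrow{\,\sigma^{*}\,}\map_{\mathcal{A}}(X^{\sqcup m},X)
\]
over the fold map $\nabla$. Since $\map_{\mathcal{A}}(X^{\sqcup m},\term)$ and $\map_{\mathcal{A}}(X^{m},\term)$ are contractible, \remref{Lifts_Equiv_Defs} identifies this with the space of lifts $L(q_{m})$ of the square $q_{m}$ in $\mathcal{A}$
\[
\xymatrix{X^{\sqcup m}\ar[d]_{\sigma}\ar[r]^{\nabla} & X\ar[d]\\
X^{m}\ar[r]\ar@{-->}[ru] & \term.}
\]
Assuming that $\sigma\colon X^{\sqcup m}\to X^{m}$ is $n$-connected, \propref{Lifts_Space_Truncatedness}, applied in the presentable $\infty$-category $\mathcal{A}$ with $\sigma$ as the left vertical map and the $d$-truncated map $X\to\term$ as the right vertical map, shows that $L(q_{m})$ is $(d-n-2)$-truncated, which is the assertion. (When $n>d$ one instead uses that $\mathcal{Q}$ is also $d$-connected, so $\sigma$ is $d$-connected and $L(q_{m})$ is contractible.)

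Everything thus reduces to proving that $\sigma\colon X^{\sqcup m}\to X^{m}$ is $n$-connected in $\mathcal{A}$; for $n=-2$ this is vacuous, so assume $n\ge-1$, whence $\mathcal{Q}\not\simeq\bb E_{0}$ by \remref{Minus_One_Connected}. This is the $m$-ary analogue of \propref{Coproduct_Lemma}, and I would prove it in the same way. By \propref{Algebraic_Reduction} it suffices to show that the underlying map $\underline{\sigma}$ is $n$-connected in the topos $\mathcal{C}$, and by \propref{Connectedness_Section} for this it suffices that $\underline{\sigma}$ admit a section and be $(n-\tfrac{1}{2})$-connected. A section is built from any operation of $\mathcal{Q}$ of arity $\ge2$, exactly as in \lemref{coproduct_tensor_section}. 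For the $(n-\tfrac{1}{2})$-connectedness one repeats the argument of \propref{Coproduct_Lemma} with $n$ in place of $d$: the symmetric monoidal truncation $\tau_{\le n}^{\times}\colon \mathcal{C}^{\times}\to(\tau_{\le n}\mathcal{C})^{\times}$ induces a left adjoint $F\colon \alg_{\mathcal{Q}}(\mathcal{C})\to\alg_{\mathcal{Q}}(\tau_{\le n}\mathcal{C})$, restriction along $\mathcal{Q}\to\bb E_{\infty}$ is an equivalence $\alg_{\bb E_{\infty}}(\tau_{\le n}\mathcal{C})\simeq\alg_{\mathcal{Q}}(\tau_{\le n}\mathcal{C})$ by \propref{d-homotopy_operad}, and composing $F$ with the inverse of this equivalence gives a functor preserving finite products and finite coproducts which carries $\sigma$ to the canonical map from a finite coproduct to the corresponding finite product of $\bb E_{\infty}$-algebras in $\tau_{\le n}\mathcal{C}$; this is an equivalence, since such a comparison map is an equivalence for $\bb E_{\infty}$-algebras (A.3.2.4.7).

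The main obstacle is precisely this upgrade of \propref{Coproduct_Lemma} from binary to $m$-fold coproducts: all the care goes into the object $X^{\sqcup m}$, whose underlying object in $\mathcal{C}$ is \emph{not} the $m$-fold coproduct of $\underline{X}$. One can either re-run the proof of \propref{Coproduct_Lemma} essentially verbatim (it never uses that there are only two factors) or argue by induction on $m$ from the binary case, invoking the closure of $n$-connected morphisms under products via \lemref{Connectedness_Tensor_Closure} (available once one records that the Cartesian monoidal structure on $\mathcal{A}$ is presentably symmetric monoidal). The remaining pieces — the identification with $L(q_{m})$ and the $d$-truncatedness of $X\to\term$ — are routine assemblies of results already in hand.
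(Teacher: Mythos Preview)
Your proposal is correct and follows essentially the same route as the paper: identify $\End_{\mathcal{A}}^{\red}(X)(m)$ with the space of lifts of $\sigma\colon X^{\sqcup m}\to X^{m}$ against $X\to\term$ via \lemref{Reduced_Endomorphism_Mapping_Space}, observe that $X\to\term$ is $d$-truncated because $\underline{\mathcal{A}}$ is an essentially $(d+1)$-category, and invoke \propref{Lifts_Space_Truncatedness} once $\sigma$ is known to be $n$-connected. The paper dispatches the $m$-ary connectedness of $\sigma$ in a single phrase (``repeated application of \propref{Coproduct_Lemma}''), whereas you spell out the two natural ways to do this; both are fine, and the inductive one goes through cleanly since the forgetful functor to $\mathcal{C}$ preserves products, detects $n$-connectedness by \propref{Algebraic_Reduction}, and $n$-connected maps in the topos $\mathcal{C}$ are closed under products (Example~\ref{Topos_Cartesian}). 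Your explicit handling of the case $n>d$ is a nice touch the paper leaves implicit; your worry that the Cartesian structure on $\mathcal{A}$ be \emph{presentably} symmetric monoidal is avoidable precisely by passing to $\mathcal{C}$ as above.
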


\begin{proof}
The $\infty$-operad $\mathcal{E}=\End_{\alg_{\mathcal{Q}}\left(\mathcal{C}\right)}^{\red}\left(X\right)$
has a unique object, which we call $X$. We need to show that for every
$m\in\bb N$, the multi-mapping space $\mul_{\mathcal{E}}\left(X^{\left(m\right)},X\right)$
is $\left(d-n-2\right)$-truncated. By \lemref{Reduced_Endomorphism_Mapping_Space}
we have a fiber sequence
\[
\mul_{\mathcal{E}}(X^{\left(m\right)},X)\to\mul_{\alg_{\mathcal{Q}}\left(\mathcal{C}\right)}\left(X^{m},X\right)\to\map_{\alg_{\mathcal{Q}}\left(\mathcal{C}\right)}\left(X^{\sqcup m},X\right),
\]
where the fiber is taken over the fold map $\nabla\colon X^{\sqcup m}\to X$.
The fiber is equivalent to the space of lifts for the square
\[
\xymatrix{X^{\sqcup m}\ar[d]\ar[r]^{\nabla} & X\ar[d]\\
X^{m}\ar[r] & \term
.}
\]
Since $\mathcal{C}$ is an essentially $\left(d+1\right)$-category,
so is the Cartesian $\infty$-operad $\mathcal{C}_{\times}$ and, therefore, by \propref{Alg_d_Category}, so is $\alg_{\mathcal{Q}}\left(\mathcal{C}\right)$.
In particular, $X$ is $d$-truncated. Hence, by \propref{Lifts_Space_Truncatedness}, it is enough to show that the canonical map $X^{\sqcup m}\to X^{m}$ is $n$-connected. 
Since $\mathcal{Q}$ is $n$-connected, this follows from repeated application of \propref{Coproduct_Lemma}.
\end{proof}

\subsection{Topoi and the Reduced Endomorphism Operad}

In this subsection we describe a simple application of \corref{Reduced_Endomorphism_Truncatendness}.
Let $\mathcal{C}$ be an $\infty$-topos and let $\mathcal{C}_{*}$ be the
$\infty$-category of pointed objects in $\mathcal{C}$ with the Cartesian
symmetric monoidal structure. 
\begin{defn}
For a pair of integers $m,k\ge-2$, we denote by $\mathcal{C}_{*}^{\left[k,m\right]}\ss\mathcal{C}_{*}$
the full subcategory spanned by objects which are simultaneously $(k-1)$-connected (ie \emph{$k$-connective}) and $m$-truncated. 
\end{defn}

\begin{thm}
\label{thm:Application} Let $\mathcal{C}$ be an $\infty$-topos
and let $k,d\ge-2$. For every $X\in\mathcal{C}_{*}^{\left[k,2k+d\right]}$
the $\infty$-operad $\End_{\mathcal{C}_{*}}^{\red}\left(X\right)$
is an essentially $\left(d+1\right)$-operad. In particular, for $d=-1$,
the $\infty$-operad $\End_{\mathcal{C}_{*}}^{\red}\left(X\right)$
is either $\bb E_{0}$ or $\bb E_{\infty}$ and for $d=-2$, it is
$\bb E_{\infty}$.
\end{thm}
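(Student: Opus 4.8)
The overall plan is to imitate the proof of \corref{Reduced_Endomorphism_Truncatendness}, the one genuine new ingredient being that the connectivity of the canonical map $X^{\sqcup m}\to X^{\times m}$ now has to come from $X$ being highly connected rather than from a connected operad (indeed $\mathcal{C}_{*}$ is not a $(d+1)$-topos and the relevant operad below is $\bb E_{0}$, which is only $(-2)$-connected, so \corref{Reduced_Endomorphism_Truncatendness} does not apply directly). Note first that $\mathcal{C}_{*}$ with its Cartesian symmetric monoidal structure is a presentably symmetric monoidal $\infty$-category whose unit, the terminal object, is also initial; hence it is unital as an $\infty$-operad and admits finite coproducts, so $\End_{\mathcal{C}_{*}}^{\red}(X)$ is defined, has a single object, and being an essentially $(d+1)$-operad means exactly that $\mul_{\End_{\mathcal{C}_{*}}^{\red}(X)}(X^{(m)},X)$ is $d$-truncated for all $m\ge0$. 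By \lemref{Reduced_Endomorphism_Mapping_Space} applied to $\mathcal{C}_{*}$ (and the presentation of the space of lifts as an iterated fiber, exactly as in the proof of \corref{Reduced_Endomorphism_Truncatendness}), this multi-mapping space is the space of lifts $L(q_{m})$ of the square
\[
\xymatrix{X^{\sqcup m}\ar[d]\ar[r]^{\nabla} & X\ar[d]\\ X^{\times m}\ar[r] & \term,}
\]
where $X^{\sqcup m}$ is the $m$-fold coproduct (wedge) in $\mathcal{C}_{*}$. Since the forgetful functor $\mathcal{C}_{*}\to\mathcal{C}$ is conservative and preserves finite limits and pushouts, an object of $\mathcal{C}_{*}$ is $c$-truncated (resp.\ $c$-connected) precisely when its underlying object of $\mathcal{C}$ is; in particular $X$ is $(2k+d)$-truncated and $(k-1)$-connected. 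Assuming $k\ge1$ (the cases $k\le0$ are elementary, as then $2k+d\le d$), the map $X\to\term$ is $(2k+d)$-truncated, so by \propref{Lifts_Space_Truncatedness} it will suffice to prove that $X^{\sqcup m}\to X^{\times m}$ is $(2k-2)$-connected, for then $L(q_{m})$ is $\bigl((2k+d)-(2k-2)-2\bigr)=d$-truncated.

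To establish this connectivity estimate I would use \propref{Algebraic_Reduction}. One has $\mathcal{C}_{*}\simeq\alg_{\bb E_{0}}(\mathcal{C})$ with the Cartesian symmetric monoidal structure: $\bb E_{0}$ is reduced, $\mathcal{C}_{\times}$ is presentably symmetric monoidal because $\mathcal{C}$ is an $\infty$-topos (products distribute over colimits), and $\alg_{\bb E_{0}}(\mathcal{C}_{\times})$ is Cartesian by \lemref{Cart_Sym_Mon_Strcture}. Under this identification the forgetful functor sends $X^{\sqcup m}$ to the $m$-fold wedge $Y^{\vee m}:=Y\sqcup_{\term}\dots\sqcup_{\term}Y$ and $X^{\times m}$ to $Y^{\times m}$, where $Y:=\underline{X}$ is $(k-1)$-connected; so \propref{Algebraic_Reduction} reduces us to showing that the canonical map $Y^{\vee m}\to Y^{\times m}$ is $(2k-2)$-connected in $\mathcal{C}$. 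I would prove this by induction on $m$, the cases $m\le1$ being trivial. Identifying $Y^{\vee m}=Y^{\vee(m-1)}\vee Y$ and $Y^{\times m}=Y^{\times(m-1)}\times Y$, the comparison map factors as
\[
Y^{\vee m}\xrightarrow{\ g\vee\Id_{Y}\ }Y^{\times(m-1)}\vee Y\xrightarrow{\ h\ }Y^{\times m},
\]
where $g\colon Y^{\vee(m-1)}\to Y^{\times(m-1)}$ is the comparison map for $m-1$ factors. By induction $g$ is $(2k-2)$-connected; the map $g\vee\Id_{Y}$ is a cobase change of $g$, hence again $(2k-2)$-connected since $n$-connected morphisms in an $\infty$-topos are stable under pushout; and $Y^{\times(m-1)}$ is again $(k-1)$-connected, so $h$ is the two-factor comparison map between $(k-1)$-connected pointed objects. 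Since a composite of $(2k-2)$-connected morphisms is $(2k-2)$-connected, the whole theorem comes down to the single statement: \emph{for $(k-1)$-connected pointed objects $A,B$ of an $\infty$-topos, the map $A\vee B\to A\times B$ is $(2k-2)$-connected.}

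This last statement — that the wedge-to-product comparison map doubles connectivity — is the only non-formal input, and is the step I expect to be the main obstacle. I would deduce it from the generalized Blakers--Massey theorem in an $\infty$-topos: the basepoint inclusions $\term\to A$ and $\term\to B$ are $(k-2)$-connected (their fibers are the loop objects of $A$ and $B$), so Blakers--Massey applied to the pushout square with corners $\term,A,B,A\vee B$ shows that the gap map $\term\to A\times_{A\vee B}B$ is $\bigl((k-2)+(k-2)\bigr)=(2k-4)$-connected; a short diagram chase over $A\vee B$, comparing $A\times_{A\vee B}B$ with $A\times B$ via the two projections, then upgrades this to $(2k-2)$-connectedness of $A\vee B\to A\times B$. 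Alternatively one can argue through the cofiber sequence $A\vee B\to A\times B\to A\wedge B$, using that $A\wedge B$ is $(2k-1)$-connected. The only real delicacy in writing this out is the careful bookkeeping of the off-by-one shifts relating the connectivity of an object, of its basepoint inclusion, and of its loop object.

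Finally, the two special cases follow immediately. If $d=-1$, each operation space $\End_{\mathcal{C}_{*}}^{\red}(X)(n)$ is $(-1)$-truncated, hence empty or contractible; since $\End_{\mathcal{C}_{*}}^{\red}(X)$ is a reduced $\infty$-operad, either all of its operation spaces are contractible, in which case it is $\bb E_{\infty}$, or $\End_{\mathcal{C}_{*}}^{\red}(X)(2)=\es$, in which case it is $\bb E_{0}$ by \remref{Minus_One_Connected}. If $d=-2$, every operation space is $(-2)$-truncated, i.e.\ contractible, so $\End_{\mathcal{C}_{*}}^{\red}(X)\simeq\bb E_{\infty}$.
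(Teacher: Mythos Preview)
Your argument is correct, but it takes a genuinely different route from the paper. The paper does \emph{not} re-run the lift-space analysis or prove any wedge-to-product connectivity estimate; instead it invokes the recognition principle $\mathcal{C}_{*}^{\ge k}\simeq\underline{\alg}_{\bb E_{k}}^{\grp}(\mathcal{C}_{*})$ (via $\Omega^{k}$) to transport $X$ to an $\bb E_{k}$-algebra in $\tau_{\le k+d}\mathcal{C}$, and then applies \corref{Reduced_Endomorphism_Truncatendness} verbatim: the ambient category is now a $(k+d+1)$-topos and $\bb E_{k}$ is $(k-2)$-connected, so that corollary yields an essentially $\bigl((k+d)-(k-2)-1\bigr)=(d+1)$-operad. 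In other words, the paper converts the connectivity of the \emph{object} $X$ into the connectivity of the \emph{operad} $\bb E_{k}$, so that the existing machinery applies unchanged; the only extra step is the (implicit) identification of reduced endomorphism operads across the product-preserving equivalence of full subcategories.

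Your approach trades the dependence on the $\bb E_{k}$-recognition principle and on \corref{Reduced_Endomorphism_Truncatendness} for a direct connectivity estimate on $A\vee B\to A\times B$ via Blakers--Massey. This is more self-contained and makes the geometric content visible. The paper's route is shorter given what has already been built, and it also explains \emph{why} the bound $2k+d$ is the natural one: it is precisely the truncation level for which $\Omega^{k}X$ lands in a $(k+d+1)$-topos. One small caution on your side: the ``short diagram chase'' upgrading the Blakers--Massey output from $(2k-4)$ to $(2k-2)$ is not as immediate as you suggest; the cleanest way to get the sharp bound is to compute the fiber of $A\vee B\to A\times B$ directly by descent as $\Omega B\sqcup_{\Omega A\times\Omega B}\Omega A\simeq\Omega A\star\Omega B$, which is $(2k-2)$-connected since the join of two $(k-2)$-connected objects is $(2k-2)$-connected.
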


\begin{proof}
By A.5.2.6.10 and A.5.2.6.12, we have a commutative diagram of $\infty$-categories
\[
\xymatrix{\mathcal{C}_{*}^{\ge k}\ar[dr]_{\Omega^{k}}\ar[r]^{\sim} & \underline{\alg}_{\bb E_{k}}^{\grp}\left(\mathcal{C}_{*}\right)\ar[d]^{U}\\
 & \mathcal{C}_{*}
,}
\]
in which $U$ is the forgetful functor. Since the $k$-fold loop
space functor restricts to a functor $\mathcal{C}_{*}^{\left[k,2k+d\right]}\to\tau_{\le k+d}\mathcal{C}_{*}$,
we can restrict the above diagram to 
\[
\xymatrix{\mathcal{C}_{*}^{\left[k,2k+d\right]}\ar[dr]_{\Omega^{k}}\ar[r]^{\sim} & \underline{\alg}_{\bb E_{k}}^{\grp}\left(\tau_{\le k+d}\mathcal{C}_{*}\right)\ar[d]^{U}\\
 & \tau_{\le k+d}\mathcal{C}_{*}
.}
\]

The $\infty$-category $\underline{\alg}_{\bb E_{k}}^{\grp}\left(\tau_{\le k+d}\mathcal{C}_{*}\right)$ is a full subcategory of $\underline{\alg}_{\bb E_{k}}\left(\tau_{\le k+d}\mathcal{C}_{*}\right)$, which is itself equivalent to $\underline{\alg}_{\bb E_{k}}\left(\tau_{\le k+d}\mathcal{C}\right)$. 
The $\infty$-category $\tau_{\le k+d}\mathcal{C}$ is a $\left(k+d+1\right)$-topos (with the Cartesian symmetric monoidal structure) and $\bb E_{k}$
is $\left(k-2\right)$-connected. Thus, \corref{Reduced_Endomorphism_Truncatendness} implies that
for every $X$ in $\underline{\alg}_{\bb E_{k}}^{\grp}\left(\tau_{\le k+d}\mathcal{C}_{*}\right)$, the reduced endomorphism operad of $X$ is an essentially $\left(d+1\right)$-operad.

Let $d=-1$. We recall from \remref{Minus_One_Connected}
that if $\mathcal{P}\not\simeq\bb E_{0}$, then it is $\left(-1\right)$-connected. Therefore, if $\mathcal{P}$ is an essentially $0$-operad, then
$\mathcal{P}\simeq\bb E_{\infty}$. Hence, $\mathcal{P}$ is either
$\bb E_{0}$ or $\bb E_{\infty}$. 

Let $d=-2$. We get that $\mathcal{P}$ is an essentially $\left(-1\right)$-operad and hence equivalent to $\bb E_{\infty}$.
\end{proof}

For every reduced $\infty$-operad $\mathcal{P}$, the structure of
a $\mathcal{P}$-algebra on an object $X\in\mathcal{C}_{*}$ is equivalent
to the data of a map $\mathcal{P}\to\End_{\mathcal{C}_{*}}^{\red}\left(X\right)$.
Thus, if $X\in\mathcal{C}_{*}^{\left[k,2k-2\right]}$, then $X$ has
a unique $\mathcal{P}$-algebra structure for every reduced $\infty$-operad
$\mathcal{P}$. Combining this with the fact that for a pointed connected
object in an $\infty$-topos, a structure of an $\bb E_{\infty}$-algebra
is equivalent to an $\infty$-delooping, we get the following classical fact:
\begin{cor}
Let $\mathcal{C}$ be an $\infty$-topos and let $k\ge1$ be an integer.
Every $X\in\mathcal{C}_{*}^{\left[k,2k-2\right]}$ admits a unique
$\infty$-delooping. 
\end{cor}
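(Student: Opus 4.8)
The plan is to deduce the corollary directly from \thmref{Application} with $d=-2$, using the universal property of the reduced endomorphism operad together with the recognition principle for infinite deloopings in an $\infty$-topos. First I would observe that, since $k\ge1$, any object of $\mathcal{C}_{*}^{\left[k,2k-2\right]}$ is $\left(k-1\right)$-connected with $k-1\ge0$, hence pointed and connected — this is the hypothesis needed at the end. Then I would apply \thmref{Application} with $d=-2$: since $2k+\left(-2\right)=2k-2$, the hypothesis $X\in\mathcal{C}_{*}^{\left[k,2k+d\right]}$ is exactly $X\in\mathcal{C}_{*}^{\left[k,2k-2\right]}$, and the theorem yields $\End_{\mathcal{C}_{*}}^{\red}\left(X\right)\simeq\bb E_{\infty}$ (the $d=-2$ clause of that statement).

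Next I would invoke the universal property of the reduced endomorphism operad (\defref{Reduced_Endomorphism_Operad} and the discussion preceding this corollary): for a reduced $\infty$-operad $\mathcal{P}$, the space of $\mathcal{P}$-algebra structures on $X$, viewed as an object of the unital symmetric monoidal $\infty$-category $\mathcal{C}_{*}$, is $\map_{\opred}\left(\mathcal{P},\End_{\mathcal{C}_{*}}^{\red}\left(X\right)\right)$. Taking $\mathcal{P}=\bb E_{\infty}$ and combining with the previous step, this becomes $\map_{\opred}\left(\bb E_{\infty},\bb E_{\infty}\right)$, which is contractible because $\bb E_{\infty}$ is the terminal $\infty$-operad and $\opred$ is a full subcategory of $\op$ containing it. Hence $X$ carries an essentially unique $\bb E_{\infty}$-algebra structure.

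Finally I would pass from the uniqueness of the $\bb E_{\infty}$-structure to the uniqueness of the $\infty$-delooping, via the classical fact (cited, not reproved): for a pointed connected object of an $\infty$-topos $\mathcal{C}$, the space of $\bb E_{\infty}$-algebra structures is equivalent to the space of $\infty$-deloopings — a connected $\bb E_{\infty}$-monoid is automatically grouplike, grouplike $\bb E_{\infty}$-monoids in $\mathcal{C}$ are identified with connective spectrum objects of $\mathcal{C}$, and the delooping sequence of $X$ is read off from the shifts of the associated spectrum (cf. A.5.2.6). Together with the previous paragraph this gives that $X$ admits an essentially unique $\infty$-delooping.

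I do not expect a genuine obstacle here: the entire substantive input is \thmref{Application} (which itself rests on \propref{Coproduct_Lemma} and \corref{Reduced_Endomorphism_Truncatendness}), and the rest is bookkeeping with the universal property of $\End^{\red}$ and a citation. The only point that needs care in the write-up is pinning down the precise meaning of ``$\infty$-delooping'' and citing the appropriate form of the recognition principle, so that ``unique $\bb E_{\infty}$-structure'' translates cleanly into ``unique delooping sequence $\left(Y_n\right)$ with $\Omega Y_{n+1}\simeq Y_n$ and $Y_0\simeq X$''.
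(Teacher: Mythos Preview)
Your proposal is correct and follows essentially the same route as the paper: apply \thmref{Application} with $d=-2$ to conclude $\End_{\mathcal{C}_{*}}^{\red}(X)\simeq\bb E_{\infty}$, use the universal property of the reduced endomorphism operad to deduce that $X$ carries a unique $\bb E_{\infty}$-structure, and then invoke the equivalence between $\bb E_{\infty}$-structures and $\infty$-deloopings for pointed connected objects in an $\infty$-topos. The paper's argument is the paragraph immediately preceding the corollary, and your write-up is simply a more detailed version of that paragraph.
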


In fact, we can get slightly more from \thmref{Application}. For
example,
\begin{cor}
Let $\mathcal{C}$ be an $\infty$-topos, let $k\ge1$ be an integer, and let $X\in\mathcal{C}_{*}^{\left[k,2k-1\right]}$. If $X$ admits an $H$-structure,
then it admits a unique $\infty$-delooping.
\end{cor}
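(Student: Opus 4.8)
The plan is to reduce everything to \thmref{Application} in the boundary case $d=-1$, the new ingredient compared with the preceding corollary being that the hypothesis of an $H$-structure is precisely what excludes the degenerate alternative $\bb E_{0}$. I would first record the standing observation that, since $k\ge1$, the object $X$ is $0$-connected, hence a pointed connected object of $\mathcal{C}$ — which is exactly the setting in which a structure of an $\bb E_{\infty}$-algebra and an $\infty$-delooping carry the same data, as recalled after \thmref{Application}.

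Next I would argue as follows. By \lemref{Reduced_Endomorphism_Mapping_Space} (applied to $\mathcal{C}_{*}$ with its Cartesian symmetric monoidal structure, which is unital as an $\infty$-operad and admits finite coproducts), the space of binary operations of the reduced endomorphism operad sits in a fiber sequence
\[
\End_{\mathcal{C}_{*}}^{\red}\left(X\right)\left(2\right)\to\map_{\mathcal{C}_{*}}\left(X\times X,X\right)\xrightarrow{\sigma^{*}}\map_{\mathcal{C}_{*}}\left(X\sqcup X,X\right)
\]
over the fold map; unwinding the definition of $\sigma$, a point of the fiber is exactly a multiplication $X\times X\to X$ together with homotopies making the base point a two-sided unit, i.e.\ an $H$-structure on $X$. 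So the existence of an $H$-structure gives $\End_{\mathcal{C}_{*}}^{\red}\left(X\right)\left(2\right)\neq\es$, whence by \remref{Minus_One_Connected} we get $\End_{\mathcal{C}_{*}}^{\red}\left(X\right)\not\simeq\bb E_{0}$. On the other hand $X\in\mathcal{C}_{*}^{\left[k,2k-1\right]}=\mathcal{C}_{*}^{\left[k,2k+d\right]}$ with $d=-1$, so \thmref{Application} forces $\End_{\mathcal{C}_{*}}^{\red}\left(X\right)$ to be either $\bb E_{0}$ or $\bb E_{\infty}$; combining the two, it must be $\bb E_{\infty}$. Finally, since $\bb E_{\infty}$ is terminal in $\opred$, the space $\map_{\opred}\left(\bb E_{\infty},\End_{\mathcal{C}_{*}}^{\red}\left(X\right)\right)$ of $\bb E_{\infty}$-algebra structures on $X$ is contractible, and by the connectedness of $X$ this space is the space of $\infty$-deloopings of $X$; hence $X$ admits an $\infty$-delooping and the space of such is contractible.

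The only step that takes any thought — the ``hard part'', such as it is — is the identification of an $H$-structure with a point of $\End_{\mathcal{C}_{*}}^{\red}\left(X\right)\left(2\right)$: this amounts to fixing the $\infty$-categorical meaning of ``$H$-structure'' (a unital binary multiplication, with its unitality homotopies as part of the datum) and matching it against the two-sided-unit homotopies read off from $\sigma$ in \lemref{Reduced_Endomorphism_Mapping_Space}. Everything afterwards is a direct invocation of \thmref{Application} together with the terminality of $\bb E_{\infty}$, so I anticipate no genuine obstacle.
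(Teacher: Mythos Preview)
Your proof is correct and follows essentially the same approach as the paper: apply \thmref{Application} with $d=-1$ to get the dichotomy $\bb E_{0}$ or $\bb E_{\infty}$, use the $H$-structure to rule out $\bb E_{0}$ via $\End_{\mathcal{C}_{*}}^{\red}(X)(2)\neq\es$, and conclude. Your version is simply more detailed, explicitly invoking \lemref{Reduced_Endomorphism_Mapping_Space}, \remref{Minus_One_Connected}, and the terminality of $\bb E_{\infty}$ where the paper leaves these implicit.
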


\begin{proof}
By \thmref{Application}, the $\infty$-operad $\End_{\mathcal{C}_{*}}^{\red}\left(X\right)$
is either $\bb E_{0}$ or $\bb E_{\infty}$. On the other hand, the
existence of an $H$-structure is equivalent to $\End_{\mathcal{C}_{*}}^{\red}\left(X\right)\left(2\right)\ne\es$.
Thus, $X$ admits an $H$-structure if and only if $\End_{\mathcal{C}_{*}}^{\red}\left(X\right)\simeq\bb E_{\infty}$
if and only if $X$ admits a unique $\infty$-delooping.
\end{proof}

\subsection{The $\infty$-Categorical Eckmann\textendash Hilton Argument}

The main theorem of this paper is
\begin{thm}
\label{thm:Main_Theorem}For all integers $d_{1},d_{2}\ge-2$, given
a $d_{1}$-equivalence $\mathcal{P}\to\mathcal{Q}$ between two reduced
$\infty$-operads and a reduced $d_{2}$-connected $\infty$-operad
$\mathcal{R}$, the induced map $\mathcal{P}\otimes\mathcal{R}\to\mathcal{Q}\otimes\mathcal{R}$
is a $\left(d_{1}+d_{2}+2\right)$-equivalence.
\end{thm}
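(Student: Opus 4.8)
The plan is to strip off the tensor factor $\mathcal{R}$ using the closed structure on $\op$ and then recognize what is left as Corollary~\corref{Reduced_Endomorphism_Truncatendness}. First I would handle the degenerate case $d_1=d_2=-2$ separately: there $d_1+d_2+2=-2$, and any map of reduced $\infty$-operads is a $(-2)$-equivalence, since $h_{-1}$ of a reduced $\infty$-operad has all multi-mapping spaces contractible and is therefore $\bb E_\infty$. So assume $d:=d_1+d_2+2\ge-1$. Since the Boardman--Vogt tensor product of reduced $\infty$-operads is again reduced, both $\mathcal{P}\otimes\mathcal{R}$ and $\mathcal{Q}\otimes\mathcal{R}$ are reduced, so by the implication $(2)\Rightarrow(1)$ of Proposition~\propref{d-equivalence} it suffices to show that for every $(d_1+d_2+3)$-topos $\mathcal{C}$ with its Cartesian symmetric monoidal structure, the map $\map_{\op}(\mathcal{Q}\otimes\mathcal{R},\mathcal{C}_{\times})\to\map_{\op}(\mathcal{P}\otimes\mathcal{R},\mathcal{C}_{\times})$ is a homotopy equivalence. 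By the closedness of $\otimes$, i.e. the adjunction $-\otimes\mathcal{R}\dashv\alg_{\mathcal{R}}(-)$, this map is identified with the map $\map_{\op}(\mathcal{Q},\alg_{\mathcal{R}}(\mathcal{C}_{\times}))\to\map_{\op}(\mathcal{P},\alg_{\mathcal{R}}(\mathcal{C}_{\times}))$ induced by $\mathcal{P}\to\mathcal{Q}$.

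Write $\mathcal{D}:=\alg_{\mathcal{R}}(\mathcal{C}_{\times})$; by Lemma~\lemref{alg_unital} this is a unital symmetric monoidal $\infty$-category. I would then choose compatible, essentially unique maps $\triv\to\mathcal{P}\to\mathcal{Q}$ (they exist since $\underline{\mathcal{P}}^{\simeq}$ and $\underline{\mathcal{Q}}^{\simeq}$ are contractible) and restrict along them, obtaining a commutative triangle of mapping spaces over $\map_{\op}(\triv,\mathcal{D})\simeq\underline{\mathcal{D}}^{\simeq}$; it then suffices to prove that the top map induces a homotopy equivalence on the homotopy fiber over each $X\in\underline{\mathcal{D}}$. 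Here the point is that, by Remark~\remref{Pointed_Operads} and the description of mapping spaces in under-categories, the fiber of $\map_{\op}(\mathcal{Q},\mathcal{D})\to\underline{\mathcal{D}}^{\simeq}$ over $X$ is $\map_{\oppt}(\mathcal{Q},\mathcal{D}_{X})=\map_{\opunpt}(\mathcal{Q},\mathcal{D}_{X})$, with $\mathcal{Q}$ carrying its unique point and $\mathcal{D}_{X}$ denoting $\mathcal{D}$ pointed by $X$; applying the reduction adjunction of Proposition~\propref{Operad_Reduction_Right_Adjoint} rewrites this as $\map_{\opred}(\mathcal{Q},(\mathcal{D}_{X})^{\red})=\map_{\opred}(\mathcal{Q},\End_{\mathcal{D}}^{\red}(X))$, naturally in $\mathcal{Q}$. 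The same identifications apply to $\mathcal{P}$, and under them the map on fibers is precomposition with $\mathcal{P}\to\mathcal{Q}$. Thus we are reduced to showing that $\map_{\op}(\mathcal{Q},\End_{\mathcal{D}}^{\red}(X))\to\map_{\op}(\mathcal{P},\End_{\mathcal{D}}^{\red}(X))$ is a homotopy equivalence for every $X\in\underline{\mathcal{D}}$.

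To conclude, I would invoke Corollary~\corref{Reduced_Endomorphism_Truncatendness} with the reduced $d_2$-connected $\infty$-operad $\mathcal{R}$ in the role of ``$\mathcal{Q}$'', the $(d_1+d_2+3)$-topos $\mathcal{C}$, and parameters $n=d_2$, ``$d$''$=d_1+d_2+2$: it shows that $\End_{\alg_{\mathcal{R}}(\mathcal{C})}^{\red}(X)$ is an essentially $(d_1+1)$-operad. Since $\mathcal{P}\to\mathcal{Q}$ is a $d_1$-equivalence, the map $h_{d_1+1}(\mathcal{P})\to h_{d_1+1}(\mathcal{Q})$ is an equivalence of $\infty$-operads, while mapping out of $\mathcal{P}$ or $\mathcal{Q}$ into an essentially $(d_1+1)$-operad factors through $h_{d_1+1}$ by Proposition~\propref{d-homotopy_operad}; hence $\map_{\op}(\mathcal{Q},\End_{\mathcal{D}}^{\red}(X))\to\map_{\op}(\mathcal{P},\End_{\mathcal{D}}^{\red}(X))$ is an equivalence, which is what remained to be shown. (This last step is precisely the argument for $(1)\Rightarrow(2)$ of Proposition~\propref{d-equivalence}.) Specializing to $\mathcal{Q}=\bb E_\infty$ then gives the $\infty$-categorical Eckmann--Hilton argument as a corollary.

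The step I expect to be the main obstacle is the fiber identification in the second paragraph: one must carefully keep track of base points and pass through the chain $\opred\simeq\opredpt\hookrightarrow\opunpt$ so that the homotopy fiber of $\map_{\op}(\mathcal{Q},\alg_{\mathcal{R}}(\mathcal{C}))\to\underline{\alg}_{\mathcal{R}}(\mathcal{C})^{\simeq}$ over $X$ really becomes $\map_{\opred}(\mathcal{Q},\End^{\red}(X))$, and, crucially, do so naturally in $\mathcal{Q}$, so that comparing the cases of $\mathcal{P}$ and $\mathcal{Q}$ reduces to precomposition along $\mathcal{P}\to\mathcal{Q}$. Everything else is a formal assembly of results from Sections~2--4, the only substantial input being Corollary~\corref{Reduced_Endomorphism_Truncatendness}, which is already proved.
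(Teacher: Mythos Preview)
Your proposal is correct and follows essentially the same route as the paper's proof: reduce via \propref{d-equivalence} to mapping into a $(d_1+d_2+3)$-topos, use the $\otimes\dashv\alg$ adjunction, compare fibers over each $X\in\underline{\alg}_{\mathcal{R}}(\mathcal{C})$ via the reduction adjunction of \propref{Operad_Reduction_Right_Adjoint}, and finish with \corref{Reduced_Endomorphism_Truncatendness} and \propref{d-homotopy_operad}. The only differences are cosmetic---you fiber over $\map_{\op}(\triv,\mathcal{D})$ where the paper fibers over $\map_{\cat}(\underline{\mathcal{Q}},\underline{\alg}_{\mathcal{R}}(\mathcal{C}))$, and you explicitly dispose of the degenerate case $d_1=d_2=-2$ (where \propref{d-equivalence}, stated for $d\ge-1$, does not directly apply), which is arguably a small improvement in rigor.
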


\begin{proof}
Set $d=d_{1}+d_{2}+2$. By \propref{d-equivalence}, it is enough
to show that for every $\left(d+1\right)$-topos \emph{$\mathcal{C}$}
with the Cartesian symmetric monoidal structure, the map 
\[
\map_{\op}\left(\mathcal{Q}\otimes\mathcal{R},\mathcal{C}\right)\to\map_{\op}\left(\mathcal{P}\otimes\mathcal{R},\mathcal{C}\right),
\]
induced by pre-composition with $f$, is a homotopy equivalence. Using
the tensor-hom adjunction, it is the same as showing that the map
\[
\map_{\op}\left(\mathcal{Q},\alg_{\mathcal{R}}\left(\mathcal{C}\right)\right)\to
\map_{\op}\left(\mathcal{P},\alg_{\mathcal{R}}\left(\mathcal{C}\right)\right)
\]
is an equivalence. The underlying category functor gives a commutative
diagram:
\[
\xymatrix{\map_{\op}\left(\mathcal{Q},\alg_{\mathcal{R}}\left(\mathcal{C}\right)\right)\ar[r]\ar[d] & \map_{\op}\left(\mathcal{P},\alg_{\mathcal{R}}\left(\mathcal{C}\right)\right)\ar[d]\\
\map_{\cat}(\underline{\mathcal{Q}},\underline{\alg}_{\mathcal{R}}\left(\mathcal{C}\right))\ar[r] & \map_{\cat}(\underline{\mathcal{P}},\underline{\alg}_{\mathcal{R}}\left(\mathcal{C}\right)).
}
\ \left(*\right)
\]

As $\underline{P}\to\underline{\mathcal{Q}}$ is an equivalence
of $\infty$-categories (both are equivalent to $\Delta^{0}$), the bottom map
is a homotopy equivalence. Hence, it suffices to show that the induced map
on the homotopy fibers is a homotopy equivalence for each choice of a base
point. 
A point in the space $\map\left(\Delta^{0},\alg_{\mathcal{R}}\left(\mathcal{C}\right)\right)$ is just an $\mathcal{R}$-algebra $X$ in $\mathcal{C}$. 
We denote by $\alg_{\mathcal{R}}\left(\mathcal{C}\right)_{X}$ the $\infty$-operad $\alg_{\mathcal{R}}\left(\mathcal{C}\right)$ pointed by $X$ viewed as an object of $\opunpt$. 
With this notation, we see that the homotopy fiber of the right vertical map is equivalent to 
\[
\map_{\opunpt}\left(\mathcal{P},\alg_{\mathcal{R}}\left(\mathcal{C}\right)_{X}\right).
\]

By \lemref{alg_unital}, the $\infty$-operad $\alg_{\mathcal{R}}\left(\mathcal{C}\right)$ is unital. Therefore, by the adjunction 
\[
\iota\colon \opred\adj\opunpt\colon \left(-\right)^{\red},
\]
the above mapping space is also equivalent to 
\[
\map_{\opred}\left(\mathcal{P},\End_{\alg_{\mathcal{R}}\left(\mathcal{C}\right)}^{\red}\left(X\right)\right)\simeq\map_{\op}\left(\mathcal{P},\End_{\alg_{\mathcal{R}}\left(\mathcal{C}\right)}^{\red}\left(X\right)\right),
\]
since $\opred\ss\op$ is a full subcategory. The induced map on the
fibers of the vertical maps in $\left(*\right)$ over $X$, is therefore
equivalent to 
\[
\map_{\op}\left(\mathcal{Q},\End_{\alg_{\mathcal{R}}\left(\mathcal{C}\right)}^{\red}\left(X\right)\right)\to\map_{\op}\left(\mathcal{P},\End_{\alg_{\mathcal{R}}\left(\mathcal{C}\right)}^{\red}\left(X\right)\right).
\]

Finally, since $\mathcal{\mathcal{C}}$ is a $\left(d+1\right)$-topos
and $\mathcal{R}$ is $d_{2}$-connected,  \corref{Reduced_Endomorphism_Truncatendness}
implies that the $\infty$-operad $\End_{\alg_{\mathcal{R}}\left(\mathcal{C}\right)}^{\red}\left(X\right)$
is an essentially $\left(d-d_{2}-1=d_{1}+1\right)$-operad. Since
$\mathcal{P}\to\mathcal{Q}$ is a $d_{1}$-equivalence, by \propref{d-homotopy_operad} the above map is a homotopy equivalence and this completes the proof. 
\end{proof}
\begin{example}
Let $\mathcal{P}\to\mathcal{Q}$ be a $d$-equivalence of reduced
$\infty$-operads. For every integer $k\ge0$, the induced map $\mathcal{P}\otimes\bb E_{k}\to\mathcal{Q}\otimes\bb E_{k}$
is a $\left(d+k\right)$-equivalence.
\end{example}

The $\infty$-categorical Eckmann\textendash Hilton argument is now an immediate consequence of \thmref{Main_Theorem}.
\begin{cor}
\label{cor:EHA}For all integers $d_{1},d_{2}\ge-2$, given two reduced
$\infty$-operads $\mathcal{P}$ and $\mathcal{Q}$, if $\mathcal{P}$
is $d_{1}$-connected and $\mathcal{Q}$ is $d_{2}$-connected, then
$\mathcal{P}\otimes\mathcal{Q}$ is $\left(d_{1}+d_{2}+2\right)$-connected.
\end{cor}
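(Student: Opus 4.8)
The plan is to reduce \corref{EHA} to \thmref{Main_Theorem} by taking the target of the $d_{1}$-equivalence to be the terminal $\infty$-operad $\bb E_{\infty}$. Recall from \defref{d_Connectedness} that $\mathcal{P}$ is $d_{1}$-connected precisely when the essentially unique map $\mathcal{P}\to\bb E_{\infty}$ is a $d_{1}$-equivalence, and that $\mathcal{P}\otimes\mathcal{Q}$ is $(d_{1}+d_{2}+2)$-connected precisely when the essentially unique map $\mathcal{P}\otimes\mathcal{Q}\to\bb E_{\infty}$ is a $(d_{1}+d_{2}+2)$-equivalence; as $\bb E_{\infty}$ is terminal this is the only map into it, so it suffices to exhibit \emph{some} $(d_{1}+d_{2}+2)$-equivalence $\mathcal{P}\otimes\mathcal{Q}\to\bb E_{\infty}$. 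Applying \thmref{Main_Theorem} to the $d_{1}$-equivalence $\mathcal{P}\to\bb E_{\infty}$ and the reduced $d_{2}$-connected $\infty$-operad $\mathcal{Q}$ immediately gives that $\mathcal{P}\otimes\mathcal{Q}\to\bb E_{\infty}\otimes\mathcal{Q}$ is a $(d_{1}+d_{2}+2)$-equivalence, so everything comes down to identifying $\bb E_{\infty}\otimes\mathcal{Q}$ with $\bb E_{\infty}$.

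To do this I would first record that $\bb E_{\infty}\otimes\bb E_{\infty}\simeq\bb E_{\infty}$. Note that $\bb E_{\infty}$ and $\bb E_{0}$ are both reduced, that $\bb E_{\infty}$ is $j$-connected for \emph{every} $j\ge-2$ (all its operation spaces being contractible), and that $\bb E_{0}\to\bb E_{\infty}$ is a $(-2)$-equivalence, since every $\infty$-operad is $(-2)$-connected. Feeding the $(-2)$-equivalence $\bb E_{0}\to\bb E_{\infty}$ and the $j$-connected operad $\bb E_{\infty}$ into \thmref{Main_Theorem} shows that $\bb E_{0}\otimes\bb E_{\infty}\to\bb E_{\infty}\otimes\bb E_{\infty}$ is a $j$-equivalence for every $j\ge-2$. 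A map of reduced $\infty$-operads that is a $j$-equivalence for all $j$ induces, by \propref{d-homotopy_operad}, an equivalence on every space of the underlying symmetric sequence (an arbitrary space is recovered from its $j$-truncations), and is therefore an equivalence by \propref{SSeq_Conservative}; hence $\bb E_{0}\otimes\bb E_{\infty}\simeq\bb E_{\infty}\otimes\bb E_{\infty}$. But $\bb E_{0}\otimes\bb E_{\infty}\simeq\bb E_{\infty}$, because $\bb E_{\infty}$ is unital and tensoring with $\bb E_{0}$ is the unitalization localization (A.2.3.1.9), so $\bb E_{\infty}\otimes\bb E_{\infty}\simeq\bb E_{\infty}$. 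Running the identical argument once more, now with the $d_{2}$-equivalence $\mathcal{Q}\to\bb E_{\infty}$ and the $j$-connected operad $\bb E_{\infty}$, shows that $\mathcal{Q}\otimes\bb E_{\infty}\to\bb E_{\infty}\otimes\bb E_{\infty}\simeq\bb E_{\infty}$ is a $j$-equivalence for all $j$, hence an equivalence; that is, $\bb E_{\infty}\otimes\mathcal{Q}\simeq\bb E_{\infty}$.

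Putting the two halves together, the composite $\mathcal{P}\otimes\mathcal{Q}\to\bb E_{\infty}\otimes\mathcal{Q}\simeq\bb E_{\infty}$ is a $(d_{1}+d_{2}+2)$-equivalence, which by the reformulation above is exactly the statement that $\mathcal{P}\otimes\mathcal{Q}$ is $(d_{1}+d_{2}+2)$-connected. I do not expect any genuine obstacle: once \thmref{Main_Theorem} is in place the whole argument is formal, and the single point that requires an argument at all — the identification $\bb E_{\infty}\otimes\mathcal{Q}\simeq\bb E_{\infty}$ — is itself an easy corollary of \thmref{Main_Theorem} together with A.2.3.1.9 (one could instead simply invoke the additivity theorem $\bb E_{m}\otimes\bb E_{n}\simeq\bb E_{m+n}$ in the limit $m,n\to\infty$, but the route above keeps the proof self-contained).
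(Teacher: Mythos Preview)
Your argument is correct and follows essentially the same route as the paper: apply \thmref{Main_Theorem} to the $d_{1}$-equivalence $\mathcal{P}\to\bb E_{\infty}$ tensored with the $d_{2}$-connected $\mathcal{Q}$, and then identify the target with $\bb E_{\infty}$. The only difference is cosmetic: the paper applies \thmref{Main_Theorem} a second time (to $\mathcal{P}\to\bb E_{\infty}$ tensored with $\bb E_{\infty}$, using that $\bb E_{\infty}$ is also $d_{2}$-connected) to get another $(d_{1}+d_{2}+2)$-equivalence into $\bb E_{\infty}\otimes\bb E_{\infty}\simeq\bb E_{\infty}$, which it takes as a known fact, and then composes; you instead push the connectivity of $\bb E_{\infty}$ to infinity to prove outright that $\bb E_{\infty}\otimes\mathcal{Q}\simeq\bb E_{\infty}$ (and along the way supply a self-contained proof of $\bb E_{\infty}\otimes\bb E_{\infty}\simeq\bb E_{\infty}$ via A.2.3.1.9). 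Both arrive at the same composite $(d_{1}+d_{2}+2)$-equivalence $\mathcal{P}\otimes\mathcal{Q}\to\bb E_{\infty}$; your version is slightly longer but avoids invoking $\bb E_{\infty}\otimes\bb E_{\infty}\simeq\bb E_{\infty}$ as an external input.
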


\begin{proof}
Since $\mathcal{P}$ is $d_{1}$-connected, the essentially unique map $\mathcal{P}\to\bb E_{\infty}$
is a $d_{1}$-equivalence. Hence, by \thmref{Main_Theorem}, the map
$\mathcal{P}\otimes\mathcal{Q}\to\mathcal{P}\otimes\bb E_{\infty}$
is a $\left(d_{1}+d_{2}+2\right)$-equivalence. Since $\bb E_{\infty}$
is also $d_{2}$-connected, by the same argument the induced map 
\[
\mathcal{P}\otimes\bb E_{\infty}\to\bb E_{\infty}\otimes\bb E_{\infty}\simeq\bb E_{\infty}
\]
is also a $\left(d_{1}+d_{2}+2\right)$-equivalence. The $\left(d_{1}+d_{2}+2\right)$-equivalences are closed under composition, and so the result follows (in fact, we know a posteriori that the map above is actually an equivalence of $\infty$-operads).
\end{proof}

We conclude this section (and this paper) with a couple of curious applications
of the $\infty$-categorical Eckmann\textendash Hilton argument. The first is
the classification of idempotent reduced $\infty$-operads. 
\begin{cor}
\label{cor:Idempotent_Operads}Let $\mathcal{P}$ be a reduced $\infty$-operad.
If $\mathcal{P}\otimes\mathcal{P}\simeq\mathcal{P}$, then $\mathcal{P}\simeq\bb E_{0}$
or $\mathcal{P}\simeq\bb E_{\infty}$.
\end{cor}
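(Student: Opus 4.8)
The plan is to use the Eckmann--Hilton argument (\corref{EHA}) to bootstrap the connectivity of $\mathcal{P}$ all the way up to $\infty$. If $\mathcal{P}\simeq\bb E_{0}$ there is nothing to prove, so assume $\mathcal{P}\not\simeq\bb E_{0}$; by \remref{Minus_One_Connected} this already guarantees that $\mathcal{P}$ is $\left(-1\right)$-connected.

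Next I would argue by induction that $\mathcal{P}$ is $d$-connected for every integer $d\ge-1$. Assuming $\mathcal{P}$ is $d$-connected, \corref{EHA} gives that $\mathcal{P}\otimes\mathcal{P}$ is $\left(2d+2\right)$-connected, and since $\mathcal{P}\otimes\mathcal{P}\simeq\mathcal{P}$, so is $\mathcal{P}$. For $d\ge-1$ we have $2d+2>d$, so the associated recursion $a_{0}=-1$, $a_{k+1}=2a_{k}+2$ is strictly increasing and unbounded; since a $d'$-connected $\infty$-operad is in particular $d$-connected for every $d\le d'$, it follows that $\mathcal{P}$ is $d$-connected for all $d$.

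By \remref{Minus_One_Connected}, being $d$-connected for all $d$ forces every space $\mathcal{P}\left(n\right)$ in the underlying symmetric sequence of $\mathcal{P}$ to be weakly contractible, so the essentially unique map $\mathcal{P}\to\bb E_{\infty}$ induces an equivalence on underlying symmetric sequences; since $\left(-\right)_{\sseq}$ is conservative (\propref{SSeq_Conservative}), the map $\mathcal{P}\to\bb E_{\infty}$ is an equivalence of $\infty$-operads, and we are done.

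The argument is short, and the only genuine subtlety is that the recursion $d\mapsto 2d+2$ has a fixed point at $d=-2$ and only strictly increases once $d\ge-1$ --- which is exactly why the preliminary reduction via \remref{Minus_One_Connected}, upgrading the condition ``$\mathcal{P}\not\simeq\bb E_{0}$'' to the honest connectivity statement ``$\mathcal{P}$ is $\left(-1\right)$-connected'', is the crucial first move; everything after it is an immediate consequence of \corref{EHA}.
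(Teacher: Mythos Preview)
Your proof is correct and follows essentially the same approach as the paper: start from $(-1)$-connectedness via \remref{Minus_One_Connected}, bootstrap connectivity using \corref{EHA} and the hypothesis $\mathcal{P}\otimes\mathcal{P}\simeq\mathcal{P}$, and conclude $\mathcal{P}\simeq\bb E_{\infty}$. You are slightly more explicit than the paper in spelling out the final step via \propref{SSeq_Conservative} and in noting why the recursion needs the base case $d\ge-1$, but the argument is the same.
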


\begin{proof}
If $\mathcal{P}\not\simeq\bb E_{0}$, then, by \remref{Minus_One_Connected}, $\mathcal{P}$ is $d$-connected
for some $d\ge-1$. Therefore, by \thmref{Application}, $\mathcal{P}\otimes\mathcal{P}$ is $\left(2d+2\right)>d$ connected. Since $\mathcal{P}\simeq\mathcal{P}\otimes\mathcal{P}$, we can continue by induction and deduce that $\mathcal{P}$ is $\infty$-connected; hence $\mathcal{P}\simeq\bb E_{\infty}$.
\end{proof}

The second application is to a tensor product of a sequence of reduced $\infty$-operads. Given a sequence of reduced $\infty$-operads $\left(\mathcal{P}_{i}\right)_{i=1}^{\infty}$,
we can define the tensor product of them all $\bigotimes\limits _{i=1}^{\infty}\mathcal{P}_{i}$
as the colimit of the sequence
\[
\bb E_{0}\to\mathcal{P}_{1}\to\mathcal{P}_{1}\otimes\mathcal{P}_{2}\to\mathcal{P}_{1}\otimes\mathcal{P}_{2}\otimes\mathcal{P}_{3}\to\dots,
\]
where the $i$-th map is obtained by tensoring the essentially unique
map $\bb E_{0}\to\mathcal{P}_{i}$ with $\mathcal{P}_{1}\otimes\cdots\otimes\mathcal{P}_{i-1}$. 
\begin{example}
If we take $\mathcal{P}_{i}=\bb E_{1}$ for all $i$, then the additivity
theorem (A.5.1.2.2) implies that $\bigotimes\limits _{i=1}^{\infty}\bb E_{1}$
is the colimit of the sequence of $\infty$-operads
\[
\bb E_{0}\to\bb E_{1}\to\bb E_{2}\to\bb E_{3}\to\dots,
\]
which is $\bb E_{\infty}$. 
\end{example}

We offer the following generalization:
\begin{cor}
\label{cor:Infnite_Tensor}Let $\left(\mathcal{P}_{i}\right)_{i=1}^{\infty}$
be a sequence of reduced $\infty$-operads not equivalent to $\bb E_{0}$.
There is an equivalence of $\infty$-operads $\bigotimes\limits _{i=1}^{\infty}\mathcal{P}_{i}\simeq\bb E_{\infty}$.
\end{cor}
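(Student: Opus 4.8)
The plan is to combine the $\infty$-categorical Eckmann--Hilton bound (\corref{EHA}) with a telescoping identity for the infinite tensor product. Write $\mathcal{Q}_n := \mathcal{P}_1 \otimes \cdots \otimes \mathcal{P}_n$ for the partial tensor products (with $\mathcal{Q}_0 := \bb E_0$), and for each $n \ge 1$ write $\mathcal{R}_n := \bigotimes\limits_{i=n}^{\infty}\mathcal{P}_i$ for the tail, so that $\bigotimes\limits_{i=1}^{\infty}\mathcal{P}_i = \mathcal{R}_1$. Since the tensor product of reduced $\infty$-operads is reduced, and since a sequential colimit of reduced $\infty$-operads along maps of reduced $\infty$-operads is again reduced (filtered colimits in $\op$ are computed level-wise on the underlying $\infty$-category and on the multi-mapping spaces), each $\mathcal{Q}_n$, each $\mathcal{R}_n$, and $\bigotimes\limits_{i=1}^{\infty}\mathcal{P}_i$ itself are reduced $\infty$-operads. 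Hence it will suffice to show that all of the spaces of operations of $\bigotimes\limits_{i=1}^{\infty}\mathcal{P}_i$ are contractible, i.e. that $\bigotimes\limits_{i=1}^{\infty}\mathcal{P}_i$ is $d$-connected for every $d$, since a reduced $\infty$-operad with this property is equivalent to $\bb E_\infty$.

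First I would record the connectivity of the partial tensor products. By \remref{Minus_One_Connected}, each $\mathcal{P}_i$ is $(-1)$-connected, as $\mathcal{P}_i \not\simeq \bb E_0$; and $\bb E_0$ is $(-2)$-connected. An induction on $n$, using $\mathcal{Q}_n \simeq \mathcal{Q}_{n-1} \otimes \mathcal{P}_n$ and \corref{EHA} (with the bound $(n-3) + (-1) + 2 = n-2$ in the inductive step), then shows that $\mathcal{Q}_n$ is $(n-2)$-connected. I would likewise observe that each tail $\mathcal{R}_n$ is $(-1)$-connected: again by \remref{Minus_One_Connected} it is enough to check $\mathcal{R}_n \not\simeq \bb E_0$, and this holds because $\mathcal{R}_n$ receives the structure map of the colimit from $\mathcal{P}_n$, while $\mathcal{P}_n(m)\neq\es$ for some $m\ge 2$ (by \remref{Minus_One_Connected}) and $\bb E_0(m)=\es$, so no map $\mathcal{P}_n \to \bb E_0$ exists.

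The key step is the telescoping identity $\bigotimes\limits_{i=1}^{\infty}\mathcal{P}_i \simeq \mathcal{Q}_{n-1} \otimes \mathcal{R}_n$ for every $n \ge 1$. Since the Boardman--Vogt tensor product on $\op$ is part of a closed symmetric monoidal structure, the functor $\mathcal{Q}_{n-1} \otimes (-)$ preserves all colimits; applying it to the colimit $\mathcal{R}_n = \colim\bigl(\bb E_0 \to \mathcal{P}_n \to \mathcal{P}_n \otimes \mathcal{P}_{n+1} \to \cdots\bigr)$, and using $\mathcal{Q}_{n-1} \otimes \bb E_0 \simeq \mathcal{Q}_{n-1}$ (A.2.3.1.9, as $\mathcal{Q}_{n-1}$ is unital) together with $\mathcal{Q}_{n-1} \otimes (\mathcal{P}_n \otimes \cdots \otimes \mathcal{P}_{n+k}) \simeq \mathcal{Q}_{n+k}$, I would identify $\mathcal{Q}_{n-1} \otimes \mathcal{R}_n$ with $\colim(\mathcal{Q}_{n-1} \to \mathcal{Q}_n \to \mathcal{Q}_{n+1} \to \cdots)$, which is cofinal in the sequence defining $\bigotimes\limits_{i=1}^{\infty}\mathcal{P}_i$. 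Combining this with the connectivity estimates and one more application of \corref{EHA}, for every $n \ge 1$ the $\infty$-operad $\bigotimes\limits_{i=1}^{\infty}\mathcal{P}_i \simeq \mathcal{Q}_{n-1} \otimes \mathcal{R}_n$ is $\bigl((n-3) + (-1) + 2\bigr) = (n-2)$-connected. Letting $n \to \infty$ shows it is $d$-connected for all $d$, hence equivalent to $\bb E_\infty$.

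The only point requiring care — and the only input not already established in the paper — is the interaction of the construction with sequential colimits: that a sequential colimit of reduced $\infty$-operads is reduced, and that $\mathcal{Q}_{n-1} \otimes (-)$ preserves it. The latter is just the closedness of the tensor product, already used repeatedly; the former follows from the level-wise computation of filtered colimits of $\infty$-operads (equivalently, closure of $\opred$ under filtered colimits in $\op$). Should one wish to avoid even this, an alternative is to invoke \propref{d-equivalence} directly: for any $(d+1)$-topos $\mathcal{C}$ one has $\map_\op\bigl(\bigotimes\limits_{i}\mathcal{P}_i, \mathcal{C}_{\times}\bigr) \simeq \lim_n \map_\op(\mathcal{Q}_n, \mathcal{C}_{\times})$, a limit over a tower whose maps are eventually equivalences onto $\map_\op(\bb E_\infty, \mathcal{C}_{\times})$ (because $\mathcal{Q}_n \to \bb E_\infty$ is a $d$-equivalence once $n \ge d+2$), whence $\bigotimes\limits_{i}\mathcal{P}_i \to \bb E_\infty$ is a $d$-equivalence for every $d$ and therefore an equivalence.
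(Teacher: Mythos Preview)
Your proof is correct, but it takes a longer route than the paper's. The paper proceeds exactly as you do up through the connectivity bound on the partial products $\mathcal{Q}_k = \mathcal{P}_1\otimes\cdots\otimes\mathcal{P}_k$ (via \remref{Minus_One_Connected} and induction on \corref{EHA}), but then finishes in one line by passing to operation spaces: since filtered colimits of $\infty$-operads are computed level-wise on multi-mapping spaces, one has $\bigl(\bigotimes_{i}\mathcal{P}_i\bigr)(n)\simeq\colim_k\mathcal{Q}_k(n)$, a sequential colimit of spaces whose $k$-th term is $(k-2)$-connected, hence contractible. Your telescoping identity $\bigotimes_i\mathcal{P}_i\simeq\mathcal{Q}_{n-1}\otimes\mathcal{R}_n$ together with a further invocation of \corref{EHA} achieves the same conclusion, but at the cost of introducing the tails $\mathcal{R}_n$, verifying they are $(-1)$-connected, and appealing to closedness of the tensor product to justify the telescoping. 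Both arguments ultimately rest on the same technical input (the level-wise nature of filtered colimits in $\op$), so the paper's approach is strictly shorter. Your alternative closing argument via \propref{d-equivalence} is a genuinely different and rather clean way to avoid that technical input; it is not what the paper does, but it works.
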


\begin{proof}
By \remref{Minus_One_Connected}, all $\mathcal{P}_{i}$-s are $\left(-1\right)$-connected.
By induction on $k$ and \corref{EHA}, the $\infty$-operad $\mathcal{P}_{1}\otimes\cdots\otimes\mathcal{P}_{k}$
is $\left(k-2\right)$-connected. For every $n\in\bb N$ we get 
\[
\left(\bigotimes\limits _{i=1}^{\infty}\mathcal{P}_{i}\right)\left(n\right)\simeq\colim\limits _{k}\left(\mathcal{P}_{1}\otimes\cdots\otimes\mathcal{P}_{k}\right)\left(n\right)\simeq\term
\]
and therefore $\bigotimes\limits _{i=1}^{\infty}\mathcal{P}_{i}\simeq\bb E_{\infty}$.
\end{proof}
For example, this implies that putting countably many compatible $H$-space
structures on a pointed connected space $X$ is the same as putting
an $\infty$-loop space structure on $X$.

\phantomsection\addcontentsline{toc}{section}{\refname}
\bibliography{Bib_EH}
\bibliographystyle{alpha}

\end{document}